\theoremstyle{plain}
\newtheorem{thm}{Theorem}
\newtheorem{prop}{Proposition}[section]
\newtheorem{lem}[prop]{Lemma}
\newtheorem{cor}[prop]{Corollary}
\newtheorem{defi}[prop]{Definition}
\newtheorem{rmk}[prop]{Remark}
\newtheorem{example}[prop]{Example}
\newcommand {\R} {\mathbb{R}} \newcommand {\Z} {\mathbb{Z}}
 \newcommand {\N} {\mathbb{N}}
\newcommand {\p} {\partial}
\newcommand {\dt} {\partial_t}
\newcommand {\D} {\Delta}
\newcommand {\supp} {\text{supp}}
\DeclareMathOperator{\vol}{vol}
\DeclareMathOperator {\dist} {dist}
\DeclareMathOperator {\Ree} {Re}
\DeclareMathOperator {\Imm} {Im}
\DeclareMathOperator{\spa} {span}
\DeclareMathOperator{\inte} {int}
\newcommand{\propsemicont}{4.3 in \cite{KRS14}\xspace}
\newcommand{\propblowup}{4.5 in \cite{KRS14}\xspace}
\newcommand{\propindep}{4.2 in \cite{KRS14}\xspace}
\newcommand{\corind}{4.2 in \cite{KRS14}\xspace}
\newcommand{\thmtwo}{2 in \cite{KRSI}\xspace}
\newcommand{\propasym}{4.6 in \cite{KRSI}\xspace}
\newcommand{\corasym}{4.8 in \cite{KRSI}\xspace}
\title{The Variable Coefficient Thin Obstacle Problem: Higher Regularity} 
\author{Herbert Koch}
\author{Angkana R\"uland }
\author{Wenhui Shi}
\address{
Mathematisches Institut, Universit\"at Bonn, Endenicher Allee 60, 53115 Bonn, Germany }
\email{koch@math.uni-bonn.de}
\address{
Mathematical Institute of the University of Oxford, Andrew Wiles Building, Radcliffe Observatory Quarter, Woodstock Road, OX2 6GG Oxford, United Kingdom }
\email{ruland@maths.ox.ac.uk}
\address{
Mathematisches Institut, Universit\"at Bonn, Endenicher Allee 64, 53115 Bonn, Germany  }
\email{wenhui.shi@hcm.uni-bonn.de}
\begin{document}

\begin{abstract}
In this article we continue our investigation of the thin obstacle problem with variable coefficients which was initiated in \cite{KRS14}, \cite{KRSI}. Using a partial Hodograph-Legendre transform and the implicit function theorem, we prove higher order Hölder regularity for the regular free boundary, if the associated coefficients are of the corresponding regularity. For the zero obstacle this yields an improvement of a \emph{full derivative} for the free boundary regularity compared to the regularity of the metric. In the presence of non-zero obstacles or inhomogeneities, we gain \emph{three halves of a derivative} for the free boundary regularity with respect to the regularity of the inhomogeneity. Further we show analyticity of the regular free boundary for analytic metrics. We also discuss the low regularity set-up of $W^{1,p}$ metrics with $p>n+1$ with and without ($L^p$) inhomogeneities.\\ 
Key new ingredients in our analysis are the introduction of generalized Hölder spaces, which allow to interpret the transformed fully nonlinear, degenerate (sub)elliptic equation as a perturbation of the Baouendi-Grushin operator, various uses of intrinsic geometries associated with appropriate operators, the application of the implicit function theorem to deduce (higher) regularity and the splitting technique from \cite{KRSI}.   
\end{abstract}

\subjclass[2010]{Primary 35R35}

\keywords{Variable coefficient Signorini problem, variable coefficient thin obstacle problem, thin free boundary, Hodograph-Legendre transform}

\thanks{
H.K. acknowledges support by the DFG through SFB 1060.
A.R. acknowledges a Junior Research Fellowship at Christ Church.
W.S. is supported by the Hausdorff Center of Mathematics.}

\maketitle

\tableofcontents

\section{Introduction}

This article is devoted the study of the higher regularity properties of the free boundary of solutions to the \emph{thin obstacle} or \emph{Signorini problem}. To this end, we consider local minimizers to the functional
\begin{align*}
J(v):=\int\limits_{B_1^+}a^{ij}\p_iv\p_jv dx,\quad v\in \mathcal{K},
\end{align*}
with $\mathcal{K}:=\{u\in H^1(B_1^+)| \ u\geq 0 \mbox{ on } B_1'\times \{0\}\}$. Here $B_1^+ := \{x\in B_1 \subset \R^{n+1}| \ x_{n+1}\geq 0\}$ and $B_1':= \{x\in B_1 \subset \R^{n+1}| \ x_{n+1}=0\}$ denote the $(n+1)$-dimensional upper half ball and the co-dimension one ball, respectively. The tensor field $a^{ij}: B_1^+ \rightarrow \R^{(n+1)\times (n+1)}_{sym}$ is assumed to be uniformly elliptic, symmetric and $W^{1,p}(B_1^+)$ regular for some $p> n+1$. Here and in the sequel we use the summation convention.\\

Due to classical results on variational inequalities \cite{U87}, \cite{F10}, minimizers of this problem exist and are unique (under appropriate boundary conditions). Moreover, minimizers are $C^{1,\min\{1-\frac{n+1}{p},\frac{1}{2}\}}(B_{1/2}^+)$ regular (c.f. \cite{AC06}, \cite{KRSI}) and solve the following uniformly elliptic equation with \emph{complementary} or \emph{Signorini boundary conditions} 
\begin{equation}
\label{eq:thin_obst}
\begin{split}
\p_i a^{ij} \p_j w &= 0 \mbox{ in } B_1^+,\\
w \geq 0, \ a^{n+1, j} \p_{j}w\leq 0,\ w (a^{n+1,j}\p_j w) &= 0 \mbox{ in } B_1' \times \{0\}.
\end{split}
\end{equation}
Here the bulk equation is to be interpreted weakly, while the boundary conditions hold pointwise. In particular, the constraint originating from the convex set $\mathcal{K}$ only acts on the boundary; in this sense the obstacle is \emph{thin}. The constraint on functions in $\mathcal{K}$ divides the boundary $B_1' \times \{0\}$ into three different regions: The \emph{contact set} $\Lambda_w := \{x\in B_{1}'\times \{0\}| \ w=0\}$, where the minimizer attains the obstacle, the \emph{non-coincidence set}, $\Omega_w:= \{x\in B_1' \times \{0\}| \ w>0\}$, where the minimizer lies strictly above the obstacle, and the \emph{free boundary}, $\Gamma_w := \partial \Omega_w$, which separates the contact set from the non-coincidence set. \\

As we seek to obtain a more detailed analysis of the (regular) free boundary under higher regularity assumptions on the metric tensor $a^{ij}$, we briefly recall the, for our purposes, most relevant known properties of the free boundary (c.f. \cite{CSS}, \cite{PSU},  \cite{GSVG14}, \cite{GPSVG15}, \cite{KRS14} \cite{KRSI}):
Considering metrics which need not be more regular than $W^{1,p}$ with $p\in(n+1,\infty]$ and carrying out a blow-up analysis of solutions, $w$, of (\ref{eq:thin_obst}) around free boundary points, it is possible to assign to each free boundary point $x_0\in \Gamma_w \cap B'_1$ the uniquely determined \emph{order of vanishing} $\kappa(x_0)$ of $w$ at this point (c.f. Proposition \propindep):
\begin{align*}
\kappa(x_0):= \lim\limits_{r \rightarrow 0_+} \frac{\ln\left( r^{-\frac{n+1}{2}} \left\| w \right\|_{L^2(B_r^+(x_0))} \right)}{\ln(r)} .
\end{align*} 
Since the order of vanishing satisfies the gap property that either $\kappa(x_0)= \frac{3}{2}$ or $\kappa(x_0)\geq 2$ (c.f. Corollary \corind), the free boundary can be decomposed as follows:
\begin{align*}
\Gamma_w \cap B_{1}' := \Gamma_{3/2}(w)\cup \bigcup\limits_{\kappa \geq 2} \Gamma_{\kappa}(w), 
\end{align*}
where $\Gamma_{\kappa}(w):= \{x_0\in \Gamma_{w} \cap B_{1}'| \ \kappa(x_0)= \kappa\}$. Moreover, noting that the mapping $\Gamma_w \ni x_0\mapsto \kappa(x_0)$ is upper-semi-continuous (c.f. Proposition \propsemicont), we obtain the set $\Gamma_{3/2}(w)$, which is called the \emph{regular free boundary}, is a relatively open subset of $\Gamma_w$. At each regular free boundary point $x_0\in \Gamma_{3/2}(w)$, there exists an $L^2$-normalized blow-up sequence $w_{x_0,r_j}$, which converges to a nontrivial global solution $w_{3/2}(Q(x_0)x)$ with flat free boundary. Here $w_{3/2}(x):=\Ree (x_n+ix_{n+1})^{3/2}$ is a model solution and $Q(x_0)\in SO(n+1)$ (c.f. Proposition \propblowup). 
By a more detailed analysis the regular free boundary can be seen to be $C^{1,\alpha}$ regular (c.f. Theorem \thmtwo) and a leading order expansion of solutions $w$ at the regular free boundary can be determined (c.f. Proposition \propasym and Corollary \corasym, c.f. also Proposition \ref{prop:asym2} in Section \ref{sec:asymp}).\\

In the sequel we will exclusively focus on the \emph{regular} free boundary. Due to its relative openness and by scaling, it is always possible to assume that the whole boundary in a given domain consists only of the regular free boundary. This convention will be used throughout the article; whenever referring to the ``free boundary'' without further details, we will mean the regular free boundary.

\subsection{Main results and ideas}

In this article our main objective is to prove \emph{higher} regularity of the (regular) free boundary if the metric $a ^{ij}$ is of higher (Hölder) regularity. In particular, we prove the analyticity of the free boundary for analytic coefficients:

\begin{thm}
\label{thm:higher_reg}
Let $a^{ij}:B_1^+ \rightarrow \R^{(n+1)\times (n+1)}_{sym}$ be a uniformly elliptic, symmetric, $W^{1,p}$ tensor field with  $p\in(n+1,\infty]$. Suppose that $w:B_{1}^+ \rightarrow \R$ is a solution of the variable coefficient thin obstacle problem (\ref{eq:thin_obst}) with metric $a^{ij}$.
\begin{itemize}
\item[(i)] Then the regular free boundary $\Gamma_{3/2}(w)$ is locally a $C^{1,1-\frac{n+1}{p}}$ graph if $p<\infty$ and a $C^{1,1-}$ graph if $p=\infty$.
\item[(ii)] Assume further that $a^{ij}$ is $C^{k,\gamma}$ regular with $k\geq 1$ and $\gamma\in (0,1)$. Then the regular free boundary $\Gamma_{3/2}(w)$ is locally a $C^{k+1,\gamma}$ graph.
\item[(iii)] Assume in addition that $a^{ij}$ is real analytic. Then the regular free boundary $\Gamma_{3/2}(w)$ is locally real analytic.
\end{itemize}
\end{thm}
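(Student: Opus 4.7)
The plan is to straighten the regular free boundary via a partial Hodograph-Legendre transform, reducing the free boundary regularity problem to an interior regularity problem for a fully nonlinear degenerate elliptic equation which we view as a perturbation of the Baouendi-Grushin operator. Fix a regular free boundary point $x_0 \in \Gamma_{3/2}(w)$. After translating to $x_0$ and rotating by $Q(x_0)$, Proposition \propasym provides the leading-order expansion $w(x) = w_{3/2}(x) + \mbox{error}$ with $w_{3/2}(x) = \Ree(x_n + i x_{n+1})^{3/2}$. Since $\partial_n w$ behaves like $x_n^{1/2}$ on the non-coincidence side near $x_0$, I would introduce a Hodograph coordinate by replacing $x_n$ with a new variable such as $y_n := (\partial_n w)^{2}$, while keeping tangential coordinates $x' = (x_1, \dots, x_{n-1})$ and the normal coordinate $x_{n+1}$. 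In these coordinates the non-coincidence set becomes a fixed half-space, the free boundary becomes a hyperplane, and the Legendre transform produces a function $v^{*}$ whose boundary trace parametrizes $\Gamma_{3/2}(w)$ as a graph.

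A direct computation then shows that $v^{*}$ satisfies a fully nonlinear equation whose principal part, at the model $w_{3/2}$ with $a^{ij} = \delta^{ij}$, reduces to an operator of Baouendi-Grushin type, schematically $\partial_{y}^{2} + y^{2} \Delta_{x}$, together with boundary terms associated with the splitting into contact and non-coincidence sets. Deviations from the model are controlled by $a^{ij} - \delta^{ij}$ and the asymptotic error term, measured in intrinsic norms adapted to the Baouendi-Grushin quasimetric $d((x,y),(\tilde x,\tilde y)) \sim |x - \tilde x|^{1/2} + |y - \tilde y|$. To formulate and solve the equation, I would introduce generalized Hölder spaces adapted to this intrinsic geometry, in which the Baouendi-Grushin operator enjoys sharp Schauder-type estimates and the nonlinear terms act continuously.

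With this framework in place, I would apply the implicit function theorem to the schematic equation $F(v^{*},a) = 0$, after verifying that the linearization $D_{v^{*}}F$ at the model is an isomorphism between the corresponding intrinsic Hölder spaces. This produces $v^{*}$ as a smooth map of $a$ gaining one derivative in the intrinsic scale, which on the boundary translates into a gain of one full Euclidean derivative: $a^{ij} \in C^{k,\gamma}$ yields $\Gamma_{3/2}(w) \in C^{k+1,\gamma}$, proving (ii). Part (i) follows by a parallel argument at the $W^{1,p}$ level, where classical Schauder theory is unavailable and one instead combines the asymptotic expansion with Campanato-type estimates in the intrinsic geometry to extract the sharp exponent $1 - \frac{n+1}{p}$. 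For (iii), one either runs the implicit function theorem in complexified function spaces, noting that $F$ extends holomorphically in $a$ when $a$ is analytic, or derives Cauchy-type bounds on the derivatives of the graph parametrization by iterating (ii) on a sequence of shrinking intrinsic balls.

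The main obstacle is twofold. First, defining the Hodograph-Legendre transform at the low $W^{1,p}$ regularity level is delicate: the asymptotic expansion has limited precision, and one must nevertheless ensure that $v^{*}$ lies in the correct generalized Hölder space; this in particular requires a sharp Schauder theory for the Baouendi-Grushin operator on a half-space with mixed boundary conditions matching the contact/non-coincidence split. Second, invertibility of the linearization is subtle because the Baouendi-Grushin operator is only subelliptic, so one must identify intrinsic spaces in which the linearization is actually bijective rather than merely Fredholm, while keeping the spaces large enough for the nonlinearity to be smooth in the sense required by the analytic implicit function theorem used for part (iii).
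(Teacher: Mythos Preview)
Your high-level strategy---Hodograph--Legendre transform, Baouendi--Grushin linearization, adapted H\"older spaces, implicit function theorem---matches the paper's. But several technical choices diverge from the paper in ways that either create real gaps or miss the actual difficulties.

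\textbf{The Hodograph transform.} You propose the single-variable change $y_n = (\partial_n w)^2$, keeping $x_{n+1}$. The paper instead uses the two-variable map $T(x) = (x'', \partial_n w, \partial_{n+1} w)$, which sends $B_1^+$ into the \emph{quarter space} $Q_+ = \{y_n \ge 0,\ y_{n+1} \le 0\}$ with the free boundary going to the edge $\{y_n = y_{n+1} = 0\}$. Your map collapses the entire contact set (where $\partial_n w \equiv 0$) onto a codimension-two set, so it is not a homeomorphism; the two-variable transform is needed to separate the contact set, the non-coincidence set, and the free boundary. The resulting corner-domain geometry is not incidental: the paper stresses that the reflection trick used in the constant-coefficient case (KPS) would destroy the regularity of $a^{ij}$, so one must develop Schauder theory for the Baouendi--Grushin operator with mixed Dirichlet--Neumann data in $Q_+$, up to the edge. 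You do not address this.

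\textbf{The IFT mechanism.} You write $F(v^*, a) = 0$ with $a$ apparently the metric; smoothness in the metric does not by itself give spatial regularity of $v^*$. The paper uses Angenent's device: precompose $v$ with a family of diffeomorphisms $\Phi_a$ that infinitesimally generate tangential translations, set $v_a = v \circ \Phi_a$, and apply the IFT to $G_a(w) = 0$ near $(a,w) = (0,0)$ with $w = v_a - v$. Regularity of $a \mapsto v_a$ then converts directly into tangential regularity of $v$, and the free boundary is the graph $x_n = -\partial_n v(x'',0,0)$.

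\textbf{Part (i).} For $W^{1,p}$ metrics, $D^2 w$ is only in $L^p$, so the Hodograph map is not even pointwise defined and your argument cannot start. The paper's remedy is a splitting $w = u + \tilde u$: the auxiliary function $\tilde u$ solves a coercive equation absorbing the rough right-hand side $-(\partial_i a^{ij})\partial_j w$ and decays faster than $w$ near $\Gamma_w$, while $u$ is $C^{2,1-(n+1)/p}$ away from $\Gamma_w$ and shares the free boundary with $w$. One then runs the Hodograph--Legendre machinery on $u$, not $w$. Your proposal acknowledges the difficulty but does not supply this mechanism.

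\textbf{Part (iii).} The paper obtains analyticity directly from the analytic IFT, since $a \mapsto \Phi_a$ (and hence $a \mapsto F_a$) is analytic by construction. Your alternative of iterating (ii) with Cauchy estimates would require uniform-in-$k$ control of the implicit constants, which you would have to prove separately.
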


We note that these results are sharp on the Hölder scale. In deriving the sharp gain of a full derivative, the choice of our function spaces play a key role (c.f. the discussion below for details on the motivation of our function spaces and Remark \ref{rmk:optreg} in Section \ref{sec:IFT1} for the optimality on the Hölder scale and the role of our function spaces).  \\
In addition to the previously stated results, we also deal with the regularity problem in the presence of inhomogeneities. 

\begin{thm}
\label{thm:higher_reg_inhom}
Let $a^{ij}:B_1^+ \rightarrow \R^{(n+1)\times (n+1)}_{sym}$ be a $W^{1,p}$ tensor field with $p\in(2(n+1),\infty]$ and let $f:B_1^+ \rightarrow \R$ be an $L^p(B_1^+)$ function. Suppose that $w:B_{1}^+ \rightarrow \R$ is a solution of the variable coefficient thin obstacle problem with metric $a^{ij}$ and inhomogeneity $f$:
\begin{equation}
\label{eq:thin_obst_inhom}
\begin{split}
\p_i a^{ij} \p_j w &= f \mbox{ in } B_1^+,\\
w \geq 0, \ a^{n+1, j} \p_{j}w\leq 0,\ w (a^{n+1,j}\p_j w) &= 0 \mbox{ in } B_1' \times \{0\}.
\end{split}
\end{equation}
\begin{itemize}
\item[(i)] Then the regular free boundary $\Gamma_{3/2}(w)$ is locally a $C^{1,\frac{1}{2}-\frac{n+1}{p}}$ graph.
\item[(ii) ] Assume in addition that $a^{ij}$ is a $C^{k,\gamma}$ tensor field with $k\geq 1$ and $\gamma\in (0,1]$ and let $f:B_1^+ \rightarrow \R$ be a $C^{k -1,\gamma}$ function. Then the regular free boundary $\Gamma_{3/2}(w)$ is locally a $C^{k+[\gamma+ \frac{1}{2}], \gamma+\frac{1}{2}- [\gamma+ \frac{1}{2}]}$ graph.
\item[(iii)] Moreover, assume that $a^{ij}$ is a real analytic tensor field and let $f:B_1^+ \rightarrow \R$ be a real analytic function. Then the regular free boundary $\Gamma_{3/2}(w)$ is locally real analytic.
\end{itemize}
Here $[\cdot]$ denotes the floor function.
\end{thm}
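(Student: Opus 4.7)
The plan is to adapt the strategy developed for Theorem \ref{thm:higher_reg} to the inhomogeneous setting. I would first strengthen the asymptotic expansion of solutions at regular free boundary points $x_0$ (the inhomogeneous analogue of the expansion in Proposition \ref{prop:asym2}) to account for the contribution of $f$: the leading order is still governed by $w_{3/2}(Q(x_0)x)$, but with a correction term of order $|x-x_0|^{2-(n+1)/p}$ (for part (i)) or higher (for part (ii)) coming from the inhomogeneity. This controlled asymptotics, combined with the relative openness of $\Gamma_{3/2}(w)$, allows one to work microlocally at a fixed free boundary point and to view $w$ as a perturbation of the model solution $w_{3/2}$. After a $C^{1,\alpha}$ parameterization of the free boundary (whose existence with $\alpha = 1/2 - (n+1)/p$ yields part (i)), I would perform a partial Hodograph-Legendre transform in the tangential variable whose derivative is non-degenerate near the regular free boundary, thereby straightening $\Gamma_{3/2}(w)$ to a hyperplane.

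After this change of variables, the PDE \eqref{eq:thin_obst_inhom} becomes a fully nonlinear, degenerate (sub)elliptic equation for the Legendre function $v$, which should be read as a perturbation of the Baouendi-Grushin operator. Crucially, the inhomogeneity $f$ gives rise to an additional right-hand side in the transformed equation, whose regularity must be tracked in the \emph{intrinsic} generalized Hölder spaces associated to the Baouendi-Grushin geometry. The one-half derivative gain, which underlies the stronger ``three halves'' improvement in the free boundary regularity, comes precisely from the fact that $f \in C^{k-1,\gamma}$ in the Euclidean scale translates into better regularity in the intrinsic scale once composed with the Legendre transform (the factor $|x_{n+1}|$ degeneration gives an extra half-derivative, in analogy with the classical relation between Euclidean and parabolic Hölder spaces). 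To separate this contribution cleanly from the obstacle-type part of the equation, I would invoke the splitting technique of \cite{KRSI}, decomposing $v$ into a free-boundary piece and a particular solution that absorbs $f$.

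Parts (ii) and (iii) then follow from the implicit function theorem applied to an appropriate nonlinear map $F$ between generalized Hölder spaces whose zeros correspond to solutions of the transformed problem. The linearization of $F$ at $v=v_{3/2}$ reduces to (a perturbation of) the Baouendi-Grushin operator, whose invertibility on the chosen spaces with the correct indices is the central technical input and provides a bijection between the regularity of the coefficients/inhomogeneity and the regularity of the Legendre transform $v$; inverting the Hodograph map then transfers this regularity back to the free boundary parameterization. In part (iii), one replaces the $C^{k,\gamma}$ spaces by their analytic counterparts and uses the analytic implicit function theorem, exploiting the fact that the nonlinear map $F$ is itself analytic when the data are.

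The main obstacle will be the bookkeeping of the transformed inhomogeneity: verifying that after the Hodograph-Legendre transform the forcing term arising from $f$ actually lies in the appropriate generalized Hölder space with the sharp one-half derivative gain, and that this propagates through the implicit function theorem to produce exactly the exponent pattern $k + [\gamma + 1/2]$ and $\gamma + 1/2 - [\gamma + 1/2]$ (including the floor function jump when $\gamma + 1/2 \geq 1$). Ensuring that the generalized Hölder spaces interact correctly with the composition operations required by the transform, and checking the non-degeneracy of the linearization uniformly across the range of exponents in (ii), is the delicate piece that the splitting technique from \cite{KRSI} is designed to handle.
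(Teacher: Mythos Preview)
Your overall architecture --- Hodograph--Legendre transform, interpretation as a perturbation of the Baouendi--Grushin operator, bootstrap in the generalized H\"older spaces $X_{\delta,\epsilon}$, and implicit function theorem for the higher regularity --- matches the paper. But two points are genuinely off and would obstruct the argument as written.

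\textbf{Splitting comes before the transform, not after.} You propose to perform the Hodograph--Legendre transform and then split the Legendre function $v$. For part (i) this fails: when $a^{ij}\in W^{1,p}$ and $f\in L^p$, the solution $w$ is only $W^{2,p}_{loc}$ away from $\Gamma_w$, so $D^2w$ is not pointwise controlled and the invertibility argument for $T(x)=(x'',\p_n w,\p_{n+1}w)$ (which relies on pointwise asymptotics of $D^2w$ in non-tangential cones, cf.\ Proposition \ref{prop:improved_reg}) breaks down. The paper instead splits $w=u+\tilde u$ \emph{first} via (\ref{eq:split1})--(\ref{eq:split2}); the piece $u$ is $C^{2,1-\frac{n+1}{p}}_{loc}$ away from $\Gamma_w$ and shares the same free boundary, and the transform is applied to $u$. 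The function $\tilde u$ never sees the transform --- its role is purely to produce the decay estimate (\ref{eq:auxv}), which feeds into the right-hand side $g$ of the transformed equation (\ref{eq:w1p}). For parts (ii) and (iii) no splitting is needed at all: since $a^{ij}\in C^{k,\gamma}$ with $k\ge 1$, $w$ itself is $C^{2,\gamma}$ away from $\Gamma_w$ and one transforms $w$ directly.

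\textbf{The half-derivative mechanism is misidentified.} The extra $\tfrac12$ does not come from ``$|x_{n+1}|$ degeneration'' or a Euclidean/intrinsic H\"older conversion. In the transformed equation $\tilde F(v,y)=g(y)$, the inhomogeneity appears as $g(y)=-J(v)\cdot(\text{source})$ with $J(v)=\p_{nn}v\,\p_{n+1,n+1}v-(\p_{n,n+1}v)^2$. The asymptotics of $v$ give $|J(v)|\sim r^2$ with $r=(y_n^2+y_{n+1}^2)^{1/2}$, so $|g(y)|\lesssim r^2$ (respectively $r^{2-\frac{2(n+1)}{p}}$ in case (i), cf.\ (\ref{eq:w1p_g})). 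In the bootstrap of Proposition \ref{prop:error_gain2} the exponent ceiling is set by $1+2\delta\le 2$, i.e.\ $\delta\le \tfrac12$, whereas in the homogeneous problem the ceiling is $\delta<1$ (from $r^3$ decay of the lower-order terms). This cap $\delta=\tfrac12$ is exactly what produces the $\gamma+\tfrac12$ pattern after the implicit function theorem is run in $X_{1/2,\epsilon}$: one obtains $\p_{in}v\in C^{0,1/2}(P)$ from the space, and $C^{k-1,\gamma}$ dependence on the translation parameter from the IFT, which combine to $C^{k+[\gamma+1/2],\,\gamma+1/2-[\gamma+1/2]}$ for $\p_n v|_P$. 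Your proposal should track this $J(v)\sim r^2$ factor explicitly, since it is the sole source of the $\tfrac12$.
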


We note that this in particular includes the set-up with non-zero obstacles with as low as $W^{2,p}$, $p\in(2(n+1),\infty]$, regularity (c.f. Section \ref{sec:nonzero}). To to best of our knowledge Theorems \ref{thm:higher_reg} and \ref{thm:higher_reg_inhom} are the first results on higher regularity for the thin obstacle problem with variable coefficients and inhomogeneities.\\

In order to obtain a better understanding for the gain of the free boundary regularity with respect to the regularity of the inhomogeneity, it is instructive to compare Theorem \ref{thm:higher_reg_inhom}, i.e. the situation of the variable coefficient \emph{thin} obstacle problem, with that of the variable coefficient \emph{classical} obstacle problem (c.f. \cite{KN77}, \cite{F10}): In the classical obstacle problem (for the Laplace operator) there is a gain of \emph{one} order of differentiability with respect to the inhomogeneity, i.e. if $f\in C^{k,\alpha}$, then the (regular) free boundary $\Gamma_w$ is $C^{k+1,\alpha}$ regular.  This can be seen to be optimal by for instance considering an inhomogeneity which only depends on a single variable (the variable $x_n$ in whose direction the free boundary is a graph, i.e. $\Gamma_w=\{x\in B_1| \ x_n = g(x')\}$, with a choice of parametrization such that locally $|\nabla' g| \neq 0$), by using up to the boundary elliptic regularity estimates for all derivatives $\p_i w$ with respect to directions orthogonal to $e_n$ and by expressing the partial derivative $\p_{nn}w$ along the free boundary in terms of the parametrization $g$.\\
In contrast, in our situation of \emph{thin} obstacles, we gain \emph{three halves} of a derivative with respect to a general inhomogeneity. We conjecture that this is the optimal gain. As we are however dealing with a co-dimension two free boundary value problem, it seems harder to prove the optimality of this gain by similar means as for the classical obstacle problem. Yet, we remark that this gain of three-halves of a derivative also fits the scaling behavior (though not the regularity assumptions) of the inhomogeneities treated in \cite{DSS14}.\\

Let us explain the main ideas of deriving the regularity results of Theorems \ref{thm:higher_reg} and \ref{thm:higher_reg_inhom}:
In order to prove higher regularity properties of the free boundary, we rely on the partial Legendre-Hodograph transform (c.f. \cite{KN77}, \cite{KPS}) 
\begin{equation}\label{eq:L}
\begin{split}
T: B_1^+ &\rightarrow  Q_+:=\{ y \in \R^{n+1}| \ y_n \geq 0, y_{n+1} \leq 0\},\\
y:=T(x)&:= (x'', \p_{n}w, \p_{n+1} w), \ v(y):= w(x) - x_n y_{n} - x_{n+1}y_{n+1},
\end{split}
\end{equation}
which allows us to fix the (regular) free boundary:
\begin{align*}
T(\Gamma_w) \subset \{y\in \R^{n+1}| \ y_n=y_{n+1}=0\}.
\end{align*}
The asymptotic expansion of $w$ around $\Gamma_w$ implies that the transformation $T$ is asymptotically a square root mapping. Similar arguments as in \cite{KPS} yield that $T$ is invertible with inverse given by
\begin{align*}
T^{-1}(y)= (y'', - \p_n v(y), -\p_{n+1}v(y)).
\end{align*}
Thus, the free boundary $\Gamma_w$ can be parametrized in terms of the Legendre function $v$ as 
\begin{align*}
\Gamma_w \cap B_{1/2}' := \{x'\in B_{1/2}'| \ x_n = -\p_{n} v(x'',0,0)\}. 
\end{align*}
Therefore, it suffices to study the regularity properties of the Legendre function $v$, in order to derive higher regularity properties of the free boundary $\Gamma_w$.\\

Pursuing this strategy and investigating the properties of the Legendre function $v$, we encounter several difficulties:\\

\emph{Nonlinearity and subellipticity of the transformed equation, function spaces.} In analogy to the observations in \cite{KPS} the Hodograh-Legendre transformation $T$ transforms the \emph{uniformly} elliptic equation for $w$ into a fully nonlinear, \emph{degenerate} (sub)elliptic equation for $v$ (c.f. Proposition \ref{prop:bulk_eq}). Moreover, studying the asymptotics of $v$ at the degenerate set of the nonlinear operator (which is the image of the free boundary under the transformation $T$), the linearized operator (at $v$) is identified a perturbation of the (subelliptic) \emph{Baouendi-Grushin} operator (c.f. Section~\ref{sec:grushin}).\\
In this context a central new ingredient and major contribution of the article enters: Seeking to deduce regularity by an application of the implicit function theorem instead of direct and tedious elliptic estimates (c.f. Section \ref{sec:IFT1}), we have to capture the relation between the linearized and nonlinear operators in terms of our \emph{function spaces} (c.f. Section \ref{sec:holder}). This leads to the challenge of finding function spaces which on the one hand mimic the asymptotics of the Legendre function. This is a key requirement, since the perturbative interpretation of the fully nonlinear operator (as a nonlinear Baouendi-Grushin type operator) crucially relies on the asymptotics close the the straightened free boundary. On the other hand, the spaces have to allow for good regularity estimates for the linearized equation which is of Baouendi-Grushin type. In this context, we note that Calderon-Zygmund estimates and Schauder estimates have natural analogues for subelliptic operators like the Baouendi-Grushin operator. The mismatch between the vector space structure (relevant for derivatives) and the subelliptic
geometry allows for nontrivial choices in the definition of Sobolev spaces and higher order H\"older spaces.\\
In order to deal with both of the described conditions, we introduce \emph{generalized Hölder} spaces which are on the one hand adapted to the Baouendi-Grushin operator (for instance by relying on the intrinsic geometry induced by this operator) and on the other hand measure the distance to an approximating polynomial with the ``correct'' asymptotics close to the straightened free boundary (c.f. Section \ref{sec:holder} for the definition and properties of our generalized Hölder spaces and the Appendix, Section \ref{sec:append} for the proofs of these results). These function spaces are reminiscent of Campanato type spaces (c.f. \cite{Ca64}, \cite{Mo09}) and also of the polynomial approximations used by De Silva and Savin \cite{DSS14}. While similar constructions are possible for elliptic equations they seem to be not relevant there. For our problem however they are crucial.\\

\emph{Partial regularity and the implicit function theorem.} Seeking to avoid lengthy and tedious higher order derivative estimates for the Legendre function $v$, we deviate from the previous strategies of proving higher regularity that are present in the literature on the thin obstacle problem. Instead we reason by the \emph{implicit function theorem} along the lines of an argument introduced by Angenent (c.f. \cite{AN90}, \cite{AN90a}, \cite{KL12}). In this context we pre-compose our Legendre function, $v$, with a one-parameter family, $\Phi_a$, of diffeomorphisms leading to a one-parameter family of ``Legendre functions'', $v_a$ (Section~\ref{subsec:IFT0}). Here the diffeomeorphisms are chosen such that the parameter dependence on $a$ is analytic and so that the diffeomorphisms are the identity outside of a fixed compact set, whereas at the free boundary infinitesimally they generate a family of translations in the tangential directions. The functions $v_a$ satisfy a similar fully nonlinear, degenerate elliptic equation as $v$. Invoking the analytic implicit function theorem, we then establish that solutions of this equation are necessarily analytic in the parameter $a$, which, due to the uniqueness of solutions, implies that the family $v_a$ depends on $a$ analytically. As the family of diffeomorphisms, $\Phi_a$, infinitesimally generates translations in the tangential directions, this immediately entails the partial analyticity of the original Legendre function $v$ in the tangential variables.\\

\emph{Corner domain, function spaces.} 
Compared with the constant coefficient case in \cite{KPS}, the presence of variable coefficients leads to a completely new difficulty: By the definition of the Hodograph-Legendre transform \eqref{eq:L}, the transformation $T$ maps the upper half ball $B_1^+$ into the quarter space $Q_+$ (c.f. Section \ref{sec:Hodo}). In particular, the free boundary is mapped into the \emph{edge} of $Q_+$, which does not allow us to invoke standard interior regularity estimates there. \\
In contrast to the argument in \cite{KPS}, we cannot overcome this problem by reflecting the resulting solution so as to obtain a problem in which the free boundary is in the interior of the domain: Indeed, this would immediately lead to a loss of regularity of the coefficients $a^{ij}$ and hence would not allow us to prove higher regularity estimates up to the boundary.
Thus, instead, we have to work in the setting of an equation that is posed in the quarter space, where the singularity of the domain is centered at the straightened free boundary. This in particular necessitates regularity estimates in this (singular) domain which hold uniformly up to the boundary (c.f. Appendix, Section \ref{sec:quarter_Hoelder}). \\
In deducing these regularity estimates, we strongly rely on the form of our generalized Hölder spaces and on the interpretation of our fully nonlinear equation as a perturbation of the Baouendi-Grushin Laplacian in the quarter space which satisfies homogeneous Dirichlet data on $\{y_{n}=0\}$ and homogeneous Neumann data on $\{y_{n+1}=0\}$. As it is possible to classify and explicitly compute all the homogeneous solutions to this operator, an approximation argument in the spirit of \cite{Wa03} yields the desired regularity estimates in our generalized Hölder spaces (c.f. Appendix, Section \ref{sec:quarter_Hoelder}).\\

\emph{Low regularity metrics.} 
In the case of only $W^{1,p}$ regular metrics with $p\in(n+1,\infty]$,  and/or $L^p$ inhomogeneities, even \emph{away} from the free boundary a general solution $w$ is only $W^{2,p}$ regular. Thus, the previous arguments leading to the invertibility of the Hodograph-Legendre transform do not apply directly, as they rely on pointwise bounds for $D^2w$. 
To resolve this issue, we use the \emph{splitting technique} from \cite{KRSI} and introduce a mechanism that exchanges \emph{decay} and \emph{regularity}: More precisely, we split a general solution $w$ into two components $w=\tilde{u}+u$. Here the first component $\tilde{u}$ deals with the low regularity of the coefficients and the inhomogeneity:
\begin{align*}
a^{ij}\p_{ij} \tilde{u} - \dist(x,\Gamma_w)^{-2}\tilde{u} = f - (\p_i a^{ij})\p_j w \mbox{ in } B_1 \setminus \Lambda_w, \ \tilde{u}=0 \mbox{ on } \Lambda_w.
\end{align*}
Due to the inclusion of the strongly coercive term $-\dist(x,\Gamma_w)^{-2}\tilde{u}$ in the equation, the solution $\tilde{u}$ has a \emph{strong decay} properties (compared to $w$) towards $\Gamma_w$. We hence interpret it as a controlled error.\\
The second contribution $u$ is now of better \emph{regularity} away from the free boundary $\Gamma_w$, as it solves the non-divergence form elliptic equation
\begin{align*}
a^{ij}\p_{ij} u  = - \dist(x,\Gamma_w)^{-2}\tilde{u} \mbox{ in } B_1 \setminus \Lambda_w, \ \tilde{u}=0 \mbox{ on } \Lambda_w.
\end{align*}
Moreover, it captures the essential behavior of the original function $w$ (c.f. Lemma \ref{lem:lower1'} and Proposition \ref{prop:improved_reg1}). In particular, the free boundary $\Gamma_w$ is the same as the free boundary $\Gamma_u:=\partial_{B_1'}\{x\in B_1': u(x)>0\}$ of $u$. We then apply our previous arguments to $u$ and correspondingly obtain the regularity of the free boundary.

\subsection{Literature and related results}
The thin obstacle problem has been studied extensively beginning with the fundamental works of Caffarelli \cite{Ca79}, Uraltseva \cite{U85}, \cite{U87}, Kinderlehrer \cite{Ki81}, and the break through results of Athanasopoulos, Caffarelli \cite{AC06}, as well as Athanasopoulos, Caffarelli, Salsa \cite{ACS08} and Caffarelli, Silvestre, Salsa \cite{CSS}. While there is a quite good understanding of many aspects of the \emph{constant} coefficient problem, the \emph{variable} problem has only recently received a large amount of attention: Here, besides the early work of Uraltseva \cite{U87}, in particular the articles by Garofalo, Smit Vega Garcia \cite{GSVG14} and Garofalo, Petrosyan, Smit Vega Garcia \cite{GPSVG15} and the present authors \cite{KRS14}, \cite{KRSI} should be mentioned. While the methods differ -- the first two articles rely on a frequency function approach and an epiperimetric inequality, the second two articles build on a Carleman estimate as well as careful comparison arguments -- in both works the regularity of the regular free boundary is obtained for the variable coefficient problem under low regularity assumptions on the metric.\\
Hence, it is natural to ask whether the free boundary regularity can be improved if higher regularity assumptions are made on the coefficients and what the precise dependence on the regularity of the coefficients amounts to. In the constant coefficient setting, the higher regularity question has independently been addressed by De Silva, Savin \cite{DSS14}, who prove $C^{\infty}$ regularity of the free boundary by approximation arguments, and by Koch, Petrosyan, Shi \cite{KPS}, who prove analyticity of the free boundary. While the precise dependence on the coefficient regularity is well understood for the \emph{classical} obstacle with variable coefficients \cite{F10}, to the best of our knowledge this question has not yet been addressed in the framework of the \emph{variable} coefficient \emph{thin} obstacle problem.

\subsection{Outline of the article}
The remainder of the article is organized as follows: After briefly introducing the precise setting of our problem and fixing our notation in the following Section \ref{sec:prelim}, we recollect the asymptotic behavior of solutions of (\ref{eq:thin_obst}) in Section \ref{sec:asymp}. With this at hand, in Section \ref{sec:Hodo} we introduce the partial Hodograph-Legendre transformation in the case of $C^{k,\gamma}$ metrics with $k\geq 1$, obtain its invertibility (c.f. Proposition \ref{prop:invertibility}) and in Section \ref{sec:Legendre} derive the fully nonlinear, degenerate elliptic equation which is satisfied by the Legendre function $v$  (Proposition \ref{prop:bulk_eq}). Motivated by the linearization of this equation, we introduce our generalized Hölder spaces (c.f. Definitions \ref{defi:Hoelder}, \ref{defi:Hoelder1} and \ref{defi:spaces}) which are adapted to the geometry of the Baouendi-Grushin Laplacian (Section \ref{sec:holder}). Exploring the (self-improving) structure of the nonlinear equation for the Legendre function $v$ (c.f. Proposition \ref{prop:error_gain}), we deduce regularity properties of the Legendre function $v$ (c.f. Proposition \ref{prop:regasymp}) by an iterative bootstrap argument (c.f. Proposition \ref{prop:error_gain}) in Section \ref{sec:improve_reg}. In Section~\ref{sec:fb_reg} we build on this regularity result and proceed with the application of the implicit function theorem to prove the optimal regularity of the regular free boundary when the metrics $a^{ij}$ are $ C^{k,\gamma}$ Hölder regular for some $k\geq 1$ (c.f. Theorem \ref{prop:hoelder_reg_a}). Moreover, we also derive analyticity of the free boundary for analytic metrics (c.f. Theorem \ref{prop:analytic}). This provides the argument for the first two parts of Theorem \ref{thm:higher_reg}. Next, in Section~\ref{sec:W1p} we study the Hodograph-Legendre transformation for $W^{1,p}$ metrics with $p\in (n+1,\infty]$ and thus derive the optimal regularity result of Theorem \ref{thm:higher_reg} (i). Using similar ideas, we also discuss the necessary adaptations in proving regularity results in the presence of inhomogeneities and nonzero obstacles (c.f. Proposition \ref{prop:inhomo_2}).  Finally, in the Appendix, Section \ref{sec:append}, we prove a characterization of our function spaces introduced in Section~\ref{sec:holder} and show an a priori estimate for the Baouendi-Grushin Laplacian in these function spaces (c.f. Section \ref{sec:quarter_Hoelder}). We also discuss auxiliary regularity and mapping properties which we use in the derivation of the asymptotics and in the application of the implicit function theorem (c.f. Sections \ref{sec:XY}, \ref{sec:kernel}).

\section{Preliminaries}
\label{sec:prelim}
\subsection{Conventions and normalizations}
\label{sec:conventions}
In the sequel we introduce a number of conventions which will be used throughout this paper.
Any tensor field $a^{ij}:B_1^+\rightarrow \R^{(n+1)\times (n+1)}_{sym}$ in this paper is uniformly elliptic, symmetric and at least $W^{1,p}$ regular for some $p\in (n+1,\infty]$. Furthermore, we assume that
\begin{itemize}
\item[(A1)]$a^{ij}(0)=\delta^{ij}$,
\item[(A2)] (Uniform ellipticity) $\frac{1}{2}|\xi|^2\leq a^{ij}(x)\xi_i\xi_j \leq 2|\xi|^2$ for each $x\in B_1^+$ and $\xi\in \R^{n+1}$,
\item[(A3)] (Off-diagonal) $a^{i,n+1}(x',0)=0$ for $i\in \{1,\dots,n\}$.
\end{itemize}
Here (A1)-(A2) follow from an affine transformation. The off-diagonal assumption (A3) is a consequence of a change of coordinates (c.f. for instance Section 2.1 in \cite{KRS14} and Uraltseva \cite{U85}), which allows to reduce \eqref{eq:thin_obst} to 
\begin{equation}
\label{eq:varcoeff}
\begin{split}
\p_ia^{ij}\p_jw=0 &\text{ in } B_1^+,\\
w\geq 0,\quad \p_{n+1}w\leq 0, \ w(\p_{n+1}w)=0 &\text{ on } B'_1.
\end{split}
\end{equation}

Under the above assumptions (A1)-(A3), a solution $w$ to the thin obstacle problem is  $C^{1,\min\{1-\frac{n+1}{p},\frac{1}{2}\}}_{loc}$ regular and of $\dist(x,\Gamma_w)^{3/2}$ growth at the free boundary $\Gamma_w$, i.e. for any $x_0\in \Gamma_w\cap B^+_{1/2}$,
\begin{equation}\label{eq:interior_est}
\sup _{ B_r(x_0)}|\nabla w|\leq C(n,p, \|a^{ij}\|_{W^{1,p}})\|w\|_{L^2(B_1^+)}r^{\frac{1}{2}}, \quad r\in (0,1/2).
\end{equation} 
This regularity and growth behavior is optimal by the interior regularity and the growth behavior of the model solution $w_{3/2}(x)=\Ree(x_n+ix_{n+1})^{3/2}$. We refer to \cite{U85} for the $C^{1,\alpha}_{loc}$ regularity and to \cite{KRSI} for the optimal $C^{1,\min\{1-\frac{n+1}{p},\frac{1}{2}\}}_{loc}$ regularity as well as the growth result. In this paper we will always work with a solution $w\in C^{1,\min\{1-\frac{n+1}{p},\frac{1}{2}\}}_{loc}(B_1^+)$, for which \eqref{eq:interior_est} holds true.\\

In order to further simplify our set-up, we observe the following symmetry properties of our problem:\\
(Symmetry) Equation \eqref{eq:thin_obst} is invariant under scaling and multiplication. More precisely, if $w$ is a solution to \eqref{eq:thin_obst}, then for $x_0\in K\Subset B'_1$, for $c\geq 0$ and $\lambda>0$, the function 
$$x\mapsto  cw(x_0+\lambda x)$$ 
is a solution to \eqref{eq:thin_obst} (with coefficients $a^{ij}(x_0+\lambda\cdot)$) in $B_r^+$, $r\in (0, \lambda^{-1}(1-|x_0|)]$.\\

These symmetry properties are for instance crucial in carrying out rescalings around the (regular) free boundary:
Assuming that $x_0\in \Gamma_{3/2}(w)$ is a regular free boundary point and defining $w_{x_0,\lambda}(x):=w(x_0+\lambda x)/\lambda^{\frac{3}{2}}$, $\lambda\in (0,1/4)$, the asymptotic expansion around the regular free boundary (c.f. Proposition \propasym) yields that
\begin{align*}
w_{x_0,\lambda}(x)\rightarrow a(x_0)w_{x_0}(x) \mbox{ in } C^{1,\beta}_{loc}(\R^{n+1}_+)
\end{align*}
for each $\beta\in (0,1/2)$ as $\lambda\rightarrow 0_+$. Here $w_{x_0}$ is a global solution with flat free boundary and $a(x_0)>0$ is a constant. \\

In this paper we are interested in the higher regularity of $\Gamma_{3/2}(w)$ under an appropriate higher regularity assumption on the metric $a^{ij}$. All the results given below are \emph{local} estimates around \emph{regular} free boundary points. Using the scaling and multiplication symmetries of the equation, we may hence without loss of generality suppose the following normalization assumptions (A4)-(A7): 
\begin{itemize}
\item[(A4)] $0\in \Gamma_{3/2}(w)$,
\end{itemize}
that $w$ is sufficiently close to $w_{3/2}$ and that the metric is sufficiently flat in the following sense: For $\epsilon_0,c_\ast>0$ small
\begin{itemize}
\item[(A5)] $\|w-w_{3/2}\|_{C^1(B_1^+)}\leq \epsilon_0$,
\item[(A6)] $\|\nabla a^{ij}\|_{L^p(B_1^+)}\leq c_\ast$.
\end{itemize}
By \cite{KRSI}, if $\epsilon_0$, $c_\ast $ are sufficiently small depending on $n,p,\|w\|_{L^2(B_1^+)}$, then assumptions (A5)-(A6) imply that $\Gamma_w\cap B'_{1/2}\subset \Gamma_{3/2}(w)$ and that $\Gamma_w\cap B'_{1/2}$ is a $C^{1,\alpha}$ graph, i.e. after a rotation of coordinates $\Gamma_{w}\cap B_{1/2}' = \{x'=(x'',x_n,0)\cap B_{1/2}'| \ x_n = g(x'')\}$, for some $\alpha\in (0,1)$. Moreover, we have the following estimate for the (in-plane) outer unit normal $\nu_{x_0}$ of $\Lambda_w$ at $x_0$
\begin{equation}\label{eq:normal}
|\nu_{x_0}-\nu_{\tilde{x}_0}|\lesssim \max\{\epsilon_0,c_\ast\}|x_0-\tilde{x}_0|^\alpha, \text{ for any }x_0, \tilde{x}_0\in \Gamma_w\cap B'_{1/2},
\end{equation}
For notational simplicity, we also assume that
\begin{itemize}
\item[(A7)] $\nu_0=e_n$. 
\end{itemize}
From now on, we will always work under the assumptions (A1)-(A7).

\subsection{Notation}
\label{sec:notation}
Similarly as in \cite{KRSI} we use the following notation:\\
\emph{Geometry.}
\begin{itemize}
\item $\R^{n+1}_+:=\{(x'',x_n,x_{n+1})\in \R^{n+1} | x_{n+1}\geq 0\}$.
\item $B_r(x_0):=\{x\in \R^{n+1}| |x-x_0|<r\}$, where $|\cdot|$ is the norm induced by the Euclidean metric, $B_r^+(x_0):=B_r(x_0)\cap \R^{n+1}_+$, $B'_r(x_0):=B_r(x_0)\cap \{x_{n+1}=0\}$. If $x_0$ is the origin, we simply write $B_r$, $B_r^+$ and $B'_r$.
\item Let $w$ be a solution of \eqref{eq:thin_obst}, then $\Lambda_w:=\{(x',0)\in B'_1| w(x',0)=0\}$ is the \emph{contact set}, $\Omega_w:=B'_1\setminus \Lambda_w$ is the \emph{positivity set}, $\Gamma_w:=\p_{B'_1}\Lambda_w\cap B'_1$ is the \emph{free boundary}, $\Gamma_{3/2}(w):=\{x\in \Gamma_w| \kappa_x=\frac{3}{2}\}$ is the \emph{regular set} of the free boundary, where $\kappa_x$ is the vanishing order at $x$.  
\item For $x_0\in \Gamma_w$, we denote by $\mathcal{N}_{x_0}=\{x\in B^+_{1/4}(x_0)\big|\dist(x,\Gamma_w)\geq \frac{1}{2}|x-x_0|\}$ the non-tangential cone at $x_0$. 
\item $\mathcal{C}'_\eta(e_n):=\mathcal{C}_\eta(e_n)\cap \{e_{n+1}=0\}$ is a tangential cone (with axis $e_n$ and opening angle $\eta$).
\item $Q_+:=\{(y'',y_n,y_{n+1})\in \R^{n+1} | y_n\geq 0, y_{n+1}\leq 0\}$.
\item $\tilde{\mathcal{B}}_r(y_0):=\{y\in \R^{n+1}| d_G(y,y_0)<r\}$, where $d_G(\cdot, \cdot)$ is the Baouendi-Grushin metric (c.f. Definition~\ref{defi:Grushinvf}). $\tilde{\mathcal{B}}_r^+(y_0):=\tilde{\mathcal{B}}_r(y_0)\cap Q_+$.
\item In the $Q_+$ with the Baouendi-Grushin metric $d_G(\cdot,\cdot)$, given $y_0\in P:=\{y_n=y_{n+1}=0\}$ we denote by $\mathcal{N}_G(y_0):=\{x\in \tilde{\mathcal{B}}_{1/4}^+(y_0)\big|\dist_G(y,P)\geq \frac{1}{2}d_G(y,y_0)\}$ the Baouendi-Grushin non-tangential cone at $y_0$.
\item We use the Baouendi-Grushin vector fields $Y_i$, $i\in\{1,\dots,n+1\}$, (c.f. Definition \ref{defi:Grushinvf}) and the modified Baouendi-Grushin vector fields $\tilde{Y}_i$, $i\in\{1,\dots,2n\}$ (c.f. Definition \ref{defi:Hoelder1}).
\item For $k\in \N$, we denote by $\mathcal{P}_k^{hom}$ the space of homogeneous polynomials (w.r.t. the Grushin scaling) of order $k$ (c.f. Definition \ref{defi:poly}), and by $\mathcal{P}_k$ the vector space of homogeneous polynomials of order less than or equal to $k$.
\end{itemize}
\emph{Functions and function spaces.}
\begin{itemize}
\item $w_{3/2}(x):= c_n \Ree(x_n + i x_{n+1})^{3/2}$, where $c_n>0$ is a normalization constant ensuring that $\| w_{3/2} \|_{L^2(B_{1}^+)}=1$.
\item $w_{1/2}(x):= c_n \Ree(x_n + i x_{n+1})^{1/2}$ and $\bar{w}_{1/2}(x):= - c_n \Imm(x_n + i x_{n+1})^{1/2}$, where $c_n>0$ denotes the same normalization constant as above.
\item We denote the \emph{asymptotic profile} at a point $x_0 \in \Gamma_{3/2}(w)\cap B_{1}'$ by $\mathcal{W}_{x_0}$. It is given by
\begin{align*}
\mathcal{W}_{x_0}(x)=a(x_0)w_{3/2}\left(\frac{(x-x_0)\cdot \nu_{x_0}}{(\nu_{x_0}\cdot A(x_0)\nu_{x_0})^{1/2}}, \frac{x_{n+1}}{(a^{n+1,n+1}(x_0))^{1/2}}\right).
\end{align*}
\item For a solution $w$ to (\ref{eq:varcoeff}) and a point $x_0\in \Gamma_{w}$ we define a \emph{blow-up sequence} $w_{x_0,\lambda}(x):=\frac{w(x_0 + \lambda x)}{\lambda^{3/2}}$ by rescaling with $\lambda \in (0,1)$. The asymptotic expansion from Proposition \propasym implies that as $\lambda \rightarrow 0$ it converges to the \emph{blow-up profile} $\mathcal{W}_{x_0}(\cdot+ x_0)$. Here $\mathcal{W}_{x_0}$ denotes the asymptotic profile from above.
\item In the sequel, we use spaces adapted to the Baouendi-Grushin operator $\Delta_G$ and denote the corresponding Hölder spaces by $C^{k,\alpha}_{\ast}$ (c.f. Definitions \ref{defi:Hoelder}, \ref{defi:Hoelder1}). Moreover, relying on these, we construct our generalized Hölder spaces $X_{\alpha,\epsilon}, Y_{\alpha,\epsilon}$ which are appropriate for our corner domains (c.f. Definition \ref{defi:spaces}).
\item We use the notation $C_0(Q_+)$ to denote the space of all continuous functions vanishing at infinity.
\item Let $\R^{(n+1)\times (n+1)}_{sym}$ denote the space of symmetric matrices and let 
$$G: \R^{(n+1)\times (n+1)}_{sym} \times \R^{n+1} \times \R^{n+1} \rightarrow \R, \ (M,P,y)\mapsto G(M,P,y),$$ 
with $M=(m_{k\ell})_{k \ell} \in \R^{(n+1) \times (n+1)}_{sym}$, $P = (p_1,\dots,p_{n+1})\in \R^{n+1}$ and $y=(y_1,\dots,y_{n+1})\in \R^{n+1}$. We denote the partial derivative with respect to the different components by
\begin{align*}
\p_{m_{k\ell}}G(M,P,y)&:= \frac{\p G(M,P,y)}{\partial m_{k \ell}},\\
\p_{p_{k}}G(M,p,y) &:=\frac{\p G(M,P,y)}{\partial p_{k}},\\
\p_{y_k}G(M,p,y)&:=\frac{\p G(M,P,y)}{\partial y_{k}}.
\end{align*}
\item $\D_G$ stands for the Baouendi-Grushin operator
\begin{align*}
\D_{G} v:= (y_n^2 + y_{n+1}^2)\D'' v + \p_{nn}v + \p_{n+1,n+1}v,
\end{align*}
where $\D''$ denotes the Laplacian in the tangential variables, i.e. in the $y''$-variables of $y=(y'',y_n,y_{n+1})$.
\end{itemize}
The notation $A\lesssim B$ means that $A\leq CB$ with $C$ depending only on dimension $n$. 

\section{Hodograph-Legendre Transformation}
\label{sec:HLTrafo}

In this section we perform a partial Hodograph-Legendre transform of our problem (\ref{eq:varcoeff}). While fixing the free boundary, this comes at the price of transforming our uniformly elliptic equation in the upper half ball into a fully nonlinear, degenerate (sub)elliptic equation in the lower quarter ball (c.f. Sections \ref{sec:Hodo}, \ref{sec:Legendre}, Propositions \ref{prop:invertibility}, \ref{prop:bulk_eq}). In particular, in addition to the difficulties in \cite{KPS}, the domain in which our problem is posed now contains a corner. In spite of this additional problem, as in \cite{KPS} we identify the fully nonlinear equation as a perturbation of the Baouendi-Grushin operator with symmetry (i.e. with Dirichlet-Neumann data) by a careful analysis of the asymptotic behavior of the Legendre transform (c.f. Section \ref{sec:Legendre}, Example \ref{ex:linear} and Section \ref{sec:grushin}).

\subsection{Asymptotic behavior of the solution $w$}
\label{sec:asymp}

We begin by deriving and collecting asymptotic expansions for higher order derivatives of solutions to our equation (c.f. \cite{KRSI}). This will prove to be advantageous in the later sections (e.g. Sections \ref{sec:Hodo}, \ref{sec:Leg}).

\begin{prop}[\cite{KRSI}, Proposition 4.6]
Let $a^{ij}\in W^{1,p}(B_1^+, \R^{(n+1)\times (n+1)}_{sym})$ with $p\in (n+1,\infty]$ be a uniformly elliptic tensor.
Assume that $w:B_1^+ \rightarrow \R$ is a solution to the variable coefficient thin obstacle problem and that it satisfies the following conditions: 
\label{prop:asym2}
There exist positive constants $\epsilon_0$ and $c_{\ast}$ such that
\begin{itemize}
\item[(i)] $\|w-w_{3/2}\|_{C^1(B_1^+)}\leq \epsilon_0$,
\item[(ii)] $\|\nabla a^{ij}\|_{L^p(B_1^+)}\leq c_\ast$.
\end{itemize}
Then if $\epsilon_0$ and $c_\ast$ are sufficiently small depending on $n,p$, there exists some $\alpha\in (0,1-\frac{n+1}{p}]$  such that $\Gamma_w\cap B_{1/2}^+$ is a $C^{1,\alpha}$ graph. Moreover, at each free boundary point $x_0\in \Gamma_w\cap B^+_{1/4}$, there exists an asymptotic profile, $\mathcal{W}_{x_0}(x)$, 
\begin{align*}
\mathcal{W}_{x_0}(x)=a(x_0)w_{3/2}\left(\frac{(x-x_0)\cdot \nu_{x_0}}{(\nu_{x_0}\cdot A(x_0)\nu_{x_0})^{1/2}}, \frac{x_{n+1}}{(a^{n+1,n+1}(x_0))^{1/2}}\right),
\end{align*}
such that for any $x\in B_{1/4}^+(x_0)$
\begin{align*}
(i)\quad &\left|\p_i w(x)-\p_i \mathcal{W}_{x_0}(x)\right|\leq C_{n,p}\max\{\epsilon_0,c_{\ast}\}|x-x_0|^{\frac{1}{2}+\alpha}, \quad i\in\{1,\dots,n\},\\
(ii)\quad &\left| \p_{n+1} w(x)-\p_{n+1}\mathcal{W}_{x_0}(x)\right|\leq C_{n,p}\max\{\epsilon_0,c_{\ast}\} |x-x_0|^{\frac{1}{2}+\alpha},\\
(iii)\quad &\left|w(x)-\mathcal{W}_{x_0}(x)\right|\leq C_{n,p}\max\{\epsilon_0,c_{\ast}\} |x-x_0|^{\frac{3}{2}+\alpha}.
\end{align*}
Here $x_0\mapsto a(x_0)\in C^{0,\alpha}(\Gamma_w\cap B_{1/2}^+)$, $\nu_{x_0}$ is the (in-plane) outer unit normal of $\Lambda_w$ at $x_0$ and $A(x_0)=(a^{ij}(x_0))$. Furthermore, $w_{3/2}(x)= c_n \Ree(x_n+ i x_{n+1})^{3/2}$, where $c_n>0$ is a dimensional constant which is chosen such that $\| w_{3/2}\|_{L^{2}(B_1^+)}=1$.
\end{prop}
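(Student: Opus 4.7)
The plan is to combine the $C^{1,\alpha}$ regularity of the regular free boundary with a quantitative blow-up analysis driven by a Carleman estimate. Under the flatness hypotheses (i), (ii), Theorem \thmtwo already guarantees that $\Gamma_w \cap B_{1/2}^+$ is a $C^{1,\alpha}$ graph for some $\alpha \in (0, 1-(n+1)/p]$; this supplies a well-defined unit normal $\nu_{x_0}$ at every regular free boundary point. The candidate profile $\mathcal{W}_{x_0}$ is then essentially forced by compatibility with the frozen-coefficient equation at $x_0$: it has to be a $3/2$-homogeneous Signorini solution whose flat free boundary is orthogonal to $\nu_{x_0}$, rescaled in the $\nu_{x_0}$-direction by $(\nu_{x_0} \cdot A(x_0) \nu_{x_0})^{1/2}$ and in $x_{n+1}$ by $(a^{n+1,n+1}(x_0))^{1/2}$, which is consistent with the off-diagonal condition (A3).

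The qualitative existence of this profile, with a still-unspecified multiplicative constant $a(x_0)>0$, follows from the blow-up classification: by Corollary \corind the Almgren-type frequency at $x_0$ equals $3/2$, so the rescalings $w_{x_0,\lambda}(x):=\lambda^{-3/2}w(x_0+\lambda x)$ subconverge to a global $3/2$-homogeneous Signorini solution with flat free boundary (Proposition \propblowup), and uniqueness of the blow-up provided by Proposition \propalmost identifies the limit with $\mathcal{W}_{x_0}(x_0+\cdot)$ and pins down $a(x_0)$.

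The key quantitative step is to promote this qualitative convergence to a polynomial rate. I would study the difference $v_{x_0,\lambda}:=w_{x_0,\lambda}-\lambda^{-3/2}\mathcal{W}_{x_0}(x_0+\lambda \cdot)$, which solves a thin obstacle-type equation whose bulk inhomogeneity comes from the frozen-metric perturbation $a^{ij}(x_0+\lambda\cdot)-a^{ij}(x_0)$, of order $\lambda^{1-(n+1)/p}$ in $L^p$, and whose thin boundary deviates from the flat model by $O(\lambda^{\alpha})$ thanks to the $C^{1,\alpha}$ free boundary regularity. Applying the Carleman estimate of Lemma \lemnewCarl with a weight tuned to the Almgren frequency $3/2$, and using that by construction $v_{x_0,\lambda}$ is orthogonal to the $3/2$-homogeneous eigenmode, yields $\|v_{x_0,\lambda}\|_{L^2(B_1^+)}\lesssim \max\{\epsilon_0,c_\ast\}\lambda^{\alpha}$. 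Interior $C^{1,\beta}$ Schauder-type regularity up to the Signorini boundary then converts this $L^2$ decay into the pointwise estimates (i)--(iii) for $w$ and its gradient after rescaling back.

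Finally, the Hölder continuity $x_0\mapsto a(x_0)\in C^{0,\alpha}(\Gamma_w\cap B_{1/2}^+)$ comes out by applying the expansion at two nearby free boundary points $x_0,\tilde{x}_0$ at a common comparison point at distance $r\asymp |x_0-\tilde{x}_0|$ from both, and using the Hölder control $|\nu_{x_0}-\nu_{\tilde{x}_0}|\lesssim |x_0-\tilde{x}_0|^{\alpha}$ from \eqref{eq:normal}. The principal obstacle I anticipate is the Carleman step: reaching the exponent $\alpha=1-(n+1)/p$ under only $W^{1,p}$ coefficients demands a sharp match between the Carleman weight and the frequency $3/2$ (so that no powers of $\lambda$ are wasted on the metric perturbation), together with careful bookkeeping of the Signorini condition for $v_{x_0,\lambda}$, whose thin contact set itself differs from the flat model by $O(\lambda^{\alpha})$ and contributes a boundary error that must be absorbed into the same scale.
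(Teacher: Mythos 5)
This proposition is quoted from \cite{KRSI} (as Proposition 4.6 there) and is not re-proved in the present paper, so there is no local proof to compare against; any comparison would have to be with the argument in \cite{KRSI}. Your proposal nonetheless has the right skeleton: invoke the $C^{1,\alpha}$ regularity of the free boundary and blow-up uniqueness to identify the profile $\mathcal{W}_{x_0}$, derive a quantitative $L^2$ convergence rate for the blow-ups, upgrade to pointwise $C^1$ estimates by Schauder theory away from $\Gamma_w$, and finally read off the Hölder modulus of $a(\cdot)$ from the two-point comparison.

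The genuine gap is in your quantitative step. You write that ``by construction $v_{x_0,\lambda}$ is orthogonal to the $3/2$-homogeneous eigenmode'' and use this to extract the decay rate from the Carleman estimate of Lemma \lemnewCarl. But $\mathcal{W}_{x_0}$ is obtained as the blow-up \emph{limit}, and the uniqueness of the blow-up gives only $v_{x_0,\lambda}\to 0$ as $\lambda\to 0$; it does not give orthogonality at any fixed scale $\lambda$. In the Carleman framework, rate and uniqueness of the blow-up are established \emph{together}: one projects out the $3/2$-mode dyadic scale by dyadic scale, shows the scale-dependent projections converge with an explicit rate, and this rate is precisely what simultaneously identifies $\mathcal{W}_{x_0}$ and proves the $L^2$ decay. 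Presupposing orthogonality to an already-fixed $\mathcal{W}_{x_0}$ is therefore circular. In addition, as you note, $w-\mathcal{W}_{x_0}$ does not satisfy homogeneous Signorini data on the symmetric difference of the two contact sets; this region has transversal width $O(|x-x_0|^{1+\alpha})$ by the $C^{1,\alpha}$ free boundary regularity, and the two functions there grow like $\dist(\cdot,\Gamma)^{3/2}$, so the resulting boundary contribution to the Carleman inequality has to be shown to be absorbable against the bulk terms. You flag both issues as ``the principal obstacle'' but do not resolve them, so the crucial bound $\|v_{x_0,\lambda}\|_{L^2(B_1^+)}\lesssim\max\{\epsilon_0,c_\ast\}\lambda^{\alpha}$ is asserted rather than derived. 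Until those two points are filled in, the proposal is a plausible outline rather than a proof.
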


Assuming higher regularity of the metric allows us to use a scaling argument to deduce the  asymptotics for higher order derivatives in non-tangential cones.

\begin{prop}
\label{prop:improved_reg}
Let $a^{ij}\in C^{k,\gamma}(B_1^+, \R^{(n+1)\times (n+1)}_{sym})$ with $k\geq 1$ and $\gamma\in (0,1)$ be uniformly elliptic. Let $\alpha>0$ be the Hölder exponent from Proposition \ref{prop:asym2}. There exist $\epsilon_0$ and $c_\ast$ sufficiently small depending on $n,p$ such that if 
\begin{itemize}
\item[(i)] $ \|w-w_{3/2}\|_{C^1(B_1^+)}\leq \epsilon_0$,
\item[(ii)]$ [a^{ij}]_{\dot{C}^{k,\gamma}(B_1^+)}\leq c_\ast,$
\end{itemize}
then for each $x_0\in \Gamma_w\cap B^+_{1/4}$, an associated non-tangential cone $x\in \mathcal{N}_{x_0}:=\{x\in B^+_{1/4}(x_0)| \ \dist(x,\Gamma_w)\geq \frac{1}{2}|x-x_0|\}$ and for all multi-indeces $\beta$ with $|\beta|\leq k+1$ we have
\begin{align*}
\left|\p^\beta w(x)-\p^\beta \mathcal{W}_{x_0}(x)\right|& \leq C_{\beta,n,p} \max\{\epsilon_0,c_{\ast}\} |x-x_0|^{\frac{3}{2}+\alpha-|\beta|},\\
\left[\p^\beta w-\p^\beta \mathcal{W}_{x_0}\right]_{\dot{C}^{0,\gamma}(\mathcal{N}_{x_0}\cap (B_{3\lambda /4 }^+(x_0)\setminus B_{\lambda/2 }^+(x_0)))}& \leq C_{\beta,n,p}\max\{\epsilon_0,c_{\ast}\}\lambda^{\frac{3}{2}+\alpha-\gamma-|\beta|}.
\end{align*}
Here $\alpha$ is the same exponent as in Proposition~\ref{prop:asym2} and $\lambda \in (0,1)$.
\end{prop}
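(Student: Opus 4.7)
The plan is to derive both estimates by a rescaling argument that reduces them to interior and boundary Schauder estimates for a uniformly elliptic equation posed on a fixed-scale non-tangential annulus. Fix $x_0\in \Gamma_w\cap B^+_{1/4}$ and $\lambda\in(0,1/4)$, and introduce the rescaled difference
\[
u_\lambda(y) := \lambda^{-(3/2+\alpha)}\bigl[w(x_0+\lambda y)-\mathcal{W}_{x_0}(x_0+\lambda y)\bigr],\qquad A^{ij}_\lambda(y) := a^{ij}(x_0+\lambda y),
\]
on the dyadic annulus $\mathcal{A}:=B_1^+\setminus B_{1/4}^+$. Since $\mathcal{W}_{x_0}$ is $3/2$-homogeneous about $x_0$, the rescaled profile $\widetilde{\mathcal{W}}(y):=\lambda^{-3/2}\mathcal{W}_{x_0}(x_0+\lambda y)$ is independent of $\lambda$ and smooth away from $\{y_n\leq 0,y_{n+1}=0\}$. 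The $C^1$ asymptotics of Proposition~\ref{prop:asym2} directly yield $\|u_\lambda\|_{L^\infty(\widehat{\mathcal{N}})}+\|\nabla u_\lambda\|_{L^\infty(\widehat{\mathcal{N}})}\lesssim \max\{\epsilon_0,c_\ast\}$ on the rescaled non-tangential cone $\widehat{\mathcal{N}}\subset\mathcal{A}$, which stays at uniform distance from the rescaled free boundary.

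Next I would identify the PDE for $u_\lambda$. A direct computation using the explicit form of $\mathcal{W}_{x_0}$ and the off-diagonal assumption (A3) shows that $\widetilde{\mathcal{W}}$ solves the frozen coefficient equation $a^{ij}(x_0)\p_{ij}\widetilde{\mathcal{W}}=0$ in the upper half space and inherits the mixed Dirichlet--Neumann conditions $\widetilde{\mathcal{W}}=0$ on the rescaled contact set and $\p_{n+1}\widetilde{\mathcal{W}}=0$ on the rescaled non-coincidence set. Subtracting this from $\p_i(a^{ij}\p_jw)=0$ and rescaling, $u_\lambda$ satisfies the divergence-form, uniformly elliptic equation
\[
\p_{y_i}\bigl(A^{ij}_\lambda(y)\,\p_{y_j}u_\lambda(y)\bigr)=-\p_{y_i}G^i_\lambda(y),\qquad G^i_\lambda(y):=\lambda^{-\alpha}\bigl(A^{ij}_\lambda(y)-a^{ij}(x_0)\bigr)\p_{y_j}\widetilde{\mathcal{W}}(y),
\]
with the same mixed boundary data. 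Using $[a^{ij}]_{\dot C^{k,\gamma}}\leq c_\ast$ and a Taylor expansion at $x_0$ to bound the intermediate derivatives, one checks that $\|A^{ij}_\lambda(\cdot)-a^{ij}(x_0)\|_{C^{k,\gamma}(\mathcal{A})}\lesssim \lambda$, while $\widetilde{\mathcal{W}}$ is smooth on $\mathcal{A}\cap\widehat{\mathcal{N}}$. Combining these gives $\|G^i_\lambda\|_{C^{k,\gamma}(\mathcal{A}\cap\widehat{\mathcal{N}})}\lesssim \max\{\epsilon_0,c_\ast\}\,\lambda^{1-\alpha}$, which is uniformly bounded because $\alpha\in(0,1)$ by Proposition~\ref{prop:asym2}.

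On the scale-one region $\widehat{\mathcal{N}}$ the coefficients $A^{ij}_\lambda$ are uniformly elliptic with $C^{k,\gamma}$-norm bounded independently of $\lambda$. Applying interior divergence-form Schauder estimates together with boundary Schauder on the flat portions of $\{y_{n+1}=0\}\cap\widehat{\mathcal{N}}$ (where either Dirichlet or Neumann data is imposed, but not both, since $\widehat{\mathcal{N}}$ avoids the rescaled free boundary) yields
\[
\|u_\lambda\|_{C^{k+1,\gamma}(\widehat{\mathcal{N}}')}\leq C\Bigl(\|u_\lambda\|_{L^\infty(\widehat{\mathcal{N}})}+\textstyle\sum_i\|G^i_\lambda\|_{C^{k,\gamma}(\widehat{\mathcal{N}})}\Bigr)\lesssim \max\{\epsilon_0,c_\ast\}
\]
on a slightly smaller cone $\widehat{\mathcal{N}}'\subset\widehat{\mathcal{N}}$. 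Unscaling via $\p^\beta(w-\mathcal{W}_{x_0})(x)=\lambda^{3/2+\alpha-|\beta|}(\p^\beta u_\lambda)\bigl((x-x_0)/\lambda\bigr)$ converts the scale-one bound into the desired pointwise estimate at distance $|x-x_0|\sim\lambda$, and an additional factor $\lambda^{-\gamma}$ delivers the claimed $\dot C^{0,\gamma}$ seminorm estimate on the dyadic shell $B^+_{3\lambda/4}(x_0)\setminus B^+_{\lambda/2}(x_0)$.

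The main obstacle is to verify the uniform $C^{k,\gamma}$-bound on the divergence-form source $G^i_\lambda$: the vanishing of $A^{ij}_\lambda-a^{ij}(x_0)$ at $y=0$ together with the scaling of all intermediate derivatives of $a^{ij}$ must absorb the normalization factor $\lambda^{-\alpha}$, yielding $\lambda^{1-\alpha}$, and the whole bootstrap relies crucially on $\alpha<1$. A secondary, book-keeping difficulty is the mixed Dirichlet--Neumann boundary condition on $\{y_{n+1}=0\}$; however, because the non-tangential cone stays away from $\Gamma_{w,\lambda}$, the contact and non-coincidence portions are treated independently by the corresponding flat-boundary Schauder estimate.
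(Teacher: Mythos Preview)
Your proposal is correct and follows essentially the same route as the paper: rescale the difference $w-\mathcal{W}_{x_0}$ to a unit-scale non-tangential annulus, observe that the resulting equation is uniformly elliptic with a source of size $O(\lambda^{1-\alpha})$, apply (boundary) Schauder, and rescale back. The only difference is in bookkeeping: the paper first differentiates $w$ and $\mathcal{W}_{x_0}$ in a tangential direction $e_\ell$, derives a divergence-form equation for $\partial_\ell\tilde u$ with sources $\partial_i\tilde F^i-\tilde g_1-\tilde g_2$, applies a $C^{1,\gamma}$ estimate, and then recovers $\partial_{n+1,n+1}\tilde u$ from the equation, whereas your pure divergence-form formulation with the single source $G^i_\lambda=\lambda^{-\alpha}(A^{ij}_\lambda-a^{ij}(x_0))\partial_j\widetilde{\mathcal W}$ delivers the full $C^{k+1,\gamma}$ bound in one stroke.
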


\begin{rmk}
\label{rmk:improved_reg}
It is possible to extend the above asymptotics for $x$ in the full neighborhood $B_{1/4}^+(x_0)$ as
\begin{align*}
&\left|\p^\beta w(x)-\p^\beta \mathcal{W}_{x_0}(x)\right|\leq C_{\beta,n,p} \max\{\epsilon_0,c_{\ast}\} |x-x_0|^{\frac{1}{2}+\alpha}\dist(x,\Gamma_w)^{-|\beta|+1}.
\end{align*}
Here it is necessary to introduce the distance to the free boundary instead of measuring it with a negative power of $|x-x_0|$.
\end{rmk}

Before coming to the proof of Proposition \ref{prop:improved_reg}, we state an immediate corollary, which will be important in the derivation of the asymptotics of the Legendre function in Proposition \ref{prop:holder_v} in Section \ref{sec:Leg}.

\begin{cor}
\label{cor:improved_reg}
Assume that the conditions of Proposition \ref{prop:improved_reg} hold. Let $$w_{x_0,\lambda}(x):=\frac{w(x_0+\lambda x)}{\lambda^{3/2}},\quad \lambda>0.$$ Then, 
\begin{align*}
&[\p^\beta w_{x_0,\lambda}-\p^\beta \mathcal{W}_{x_0}(x_0+\cdot)]_{\dot{C}^{0,\gamma}(\mathcal{N}_{0}\cap (B_{3 /4}^+\setminus B_{1/2}^+))}\leq C_{n,p} \max\{\epsilon_0,c_{\ast}\}\lambda^{\alpha}.
\end{align*}
\end{cor}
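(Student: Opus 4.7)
The plan is to deduce the corollary as a direct rescaling of the non-tangential Hölder estimate in Proposition~\ref{prop:improved_reg}, exploiting the fact that $\mathcal{W}_{x_0}$ is $3/2$-homogeneous around $x_0$.

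First I would introduce the shifted functions $f(y):=w(x_0+y)$ and $F(y):=\mathcal{W}_{x_0}(x_0+y)$, so that $w_{x_0,\lambda}(y)=\lambda^{-3/2}f(\lambda y)$ and $\p^\beta[\mathcal W_{x_0}(x_0+\cdot)](y)=(\p^\beta F)(y)$. Then chain rule gives
\begin{align*}
\p^\beta w_{x_0,\lambda}(y)=\lambda^{|\beta|-3/2}(\p^\beta f)(\lambda y),
\end{align*}
and the $3/2$-homogeneity $F(\lambda y)=\lambda^{3/2}F(y)$ (which follows from the explicit form of $\mathcal{W}_{x_0}$) yields
\begin{align*}
(\p^\beta F)(y)=\lambda^{|\beta|-3/2}(\p^\beta F)(\lambda y).
\end{align*}
Subtracting gives the key identity
\begin{align*}
\p^\beta w_{x_0,\lambda}(y)-\p^\beta[\mathcal W_{x_0}(x_0+\cdot)](y)=\lambda^{|\beta|-3/2}\bigl[\p^\beta w-\p^\beta\mathcal W_{x_0}\bigr](x_0+\lambda y).
\end{align*}

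Next I would track the change of the $\dot C^{0,\gamma}$ seminorm. For two points $y,\tilde y\in\mathcal N_0\cap(B_{3/4}^+\setminus B_{1/2}^+)$, the corresponding points $x_0+\lambda y$ and $x_0+\lambda\tilde y$ lie in $\mathcal N_{x_0}\cap(B_{3\lambda/4}^+(x_0)\setminus B_{\lambda/2}^+(x_0))$, since the non-tangential cone condition $\dist(x,\Gamma_w)\ge\tfrac12|x-x_0|$ is invariant under the scaling $x=x_0+\lambda y$ (the free boundary rescales accordingly). Dividing the previous identity by $|y-\tilde y|^{\gamma}=\lambda^{-\gamma}|\lambda y-\lambda\tilde y|^{\gamma}$ therefore gives
\begin{align*}
\bigl[\p^\beta w_{x_0,\lambda}-\p^\beta\mathcal W_{x_0}(x_0+\cdot)\bigr]_{\dot C^{0,\gamma}(\mathcal N_0\cap(B_{3/4}^+\setminus B_{1/2}^+))}
=\lambda^{|\beta|-3/2+\gamma}\bigl[\p^\beta w-\p^\beta\mathcal W_{x_0}\bigr]_{\dot C^{0,\gamma}(\mathcal N_{x_0}\cap(B_{3\lambda/4}^+(x_0)\setminus B_{\lambda/2}^+(x_0)))}.
\end{align*}

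Finally I would invoke the second bound of Proposition~\ref{prop:improved_reg}, which controls the right-hand Hölder seminorm by $C\max\{\epsilon_0,c_\ast\}\lambda^{3/2+\alpha-\gamma-|\beta|}$. Multiplying by the prefactor $\lambda^{|\beta|-3/2+\gamma}$ all derivative and Hölder exponents cancel exactly and produce the claimed bound $C\max\{\epsilon_0,c_\ast\}\lambda^{\alpha}$. There is no substantive obstacle here: the argument is pure bookkeeping of scaling exponents, and the only non-routine point is verifying that the non-tangential cone and annular regions transform correctly under $x\mapsto x_0+\lambda x$, which follows from the translation- and scaling-equivariance of the distance to $\Gamma_w$.
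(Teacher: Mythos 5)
Your proof is correct and amounts to exactly the intended ``immediate corollary'' deduction: you use the $3/2$-homogeneity of $\mathcal{W}_{x_0}$ about $x_0$ to rewrite the difference $\p^\beta w_{x_0,\lambda}-\p^\beta\mathcal{W}_{x_0}(x_0+\cdot)$ as a rescaled version of $\p^\beta w-\p^\beta\mathcal{W}_{x_0}$, observe that the non-tangential cone and dyadic annulus transform covariantly under $x\mapsto x_0+\lambda x$ (since the free boundary rescales accordingly), and check that the exponents $\lambda^{|\beta|-3/2+\gamma}$ from the scaling of the seminorm and $\lambda^{3/2+\alpha-\gamma-|\beta|}$ from the second estimate of Proposition~\ref{prop:improved_reg} cancel to give $\lambda^\alpha$. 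This is the same bookkeeping the paper relies on implicitly, and your verification of the cone and annulus equivariance is the only point that genuinely needed spelling out.
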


\begin{proof}[Proof of Proposition \ref{prop:improved_reg}]
The proof of the proposition follows from elliptic estimates in Whitney cubes, which in turn are reduced to estimates on the scale one by scaling the problem.\\
We only prove the result for $k=1$ (i.e. in case of $|\beta|=2$) and restrict ourselves to the $L^{\infty}$ estimates. For $k> 1$ and for the second estimate the argument is similar. Moreover, we observe that the case $|\beta|=1$ is already covered in Proposition~\ref{prop:asym2}.  We begin by considering the tangential derivatives of $w$: Let $\tilde{v}:=\p_\ell w$ with $\ell\in \{1,\dots, n\}$. Then $\tilde{v}$ satisfies
\begin{align*}
\p_i(a^{ij}\p_j\tilde{v})=\p_iF^i, \quad F^i=-(\p_\ell a^{ij})\p_jw,
\end{align*}
with the boundary conditions 
\begin{align*}
\tilde{v}&=0 \text{ on } \Lambda_w, \quad \p_{n+1}\tilde{v}=0 \text{ on } B'_1\setminus \Lambda_w. 
\end{align*}
Also, the derivative of the profile functions, $\p_\ell \mathcal{W}_{x_0}$, satisfies
\begin{align*}
\p_i(a^{ij}\p_j(\p_\ell \mathcal{W}_{x_0}))=g_1+g_2,
\end{align*}
with 
\begin{align*}
 g_1=(\p_{\ell} a^{ij}) \p_j\p_i \mathcal{W}_{x_0}, \ g_2=(a^{ij}(x)-a^{ij}(x_0))\p_{ij}\p_\ell\mathcal{W}_{x_0}.
\end{align*}

Seeking to combine the information on the functions $\tilde{v}$ and $\p_{\ell} \mathcal{W}_{x_0}$, we define  
\begin{align*}
\tilde{u}(x):=\frac{w(x_0+\lambda x)-\mathcal{W}_{x_0}(x_0+\lambda x)}{\lambda^{\frac{3}{2}+\alpha}}, \ 0<\lambda<1/4.
\end{align*}
Due to the previous considerations,
$\p_\ell \tilde{u}$ satisfies the equation
\begin{align*}
\p_i(a^{ij}(x_0+\lambda \cdot)\p_j \p_\ell \tilde{u})=\p_i \tilde{F}^i - \tilde{g}_1 - \tilde{g}_2 \text{ in } B_1^{+}.
\end{align*}
Here 
\begin{equation}
\begin{split}
\label{eq:tilde}
\tilde{F}^i(x)=\lambda^{\frac{1}{2}-\alpha}F^i(x_0+\lambda x),\\
\tilde{g}_1(x)=\lambda^{\frac{3}{2}-\alpha} g_1(x_0+\lambda x),\\
\tilde{g}_2(x)=\lambda^{\frac{3}{2}-\alpha} g_2(x_0+\lambda x).
\end{split}
\end{equation}
Moreover, by the asymptotics of $w$ at $x_0$ which were given in (iii) of Proposition~\ref{prop:asym2}, we obtain the following $L^{\infty}$ bound in the non-tangential cone $\mathcal{N}_0=\{x\in B^+_{1/4}|\dist(x,\Gamma_{w_{x_0,\lambda}})\geq \frac{1}{2}|x|\}$ for all $\ell \in \{1,\dots,n+1\}$: 
\begin{align}\label{eq:max}
|\partial_{\ell} \tilde{u}|\lesssim C_{n,p}\max\{\epsilon_0,c_{\ast}\}.
\end{align} 
Noting that
\begin{align*}
|F^i(x)|&\lesssim c_{\ast} \dist(x,\Gamma_w)^{1/2},\\
|g_1(x)|&\lesssim  c_{\ast} \dist(x,\Gamma_w)^{-1/2},\\
|g_2(x)|&\lesssim c_{\ast} |x-x_0| \dist(x,\Gamma_w)^{-3/2},
\end{align*}
recalling that $\lambda \dist(x, \Gamma_{w_{x_0,\lambda}}) = \dist(\lambda (x-x_0), \Gamma_w)$ and using (\ref{eq:tilde}) yields
\begin{equation}
\label{eq:distresc}
\begin{split}
|\tilde{F}^i(x)|&\lesssim c_{\ast} \lambda^{1-\alpha}\dist(x,\Gamma_{w_{x_0,\lambda}})^{1/2},\\
|\tilde{g}_1(x)|&\lesssim c_{\ast} \lambda^{1-\alpha}\dist(x,\Gamma_{w_{x_0,\lambda}})^{-1/2},\\
|\tilde{g}_2(x)|&\lesssim c_{\ast} \lambda^{1-\alpha}\dist(x,\Gamma_{w_{x_0,\lambda}})^{-3/2}.
\end{split}
\end{equation}
By the definition of $\mathcal{N}_0$ the expressions involving the distance functions in (\ref{eq:distresc}) are uniformly (in $\lambda$) bounded in $B_1^+\setminus B_{1/4}^+$. Moreover, it is immediate to check that the semi-norms $[\tilde{F}^i]_{C^{0,\gamma}}$ are uniformly bounded. 
For $\ell\in \{1,\dots, n\}$, we apply the $C^{1,\gamma}$ estimate to $\p_\ell\tilde{u}$, which holds up to the boundary, in $\mathcal{N}_0\cap (B^+_1\setminus B^+_{1/4})$ (note that with $\epsilon_0, c_\ast$ sufficiently small, $\mathcal{N}_0\cap (B_1^+\setminus B^+_{1/4})$ does not intersect the free boundaries $\Gamma_w$ or $\Gamma_{\mathcal{W}_{x_0}}$, thus $\tilde{u}$ satisfies either Dirichlet or Neumann conditions):
\begin{align}
\label{eq:tangential}
\|\p_\ell\tilde{u}\|_{C^{1,\gamma}(\mathcal{N}_0\cap (B^+_{3/4}\setminus B^+_{1/2}))} \lesssim \|\partial_{\ell}\tilde{u}\|_{L^\infty(\mathcal{N}_0\cap (B^+_1\setminus B^+_{1/4}))}+  c_{\ast}\lambda^{1-\alpha}.
\end{align}
In order to obtain a full second derivatives estimate, we now combine (\ref{eq:tangential}) with the equation for $\partial_{n+1}\tilde{u}$ to also obtain
\begin{align*}
\|\p_{n+1,n+1}\tilde{u}\|_{C^{0,\gamma}(\mathcal{N}_0\cap (B^+_{3/4}\setminus B^+_{1/2}))} \lesssim \sum\limits_{\ell=1}^{n}\|\partial_{\ell}\tilde{u}\|_{L^\infty(\mathcal{N}_0\cap (B^+_1\setminus B^+_{1/4}))}+  c_{\ast} \lambda^{1-\alpha}.
\end{align*}
Rescaling back and using \eqref{eq:max} consequently leads to 
\begin{align*}
|\nabla \p_\ell w(x)-\nabla \p_\ell \mathcal{W}_{x_0}(x)|\lesssim \max\{\epsilon_0,c_{\ast}\}\lambda^{-\frac{1}{2}+\alpha} \text{ in } \mathcal{N}_{x_0}\cap (B^+_{3\lambda/4}(x_0)\setminus B^+_{\lambda/2}(x_0)),
\end{align*}
for $\ell\in \{1,\dots, n+1\}$.
Since this holds for any $\lambda\in (0,1/4)$, we conclude that
\begin{align*}
|\p^\beta w(x)-\p^\beta \mathcal{W}_{x_0}(x)|\lesssim \max\{\epsilon_0,c_{\ast}\} |x-x_0|^{-\frac{1}{2}+\alpha}, \quad x\in \mathcal{N}_{x_0},\ |\beta|=2.
\end{align*}
\end{proof}

\begin{rmk}
\label{rmk:normal}
In the next section we will strongly use the asymptotics of the first order derivatives $\p_\ell w$ with $\ell \in \{1,\dots,n+1\}$. Hence, for future reference we state them explicitly:
\begin{align*}
\p_e\mathcal{W}_{x_0}(x)&=b_e(x_0)w_{1/2}\left(\frac{(x-x_0)\cdot \nu_{x_0}}{(\nu_{x_0}\cdot A(x_0)\nu_{x_0})^{1/2}}, \frac{x_{n+1}}{(a^{n+1,n+1}(x_0))^{1/2}}\right),\\
\p_{n+1}\mathcal{W}_{x_0}(x)&=b_{n+1}(x_0)\bar w_{1/2}\left(\frac{(x-x_0)\cdot \nu_{x_0}}{(\nu_{x_0}\cdot A(x_0)\nu_{x_0})^{1/2}}, \frac{x_{n+1}}{(a^{n+1,n+1}(x_0))^{1/2}}\right),
\end{align*}
where
\begin{align*}
w_{1/2}(x)&= c_n \Ree(x_n + i x_{n+1})^{1/2},\quad \bar{w}_{1/2}(x)= - c_n \Imm(x_n + i x_{n+1})^{1/2},\\
b_e(x_0)&=\frac{3(e\cdot \nu_{x_0}) a(x_0)}{2(\nu_{x_0}\cdot A(x_0)\nu_{x_0})^{1/2}},\quad b_{n+1}(x_0)=\frac{3 a(x_0)}{2(a^{n+1,n+1}(x_0))^{1/2}},
\end{align*}
and $c_n>0$ is the same normalization constant as in Proposition \ref{prop:asym2}.
\end{rmk}

\begin{rmk}\label{rmk:convention}
For simplicity we can, and in the sequel will, further assume that $0\in \Gamma_w$ and that
\begin{align*}
\nu_0=e_n, \quad b_n(0)=b_{n+1}(0)=1\ (\text{which corresponds to } a(0)=2/3).
\end{align*}
Thus, 
$$\nabla \mathcal{W}_0(x)=(0,w_{1/2}(x),\bar w_{1/2}(x)).$$
Moreover, under the assumptions of Proposition \ref{prop:asym2} we can also bound the $\dot{C}^{0,\alpha}$ semi-norm of $b_n, b_{n+1}$ by $\max\{\epsilon_0,c_{\ast}\}$.
\end{rmk}

Last but not least, we recall a sign condition on $\p_n w$ and $\p_{n+1}w$, which plays an important role in the determination of the image of the Hodograph-Legendre transform in (\ref{eq:mapT}) in Section \ref{sec:Hodo}. An extension of this to the set-up of $W^{1,p}$, $p\in (n+1,\infty]$, metrics is recalled in Section \ref{sec:ext}. As explained in \cite{KRSI} this requires an additional splitting step. 

\begin{lem}[Positivity, \cite{KRSI}, Lemma 4.12.]
\label{lem:lower1}
Let $a^{ij}:B_1^+ \rightarrow \R^{(n+1)\times (n+1)}_{sym}$ be a tensor field that satisfies the conditions from Section \ref{sec:conventions} and in addition is $C^{1,\gamma}$ regular for some $\gamma \in (0,1)$. Let $w:B_1^+ \rightarrow \R$ be a solution of the thin obstacle problem with metric $a^{ij}$ and assume that it satisfies the normalizations from Section \ref{sec:conventions}. Then there exist positive constants $\eta= \eta(n)$ and $c=c(n)$ such that
\begin{align}
\label{eq:lower1}
\p_ew(x)\geq c\dist(x,\Lambda_w)\dist(x,\Gamma_w)^{-\frac{1}{2}}, \quad  x\in B_{\frac{1}{2}}^+
\end{align}
for $e\in \mathcal{C}'_\eta(e_n):=\mathcal{C}_\eta(e_n)\cap \{e_{n+1}=0\}$,  which is a tangential cone (with axis $e_n$ and opening angle $\eta$). Similarly, 
\begin{align*}
\p_{n+1}w(x)\leq -c \dist(x,\Omega_w)\dist(x,\Gamma_w)^{-\frac{1}{2}}, \quad x\in B_{\frac{1}{2}}^+.
\end{align*}
\end{lem}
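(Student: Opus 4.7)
My plan is to reduce the lower bound to the asymptotic expansion of Proposition~\ref{prop:asym2} at a closest free-boundary point and, in the (nontrivial) regime where $x$ lies very close to $\Lambda_w$, to close the gap by a Hopf / boundary-Harnack argument. For $x \in B^+_{1/2}$, fix $x_0 \in \Gamma_w$ with $|x - x_0| = \dist(x,\Gamma_w) =: d_\Gamma$, and write $d_\Lambda := \dist(x, \Lambda_w)$. Proposition~\ref{prop:asym2}(i) and Remark~\ref{rmk:normal} give
\begin{align*}
\p_e w(x) = b_e(x_0)\,w_{1/2}(X(x)) + R(x), \qquad |R(x)| \leq C \max\{\epsilon_0, c_\ast\}\,d_\Gamma^{1/2+\alpha},
\end{align*}
where $X(x)$ denotes the rescaled coordinates from Remark~\ref{rmk:normal}. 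A direct polar-coordinate computation yields $w_{1/2}(X) \gtrsim \dist(X,\Lambda_0)|X|^{-1/2}$ with $\Lambda_0 := \{X_1 \leq 0, X_2 = 0\}$, and the $C^{1,\alpha}$-flatness of $\Gamma_w$ (combined with \eqref{eq:normal}) identifies the rescaled distances with $d_\Lambda$ and $d_\Gamma$ up to universal constants. Choosing the opening angle $\eta$ of the tangential cone small enough forces $e \cdot \nu_{x_0} \geq c > 0$ for every $x_0 \in \Gamma_w \cap B'_{1/2}$, so uniform ellipticity together with the normalization $a(0) = 2/3$ gives $b_e(x_0)\,w_{1/2}(X(x)) \geq c_0\,d_\Lambda d_\Gamma^{-1/2}$.

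Next I split into two regimes. In the \emph{bulk regime} $d_\Lambda \geq \mu\, d_\Gamma^{1+\alpha}$ (with $\mu$ a small fixed constant) the remainder obeys $|R(x)| \leq C \mu^{-1}\max\{\epsilon_0, c_\ast\}\,d_\Lambda d_\Gamma^{-1/2}$, so for $\epsilon_0, c_\ast$ sufficiently small the main term dominates and \eqref{eq:lower1} follows immediately. In the complementary \emph{thin regime} $d_\Lambda < \mu\,d_\Gamma^{1+\alpha}$ the expansion no longer controls the error directly, and I instead intend to run a Hopf argument: let $\bar{x} \in \Lambda_w$ be a point closest to $x$ and set $x_\ast := \bar{x} + \lambda e_{n+1}$ with $\lambda := \mu\,d_\Gamma^{1+\alpha}$. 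By construction $x_\ast$ falls into the bulk regime, so the previous step yields $\p_e w(x_\ast) \geq c_1\,\lambda d_\Gamma^{-1/2}$. Since $e$ is tangent to the thin space and $w \equiv 0$ on $\Lambda_w$, the function $\p_e w$ vanishes on $\Lambda_w$ and solves a uniformly elliptic equation in the bulk, so a quantitative Hopf / boundary-Harnack comparison on a half-ball of radius $\simeq \lambda$ tangent to $\Lambda_w$ at $\bar{x}$ delivers $\p_e w(x) \gtrsim (d_\Lambda/\lambda)\,\p_e w(x_\ast) \gtrsim d_\Lambda d_\Gamma^{-1/2}$, as desired.

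The estimate for $\p_{n+1} w$ follows the same two-regime scheme, with $\mathcal{W}_{x_0}$ replaced by its $\p_{n+1}$-derivative and $\Lambda_w$ by $\Omega_w$: here $\p_{n+1}\mathcal{W}_{x_0} = b_{n+1}(x_0)\,\bar{w}_{1/2}(X) \leq 0$ in the upper half-space, and assumption~(A3) together with the Signorini condition supplies the homogeneous Neumann datum $\p_{n+1} w = 0$ on $\Omega_w$ that feeds into the Hopf step. The main obstacle is the thin-regime comparison: it must be quantitative, uniform in the distance of $\bar{x}$ from $\Gamma_w$, and robust against the merely $W^{1,p}$ coefficients. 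Because the half-ball tangent to $\Lambda_w$ at $\bar{x}$ stays in the uniformly elliptic bulk region and the contact set is locally $C^{1,\alpha}$, this reduces to a standard interior-ball Hopf lemma once the asymptotic lower bound at the reference point $x_\ast$ is in hand from the bulk regime.
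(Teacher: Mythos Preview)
First, note that the paper does not prove this lemma; it is quoted from \cite{KRSI}, Lemma~4.12, so there is no in-paper argument to compare against. Your bulk-regime argument ($d_\Lambda \geq \mu\, d_\Gamma^{1+\alpha}$) is correct: the first-order expansion of Proposition~\ref{prop:asym2}, the explicit bound $w_{1/2}(X)\gtrsim \dist(X,\Lambda_0)\,|X|^{-1/2}$, and the $C^{1,\alpha}$-flatness of $\Gamma_w$ combine as you describe.

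The thin-regime step, however, has a genuine gap. A quantitative Hopf lemma or boundary-Harnack comparison on the half-ball requires $\partial_e w \geq 0$ on (at least) its curved boundary; you only know $\partial_e w(x_\ast) > 0$ at one interior point and $\partial_e w = 0$ on $\Lambda_w$. Points on the curved boundary that are close to $\Lambda_w$ again lie in the thin regime, and nothing so far rules out $\partial_e w < 0$ there, so the comparison $\partial_e w(x) \gtrsim (d_\Lambda/\lambda)\,\partial_e w(x_\ast)$ is circular. (Moreover, $\partial_e w$ satisfies an \emph{inhomogeneous} equation, with right-hand side $-\partial_i(\partial_e a^{ij})\partial_j w$, though this is secondary.) The clean fix uses the $C^{1,\gamma}$ hypothesis through the \emph{second-order} asymptotics of Proposition~\ref{prop:improved_reg}: in the thin regime one checks $\bar x = (x',0) \in \inte(\Lambda_w)$, $d_\Lambda = x_{n+1}$, and that the vertical segment from $\bar x$ to $x$ lies in the non-tangential cone $\mathcal{N}_{x_0}$. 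Writing $\partial_e w(x) = \int_0^{x_{n+1}} \partial_{n+1}\partial_e w(x',t)\,dt$ and inserting the bound $|\partial_{n+1}\partial_e w - \partial_{n+1}\partial_e \mathcal{W}_{x_0}| \leq C\max\{\epsilon_0,c_\ast\}\,d_\Gamma^{-1/2+\alpha}$ from that proposition (with $|\beta|=2$) gives
\[
\partial_e w(x) = b_e(x_0)\,w_{1/2}(X(x)) + O\!\bigl(\max\{\epsilon_0,c_\ast\}\,d_\Lambda\, d_\Gamma^{-1/2+\alpha}\bigr),
\]
since $\partial_e\mathcal{W}_{x_0}(\bar x)=0$; the error is now dominated by the main term for every $d_\Lambda>0$, closing the thin regime without any Hopf step. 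The same device handles $\partial_{n+1}w$. Your concern about ``merely $W^{1,p}$ coefficients'' is also misplaced: the present lemma assumes $a^{ij}\in C^{1,\gamma}$, and the $W^{1,p}$ analogue is the separate Lemma~\ref{lem:lower1'}, treated via the splitting $w=u+\tilde u$.
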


\subsection{Hodograph-Legendre transformation}
\label{sec:Hodo}
In this section we perform a partial Hodograph-Legendre transformation to show the higher regularity of the free boundary with zero obstacle. In the sequel, we assume that the metric satisfies $a^{ij}\in C^{1,\gamma}(B_1^+, \R^{(n+1)\times (n+1)}_{sym})$ with $\gamma\in (0,1)$.\\

We define the partial Hodograph-Legendre transformation associated with $w$ as
\begin{align}
\label{eq:def_Legendre}
T=T^w:B_1^+\rightarrow \R^{n+1}, \quad y=T(x)=(x'', \partial_{n} w(x), \partial_{n+1}w(x)).
\end{align}
The regularity of $w$ immediately implies that $T\in C^{0,1/2}(B_1^+)$. Moreover, 
\begin{equation}
\label{eq:mapT}
\begin{split}
T(B_1^+\setminus B'_1)&\subset \{y_n>0, y_{n+1}<0\},\\
T(\Lambda_w)&\subset\{y_n=0, y_{n+1}\leq 0\}, \ T(B'_1\setminus \Lambda_w)\subset \{y_n>0, y_{n+1}=0\},\\
T(\Gamma_w)&\subset\{y_n=y_{n+1}=0\}.
\end{split}
\end{equation}
Here the first inclusion is a consequence of Lemma \ref{lem:lower1}.
Using the leading order asymptotic expansions from Section \ref{sec:asymp}, we prove the invertibility of the transformation: 

\begin{prop}[Invertibility of $T$]\label{prop:invertibility}
Suppose that the assumptions of Proposition~\ref{prop:asym2} hold. Then, if $[\nabla a^{ij}]_{\dot{C}^{0,\gamma}(B_1^+)}\leq c_{\ast}$ and if $\epsilon_0$ and $c_\ast $ are sufficiently small, the map $T$ is a homeomorphism from $B_{1/2}^+$ to $T(B_{1/2}^+) \subset \{y\in \R^{n+1}| \ y_n\geq 0, y_{n+1}\leq 0\}$. Moreover, away from $\Gamma_w$, $T$ is a $C^1$ diffeomorphism. 
\end{prop}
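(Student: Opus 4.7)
The overall strategy is to combine three ingredients: a local $C^1$ diffeomorphism property away from $\Gamma_w$ via the inverse function theorem, a local injectivity statement near $\Gamma_w$ via comparison with a square-root model map, and a global injectivity conclusion exploiting the fact that $T$ preserves the $x''$ coordinate. Continuity of $T$ itself is immediate from $w \in C^{1,1/2}_{\mathrm{loc}}$, and the image containment $T(B_{1/2}^+) \subset Q_+$ together with the boundary correspondence of $\Gamma_w$, $\Lambda_w$, $\Omega_w$ is already recorded in \eqref{eq:mapT}.

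For the diffeomorphism claim away from $\Gamma_w$, I would examine the Jacobian of $T$. Since $T$ acts as the identity on $x''$, the matrix $DT$ has the block form $\begin{pmatrix} I_{n-1} & 0 \\ \ast & H \end{pmatrix}$ with $H = (\partial_{ij}w)_{i,j\in\{n,n+1\}}$, so $\det DT = \det H$. For the model profile $\mathcal{W}_{x_0}$, harmonicity of $w_{3/2}$ in $(x_n,x_{n+1})$ forces $\partial_{nn}\mathcal{W}_{x_0} = -\partial_{n+1,n+1}\mathcal{W}_{x_0}$, whence $\det H[\mathcal{W}_{x_0}] = -|\nabla_{(x_n,x_{n+1})} \partial_n \mathcal{W}_{x_0}|^2$ is strictly negative and of magnitude $\sim |x-x_0|^{-1}$ on $\mathcal{N}_{x_0}$. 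Proposition~\ref{prop:improved_reg} with $|\beta|=2$ controls the correction $|\partial_{ij}w - \partial_{ij}\mathcal{W}_{x_0}| \lesssim \max\{\epsilon_0,c_\ast\}|x-x_0|^{-1/2+\alpha}$, which is a subleading perturbation of $H[\mathcal{W}_{x_0}]$. Hence $\det DT < 0$ on $B_{1/2}^+\setminus\Gamma_w$ once $\epsilon_0, c_\ast$ are small (every such point lies in $\mathcal{N}_{x_0}$ for its nearest $x_0 \in \Gamma_w$), and the inverse function theorem gives the $C^1$ diffeomorphism claim locally.

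For local injectivity near $\Gamma_w$, I would use the leading order expansion of $\nabla w$ from Proposition~\ref{prop:asym2} together with Remark~\ref{rmk:normal}: near $x_0 \in \Gamma_w$, the map $T$ is an $O(|x-x_0|^{1/2+\alpha})$ correction of the model $T_{x_0}(x) := (x'', \partial_n\mathcal{W}_{x_0}(x), \partial_{n+1}\mathcal{W}_{x_0}(x))$. After the linear rescalings dictated by $\nu_{x_0}$ and $A(x_0)$, $T_{x_0}$ reduces on each $(x_n,x_{n+1})$-slice to a constant multiple of the classical square-root map $z \mapsto (\Ree\sqrt{z}, -\Imm\sqrt{z})$, which is a homeomorphism from the upper half-plane onto $Q_+$ and is bi-Lipschitz on every annulus around the origin. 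A dyadic rescaling argument applied to $y \mapsto \lambda^{-1/2}(T(x_0+\lambda y) - T(x_0))$, combined with the higher-order smallness of $T - T_{x_0}$ from Proposition~\ref{prop:improved_reg}, transfers the injectivity of the model to $T$ on a uniform neighborhood $B_{r_0}(x_0) \cap B_{1/2}^+$.

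The main obstacle is the \emph{global} gluing of the near- and far-field regimes, since the Jacobian of $T$ degenerates precisely at $\Gamma_w$ where the inverse function theorem fails. The resolution exploits the fact that $T$ preserves $x''$: global injectivity reduces to injectivity on each two-dimensional slice $\{x''= c\}$, which contains at most one free boundary point. Combining local injectivity near that point (from the model), local injectivity away from it (from $\det DT \neq 0$), and the boundary correspondence pinned down by the sign conditions of Lemma~\ref{lem:lower1}, a degree-theoretic or monotonicity argument in the plane closes the slice-wise injectivity. Continuity of $T^{-1}$ then follows because a continuous bijection from a compact set to a Hausdorff space is a homeomorphism, and restricting to the open set $B_{1/2}^+$ gives the claimed homeomorphism property, while the $C^1$ diffeomorphism claim away from $\Gamma_w$ is already in hand from the first step.
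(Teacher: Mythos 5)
The overall skeleton of your argument — Jacobian non-degeneracy away from $\Gamma_w$, leading-order model comparison near $\Gamma_w$, and slice-wise reduction using the fact that $T$ fixes $x''$ — matches the paper's strategy, and your Jacobian computation (Step 2 of the paper's proof) is essentially identical. However, there are two points where your proposal differs substantively and where I think there is a genuine gap.

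First, you compare $T$ directly with the square-root model $T_{x_0}$, while the paper composes with the complex squaring map $\psi(z) = (z'', z_n^2 - z_{n+1}^2, -2z_n z_{n+1})$ and works with $T_1 := \psi \circ T$. This is not merely cosmetic: for the model solution, $T_1$ is \emph{linear}, so the perturbation bound becomes $T_1(x) - T_1(x_0) = (x - x_0) + O(|x - x_0|^{1+\alpha})$, i.e.\ comparison with the identity. That gives the bi-Lipschitz estimate \eqref{eq:cont_boundary}, $|T_1(x)-T_1(x_0)| \sim |x-x_0|$, uniformly on $B_{1/2}^+(x_0)$, which immediately forces any two points with the same image to lie in the same dyadic annulus about $x_0$ (cf.\ \eqref{eq:quo}), at which scale the Jacobian of $T_1$ is a small perturbation of the identity. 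Your route can in principle be made to work — the square-root map is bi-Hölder, and the reverse triangle inequality in each annulus gives $|T_{x_0}(x)-T_{x_0}(\tilde x)| \gtrsim |x-\tilde x|/\max(|x-x_0|,|\tilde x -x_0|)^{1/2}$ — but you never state the quantitative conclusion that forces $x$ and $\tilde x$ to the same scale, which is the crux. Without it, "local injectivity from the model" does not quantify the region on which it holds, and the passage from local to slice-wise injectivity is left open.

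Second, and more seriously, the "degree-theoretic or monotonicity argument in the plane" that you invoke to glue the near- and far-field regimes is a hand-wave at precisely the step where the proof is hardest. Local injectivity on two overlapping regions does not, in general, yield global injectivity, and the degeneracy of $DT$ on $\Gamma_w$ rules out an off-the-shelf local-homeomorphism-to-covering-map argument. The paper does not need a separate gluing step: estimate \eqref{eq:cont_boundary} holds on \emph{all} of $B_{1/2}^+(x_0)$, so the scale-reduction and Jacobian argument already cover both regimes at once. If you want to retain your formulation, you must replace the appeal to degree theory with the explicit same-scale estimate and then apply the Jacobian bound in that annulus — which is exactly the paper's Step 1a after the $\psi$-composition. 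As written, the gluing is not a proof.

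A smaller point: the compact-to-Hausdorff argument for continuity of $T^{-1}$ is a nice shortcut compared to the paper's reflection/invariance-of-domain argument, but $B_{1/2}^+$ is not compact; you should first establish injectivity on a slightly larger half-ball, apply the compactness argument on $\overline{B_{1/2}^+}$, and then restrict. Also note that the first-order asymptotics controlling $T - T_{x_0}$ come from Proposition~\ref{prop:asym2} (and Remark~\ref{rmk:normal}); Proposition~\ref{prop:improved_reg} is what you need for the second-order terms in $\det DT$.
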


The proof of this result essentially relies on the facts that for each fixed $x''$, the transformation $T$ is asymptotically a square root mapping and the free boundary $\Gamma_w$ is sufficiently flat (i.e. it is a $C^{1,\alpha}$ graph with slow varying normals c.f. \eqref{eq:normal}). Hence, the main idea is to show the injectivity of $T$ on dyadic annuli around the free boundary. At these points the map $T$ is differentiable, which allows us to exploit the non-degeneracy of the derivative of $T$. To achieve this reduction to dyadic annuli we exploit the asymptotic structure of the functions $w$ (c.f. Propositions \ref{prop:asym2} and \ref{prop:improved_reg}).

\begin{proof}
\emph{Step 1: Homeomorphism.}\\
We begin with the injectivity of $T$ in $B_{1/2}^+$.
Since $T$ fixes the first $n-1$ variables, it is enough to show that for each $x_0\in \Gamma_w\cap B_{1/2}'$, $T$ is injective on the set $H_{x_0}:=\{(x''_0,x_n,x_{n+1})\}\cap B_{1/2}^+$. Moreover, as $\Gamma_w$ is given as a graph of a $C^{1,\alpha}$ function $g$, it suffices to prove that $T(x)\neq T(\tilde{x})$ for any two points $x,\tilde{x}\in H_{x_0}$ such that $x,\tilde{x}\notin \Gamma_w$.
In order to obtain this, we first prove that the mapping $T_1:=\psi\circ T$ is injective (and a homeomorphism) on $B_{1/2}^+$. Here $\psi:\R^{n+1}\rightarrow \R^{n+1}$ with $\psi(z)=(z'',z_n^2-z_{n+1}^2, -2z_nz_{n+1})$. Note that $T_1(x)=(x'',(\p_n w(x))^2 -(\p_{n+1} w(x))^2, - 2 \p_n w(x) \p_{n+1}w(x))$. We rely on the asymptotic expansion of $\nabla w$. In a second step, we then return to the mapping properties of $T$.\\

\emph{Step 1a: $T_1$ is a homeomorphism.}
We begin with the injectivity of $T_1$.
By Proposition~\ref{prop:improved_reg}, for $x\in B^+_{1/2}$
\begin{align*}
\p_n w(x)&= w_{1/2}(x)+\max\{\epsilon_0,c_{\ast}\}O(|x|^{\frac{1}{2}+\alpha}),\\
\p_{n+1}w(x)&=\bar{w}_{1/2}(x)+\max\{\epsilon_0,c_{\ast}\}O(|x|^{\frac{1}{2}+\alpha}).
\end{align*}
Hence, a direct computation gives that 
\begin{align*}
&T_1(x)=x+E_0(x), \\
&\text{where } E_0:B_{1/2}^+\rightarrow \R^{n+1},\ |E_0(x)|=\max\{\epsilon_0,c_{\ast}\}O(|x|^{1+\alpha}).
\end{align*}
In general, by the explicit asymptotic expansions of $\p_nw$ and $\p_{n+1}w$ around $x_0\in \Gamma_w\cap B^+_{1/2}$ (c.f. Proposition~\ref{prop:improved_reg}) and by using the fact that $b_n,b_{n+1}\in C^{0,\alpha}(\Gamma_w\cap B^+_{1/2})$, $\nu(x_0)=\nu_{x_0}\in C^{0,\alpha}(\Gamma_w\cap B^+_{1/2})$ and $A(x_0)\in C^{0,\alpha}(\Gamma_w\cap B^+_{1/2})$, we have 
\begin{equation}
\label{eq:identity}
\begin{split}
T_1(x)- T_1(x_0)&=(x-x_0) + E_{x_0}(x), \quad x\in B_{1/2}^+(x_0)\\
\text{where } |E_{x_0}(x)|&\lesssim \max\{\epsilon_0,c_{\ast}\} \left(|x-x_0||x_0|^\alpha+ |x-x_0|^{1+\alpha}\right).
\end{split}
\end{equation}
Here we recall that as indicated in Remark \ref{rmk:convention} we may assume that the Hölder constants of $b_n(x_0), b_{n+1}(x_0)$ are controlled by $\max\{\epsilon_0,c_{\ast}\}$.
From the identity (\ref{eq:identity}) we note that if $\epsilon_0, c_*$ are sufficiently small and if $x_0\in \Gamma_w\cap B_{1/2}^+$, then for $x\in B_{1/2}^+(x_0)$
\begin{align}
\label{eq:cont_boundary}
(1-\frac{1}{4})|T_1(x) - T_1(x_0)|  \leq |x-x_0|\leq (1+\frac{1}{4})|T_1(x) - T_1(x_0)|.
\end{align}
Thus, if there are $x, \tilde{x}\in H_{x_0}$ with $x, \tilde{x}\notin \Gamma_w$ such that $T_1(x)=T_1(\tilde{x})$, then necessarily 
\begin{align}\label{eq:quo}
\frac{1}{2}\leq \frac{|x-x_0|}{|\tilde{x}-x_0|}\leq 2.
\end{align}
Without loss of generality, we assume that $|x-x_0|\leq |\tilde{x}-x_0|$ and define $r:=|x-x_0|$. Then \eqref{eq:quo} implies that $x, \tilde{x}\in A_{r,2r}^+(x_0)\cap H_{x_0}$, where $A_{r,2r}^+(x_0)$  is the closed cylinder centered at $x_0$:
\begin{align*}
A^+_{r,2r}(x_0)&:=\{(x'',x_n,x_{n+1})\in B_1^+| \ |x''-x''_0|\leq r,\\
&\qquad r\leq \sqrt{(x_n-(x_0)_n)^2+(x_{n+1}-(x_0)_{n+1})^2}\leq 2r\}.
\end{align*}
Since $\Gamma_w$ is $C^{1,\alpha}$ with $|\nu_{x_0}-\nu_{\tilde{x}_0}|\lesssim \max\{\epsilon_0,c_\ast\}|x_0-\tilde{x}_0|^\alpha$,  for any $x_0, \tilde{x}_0\in \Gamma_w\cap B'_{1/2}$ and $\nu_0=e_n$, we have that $\Gamma_w \cap A^+_{r,2r}(x_0)=\emptyset$ for a sufficiently small (but independent of $r$) choice of the constants $\epsilon_0, c_\ast$. Thus, $T_1$ is a $C^1$ mapping in $A^+_{r,2r}(x_0)\cap B_{1/2}^+$ (because $w$ is $C^{2,\gamma}$ away from $\Gamma_w$). We compute $DT_1$ in $A_{r,2r}^+(x_0)\cap B_{1/2}^+$. By using the asymptotics of $D w$ and $D^2w$ around $x_0$ (c.f. Propositions \ref{prop:asym2}, \ref{prop:improved_reg}), we obtain
\begin{align*}
|DT_1(x)-I|\lesssim \max\{\epsilon_0,c_{\ast}\}  \left(|x_0|^{\alpha}+r^{2\alpha}\right), \quad x\in A_{r,2r}^+(x_0) \cap \mathcal{N}_{x_0}\cap B_{1/2}^+,
\end{align*}
where $I$ is the identity map.
Therefore, for sufficiently small, universal constants $\epsilon_0, c_\ast$, the map $T_1$ is injective in $A_{r,2r}^+(x_0)\cap \mathcal{N}_{x_0}\cap B_{1/2}^+$. This implies that $T_1(x)\neq T_1(\tilde{x})$.\\

\emph{Step 1b: $T_1:B_{1/2}^+ \rightarrow T_1(B_{1/2}^+)$ is a homeomorphism.}
By the continuity of $T_1$ and by the invariance of domain theorem, we infer that, as a mapping from $\inte(B_{1/2}^+)$ to $T_1(\inte(B_{1/2}^+))$, $T_1$ is a homeomorphism. We claim that this is also true for $T_1$ as a map from $B_{1/2}^+$ to $T_1(B_{1/2}^+)$. Indeed, due to our previous considerations in Step 1a, $T_1$ is injective (and hence invertible) on the whole of $B_{1/2}^+$ (as a map onto its image). Hence, it suffices to prove the continuity of the inverse. Here we distinguish three cases: Let $y\in T_1(B_{1/2}^+)$ and first assume that $y\in T_1(\Gamma_w\cap B_{1/2}')$. Then, (\ref{eq:cont_boundary}) immediately implies the continuity of $T_1^{-1}$ at $y$. Secondly, we assume that $y\in T_1(B_{1/2}'\setminus \Lambda_w)$. Let $x=T^{-1}_1(y)\in B_{1/2}'\setminus \Lambda_w$. Then we carry out an even reflection of $w$ about $x_{n+1}$ (and a corresponding partly even, partly odd reflection for $a^{ij}$) as described in Remark 3.8 in \cite{KRSI}. The resulting reflected function $\tilde{w}$ is still $C^{1,1/2}$ regular in a (sufficiently small) neighborhood $B_{\rho}(x) \subset B_{1/2}^+ \setminus \Lambda_w$ of $x$. Moreover, the $y_{n+1}$ -component of $T_1^{\tilde{w}}$ changes sign on passing from $x_{n+1}>0$ to $x_{n+1}<0$. Thus, the mapping $T_1^{\tilde{w}}$ is still injective as a mapping from $B_{\rho}(x)$ to $T^{\tilde{w}}_1(B_{\rho}(x))$. Since it is also continuous, the invariance of domain theorem implies that it is a homeomorphism from $B_{\rho}(x)$ to $T^{\tilde{w}}_1(B_{\rho}(x))$, which is an open subset in $\R^{n+1}$ containing $y$. In particular, this implies that our original mapping, $(T^{w}_1)^{-1}$, is continuous at $y\in T_1(B_{1/2}'\setminus \Lambda_w)$. Last but not least, for a point $y\in T_1(B_{1/2}' \cap \inte(\Lambda_w))$, we argue similarly. However, instead of using an even reflection, we carry out an odd reflection of $w$ about $x_{n+1}$. Again, we note that the associated map  $T_1^{\tilde{w}}$ changes sign on passing from $x_{n+1}>0$ to $x_{n+1}<0$. Thus, arguing as in the second case, we again obtain the continuity of $(T_1^{w})^{-1}$ at $y$. Combining the results of the three cases therefore yields that $T_1$ is a homeomorphism as a map from $B_{1/2}^+$ to $T_1(B_{1/2}^+)$, which is relatively open in $\{y_n\geq 0, y_{n+1}\leq 0\}$.
\\

\emph{Step 1c: $T$ is a homeomorphism.}
By definition of $T_1$, we have that $T_1 = \psi\circ T$, where $\psi(x):=(x'', x_n^2 - x_{n+1}^2, -2 x_{n}x_{n+1})$. We show that the injectivity of $T$ follows immediately from the injectivity of $T_1$. As $T(B_1^+)\subset \{y\in \R^{n+1}| y_{n}\geq 0, y_{n+1}\leq 0\}$ and as $\psi$ is injective on this quadrant, we obtain $T(U)=\psi^{-1}\circ T_1(U)$ for any $U\subset B_1^+$. Since $T_1$ is open and $\psi$ is continuous, this implies that $T$ is open. Combining this with the continuity of $T$, we obtain that $T$ is homeomorphism from $B_{1/2}^+$ to $T(B_{1/2}^+)\subset \{y\in \R^{n+1}| y_n\geq 0, y_{n+1}\leq 0\}$.\\

\emph{Step 2: Differentiability.}
Recalling the regularity of the metric, $a^{ij}\in C^{1,\gamma}$ for $\gamma>0$, we observe that $w\in C^{2,\gamma}_{loc}(B_1^+\setminus \Gamma_w)$. Thus, $T$ is $C^1$ away from $\Gamma_w$. In order to show that $T$ is a $C^1$ diffeomorphism away from $\Gamma_w$, it suffices to compute its Jacobian. For $x\in B_{1/2}^+\setminus \Gamma_w$, let $x_0=(x'',g(x''),0)$ be the projection onto $\Gamma_w$. Then, by the asymptotics for $D^2w$ (Proposition~\ref{prop:improved_reg} applied in the non-tangential cone $\mathcal{N}_{x_0}$), we have
\begin{align*}
\det(DT(x))&=\p_{nn}w\p_{n+1,n+1}w-(\p_{n,n+1}w)^2 \\
&=\p_{nn}\mathcal{W}_{x_0}\p_{n+1,n+1}\mathcal{W}_{x_0}-(\p_{n,n+1}\mathcal{W}_{x_0})^2 + \max\{\epsilon_0,c_{\ast}\}  O(|x-x_0|^{-1+\alpha}).
\end{align*}
A direct computation gives
\begin{align*}
&\p_{nn}\mathcal{W}_{x_0}\p_{n+1,n+1}\mathcal{W}_{x_0}-(\p_{n,n+1}\mathcal{W}_{x_0})^2\\
&=-\frac{9}{16}a(x_0)^2\frac{e_n\cdot \nu_{x_0}}{(\nu_{x_0}\cdot A(x_0)\nu_{x_0})(a^{n+1,n+1}(x_0))} \frac{1}{\tilde{r}},
\end{align*}
where 
\begin{align*}
\tilde{r}=\left(\frac{((x-x_0)\cdot \nu_{x_0})^2}{(\nu_{x_0}\cdot A(x_0)\nu_{x_0})}+\frac{x_{n+1}^2}{a^{n+1,n+1}(x_0)}\right)^{1/2}.
\end{align*}
The $C^{0,\alpha}$ regularity of $\nu_{x_0}$ and the ellipticity of $A(x)=(a^{ij}(x))$ entail that
\begin{align*}
c|x-x_0|\leq \tilde{r}\leq C |x-x_0|, \text{ for some absolute constants }0<c<C<\infty.
\end{align*}
Thus, 
\begin{equation}
\label{eq:jacobi}
\det(DT(x))=-c|x-x_0|^{-1}+\max\{\epsilon_0,c_{\ast}\}  O(|x-x_0|^{-1+\alpha})<0.
\end{equation}
Therefore, after potentially choosing the constant $1/2=1/2(n,p,\alpha)>0$ even smaller, the implicit function theorem implies that $T$ and $T^{-1}$ are locally $C^{1}$. Due to the global invertibility, which we have proved above, the statement follows. 
\end{proof}

\subsection{Legendre function and nonlinear PDE}
\label{sec:Legendre}

In this section we compute a partial Legendre transform of a solution $w$ of our problem (\ref{eq:varcoeff}). In this context it becomes convenient to view the equation (\ref{eq:varcoeff}) in non-divergence form and to regard the equation in the interior as a special case of the problem
\begin{align*}
a^{ij}\p_{ij} u = f(Du,u,y),
\end{align*}
for a suitable function $f$. In our case $f(Du,u,y)=-(\p_{i} a^{ij})\p_j u$.
Starting from this non-divergence form, we compute the equation which the Legendre function satisfies (c.f. Proposition \ref{prop:bulk_eq}). By considering the explicit example of the Legendre transform of $\mathcal{W}_{x_0}$ for $x_0=0$, we motivate that the fully nonlinear equation in the bulk is related to the Baouendi-Grushin operator (c.f. Example \ref{ex:linear}). \\

From now on we will work in the image domain $T(B_{1/2}^+)$, where $T$ is the partial Hodograph transformation defined in \eqref{eq:def_Legendre}. For simplicity, we set $U:=T(B_{1/2}^+)$ and denote the straightened free boundary by $P:=T(\Gamma_w\cap B_{1/2}')$. We recall that the Hodograph transform was seen to be invertible in $U$ (c.f. Proposition \ref{prop:invertibility}).
For $y\in U$, we define the partial Legendre transform of $w$ by the identity 
\begin{equation}\label{eq:legendre}
v(y)=w(x)-x_{n}y_n-x_{n+1}y_{n+1}, \quad x=T^{-1}(y).
\end{equation}
A direct computation shows that 
\begin{equation}\label{eq:dual}
\partial_{y_i}v=\partial_{x_i}w, \ i=1,\ldots, n-1,\quad \partial_{y_{n}}v=-x_n,\quad \partial_{y_{n+1}}v=-x_{n+1}.
\end{equation}
As a consequence of \eqref{eq:dual}, the free boundary $\Gamma_w\cap B_{1/2}'$ is parametrized by 
\begin{align}
\label{eq:boundaryLH}
x_n=-\p_{y_n}v(y'',0,0).
\end{align}

As in \cite{KPS} the advantage of passing to the Legendre-Hodograph transform consists of fixing (the image of the) free boundary, i.e. by mapping it to the co-dimension two hyperplane $y=(y'',0,0)$. However, this comes at the expense of a more complicated, fully nonlinear, degenerate (sub)elliptic equation for $v$. We summarize this in the following:

\begin{prop}[Bulk equation]
\label{prop:bulk_eq}
Suppose that $a^{ij}\in C^{1,\gamma}(B_1^+, \R^{(n+1)\times (n+1)}_{sym})$ is uniformly elliptic.
Let $w:B_1^+ \rightarrow \R$ be a solution of the variable coefficient thin obstacle problem and let $v:U \rightarrow \R$ be its partial Legendre-Hodograph transform. Then $v\in C^{1}(U)$ and it satisfies the following fully nonlinear equation
\begin{equation}
\label{eq:nonlineq1}
\begin{split}
F(D^2v, D v, v,y)&=-\sum_{i,j=1}^{n-1}\tilde{a}^{ij}\det\begin{pmatrix}
\p_{ij}v& \p_{in}v & \p_{i,n+1}v\\
\p_{jn}v& \p_{nn}v & \p_{n,n+1}v\\
\p_{j,n+1}v & \p_{n,n+1}v &\p_{n+1,n+1}v
\end{pmatrix}\\
&+2\sum_{i=1}^{n-1}\tilde{a}^{i,n}\det\begin{pmatrix}
\p_{in}v & \p_{i,n+1}v\\
\p_{n,n+1}v & \p_{n+1,n+1}v
\end{pmatrix}\\
& \quad+2 \sum_{i=1}^{n-1}\tilde{a}^{i,n+1}\det\begin{pmatrix}
\p_{i,n+1}v & \p_{in}v\\
\p_{n,n+1}v & \p_{nn}v
\end{pmatrix}\\
&+\tilde{a}^{nn}\p_{n+1,n+1}v+\tilde{a}^{n+1,n+1}\p_{nn}v-2\tilde{a}^{n,n+1}\p_{n,n+1}v\\
&-\det\begin{pmatrix}
\p_{nn}v &\p_{n,n+1}v\\
\p_{n,n+1}v &\p_{n+1,n+1}v
\end{pmatrix}\left(\sum_{j=1}^{n-1}\tilde{b}^j\p_j v+\tilde{b}^ny_n+\tilde{b}^{n+1}y_{n+1}\right)=0,
\end{split}
\end{equation}  
where
\begin{align*}
\tilde{a}^{ij}(y)&:=a^{ij}(x)\big|_{x=(y'',-\p_nv(y),-\p_{n+1}v(y))},\\
\tilde{b}^j(y)&:=\sum_{i=1}^{n+1}(\p_{x_i}a^{ij})(x)\big|_{x=(y'',-\p_nv(y),-\p_{n+1}v(y))}.
\end{align*}
Moreover, the following mixed Dirichlet-Neumann boundary conditions hold:
\begin{align*}
v=0\text{ on } U\cap \{y_n=0\}; \quad \p_{n+1}v=0 \text{ on } U\cap \{y_{n+1}=0\}.
\end{align*}
In particular, $\Gamma_w\cap B_{1/4}$ is parametrized by $x_n=-\p_{y_n}v(y'',0,0)$.
\end{prop}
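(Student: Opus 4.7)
The plan is to express the non-divergence form of (\ref{eq:varcoeff}) entirely in terms of $v$ by means of the Hodograph--Legendre change of variables, working first away from the straightened free boundary $P$ and then extending the boundary conditions by continuity.

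\textbf{Step 1: First order identities and $C^1$ regularity.}
Starting from $v(y) = w(x) - x_n y_n - x_{n+1} y_{n+1}$ with $x = T^{-1}(y)$, a direct differentiation (using $y_i = x_i$ for $i \leq n-1$ and $y_j = \partial_j w$ for $j\in\{n,n+1\}$) produces the dual identities $\partial_{y_i} v = \partial_i w$ ($i\leq n-1$), $\partial_{y_n} v = -x_n$, $\partial_{y_{n+1}} v = -x_{n+1}$. Since Proposition \ref{prop:invertibility} tells us $T^{-1}$ is continuous and $w\in C^{1,1/2}$, the right-hand sides depend continuously on $y$; thus $v\in C^1(U)$. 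Restricting $\partial_{y_n} v = -x_n$ to $y_n = y_{n+1} = 0$ gives the stated parametrization of $\Gamma_w$.

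\textbf{Step 2: Hessian relation via block Legendre duality.}
Away from $\Gamma_w$, the hypothesis $a^{ij}\in C^{1,\gamma}$ gives $w\in C^{2,\gamma}_{loc}(B_1^+\setminus\Gamma_w)$, and by Proposition \ref{prop:invertibility} $T$ is a $C^1$ diffeomorphism there. Writing
\[
D^2 w = \begin{pmatrix} A & B \\ B^T & D \end{pmatrix}, \qquad D^2 v = \begin{pmatrix} A' & B' \\ (B')^T & D' \end{pmatrix},
\]
where $D,D'$ are the $2\times 2$ blocks in the $(n,n+1)$ directions, differentiating the first-order identities once more and using that $(y_n,y_{n+1})\mapsto (x_n,x_{n+1})$ inverts $(x_n,x_{n+1})\mapsto (\partial_n w,\partial_{n+1}w)$ for fixed $y''=x''$ yields the block-wise Legendre formulas
\[
D = -(D')^{-1}, \qquad B = -B'(D')^{-1}, \qquad A = A' - B'(D')^{-1}(B')^T.
\]
In particular, writing $\delta := \det(D') = \partial_{nn}v\,\partial_{n+1,n+1}v - (\partial_{n,n+1}v)^2$, cofactor expansion identifies $\delta\, \partial_{ij}w$ with the $3\times 3$ determinant appearing in the first sum of (\ref{eq:nonlineq1}) for $i,j\leq n-1$, identifies $\delta\,\partial_{i\ell}w$ with the corresponding $2\times 2$ determinants for $i\leq n-1$ and $\ell\in\{n,n+1\}$, and reproduces the pure $(n,n+1)$ block $\delta\,\partial_{nn}w=-\partial_{n+1,n+1}v$, etc.

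\textbf{Step 3: Assembly and boundary conditions.}
Rewrite (\ref{eq:varcoeff}) as the non-divergence form equation
\[
a^{ij}(x)\,\partial_{ij}w + (\partial_i a^{ij})(x)\,\partial_j w = 0
\]
in $B_1^+\setminus\Lambda_w$. Substituting the Hessian relations from Step 2, multiplying through by $-\delta$, and using $\partial_j w = \partial_{y_j} v$ for $j\leq n-1$ together with $\partial_n w = y_n$ and $\partial_{n+1}w = y_{n+1}$ in the first-order term gives exactly $F(D^2 v,Dv,v,y) = 0$ in $U\setminus P$. The mixed Dirichlet--Neumann data follow from the partition (\ref{eq:mapT}): on $T(\Lambda_w) \subset \{y_n=0\}$ we have $w=0$ and $x_{n+1}=0$, so $v = w - x_n y_n - x_{n+1} y_{n+1} = 0$; on $T(\Omega_w)\subset \{y_{n+1}=0\}$ we have $x_{n+1}=0$, hence $\partial_{n+1}v = -x_{n+1}=0$. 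Since $v\in C^1(U)$, these relations extend by continuity up to the edge $P$, and the fully nonlinear equation extends to $U$ in the appropriate (viscosity/a.e.)\ sense.

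\textbf{Main obstacle.}
The delicate part is the block-wise Legendre computation of Step 2: keeping signs straight in $D = -(D')^{-1}$ and in the Schur complement for $A$, and then recognizing the resulting rational expressions in entries of $D^2 v$ as the explicit $2\times 2$ and $3\times 3$ determinants appearing in (\ref{eq:nonlineq1}). Everything else is bookkeeping, but this linear-algebraic identification (essentially a clearing of the denominator $\delta$ and a cofactor re-expansion) is where the structure of the fully nonlinear operator, and in particular its degeneracy on $\{\delta = 0\}$, becomes visible.
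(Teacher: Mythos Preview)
Your proof is correct and follows essentially the same route as the paper's: both arguments derive the first-order duality relations, pass to the block Hessian identities (the paper writes these as $H(w)=H(v)^{-1}$, $A(v)=-H(w)^{-1}A(w)$ together with the Schur-complement formula for the tangential $(n-1)\times(n-1)$ block, which is exactly your $D=-(D')^{-1}$, $B=-B'(D')^{-1}$, $A=A'-B'(D')^{-1}(B')^T$), then substitute into the non-divergence form of the equation and clear the denominator by multiplying with $-J(v)=-\delta$. Your identification of the determinantal structure via cofactor expansion and your derivation of the mixed Dirichlet--Neumann data from the partition \eqref{eq:mapT} likewise match the paper.
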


\begin{rmk}
For convenience of notation, in the sequel we will also use the notation $F(v,y):=F(D^2v,Dv,v,y)$. We emphasize that the coefficients $\tilde{a}^{ij}(y)$ depend on $v$ nonlinearly.
\end{rmk}

The proof of Proposition \ref{prop:bulk_eq} follows by computing the corresponding changes of coordinates:

\begin{proof}
Due to the regularity of $T^{-1}$ and $w$ \eqref{eq:dual} directly entails that $v\in C^{ 1}(U)$. The condition $w=0$ on $\Gamma_w\cap B^+_{1/4}$ immediately translates into $v=0$ on $P$. Moreover, it is easy to check from \eqref{eq:dual} and the Signorini boundary condition of $w$, that $v=0$ on $U\cap \{y_n=0\}$ and $\p_{n+1}v=0$ on $U\cap \{y_{n+1}=0\}$.

Now we derive the equation for $v$. Recalling that 
\begin{align*}
y=T(x)=(x', \partial_{x_n}w, \partial_{x_{n+1}} w), \quad x=T^{-1}(y)=(y', -\partial_{y_n}v, -\partial_{y_{n+1}}v),
\end{align*}
and using \eqref{eq:dual}, we have 
\begin{align*}
DT=\begin{pmatrix}
I_{n-1} & 0\\
A(w)& H(w)
\end{pmatrix},\quad 
DT^{-1}=\begin{pmatrix}
I_{n-1} & 0\\
A(v)& H(v)
\end{pmatrix} \mbox{ in } U\setminus P,
\end{align*}
where
\begin{align*}
A(w)=\begin{pmatrix}
\partial_{x_{n}x_1} w & \ldots & \partial_{x_{n}x_{n-1}} w\\
\partial_{x_{n+1}x_1} w & \ldots & \partial_{x_{n+1}x_{n-1}} w
\end{pmatrix},\quad 
H(w)=\begin{pmatrix}
\partial_{x_{n}x_n} w & \partial_{x_{n}x_{n+1}} w\\
\partial_{x_{n+1}x_n} w & \partial_{x_{n+1}x_{n+1}}w
\end{pmatrix},\\
A(v)=-\begin{pmatrix}
\partial_{y_{n}y_1} v & \ldots & \partial_{y_{n}y_{n-1}} v\\
\partial_{y_{n+1}y_1} v & \ldots & \partial_{y_{n+1}y_{n-1}} v
\end{pmatrix},\quad
H(v)=-\begin{pmatrix}
\partial_{y_{n}y_n} v & \partial_{y_{n}y_{n+1}} v\\
\partial_{y_{n+1}y_n} v & \partial_{y_{n+1}y_{n+1}}v
\end{pmatrix}.
\end{align*}

Next we express $D^2w(x)$ in terms of $D^2v(y)$ if $y\in U\setminus P$. Since $(DT)^{-1}=DT^{-1}$, we immediately obtain 
\begin{equation}\label{eq:relation}
H(w)=H(v)^{-1},\quad A(v)=-H(w)^{-1}A(w).
\end{equation}
Moreover, the identities \eqref{eq:relation} and \eqref{eq:dual} together with a direct calculation give
\begin{align}
(\partial_{y_iy_j}v)_{(n-1)\times (n-1)}&= (\partial_{x_ix_j}w)_{(n-1)\times (n-1)} - A(w)^t H(w)^{-1} A(w), \label{eq:hessianv}\\
(\partial_{x_ix_j}w)_{(n-1)\times (n-1)}&= (\partial_{y_iy_j}v)_{(n-1)\times (n-1)} - A(v)^t H(v)^{-1} A(v)\label{eq:hessianw}.
\end{align}

In order to compute the equation for $v$, we assume that $a^{ij}\in C^{1,\gamma}$ for some $\gamma>0$ and rewrite the equation for $w$ in non-divergence form
\begin{equation}\label{eq:nondivw}
a^{ij}\partial_{ij}w + (\partial_i a^{ij})\partial_j w=0.
\end{equation}
For convenience and abbreviation, we set
\begin{align*}
\tilde{a}^{ij}(y)&:=a^{ij}(x)\big|_{x=(y'',-\p_nv(y),-\p_{n+1}v(y))},\\
\tilde{b}^j(y)&:=\sum_{i=1}^{n+1}(\p_{x_i}a^{ij})(x)\big|_{x=(y'',-\p_nv(y),-\p_{n+1}v(y))}.
\end{align*}
Plugging \eqref{eq:relation}-\eqref{eq:hessianw} into (\ref{eq:nondivw}), and multiplying the resulting equation by
\begin{align*}
-J(v):=-\det\begin{pmatrix}
\p_{nn}v &\p_{n,n+1}v\\
\p_{n,n+1}v &\p_{n+1,n+1}v
\end{pmatrix},
\end{align*}
leads to the equation~(\ref{eq:nonlineq1}) for $v$.
\end{proof}

We conclude this section by computing the Legendre function of a 3/2-homogeneous blow-up of a solution to the variable coefficient thin obstacle problem. 

\begin{lem}
\label{lem:asymp_profile}
Let $w:B_{1}^+ \rightarrow \R$ be a solution of the variable coefficient thin obstacle problem and let $x_0\in \Gamma_w\cap B_{1/2}$. Assume that $v$ is the Legendre function of $w$ under the Hodograph transformation $y=T^w(x)$. Then at $y_0=T^w(x_0)$, the Legendre function $v$ has the asymptotic expansion
$$v(y)= v_{y_0}(y)+ \max\{\epsilon_0,c_\ast\}O(|y-y_0|^{3+2\alpha}),$$
with the leading order profile  
\begin{align*}
v_{y_0}(y) &=- \frac{4}{27 a^2(x_0)}\left( \left(\frac{\nu_{x_0}\cdot A(x_0)\nu_{x_0}}{(\nu_{x_0})_n} y_n \right)^3 \right.\\
& \quad \left.  - 3 \left( \frac{\nu_{x_0}\cdot A(x_0)\nu_{x_0}}{(\nu_{x_0})_n} (a^{n+1,n+1}(x_0)) \right)  y_ny_{n+1}^2 \right)\\
&\quad -g(y_0)y_n  + y_{n}\frac{(y''-y_0)\cdot \nu_{x_0}''}{(\nu_{x_0})_n},
\end{align*}
where $\nu_{x_0}:= (\nu_{x_0}'', (\nu_{x_0})_n,0)=\frac{(-\nabla''g(x_0), 1, 0)}{\sqrt{1+|\nabla''g(x_0)|^2}}$ denotes the (in-plane) outer normal to $\Lambda_w$ at $x_0$.
\end{lem}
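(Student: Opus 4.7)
The plan is to first compute the partial Legendre transform of the leading-order asymptotic profile $\mathcal{W}_{x_0}$ by hand --- this will reproduce exactly the explicit formula for $v_{y_0}$ --- and then to control the remainder $v - v_{y_0}$ using the sub-leading asymptotics of Proposition~\ref{prop:asym2}. The heuristic governing the exponent $3+2\alpha$ is that the Hodograph map is asymptotically square-root--like near the free boundary, so that $|y-y_0| \sim |x-x_0|^{1/2}$ in the non-tangential regime while $v - v_{y_0}$ vanishes identically along the tangential slice $\{y_n=y_{n+1}=0\}$ by construction.

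For the model computation, I would use the complex identity $\partial_{X_n}\mathcal{W}_{x_0} - i\partial_{X_{n+1}}\mathcal{W}_{x_0} = \tfrac{3}{2}a(x_0)c_n(X_n + iX_{n+1})^{1/2}$ in the normalized coordinates $X_n = (x-x_0)\cdot \nu_{x_0}/(\nu_{x_0}\cdot A(x_0)\nu_{x_0})^{1/2}$, $X_{n+1} = x_{n+1}/(a^{n+1,n+1}(x_0))^{1/2}$. Inverting gives $X_n + iX_{n+1} = \bigl(\tfrac{2}{3a(x_0)c_n}\bigr)^{2}(Y_n - iY_{n+1})^2$ for the dual momenta, and, because $(\nu_{x_0})_{n+1} = 0$, the linear change from $(Y_n, Y_{n+1})$ to $(y_n, y_{n+1})$ is lower-triangular. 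Substituting into $\mathcal{W}_{x_0}(\tilde x) - \tilde x_n y_n - \tilde x_{n+1} y_{n+1}$ and collecting terms yields the cubic part of $v_{y_0}$; the two linear-in-$y_n$ corrections arise when resolving $\tilde x_n$ from $(\tilde x - x_0)\cdot\nu_{x_0} = (\nu_{x_0})_n(\tilde x_n - (x_0)_n) + \nu_{x_0}''\cdot(y''-x_0'')$, together with $(x_0)_n = g(x_0'') = g(y_0)$ and $\tilde x'' = y''$.

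For the error estimate I would set $x := (T^w)^{-1}(y)$ and $\tilde x := (T^{\mathcal{W}_{x_0}})^{-1}(y)$. Legendre duality gives $\nabla \mathcal{W}_{x_0}(\tilde x) = (y_n, y_{n+1})$ in the normal components (and $x''=\tilde x''=y''$), which allows the decomposition
\begin{align*}
v(y) - v_{y_0}(y) = \bigl(w(x) - \mathcal{W}_{x_0}(x)\bigr) + \bigl(\mathcal{W}_{x_0}(x) - \mathcal{W}_{x_0}(\tilde x) - \nabla \mathcal{W}_{x_0}(\tilde x)\cdot (x-\tilde x)\bigr).
\end{align*}
Proposition~\ref{prop:asym2}(iii) bounds the first bracket by $\max\{\epsilon_0, c_\ast\}|x-x_0|^{3/2+\alpha}$, while the second bracket is a second-order Taylor remainder of size $|D^2\mathcal{W}_{x_0}(\xi)|\,|x-\tilde x|^2 \lesssim |x-x_0|^{-1/2}|x-\tilde x|^2$. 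The identity $T^{\mathcal{W}_{x_0}}(x) - T^{\mathcal{W}_{x_0}}(\tilde x) = T^{\mathcal{W}_{x_0}}(x) - T^w(x)$ combined with the Proposition~\ref{prop:asym2}(i)--(ii) bound $|T^w(x) - T^{\mathcal{W}_{x_0}}(x)|\lesssim \max\{\epsilon_0,c_\ast\}|x-x_0|^{1/2+\alpha}$ and the non-degeneracy of $DT^{\mathcal{W}_{x_0}}$ from Proposition~\ref{prop:invertibility} yields $|x-\tilde x| \lesssim \max\{\epsilon_0, c_\ast\}|x-x_0|^{1+\alpha}$, so the second bracket is also controlled by $\max\{\epsilon_0,c_\ast\}|x-x_0|^{3/2+2\alpha}$. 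Converting back via $|x-x_0| \sim |y-y_0|^2$ in the non-tangential region (which is where the maximal error lives since $v-v_{y_0}$ vanishes along the tangential direction) produces the claimed $O(|y-y_0|^{3+2\alpha})$.

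The main technical obstacle I foresee is the passage from the $|T^w - T^{\mathcal{W}_{x_0}}|$ bound to the $|x-\tilde x|$ bound uniformly up to the straightened free boundary, where both Jacobians degenerate like $|x-x_0|^{-1/2}$ (cf.\ \eqref{eq:jacobi}) and a direct application of the implicit function theorem is not available. I would handle this exactly as in the proof of Proposition~\ref{prop:invertibility}: decompose a neighborhood of $x_0$ into dyadic Whitney shells $\{|x-x_0|\sim 2^{-k}\}$ on which the rescaled transforms are uniformly $C^1$-close to a fixed non-degenerate model, invert scale by scale, and patch the resulting bi-Lipschitz estimates across scales.
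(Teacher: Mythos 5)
Your proposal is correct and follows essentially the same route as the paper's proof, which is a one-paragraph "straightforward calculation" that obtains the asymptotics of the inverse map $x=x(y)$ from Proposition~\ref{prop:asym2} and plugs them into $v(y)=w(x(y))-x_ny_n-x_{n+1}y_{n+1}$. Your explicit Legendre transform of $\mathcal{W}_{x_0}$, the decomposition of $v-v_{y_0}$ into $(w-\mathcal{W}_{x_0})(x)$ plus a second-order Taylor remainder, and the dyadic-shell inversion of $T^{\mathcal{W}_{x_0}}$ borrowed from Proposition~\ref{prop:invertibility} are a correct and more detailed rendering of what the paper compresses into "the asymptotics of the inverse."
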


\begin{proof}
The claim follows from a straightforward calculation. Indeed, recall that $y(x)=(x'',\p_nw(x),\p_{n+1}w(x))$. From the asymptotics of $\p_nw, \p_{n+1}w$ around $x_0\in \Gamma_w$ in Proposition~\ref{prop:asym2}, we obtain the asymptotics of the inverse $x=x(y)$  around $y_0=T^w(x_0)$. Additionally, recalling that $v(y)=w(x(y))-x_n(y)y_n-x_{n+1}(y)y_{n+1}$, we obtain the claimed asymptotic expansion of $v$ around $y_0$.
\end{proof}

It turns out that the function $v_{y_0}(y)$ provides good intuition for the behavior of solutions to (\ref{eq:nonlineq1}).
In order to obtain a better idea about the structure of $F(v,y)$, we compute its linearization at $ v_0(y)$, which is the leading order expansion of $v$ at the origin. It is immediate from Lemma~\ref{lem:asymp_profile} (and using the normalization in Remark~\ref{rmk:normal} and Remark~\ref{rmk:convention}) that
$$v_0(y)=-\frac{1}{3}\left(y_n^3-3y_ny_{n+1}^2\right).$$

\begin{example}[Linearization at $v_0$]
\label{ex:linear}
Let $v_0$ be the Legendre function of the blow-up limit $\mathcal{W}_{0}$ at the origin, which itself is a global solution to the Signorini problem with constant metric $a^{ij}=\delta^{ij}$. Then, the Legendre function $v_0$ satisfies the nonlinear PDE
\begin{align*}
F(D^2v)=-\p_{nn}v-\p_{n+1,n+1}v+ \sum\limits_{i=1}^{n-1} \det
\begin{pmatrix}
\p_{ii}v & \p_{in}v & \p_{i,n+1}v\\
\p_{ni}v & \p_{nn}v & \p_{n,n+1}v\\
\p_{n+1,i}v & \p_{n+1,n}v & \p_{n+1,n+1}v
\end{pmatrix}=0.
\end{align*}
A direct computation leads to 
\begin{align*}
\frac{\p F(M)}{\p m_{ij}}\big|_{M=D^2 v_0}=-
\begin{pmatrix}
4(y_n^2+y_{n+1}^2) & 0 & 0\\
0 & 1 &0\\
0& 0& 1
\end{pmatrix}.
\end{align*}
As a consequence, the linearization $L_{v_0}=D_vF \big|_{ v_0}=4(y_n^2+y_{n+1}^2)\Delta''+\p^2_{n,n}+\p^2_{n+1,n+1}$ is a \emph{constant coefficient Baouendi-Grushin operator}.
\end{example}

The previous example and the observation that around the origin $v$ is a perturbation of $v_0$ and $a^{ij}$ is a perturbation of the identity matrix, indicates that the linearization $D_v F$ (and hence $F$) can be viewed as a perturbation of the Baouendi-Grushin Laplacian. 
Motivated by this, we introduce function spaces which are adapted to the Baouendi-Grushin operator in the next section.

\section{Function spaces}
\label{sec:holder}
In this section we introduce and discuss generalized H\"older spaces (c.f. Definition \ref{defi:spaces}, Proposition \ref{prop:decompI}) which are adapted to our equation (\ref{eq:nonlineq1}). These are the spaces in which we apply the implicit function theorem in Section~\ref{sec:fb_reg} to deduce the tangential regularity of the Legendre function $v$.
In order to define these spaces, we use the intrinsic geometry induced by the Baouendi-Grushin operator. In particular, we work with the intrinsic (or Carnot-Caratheodory) distance (c.f. Definition \ref{defi:Grushinvf}) associated with the Baouendi-Grushin operator and corresponding intrinsic Hölder spaces (c.f. Definitions \ref{defi:Hoelder}, \ref{defi:Hoelder1}).\\
Our function spaces are inspired by Campanato's characterization of the classical H\"older spaces \cite{Ca64} and are reminiscent of the function spaces used in \cite{DSS14}. They are constructed on the one hand to capture the asymptotics of the Legendre function and on the other hand to allow for elliptic estimates for the Baouendi-Grushin operator (c.f. Proposition \ref{prop:invert}).

\subsection{Intrinsic metric for Baouendi-Grushin Laplacian}
\label{sec:intrinsic}
In this section we define the geometry which is adapted to our equation (\ref{eq:nonlineq1}). This is motivated by viewing our nonlinear operator from (\ref{eq:nonlineq1}) as a variable coefficient perturbation of the constant coefficient \emph{Baouendi-Grushin} operator (c.f. Example \ref{ex:linear})
\begin{align*}
\D_G:= (y_n^2 + y_{n+1}^2)\D'' + \p_n^2 + \p_{n+1}^2.
\end{align*}

The Baouendi-Grushin operator is naturally associated with the \emph{Baouendi-Grushin vector fields} and an \emph{intrinsic metric}:

\begin{defi}
\label{defi:Grushinvf}
Let $Y_i:=\sqrt{y_n^2+y_{n+1}^2}\p_i$, $i\in\{1,\dots, n-1\}$, $Y_n:=\p_n$, $Y_{n+1}:=\p_{n+1}$ denote the \emph{Baouendi-Grushin vector fields}. The metric associated with the vector fields $Y_i$ is 
\begin{align}
\label{eq:metr}
ds^2=\sum_{j=1}^{n-1}\frac{dy_j^2}{y_n^2+y_{n+1}^2}+dy_n^2+dy_{n+1}^2.
\end{align}
More precisely it is defined by the following scalar product in the tangent space:
\begin{align*}
g_{y}(v,w):= (y_n^2 + y_{n+1}^2)^{-1}\left(\sum\limits_{j=1}^{n-1}v_j w_j  \right) + v_n w_n + v_{n+1} w_{n+1},
\end{align*}
for all $y \in \R^{n+1}$, $v,w \in \spa\{Y_i(y)| \  i\in \{ 1,\dots, n+1\}\}$.
Let $d_G$ be the distance function associated with this sub-Riemannian metric (or the associated \emph{Carnot-Caratheodory metric}): 
\begin{multline*}
d_G(x,y) := \inf \{ \ell(\gamma)| \ \gamma: [a,b] \subset \R \rightarrow \R^{n+1} \mbox{ joins } x \mbox{ and } y,\\
 \dot{\gamma}(t)\in \spa\{Y_i(\gamma(t))| \ i\in \{1,\dots, n+1\}\}\},
\end{multline*} 
where
\begin{align*}
\ell(\gamma) := \int_{a}^{b} \sqrt{g_{\gamma(t)}(\dot{\gamma}(t), \dot{\gamma}(t))}dt.
\end{align*}
\end{defi}

\begin{rmk}\label{rmk:equi_dist}
We remark that for the family of dilations $\delta_\lambda(\cdot)$ which is defined by $\delta_\lambda(y'',y_n,y_{n+1}):=(\lambda^2 y'',\lambda y_{n},\lambda y_{n+1})$, we have $d_G(\delta_\lambda( p), \delta_\lambda (q))=|\lambda|d_G(p,q)$ for $p,q\in \R^{n+1}$. Moreover, from \eqref{eq:metr} for $\sqrt{y_n^2+y_{n+1}^2}\sim 1$ we have $ds^2\sim dy_1^2+\dots+dy_{n+1}^2$. Using these, it is possible to directly verify that $d_G$ is equivalent to the following quasi-metric 
\begin{align*}
d(x,y)=|x_n-y_n|+|x_{n+1}-y_{n+1}|+\frac{|x''-y''|}{|x_n|+|x_{n+1}|+|y_n|+|y_{n+1}|+|x''-y''|^{1/2}}.
\end{align*}
\end{rmk}

\begin{rmk}
\label{rmk:original_variables}
In order to elucidate our choice of metric, we derive its form in our original $x$-coordinates. To this end, we consider the case of the flat model solution $w(x)=\mathcal{W}_{0}(x)$. Denoting the Euclidean inner product on $\R^{n+1}$ by $g_0$ and defining $g_{\mathcal{W}_{0}}$ as the Baouendi-Grushin inner product from Definition~\ref{defi:Grushinvf}, (\ref{eq:metr}) (up to constants), we obtain that $g_{\mathcal{W}_{0}}=(x_n^2+x_{n+1}^2)^{-\frac{1}{2}}T_\ast g_{0}$, where $T$ is the Legendre transformation associated with $\mathcal{W}_0$. 
\end{rmk}

The previously defined intrinsic metric induces a geometry on our space. In particular, it defines associated Baouendi-Grushin cylinders/balls:

\begin{defi}
\label{defi:Grushincylinder}
Let $0<r\leq 1$. We set 
$$\mathcal{B}_r:= \{y\in \R^{n+1}| \ |y''|\leq r^2, \ y_{n}^2 + y_{n+1}^2 \leq r^2 \}$$ 
to denote the closed \emph{non-isotropic Baouendi-Grushin cylinders}. For  
$$y_0\in P:=\{(y'',y_n,y_{n+1})| y_n=y_{n+1}=0\}$$ we further define $\mathcal{B}_r(y_0):=y_0+\mathcal{B}_r$. 
In the quarter space, we restrict the cylinders to the corresponding intersection 
$$\mathcal{B}_{r}^+(y_0):=\mathcal{B}_{r}(y_0)\cap Q_{+}, \quad \text{where }Q_+:= \{y\in \R^{n+1}| \ y_{n}\geq 0, y_{n+1}\leq 0\}.$$
\end{defi}

\begin{rmk}
Due to Remark~\ref{rmk:equi_dist}, there are constants $c,C>0$ such that for any $y_0\in P$ 
\begin{align*}
\mathcal{\tilde{B}}_{cr}(y_0)\subseteq \mathcal{B}_r(y_0)\subseteq \mathcal{\tilde{B}}_{Cr}(y_0),\quad \text{where }\mathcal{\tilde{B}}_r(y_0)=\{y| d_G(y,y_0)< r\}.
\end{align*}
In the sequel, with slight abuse of notation, for $y_0\in P$ we will not distinguish between $\mathcal{\tilde{B}}_r(y_0)$ and $\mathcal{B}_r(y_0)$ for convenience of notation.  
\end{rmk}

\subsection{Function spaces}
\label{sec:functions}
In the sequel, we consider the intrinsic H\"older spaces which are associated with the geometry introduced in Section \ref{sec:intrinsic}: 

\begin{defi}
\label{defi:Hoelder}
Let $\Omega$ be a subset in $\R^{n+1}$ and let $\alpha\in (0,1]$. Then
\begin{align*}
C^{0,\alpha}_\ast(\overline{\Omega}):=\left\{u:\overline{\Omega}\rightarrow \R| \ \sup_{x,y\in \overline{\Omega}}\frac{|u(x)-u(y)|}{d_G(x,y)^\alpha}<\infty\right\}.
\end{align*}
Let
$$[u]_{\dot{C}^{0,\alpha}_\ast(\overline{\Omega})}:=\sup_{x,y\in \overline{\Omega}}\frac{|u(x)-u(y)|}{d_G(x,y)^\alpha}.$$
For $u\in C^{0,\alpha}_\ast(\overline{\Omega})$ we define
$$\|u\|_{C^{0,\alpha}_\ast(\overline{\Omega})}:=\|u\|_{L^\infty(\Omega)}+[u]_{C^{0,\alpha}_\ast(\overline{\Omega})}.$$
\end{defi}

\begin{rmk}
The mapping $\| \cdot \|_{C_{\ast}^{0,\alpha}}: C_{\ast}^{0,\alpha} \rightarrow [0,\infty)$ is a norm. By Remark \ref{rmk:equi_dist} 
\begin{align*}
C^{0,\alpha}_{\ast}(\overline{\Omega}) \hookrightarrow C^{0,\frac{\alpha}{2}}(\overline{\Omega}).
\end{align*}
Hence, the pair $(C^{0,\alpha}_\ast(\overline{\Omega}), \| \cdot \|_{C^{0,\alpha}_{\ast}(\bar{\Omega})})$ is a Banach space.
\end{rmk}

Based on the spaces from Definition \ref{defi:Hoelder}, we can further define higher order H\"older spaces:

\begin{defi}
\label{defi:Hoelder1}
Let 
$$\tilde{Y}_1=y_n\p_1, \quad \tilde{Y}_2=y_{n+1}\p_1, \quad \dots, \quad \tilde{Y}_{2n-1}=\p_n,\quad \tilde{Y}_{2n}=\p_{n+1}.$$ 
For $k\in \mathbb{N}$, $k\geq 1$, we say that $u\in C^{k,\alpha}_\ast(\overline{\Omega})$, if for all $\sigma_i\in \{1,\ldots, 2n\}$, $1\leq i\leq k$, the functions $u, \tilde{Y}_{\sigma_1}\cdots \tilde{Y}_{\sigma_i}u$ are continuous and $\tilde{Y}_{\sigma_1}\cdots\tilde{Y}_{\sigma_k}u \in C^{0,\alpha}_\ast(\overline{\Omega})$.
We define
\begin{equation}
\label{eq:norm1}
\begin{split}
\|u\|_{C^{k,\alpha}_\ast(\overline{\Omega})}& =\|u\|_{L^{\infty}(\overline{\Omega})} \\
&+ \sum_{j=1}^{k-1}\sum_{\sigma_1,\ldots,\sigma_j\in \{1,\dots, 2n\}}\|\tilde{Y}_{\sigma_1}\cdots \tilde{Y}_{\sigma_j}u\|_{L^\infty(\overline{\Omega})} \\
& +\sum_{\sigma_1,\ldots,\sigma_k\in \{1,\dots,2n\}}\|\tilde{Y}_{\sigma_1}\cdots \tilde{Y}_{\sigma_k}u\|_{C^{0,\alpha}_\ast(\overline{\Omega})}.
\end{split}
\end{equation}
\end{defi}

\begin{rmk}
The space $C^{k,\alpha}_{\ast}(\overline{\Omega})$ equipped with $\| \cdot \|_{C^{k,\alpha}_{\ast}(\overline{\Omega})}$ is a Banach space.\\
\end{rmk}

Building on the previously introduced Hölder spaces, we proceed to define the function spaces which we use to prove the higher regularity of the Legendre function $v$. These spaces, their building blocks and their role in our argument are reminiscent of the higher regularity approach of De Silva and Savin \cite{DSS14}. In contrast to the approach of De Silva and Savin we however use them in the \emph{linear} set-up in the sense that the (regular) free boundary has been fixed by the Legendre-Hodograph transform (at the expense of working with a degenerate (sub)elliptic, fully nonlinear equation). In this situation the approximation approach of De Silva and Savin simply becomes a Taylor expansion of our solution at the straightened free boundary.
Moreover, we do not carry out the expansion up to arbitrary order, but only up to order less than five. Beyond this we work with the implicit function theorem (c.f. Theorem \ref{prop:hoelder_reg_a} in Section \ref{sec:IFT1}), which is more suitable to the variable coefficients set-up. In particular, this restriction to an essentially leading order expansion with respect to the non-tangential variables allows us to avoid dealing with \emph{regularity issues in the non-tangential directions}. Working in a conical domain and with metrics and inhomogeneities which are not necessarily symmetric with respect to the non-tangential directions, we thus ignore potential higher-order singularities in the non-tangential variables. This has the advantage of deducing the desired partial regularity result in the tangential directions, which then entails the free boundary regularity, without having to deal with potentially arising non-tangential singularities.\\

Roughly speaking, our spaces interpolate between the regularity of the function at 
$P=\{y_n=y_{n+1}=0\}$ (at which the Baouendi-Grushin operator is only degenerate elliptic)
and at $\{\frac{1}{2}<y_n^2+y_{n+1}^2< 2\}$ (in which the Baouendi-Grushin operator is uniformly elliptic region). In order to make this rigorous, we need the notion of an \emph{homogeneous polynomial}:

\begin{defi}[Homogeneous polynomials]
\label{defi:poly}
Let $k\in \N$. We define the \emph{space of homogeneous polynomials of degree less than or equal to $k$} as
\begin{align*}
\mathcal{P}_k=&\{p_k(y)| \  p_k(y)=\sum_{|\beta|\leq k}a_\beta y^{\beta},\\
&\text{ such that }a_\beta=0 \text{ whenever }\sum_{i=1}^{n-1}2\beta_i+\beta_n+\beta_{n+1}>k\}.
\end{align*}
Moreover, we define the \emph{space of homogeneous polynomials of degree exactly $k$} as 
\begin{align*}
\mathcal{P}_k^{hom}=&\{p_k(y)| \  p_k(y)=\sum_{|\beta|\leq k}a_\beta y^{\beta},\\
&\text{ such that }a_\beta=0 \text{ whenever }\sum_{i=1}^{n-1}2\beta_i+\beta_n+\beta_{n+1}\neq k\}.
\end{align*}
\end{defi}

The definition of the homogeneous polynomials is motivated by the scaling properties of our operator $\Delta_G$. More precisely, we note the following dilation invariance property: if $u$ solves $\D_G u = f$, then the function $v(y):= u(\delta_\lambda(y))$, where $\delta_\lambda(y)=(\lambda^2y'',\lambda y_n,\lambda y_{n+1})$, solves
\begin{align*}
\D_G v = \lambda^2 f_\lambda,
\end{align*}
where $f_{\lambda}(y)= f(\delta_\lambda(y))$.
This motivates to count the order of the tangential variables $y''$ and the normal variables $y_n$, $y_{n+1}$ differently and define the homogeneous polynomials with respect to the Grushin scaling:  $p_k(\delta_\lambda(y))=\lambda^k p_k(y)$ for $p_k\in \mathcal{P}_k^{hom}$.

\begin{rmk}
We observe that for instance $P\in \mathcal{P}_3$ is of the form
\begin{align*}
P(y)&=c_0+\sum_{i=1}^{n+1}a_iy_i+\sum_{k\in \{1,\dots,n-1\},\ell\in \{n,n+1\}}a_{k\ell}y_ky_\ell\\
&+ \left(c_1y_n^3+c_2y_n^2y_{n+1}+c_3y_ny_{n+1}^2+c_4y_{n+1}^3\right).
\end{align*}
\end{rmk}

Using the notion of homogeneous polynomials, we further define an adapted notion of differentiability at the co-dimension two hypersurface $P$:

\begin{defi}\label{defi:diff}
Let $k\in \N$ and $\alpha \in (0,1]$. Given a function $f$, we say that \emph{$f$ is $C^{k,\alpha}_\ast$ at $P$}, if at each $y_0\in P$ there exists an approximating polynomial $P_{y_0}(y)=\sum a_\beta(y_0)(y-y_0)^{\beta}\in \mathcal{P}_k$ such that
\begin{align*}
f(y)=P_{y_0}(y)+O(d_G(y,y_0)^{k+2\alpha}), \quad \text{as } y\rightarrow y_0.
\end{align*}
\end{defi}

\begin{rmk}
We note that for a multi-index $\beta$ satisfying $\sum_{i=1}^{n-1}2\beta_i+\beta_n+\beta_{n+1}\leq k$, the evaluation $\partial^{\beta}P_{y_0}(y_0) = \beta! a_{\beta}(y_0)$ corresponds to the (classical) $\beta$ derivative of $f$ at $y_0$, i.e. $\p^\beta f(y_0)=\beta! a_{\beta}(y_0)$.
\end{rmk}

With this preparation, we can finally give the definition of our function spaces:

\begin{defi}[Function spaces]
\label{defi:spaces}
Let $\epsilon, \alpha\in (0,1]$. Then,
\begin{align*}
X_{\alpha,\epsilon}:=&\{v\in C^{2,\epsilon}_\ast (Q_+) \cap C_0(Q_+)| \ \supp(\Delta_G v)\subset \mathcal{B}_1^+,\ v\text{ is } C^{3,\alpha}_\ast \text{ at } P, \\
& 
v=0 \text{ on } \{y_n=0\},\ \p_{n+1}v=0\text{ on } \{y_{n+1}=0\}, \ \p_{nn}v=0 \text{ on } P,\\
& \text{and }\| v \|_{X_{\alpha, \epsilon}}<\infty\},\\
Y_{\alpha,\epsilon}:=&\{f\in C^{0,\epsilon}_\ast(Q_+)| \ \supp(f) \subset \mathcal{B}_1^+, \  f \text{ is } C^{1,\alpha}_\ast \text{ at }P,  \  f=\p_{n+1}f=0 \text{ on }P,\\
& \text{and }\|f\|_{Y_{\alpha,\epsilon}}<\infty\}.
\end{align*}
The corresponding norms are defined as
\begin{align*}
\|f\|_{Y_{\alpha, \epsilon}}& : =\sup_{\bar y\in P}[d_G(\cdot,\bar y)^{-(1+2\alpha-\epsilon)}(f-P_{\bar y})]_{\dot{C}^{0,\epsilon}_\ast(\mathcal{B}_3^+(\bar y))}  ,\\
\text{ where } & P_{\bar y}(y)=y_n \p_nf(\bar y);\\
\| v \|_{X_{\alpha,\epsilon}} &:=\sup_{ \bar{y}\in P} \left(\|d_G(\cdot, \bar y)^{-(3+2\alpha)}(v-P_{\bar y})\|_{L^{\infty}(\mathcal{B}_3^+(\bar y))} \right.\\
& \left. +\sum\limits_{i,j=1}^{n+1}[d_G(\cdot, \bar y)^{-(1+2\alpha- \epsilon)}Y_i Y_j (v-P_{\bar y})]_{\dot{C}^{0,\epsilon}_{\ast}(\mathcal{B}_3^+(\bar y))}    
 + [v]_{C^{2,\epsilon}_{\ast}(Q_+ \setminus \mathcal{B}_3^+(\bar y))}\right),\\
\text{ where } & P_{\bar y}(y)=\p_nv(\bar y)y_n  +  \sum_{i=1}^{n-1}\p_{in}v(\bar y)(y_i-\bar y_i) y_n +\frac{1}{6}\p_{nnn}v(\bar y)y_n^3\\
& \qquad +\frac{1}{2}\p_{n,n+1,n+1}v(\bar y)y_ny_{n+1}^2.
\end{align*}
\end{defi}

Let us discuss these function spaces $X_{\alpha,\epsilon}$: They are subspaces of the Baouendi-Grushin H\"older spaces $C^{2,\epsilon}_\ast(Q_+)$, with the additional properties that these functions are $C^{3,\alpha}_\ast$ along the edge $P$ and that they satisfy the symmetry conditions $v=0$ on $\{y_n=0\}$ and $\p_{n+1}v=0$ on $\{y_{n+1}=0\}$. The condition $\p_{nn}v=0$ on $P$ is a necessary compatibility condition which ensures that $\Delta_G$ maps $X_{\alpha,\epsilon}$ to $Y_{\alpha,\epsilon}$. The boundary conditions together with the $C^{3,\alpha}_\ast$ regularity allow us to conclude that any function in $X_{\alpha,\epsilon}$ has the same type of asymptotic expansion at $P$ as the Legendre function $v$. The support condition on $\Delta_G v$ together with the decay condition at infinity ($v\in C_0(Q_+)$ is a continuous function in $Q_+$ vanishing at infinity) is to ensure that $(X_{\alpha,\epsilon},\|\cdot\|_{X_{\alpha,\epsilon}})$ is a Banach space. \\
The spaces are the ones in which we apply the Banach implicit function theorem later in Section~\ref{sec:IFT1}. They are constructed in such a way as to
\begin{itemize}
\item[(i)] mimic the asymptotics behavior of our Legendre functions (which are defined in (\ref{eq:legendre})) around the straightened regular free boundary $P$. In particular, the Legendre functions $v$ associated with solutions $w$ of (\ref{eq:varcoeff}) are contained in the spaces $X_{\alpha,\epsilon}$ for a suitable range of $\alpha,\epsilon$ (c.f. Proposition \ref{prop:error_gain2}).
\item[(ii)] The spaces are compatible with the mapping properties of the fully nonlinear, degenerate, (sub)elliptic operator $F$ from (\ref{eq:nonlineq1}) (c.f. Proposition \ref{prop:nonlin_map}).
\item[(iii)] They are compatible with the linearization of the operator $F$ (c.f. Proposition \ref{prop:linear}). In particular they allow for ``Schauder type'' estimates for the Baouendi-Grushin Laplacian.
\end{itemize}

\begin{rmk}
\label{rmk:homo}
\begin{itemize}
\item[(i)] We note that by our support assumptions 
\begin{align*}
& \|d_G(\cdot,\bar y)^{-(1+2\alpha)}(f-P_{\bar y})\|_{L^{\infty}(\mathcal{B}_3^+(\bar y))} \\
& \quad \leq C [d_G(\cdot,\bar y)^{-(1+2\alpha-\epsilon)}(f-P_{\bar y})]_{\dot{C}^{0,\epsilon}_\ast(\mathcal{B}_3^+(\bar y))} .
\end{align*}
Similarly, by interpolation, we control all intermediate Hölder norms of $v-P_{\bar{y}}$ by $\| v\|_{X_{\alpha,\epsilon}}$.
\item[(ii)] We remark that the norms of $X_{\alpha,\epsilon}$ and $Y_{\alpha,\epsilon}$ only contain homogeneous contributions and do not include the lower order contributions which would involve the norms of the approximating polynomials $p(y):= \sum\limits_{|\alpha|\leq k} a_{\alpha}y^{\alpha}\in \mathcal{P}_k^{hom}$:
$$|p|_{k}:= \sum\limits_{\beta}|a_{\beta}| .$$ Yet, this results in Banach spaces as additional support conditions are imposed on $f, \D_G v$. The Banach space property is shown in Lemma~\ref{lem:Banach} in the Appendix.
\end{itemize}
\end{rmk}

For locally defined functions we use the following spaces:

\begin{defi}[Local function spaces]
\label{defi:spaces_loc}
Given $\alpha,\epsilon\in (0,1]$ and $R>0$.
\begin{align*}
X_{\alpha,\epsilon}(\mathcal{B}_R^+):=&\{v\in C^{2,\epsilon}_\ast(\mathcal{B}_R^+)| v\text{ is } C^{3,\alpha}_\ast \text{ at } P\cap \mathcal{B}_R, \\
&v=0 \text{ on } \{y_n=0\}\cap \mathcal{B}_R,\ \p_{n+1}v=0\text{ on }\{y_{n+1}=0\}\cap \mathcal{B}_R, \\  
&\p_{nn}v=0\text{ on } P\cap \mathcal{B}_R \text{ and } \|v\|_{X_{\alpha,\epsilon}(\mathcal{B}_R^+)}<\infty\},
\end{align*}
where
\begin{align*}
\|v\|_{X_{\alpha,\epsilon}(\mathcal{B}_R^+)}:=\sup_{\bar y\in P\cap \mathcal{B}_R}\left(\sum_{i,j=1}^{n+1}[d_G(\cdot, \bar y)^{-(1+2\alpha-\epsilon)}Y_iY_j(v-P_{\bar y})]_{\dot{C}^{0,\epsilon}_\ast(\mathcal{B}_3^+(\bar y)\cap \mathcal{B}_R^+)}\right.\\
\left.+\|d_G(\cdot, \bar y)^{-(3+2\alpha)}(v-P_{\bar y})\|_{L^{\infty}(\mathcal{B}_3^+(\bar y)\cap \mathcal{B}_R^+)}
+ |P_{\bar y}|_{3}\right),
\end{align*}
with $P_{\bar y}$ being as in Definition~\ref{defi:spaces}.\\ 
Similarly,
\begin{align*}
Y_{\alpha,\epsilon}(\mathcal{B}_R^+):=&\{f\in C^{0,\epsilon}_\ast(\mathcal{B}_R^+)| f \text{ is } C^{1,\alpha}_\ast \text{ at } P\cap \mathcal{B}_R,\\
&f=\p_{n+1}f=0\text{ on }P\cap \mathcal{B}_R \text{ and } \|f\|_{Y_{\alpha,\epsilon}(\mathcal{B}_R^+)}<\infty\},
\end{align*}
where
\begin{align*}
\|f\|_{Y_{\alpha,\epsilon}(\mathcal{B}_R^+)}:=\sup_{\bar y\in P\cap \mathcal{B}_R}\left(\| d_G(\cdot, \bar y)^{-(1+2\alpha)}(f-P_{\bar y})\|_{L^{\infty}(\mathcal{B}_3^+(\bar y)\cap \mathcal{B}_R^+)} \right.\\
\left. + [d_G(\cdot, \bar y)^{-(1+2\alpha)}(f-P_{\bar y})]_{\dot{C}^{0,\epsilon}_\ast(\mathcal{B}_3^+(\bar y)\cap \mathcal{B}_R^+)} +|P_{\bar y}|_1 \right),
\end{align*}
with $P_{\bar y}$ being as in Definition~\ref{defi:spaces}.
\end{defi}

For the functions in $X_{\alpha,\epsilon}$ and $Y_{\alpha,\epsilon}$, the following characterization will be useful. We postpone the proof to the Appendix, Section \ref{sec:decomp}.

\begin{prop}[Characterization of $X_{\alpha,\epsilon}$ and $Y_{\alpha,\epsilon}$]
\label{prop:decompI}
Let $v\in X_{\alpha,\epsilon}$ and $f\in Y_{\alpha,\epsilon}$ and $2\alpha>\epsilon$. Let $r=r(y):=\sqrt{y_n^2+y_{n+1}^2}$ denote the distance from $y$ to $P$. Let $y'':=(y'',0,0)\in P$.
\begin{itemize}
\item[(i)] Then $\p_n f(y'')\in C^{0,\alpha}(P)$. Moreover, there exists $f_1(y)\in C^{0,\epsilon}_{\ast}(Q_+)$ vanishing on $P$, such that for $y\in \mathcal{B}_3^+$
\begin{align*}
f(y)=\p_n f(y'')y_n+ r^{1+2\alpha-\epsilon}f_1(y).
\end{align*}
\item[(ii)] Then $\p_nv(y'')\in C^{1,\alpha}(P)$, $\p_{nnn}v(y''), \p_{n,n+1,n+1}v(y'') \in C^{0,\alpha}(P)$. Moreover, there exist functions $C_1,V_i, C_{ij}\in C^{0,\epsilon}_{\ast}(Q_+)$, $i,j\in\{1,\dots,n+1\}$, vanishing on $P$, such that for $y\in \mathcal{B}_3^+$
\begin{align*}
v(y) &= \p_n v(y'') y_n  + \frac{\p_{nnn}v(y'')}{6}y_n^3 +\frac{\p_{n,n+1,n+1}v(y'')}{2}y_ny_{n+1}^2+ r^{3+2\alpha-\epsilon}C_1(y), \\
\p_{i}v(y)&= \p_{in}v(y'') y_n + r^{1+2\alpha-\epsilon}V_i(y),\quad i\in \{1,\dots, n-1\},\\
\p_{n}v(y)& = \p_{n}v(y'') + \frac{\p_{nnn}v(y'')}{2}y_n^2 + \frac{\p_{n,n+1,n+1}v(y'')}{2}y_{n+1}^2 + r^{2+2\alpha-\epsilon}V_n(y),\\
\p_{n+1}v(y) & = \p_{n,n+1,n+1}v(y'') y_n y_{n+1} + r^{2+2\alpha-\epsilon}V_{n+1}(y),\\
\p_{ij}v(y)&=r^{-1+2\alpha-\epsilon}C_{ij}(y),\\
\p_{in}v(y)&= \p_{in}v(y'')  + r^{2\alpha-\epsilon}C_{in}(y),\\
\p_{i,n+1}v(y)&=r^{2\alpha-\epsilon}C_{i,n+1}(y),\\
\p_{n,n}v(y)&= \p_{nnn}v(y'') y_n + r^{1+2\alpha-\epsilon}C_{n,n}(y),\\
\p_{n,n+1}v(y)&=\p_{n,n+1,n+1}v(y'')y_{n+1}+r^{1+2\alpha-\epsilon}C_{n,n+1}(y),\\
\p_{n+1,n+1}v(y)&=\p_{n,n+1,n+1}v(y'')y_{n} + r^{1+2\alpha-\epsilon}C_{n+1,n+1}(y).
\end{align*}
\end{itemize}
Moreover, $C_1(y)=0=V_{n+1}(y)$ on $\{y_n=0\}$.
For the decompositions in (i) and (ii) we have
\begin{align*}
&[\p_{n}f]_{\dot{C}^{0,\alpha}(P\cap \mathcal{B}_3^+)}+ [f_1]_{\dot{C}^{0,\epsilon}_\ast(\mathcal{B}_3^+)}\leq C \|f\|_{Y_{\alpha,\epsilon}},\\
&[\p_{in}v]_{\dot{C}^{0,\alpha}(P\cap \mathcal{B}_3^+)} +[\p_{nnn}v]_{\dot{C}^{0,\alpha}(P\cap \mathcal{B}_3^+)}+[\p_{n,n+1,n+1}v]_{\dot{C}^{0,\alpha}(P\cap \mathcal{B}_3^+)}\\
& \quad + \sum\limits_{i,j=1}^{n+1}[C_{ij}]_{\dot{C}^{0,\epsilon}_\ast(\mathcal{B}_3^+)}
+ \sum\limits_{j=1}^{n+1}[V_{j}]_{\dot{C}^{0,\epsilon}_\ast(\mathcal{B}_3^+)}
 + [C_1]_{\dot{C}^{0,\epsilon}_\ast(\mathcal{B}_3^+)} \leq C \|v\|_{X_{\alpha,\epsilon}}.
\end{align*}
\end{prop}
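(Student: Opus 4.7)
The plan is to combine the Taylor-type pointwise approximation $|f-P_{\bar y}|\lesssim d_G(\cdot,\bar y)^{k+2\alpha}$ at each $\bar y\in P$ (built into the $C^{k,\alpha}_\ast$-at-$P$ definition) with the intrinsic Hölder control on $Q_+$ encoded in the norms of $Y_{\alpha,\epsilon}$ and $X_{\alpha,\epsilon}$, in order to extract both the Hölder regularity of the coefficients of $P_{\bar y}$ on $P$ and the claimed decompositions. I would first identify the form of $P_{\bar y}$: in part~(i), $P_{\bar y}\in\mathcal P_1=\mathrm{span}\{1,y_n,y_{n+1}\}$, and the boundary conditions $f=0=\p_{n+1}f$ on $P$ built into $Y_{\alpha,\epsilon}$ leave only $P_{\bar y}(y)=y_n\p_nf(\bar y)$; in part~(ii), $P_{\bar y}\in\mathcal P_3$, and the Dirichlet condition $v=0$ on $\{y_n=0\}$, the Neumann condition $\p_{n+1}v=0$ on $\{y_{n+1}=0\}$, together with the compatibility $\p_{nn}v=0$ on $P$, eliminate every monomial except the four appearing in the stated $P_{\bar y}$.

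To extract Euclidean-$C^{0,\alpha}(P)$ regularity of each coefficient I would compare the approximations at two points $\bar y,\bar z\in P$,
\begin{equation*}
|P_{\bar y}(y)-P_{\bar z}(y)|\lesssim d_G(y,\bar y)^{k+2\alpha}+d_G(y,\bar z)^{k+2\alpha},
\end{equation*}
and evaluate at carefully chosen test points. In part~(i), the choice $y=\bar y+se_n$ with $s\sim|\bar y''-\bar z''|^{1/2}$ makes both Grushin distances comparable to $s$, so dividing by $s$ and using that $d_G$ restricted to $P$ is comparable to the Euclidean square-root yields $|\p_nf(\bar y)-\p_nf(\bar z)|\lesssim|\bar y''-\bar z''|^{\alpha}$. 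In part~(ii), testing along $y_n$, $y_iy_n$, $y_n^3$, $y_ny_{n+1}^2$ successively isolates each coefficient of $P_{\bar y}$ and gives the corresponding $C^{0,\alpha}(P)$ bound; the $C^{1,\alpha}(P)$ regularity of $\p_nv|_P$ then follows by identifying $\p_{in}v(\bar y)$ as the $i$-th tangential derivative of $\p_nv$.

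Next I would set $f_1:=r^{-(1+2\alpha-\epsilon)}(f-\p_nf(y'')y_n)$ and analogously define $C_1, V_i, C_{ij}$ in part~(ii) by subtracting the appropriate piece of $P_{y''}$ (or its classical derivative) from $v$ (resp.\ its derivative). The pointwise bounds $|f_1|\lesssim r^{\epsilon}$, $|C_1|\lesssim r^{\epsilon}$, etc.\ follow immediately from the polynomial approximation at the projection $\bar y=(y'',0,0)$, since $d_G(y,\bar y)\sim r(y)$. To upgrade these to $C^{0,\epsilon}_\ast$ control I would compare two points $y,z$ by a case analysis on whether $\min\{r(y),r(z)\}\gtrsim d_G(y,z)$: in the former case both points sit in a Grushin annulus at comparable distance from $P$, so the intrinsic Hölder regularity of $f$ (resp.\ $v$) together with the smoothness of the weight $r^{-(k+2\alpha-\epsilon)}$ on such annuli closes the estimate; in the latter case at least one of the points is close to $P$, and one patches the polynomial approximation at its projection with a telescoping between dyadic Grushin annuli. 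The symmetry $C_1=V_{n+1}=0$ on $\{y_n=0\}$ follows from $v=0$ on $\{y_n=0\}$ combined with the explicit form of $P_{\bar y}$.

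The main obstacle I anticipate is exactly this last step: dividing by the singular weight $r^{-(k+2\alpha-\epsilon)}$ and proving that the quotient is genuinely $\epsilon$-Hölder in the Grushin metric (not merely $(\epsilon-\delta)$-Hölder) is delicate. This requires careful tracking of the two distinct scalings of the dilations $\delta_\lambda$ in the tangential versus normal directions, and verifying that the Hölder estimates on dyadic Grushin annuli sum geometrically. This is precisely the kind of technical argument that is natural to defer to the Appendix as indicated in the outline.
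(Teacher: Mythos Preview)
Your approach matches the paper's essentially step by step: the same identification of $P_{\bar y}$ from the boundary and compatibility conditions, the same test-point comparison (at $y$ with $r(y)\sim|\bar y''-\bar z''|^{1/2}$) to extract $C^{0,\alpha}(P)$ regularity of the coefficients, the same definition of the remainders by subtracting the projection-dependent polynomial piece, and the same near/far dichotomy on $d_G(y_1,y_2)$ versus $\max\{r(y_1),r(y_2)\}$ for the $C^{0,\epsilon}_\ast$ estimate.

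Your flagged ``main obstacle'' is in fact not delicate, and the paper's argument is simpler than you anticipate. In the far case ($d_G(y_1,y_2)\gtrsim\max\{r(y_1),r(y_2)\}$) no telescoping over dyadic annuli is needed: the pointwise bound $|f_1(y_i)|\lesssim r(y_i)^\epsilon\lesssim d_G(y_1,y_2)^\epsilon$ and the triangle inequality already close the estimate. In the near (non-tangential) case, recall that the $Y_{\alpha,\epsilon}$ norm is \emph{defined} as $\sup_{\bar y\in P}[d_G(\cdot,\bar y)^{-(1+2\alpha-\epsilon)}(f-P_{\bar y})]_{\dot C^{0,\epsilon}_\ast}$, so the weighted H\"older bound at a \emph{fixed} base point is immediate; what remains is only to pass from the fixed base point to the two projection-dependent base points $y_1'',y_2''$, which the paper does by a three-term splitting (one term directly from the norm, one from the $C^{0,\alpha}(P)$ coefficient regularity already established, one from the $C^{0,\epsilon}_\ast$ regularity of $r^\epsilon$ in the non-tangential region). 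No geometric summation is required.
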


\begin{rmk}
\label{rmk:characterize}
It is immediate that any $f\in C^{0,\epsilon}_\ast(Q_+)$ with $\supp(f)\subset \mathcal{B}_3^+$ which satisfies the decomposition in (i) is in  $Y_{\alpha,\epsilon}$. Moreover, 
$$\|f\|_{Y_{\alpha,\epsilon}}\leq C\left([\p_{n}f]_{\dot{C}^{0,\alpha}(P)}+ [f_1]_{\dot{C}^{0,\epsilon}_\ast(\mathcal{B}_3^+)}\right).$$ 
Similarly, it is not hard to show that functions $v$ satisfying the decomposition in (ii) with $\supp(\Delta_Gv)\subset \mathcal{B}_3^+$ are in $X_{\alpha,\epsilon}$. In particular, this implies that Proposition \ref{prop:decompI} gives an equivalent characterization of the spaces $X_{\alpha,\epsilon}$ and $Y_{\alpha,\epsilon}$.\\
Motivated by the decomposition of Proposition \ref{prop:decompI}, we sometimes also write 
\begin{align*}
Y_{\alpha,\epsilon} = y_n C^{0,\alpha} + r^{1+2\alpha- \epsilon} C_{\ast}^{0,\epsilon}.
\end{align*} 
\end{rmk}

At the end of this section, we state the following a priori estimate (which should be viewed as a Schauder type estimate for the Baouendi-Grushin operator):

\begin{prop}
\label{prop:invert}
Let $\alpha \in (0,1)$, $\epsilon \in (0,1)$, $v\in X_{\alpha,\epsilon}$, $f\in Y_{\alpha,\epsilon}$ and 
\begin{align*}
\D_G v = f.
\end{align*}
Then we have
\begin{align*}
\| v\|_{X_{\alpha,\epsilon}} \leq C \|f\|_{Y_{\alpha,\epsilon}}.
\end{align*}
\end{prop}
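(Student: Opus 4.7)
The plan is to prove this a priori estimate by a dyadic approximation argument adapted to the Baouendi--Grushin geometry, in the spirit of Wang. Since $\Delta_G$ is invariant under translations in the $y''$-directions and transforms homogeneously under the non-isotropic dilations $\delta_\lambda$, and since the $X_{\alpha,\epsilon}$ and $Y_{\alpha,\epsilon}$ norms are written as a supremum of seminorms at base points $\bar y\in P$, it suffices to prove the required control at a fixed $\bar y\in P$ and then take the supremum. After subtracting the admissible Baouendi--Grushin Taylor polynomial $P_{\bar y}$ of Grushin degree at most three (which already satisfies the boundary conditions), the reduced function $\tilde v:=v-P_{\bar y}$ solves $\Delta_G\tilde v=f-\Delta_G P_{\bar y}$ in $\mathcal{B}_3^+(\bar y)$ with the same mixed Dirichlet--Neumann conditions on $\{y_n=0\}$ and $\{y_{n+1}=0\}$, and the right-hand side vanishes to order $1+2\alpha$ at $\bar y$ in the sense of Proposition~\ref{prop:decompI}.

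The heart of the argument is a Campanato-type iteration on dyadic Grushin balls $\mathcal{B}_{2^{-k}}^+(\bar y)$. At each scale $r=2^{-k}$ one approximates $\tilde v$ by a Grushin-homogeneous polynomial $p_k$ of degree three which is $\Delta_G$-harmonic on $Q_+$ and satisfies the mixed boundary conditions, and then shows that the residual $\tilde v-p_k$ has norm improved by a fixed factor of order $2^{-(3+2\alpha)}$ on the next scale. Two ingredients make this closure possible. First, a Liouville-type classification: any $\Delta_G$-harmonic function on $Q_+$ with $v=0$ on $\{y_n=0\}$, $\partial_{n+1}v=0$ on $\{y_{n+1}=0\}$, and Grushin-polynomial growth of order strictly less than four must be a Grushin-homogeneous polynomial of degree at most three. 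This can be obtained by polar separation $y_n+iy_{n+1}=\rho e^{i\theta}$ coupled with a Fourier analysis in $y''$; the mixed boundary conditions select the half-integer angular modes exemplified by $w_{1/2},\bar w_{1/2}$ from Remark~\ref{rmk:normal}. Second, a standard boundary Schauder estimate for $\Delta_G$ on the Grushin annulus $\{1/2\le\sqrt{y_n^2+y_{n+1}^2}\le 2\}\cap Q_+$, where $\Delta_G$ is uniformly elliptic away from the edge, yields the $C^{2,\epsilon}_\ast$ seminorm contribution coming from the region bounded away from $P$.

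The main obstacle is executing the one-step improvement. I would argue by contradiction and compactness: if no such dyadic improvement held uniformly, one could rescale to produce a sequence of residuals normalized to unit size at scale one whose right-hand sides tend to zero in the $Y_{\alpha,\epsilon}$ seminorm and whose best cubic approximations vanish. A compactness argument, using the $C^{2,\epsilon'}_\ast$ bounds obtained from the second ingredient together with uniform decay estimates at $P$ coming from the barrier functions built out of $w_{1/2},\bar w_{1/2}$, then extracts a nontrivial $\Delta_G$-harmonic limit on $Q_+$ with Grushin-subquartic growth, vanishing third-order polynomial part, and the mixed boundary conditions---contradicting the Liouville classification. Summing the resulting geometric series over all dyadic scales yields the pointwise decay $|\tilde v(y)|\lesssim\|f\|_{Y_{\alpha,\epsilon}}\,d_G(y,\bar y)^{3+2\alpha}$, and completely analogous iterations for the Grushin second derivatives $Y_iY_j(v-P_{\bar y})$ weighted by $d_G(\cdot,\bar y)^{-(1+2\alpha-\epsilon)}$, combined with the away-from-$P$ Schauder bound, give the remaining seminorms of $\|v\|_{X_{\alpha,\epsilon}}$ upon taking the supremum over $\bar y\in P$.
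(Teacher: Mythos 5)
Your overall strategy — a Campanato-type dyadic iteration at points $\bar y\in P$, a compactness step producing approximately $\Delta_G$-harmonic comparison functions, a polynomial-approximation lemma to improve by a fixed factor at each scale, and a classical Schauder estimate on the Grushin annulus where $\Delta_G$ is uniformly elliptic — is exactly the paper's. The paper implements it as Lemma \ref{lem:compactness} (compactness), Lemma \ref{lem:approx} (polynomial approximation via spectral decomposition on the Grushin quarter-sphere), Lemma \ref{prop:iteration} (one-step improvement), Corollary \ref{cor:iteration} and Proposition \ref{prop:Hoelder0} (iteration and polynomial limit at $P$), and a final interpolation (Step 2 of the proof of Proposition \ref{prop:invert}).

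There is, however, a real quantitative gap in the way you state and use the Liouville-type classification, and it matters for the full range $\alpha\in(0,1)$. You assert that any $\Delta_G$-harmonic function on $Q_+$ satisfying the mixed conditions and having Grushin-polynomial growth of order \emph{strictly less than four} is a polynomial of degree at most three, and you want to feed this into a contradiction/compactness argument. But the Campanato iteration needs the residual $h-p$ (with $p$ the best degree-3 approximant) to decay like $C r^{5}$ in $L^2$, i.e.\ $r^{10}$ after squaring, in order to beat $r^{2(3+2\alpha)}=r^{6+4\alpha}$ for all $\alpha<1$. A ``subquartic $\Rightarrow$ cubic'' Liouville statement only guarantees decay $r^{4}$ for the residual, which beats $r^{3+2\alpha}$ only when $\alpha<\tfrac12$. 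What is actually needed is the stronger spectral fact that the eigenvalue gap of the Grushin quarter-sphere Laplacian (with Dirichlet on $\{y_n=0\}$, Neumann on $\{y_{n+1}=0\}$) jumps from $3$ directly to $5$: there is no homogeneous degree-4 solution satisfying the mixed conditions (one checks directly that $y_n^4$, $y_n^2 y_{n+1}^2$, $y_ny_{n+1}^3$, $y_iy_n^2$ produce no nontrivial harmonic combination). The paper's Lemma \ref{lem:approx} makes exactly this explicit via the orthogonal eigenpolynomial decomposition, and it is the source of the $r^{10}$ factor in Lemma \ref{prop:iteration}. Your remark about ``half-integer angular modes'' points toward this structure, but the Liouville statement as written omits it, and the argument as written would not close for $\alpha\ge\tfrac12$.

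A secondary, smaller issue: your compactness step normalizes at unit scale and hence produces a limit that is $\Delta_G$-harmonic on a \emph{fixed ball} $\mathcal{B}_1^+$, not a global function on $Q_+$ with a growth bound, so a global Liouville classification is not directly applicable. The paper sidesteps this by proving a \emph{local} quantitative approximation (Lemma \ref{lem:approx}) for harmonic functions merely bounded in $L^2(\mathcal{B}_1^+)$, using the explicit polynomial eigenbasis; this is the form of the statement that plugs into the iteration.
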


\begin{rmk}
We note that the a priori estimates exemplify a scaling behavior which depends on the support of the respective function. More precisely, let $=\D_G v$ be supported in $\mathcal{B}_{\mu}^+$ for some $\mu>0$. Then,
\begin{align*}
\|v\|_{X_{\alpha,\epsilon}} \leq C(1+|\mu|^{1+2\alpha-\epsilon}) \|\D_G v\|_{Y_{\alpha,\epsilon}}.
\end{align*}
\end{rmk}

The proof of Proposition \ref{prop:invert} follows by exploiting the scaling properties of our operator and polynomial approximations. This method is in analogy to the Campanato approach (c.f. \cite{Ca64}) to prove Schauder estimates for the Laplacian (c.f. also \cite{Gia}, \cite{Wang92} and \cite{Wa03} for generalizations to elliptic systems, fully nonlinear elliptic and parabolic equations and certain subelliptic equations). Our generalization of these spaces is adapted to our thin free boundary problem. We postpone the proof of Proposition \ref{prop:invert} to the Appendix, Section \ref{sec:quarter_Hoelder}.

\section[Regularity of the Legendre Function]{Regularity of the Legendre Function for $C^{k,\gamma}$ Metrics}
\label{sec:improve_reg}

In this Section we return to the investigation of the Legendre function. We recall that in Section~\ref{sec:Legendre} we transformed the free boundary problem (\ref{eq:varcoeff}) into a fully nonlinear Baouendi-Grushin type equation for the Legendre function $v$. In this section, we will study the regularity of the Legendre function $v$ in terms of the function spaces $X_{\alpha,\epsilon}$ from Section~\ref{sec:holder}.\\
In the whole section we assume that the metrics $a^{ij}$ are $C^{1,\gamma}$ Hölder regular for some $\gamma\in (0,1)$. We start by showing that the Legendre function $v$ (associated with a solution $w$ to \eqref{eq:thin_obst}) is in the space $X_{\alpha,\gamma}$ introduced in Section~\ref{sec:holder}.  This is a consequence of transferring the asymptotics of $w$ (which were derived in Propositions \ref{prop:asym2}, \ref{prop:improved_reg} in Section~\ref{sec:asymp}) to $v$. Here $\alpha$ is the (a priori potentially very small) H\"older exponent of the free boundary from Section~\ref{sec:asymp}.\\
In Section~\ref{subsec:improvement} (c.f. Propositions \ref{prop:error_gain}, \ref{prop:error_gain2}), we exploit the structure of our nonlinear equation to improve the regularity of $v$ from $X_{\alpha,\epsilon}$ to $X_{\delta,\epsilon}$ for any $\delta\in (0,1)$ and $\epsilon \in (0,\gamma]$. In particular this implies that the free boundary $\Gamma_w$ is $C^{1,\delta}$ regular for any $\delta\in (0,1)$. 

\subsection{Asymptotics of the Legendre function}
\label{sec:Leg}
Throughout this section we assume that $w$ solves the thin obstacle problem \eqref{eq:varcoeff} with coefficients $a^{ij}\in C^{k,\gamma}$ for $k\geq 1$ and $\gamma\in (0,1]$. Moreover, we always assume that the conditions (A1)-(A7) are satisfied. We further recall from Section~\ref{sec:Legendre} that the Legendre function $v$ is originally defined on $U=T(B_{1/2}^+)$. However, after a  rescaling procedure we  may assume that $v$ is defined in $\mathcal{B}_2^+$. \\

We first rewrite the asymptotics of $w$ (stated in Corollary~\ref{cor:improved_reg}) in terms of the corresponding Legendre function $v$.

\begin{prop}\label{prop:holder_v}
Let $a^{ij}\in C^{k,\gamma}$ with $k\geq 1$ and $\gamma\in (0,1)$. Let $w$ be a solution to the variable coefficient thin obstacle problem and let $v$ be its Legendre function as defined in \eqref{eq:legendre}. Assume that $w$ satisfies the asymptotic expansion in Proposition~\ref{prop:improved_reg} with some $\alpha\in (0,1)$.
Then for any $\hat y\in \mathcal{B}_1^+$ with $\sqrt{\hat y_n^2+\hat y_{n+1}^2}=:\lambda \in (0,1)$, and any multi-index $\beta$ with $|\beta|\leq k+1$
\begin{align*}
\left[D^{\beta}v-D^{\beta} v_{\hat y''}\right]_{\dot{C}^{0,\gamma}_\ast(\mathcal{B}_{\lambda/4}^+(\hat y))} \leq C(|\beta|,n,p) \max\{\epsilon_0, c_*\} \lambda^{3+2\alpha-\gamma-2|\beta''|-|\beta_n|-|\beta_{n+1}|},
\end{align*}
where $v_{\hat y''}$ is the leading order expansion of $v$ at $\hat y''$ as defined in Lemma~\ref{lem:asymp_profile}.
\end{prop}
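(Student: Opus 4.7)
The strategy is to transfer the non-tangential H\"older asymptotics of $w$ (from Corollary~\ref{cor:improved_reg} and Proposition~\ref{prop:improved_reg}) to Grushin H\"older asymptotics of $v$ via the Hodograph-Legendre transform. Fix $\hat y \in \mathcal{B}_1^+$ with $\lambda := \sqrt{\hat y_n^2 + \hat y_{n+1}^2}$, set $x_0 := T^{-1}(\hat y'', 0, 0)\in \Gamma_w$ and $x_{\hat y} := T^{-1}(\hat y)$. The central dictionary between the two coordinate systems, derived from \eqref{eq:dual} and the asymptotic expansions of $\p_n v$ and $\p_{n+1} v$ in Proposition~\ref{prop:decompI}, is that $x(y) := T^{-1}(y)$ behaves like a square-root map: for $y, y' \in \mathcal{B}_{\lambda/4}(\hat y)$ one has
\[
|x(y) - x(y')| \lesssim \lambda \, d_G(y, y'), \qquad |x(y) - x_0| \sim \lambda^2,
\]
so $x(\mathcal{B}_{\lambda/4}^+(\hat y))$ sits in a Euclidean ball of radius $\sim \lambda^2$ inside the non-tangential cone $\mathcal{N}_{x_0}$.

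\textbf{Step 1: asymptotics in the $x$-picture.} Applying Proposition~\ref{prop:improved_reg} on the relevant Whitney region at scale $\mu = \lambda^2$ yields, for any multi-index $\tilde\beta$ with $|\tilde\beta|\leq k+1$,
\[
[\p^{\tilde\beta}w - \p^{\tilde\beta}\mathcal{W}_{x_0}]_{\dot{C}^{0,\gamma}(x(\mathcal{B}_{\lambda/4}^+(\hat y)))} \lesssim \max\{\epsilon_0, c_\ast\}\,\lambda^{3 + 2\alpha - 2\gamma - 2|\tilde\beta|}.
\]
Converting an Euclidean H\"older seminorm into a Grushin one via $|x(y) - x(y')| \lesssim \lambda\, d_G(y, y')$ costs an extra factor $\lambda^\gamma$, so the composition $y \mapsto (\p^{\tilde\beta} w - \p^{\tilde\beta}\mathcal{W}_{x_0})(x(y))$ has Grushin H\"older seminorm controlled by $\max\{\epsilon_0, c_\ast\}\lambda^{3+2\alpha-\gamma-2|\tilde\beta|}$ on $\mathcal{B}_{\lambda/4}^+(\hat y)$.

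\textbf{Step 2: translation to $v$.} Each derivative $D^\beta v$ is expressed algebraically in terms of the entries of $D^{\leq|\beta|}w$ and $H(w)^{-1}$ evaluated at $x(y)$, by iterating \eqref{eq:dual} together with $H(v) = H(w)^{-1}$, $A(v) = -H(w)^{-1}A(w)$, and the chain rule in $y$. The leading profile $v_{\hat y''}$ from Lemma~\ref{lem:asymp_profile} arises from the identical algebraic expressions with $w$ replaced by $\mathcal{W}_{x_0}$. Differencing and using that $H(w)^{-1}$ and $H(\mathcal{W}_{x_0})^{-1}$ both have size $\sim \lambda$ (since $\det DT \sim -\lambda^{-2}$ by \eqref{eq:jacobi}, while individual entries of $H(w)$ have size $\sim \lambda^{-1}$), every term in the resulting expansion is a product in which exactly one factor is of the form $D^{\tilde\beta}w - D^{\tilde\beta}\mathcal{W}_{x_0}$ (estimated in Step 1) and the remaining factors are controlled pointwise by the asymptotics in Remark~\ref{rmk:normal}. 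Tracking the $\lambda$-weights shows that each $\p_{y_i}$, $i \leq n-1$, removed from $v$ contributes $\lambda^{-2}$ and each $\p_{y_n}$, $\p_{y_{n+1}}$ removed contributes $\lambda^{-1}$, which converts the exponent $3+2\alpha-\gamma-2|\tilde\beta|$ from Step~1 into the target exponent $3+2\alpha-\gamma-2|\beta''|-|\beta_n|-|\beta_{n+1}|$.

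\textbf{Step 3: inductive bookkeeping.} The main technical obstacle is this power-counting for $|\beta|\geq 2$: differentiating $H(w)^{-1}$ in $y$ via $\p_y(H(w)^{-1}) = -H(w)^{-1}(\p_y H(w))H(w)^{-1}$ generates nested products of $D^2 w$, $D^3 w$ and $H(w)^{-1}$, and one must verify that every such product has the same net Grushin weight. I would proceed by induction on $|\beta|$. The base cases $|\beta|\leq 1$ follow directly from \eqref{eq:dual} (combined with Proposition~\ref{prop:improved_reg}), while the inductive step uses the derivative identities together with the asymptotic sizes of $D^{\tilde\beta}\mathcal{W}_{x_0}$ in Remark~\ref{rmk:normal} and the observation that each factor in the expansion is either H\"older-controlled by the inductive hypothesis (when the $w - \mathcal{W}_{x_0}$ factor is differentiated) or comparable to its pointwise profile $D^{\tilde\beta}\mathcal{W}_{x_0}$ (otherwise). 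No input beyond the Whitney-scale asymptotics of $w$ is required, so the induction closes and yields the desired Grushin H\"older estimate on $\mathcal{B}_{\lambda/4}^+(\hat y)$.
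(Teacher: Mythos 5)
Your proof follows essentially the same strategy as the paper's: express $D^{\beta}v$ algebraically via the Hodograph--Legendre chain rule in terms of $D^{\tilde\alpha}w$, and then transfer the Whitney-scale H\"older asymptotics of $w$ from Proposition~\ref{prop:improved_reg} / Corollary~\ref{cor:improved_reg} to Grushin H\"older asymptotics of $v$ through the near-square-root map $T^{-1}$.

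The main implementation difference is that the paper's Step~1 first rescales, setting $\hat v_{\hat y'',\lambda}(\zeta)=\lambda^{-3}\bigl(v(\hat y''+\delta_\lambda\zeta)-\p_nv(\hat y'')\lambda\zeta_n\bigr)$ and identifying $\hat v_{\hat y'',\lambda}$ as the Legendre function of $w_{x_0,\lambda^2}$. This turns the estimate into a unit-scale comparison (\eqref{eq:deri_v} with the single factor $\lambda^{2\alpha}$ coming from Corollary~\ref{cor:improved_reg}), so that all of the $\lambda$-weights you track by hand in your Steps~2--3 become $O(1)$ and the chain-rule algebra $F_\beta$ is just a fixed analytic function of its arguments, handled by a straight Taylor estimate with no nested power-counting or induction. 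Your inductive bookkeeping would close if carried out carefully, but the rescaling avoids it entirely. One point worth flagging: you derive the dictionary $|x(y)-x(y')|\lesssim\lambda\,d_G(y,y')$ by appealing to the expansions of $\p_nv$, $\p_{n+1}v$ in Proposition~\ref{prop:decompI}, but that proposition characterizes functions already known to lie in $X_{\alpha,\epsilon}$, and $v\in X_{\alpha,\epsilon}$ is precisely what this proposition is meant to feed into (via Proposition~\ref{prop:regasymp}) --- so the citation is circular. The dictionary should instead be read off directly from $T^{-1}(y)=(y'',-\p_nv(y),-\p_{n+1}v(y))$ together with the second-order asymptotics of $w$ in Propositions~\ref{prop:asym2} and~\ref{prop:improved_reg} (equivalently the Jacobian estimate \eqref{eq:jacobi}), which is how the paper justifies the ``close to the square root mapping'' claim in its Step~2.
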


\begin{proof}
We argue in three steps in which we successively simplify the problem:\\

\emph{Step 1: First reduction -- Scaling.} Given $\hat y\in \mathcal{B}_1^+$ with $\sqrt{\hat y_n^2+\hat y_{n+1}^2}=\lambda>0$, we project $\hat y$ onto $P=\{y_n=y_{n+1}=0\}$ and let $\hat y''=(\hat y'',0,0)$ denote the projection point.
Let 
$$\hat v_{\hat y'',\lambda}(\zeta):=\frac{v(\hat y''+\delta_\lambda( \zeta))-\p_nv(\hat y'')(\lambda \zeta_n)}{\lambda^3}, $$
with $\delta_\lambda (\zeta)=(\lambda^2\zeta'',\lambda \zeta_n,\lambda \zeta_{n+1})$. 
We note that 
$\hat v_{\hat y'',\lambda}$ is the Legendre function for $w_{x_0,\lambda^2}(\xi):=w(x_0+\lambda^2\xi)/\lambda^3$ with $x_0=T^{-1}(\hat y'')$ . We set 
$$\hat \zeta=\left(0,\frac{\hat y_n}{\lambda}, \frac{\hat y_{n+1}}{\lambda}\right)\in B''_1\times \mathcal{S}^1.$$ 
With this rescaling, it suffices to show that
\begin{align}\label{eq:deri_v}
\left[ D^{\beta} \hat v_{\hat y'',\lambda}- D^{\beta} \hat v_{\hat y''} \right]_{\dot{C}^{0,\gamma}_\ast(\mathcal{B}_{1/4}^+(\hat \zeta))}\leq C(|\beta|,n,p)  \max\{\epsilon_0, c_*\} \lambda^{2\alpha},
\end{align}
where $\hat v_{\hat{y}''}$ denotes the Legendre function for $w_{x_0}(\xi)=\lim_{\lambda\rightarrow 0_+} w_{x_0,\lambda^2}(\xi)$. 
Indeed, the conclusion of Proposition \ref{prop:holder_v} then follows by undoing the rescaling in \eqref{eq:deri_v}.\\

\emph{Step 2: Second reduction.} Given any multi-index $\beta$ with $|\beta |\leq k+1$, it is possible to express $D^{\beta}\hat v_{\hat y'',\lambda}$ as a function of $w_{x_{0},\lambda^2}$ and its derivatives: 
$$D^\beta \hat v_{\hat y'',\lambda}(y)= F_\beta(D^{\tilde{\alpha}} w_{x_{0},\lambda^2}(x))\big|_{x=(T^{w_{x_0,\lambda^2}})^{-1}(y)},\quad |\tilde{\alpha}|\leq |\beta|.$$ 
Here $F_\beta$ is an analytic function on the open set $\{J(w_{x_{0},\lambda^2})\neq 0\}$. Let $\hat \xi=(T^{w_{x_{0},\lambda^2}})^{-1}(\hat \zeta)$. We note that a sufficiently small choice of $\lambda_0\in(0,1)$ implies that our change of coordinates is close to the square root mapping (c.f. Proposition \ref{prop:asym2}). Combining this with the observation that on scales of order one (i.e. when $\zeta_n^2+\zeta_{n+1}^2\sim 1$) the Baouendi-Grushin metric is equivalent to the Euclidean metric, results in the inclusions $(T^{w_{x_{0},\lambda^2}})^{-1}(\mathcal{B}_{1/4}(\hat \zeta)), (T^{w_{x_{0}}})^{-1}(\mathcal{B}_{1/4}(\hat \zeta))\subset B_{1/2}(\hat \xi)$ and $B_{3/4}(\hat \xi)\cap \Gamma_{w_{x_{0},\lambda^2}}=\emptyset$, $B_{3/4}(\hat \xi)\cap \Gamma_{w_{x_{0}}}=\emptyset$. Thus, to show \eqref{eq:deri_v}, it then suffices to prove
\begin{align}\label{eq:multi_deri}
\left[F_\beta(D^{\tilde{\alpha}} w_{x_{0},\lambda^2} )- F_\beta(D^{\tilde{\alpha}} \mathcal{W}_{x_0}(x_0 +\cdot))\right]_{\dot{C}^{0,\gamma}(B_{1/2}^+(\hat \xi))} \leq C(|\beta|,n,p)  \max\{\epsilon_0, c_*\} \lambda^{2\alpha},
\end{align}
and 
\begin{align}\label{eq:inverse_T}
\|(T^{w_{x_0,\lambda^2}})^{-1}-(T^{w_{x_0}})^{-1}\|_{C^{1}(\mathcal{B}^+_{3/4})}\leq C  \max\{\epsilon_0, c_*\} \lambda^{2\alpha}
\end{align}
for $\lambda \in (0,\lambda_0)$.
Indeed, once we have obtained \eqref{eq:multi_deri}-\eqref{eq:inverse_T}, \eqref{eq:deri_v} follows by the equivalence of the Euclidean and Baouendi-Grushin geometries at scales of order one and a triangle inequality.\\

\emph{Step 3: Proof of (\ref{eq:multi_deri}) and conclusion.}
To show \eqref{eq:multi_deri}, we use a Taylor expansion and write
$$F_\beta(D^{\tilde{\alpha}} w_{x_{0},\lambda^2})-F_\beta(D^{\tilde{\alpha}} \mathcal{W}_{x_0}(x_0 + \cdot))=R_{\tilde{\alpha}} (w_{x_{0},\lambda^2})(D^{\tilde{\alpha}} w_{x_{0},\lambda^2}-D^{\tilde{\alpha}} \mathcal{W}_{x_0}(x_0 + \cdot)),$$
where
$$R_{\tilde{\alpha}}(w_{x_{0},\lambda^2})(x)=\int_0^1\p_{m_{\tilde{\alpha}}}F_\beta(tD^{\tilde{\alpha}}w_{x_{0},\lambda^2}(x)+(1-t)D^{\tilde{\alpha}}\mathcal{W}_{x_0}(x_0 + x))dt.$$
Since $-C\leq J(tw_{x_{0},\lambda^2}+(1-t)\mathcal{W}_{x_0})\leq -c$ in $B_{1/2}^+(\hat \xi)$ and since $w_{x_{0},\lambda^2}, \mathcal{W}_{x_0}(x_0 + \cdot) \in C^{k+1,\gamma}(B_{1/2}^+(\hat \xi))$, we have that $R_{\tilde{\alpha}}(w_{x_{0},\lambda^2})\in C^{0, \gamma}(B_{1/2}^+(\hat \xi))$ with uniform bounds in $\lambda$. 
Next we recall that by Corollary \ref{cor:improved_reg} 
$$\left[D^{\tilde{\alpha}}w_{x_{0},\lambda^2}-D^{\tilde{\alpha}}\mathcal{W}_{x_0}(x_0 + \cdot) \right]_{\dot{C}^{0,\gamma}(B_{1/4}^+(\hat{\xi}))} \leq C(\beta)\max\{ \epsilon_0, c_{\ast}\} \lambda^{2\alpha}.$$ Combining this with the fact that $R_{\tilde{\alpha}}(w_\tau)(\xi)\in C^{0, \gamma}(B_{1/4}^+(\hat \xi))$, yields \eqref{eq:multi_deri}.
To show \eqref{eq:inverse_T}, we first observe that $\|T^{w_{x_0,\lambda^2}}-T^{w_{x_0}}\|_{C^1(B_{3/4}(\hat \xi))}\leq C  \max\{\epsilon_0, c_*\}\lambda^{2\alpha}$ by Corollary~\ref{cor:improved_reg} and the definition of $T$. Then using the uniform boundedness of $\|D (T^{w_{x_0,\lambda^2}})^{-1}\|_{L^\infty(\mathcal{B}^+_{1/2}(\hat \xi))}$ and $\|D (T^{w_{x_0}})^{-1}\|_{L^\infty(\mathcal{B}^+_{1/2}(\hat \xi))}$ we obtain the desired estimate.
\end{proof}

Using the spaces from Definition \ref{defi:spaces}, we apply Proposition \ref{prop:holder_v} to quantify the regularity and asymptotics of our Legendre function:

\begin{prop}
\label{prop:regasymp}
Under the assumptions of Proposition~\ref{prop:holder_v} we have $v\in X_{\alpha, \mu}(\mathcal{B}_{1}^+)$ 
for all $\mu\in(0,\gamma]$. In particular, for $y_0\in P\cap \mathcal{B}_{1}$ there exist functions $C_{k\ell}\in C^{0,\gamma}_{\ast}(\mathcal{B}_{1}^+(y_0))$ with $C_{k l}(y'',0,0)=0$ for all $k,l \in \{1,\dots,n+1\} $ such that the following asymptotics are valid:
\begin{align*}
\p_{ij}v(y)& = (y_n^2+y_{n+1}^2)^{-1}d_G(y,y_0)^{1+2\alpha-\gamma}C_{ij}(y),\\
\p_{in}v(y)&=\frac{(e_i\cdot \nu_{T^{-1}(y_0)})}{(e_n\cdot \nu_{T^{-1}(y_0)})}+d_G(y,y_0)^{2\alpha-\gamma}C_{in}(y),\\
\p_{i,n+1}v(y)&= d_G(y,y_0)^{2\alpha-\gamma}C_{i,n+1}(y),\\
\begin{pmatrix}
\partial_{nn} v(y) & \partial_{n,n+1} v(y)\\
\partial_{n+1,n} v(y) & \partial_{n+1,n+1}v(y)
\end{pmatrix}
&=\begin{pmatrix} a_0(y_0) y_n & a_1(y_0)y_{n+1}\\
a_1(y_0)y_{n+1}& a_1(y_0)y_n
\end{pmatrix} \\
& \quad + d_G(y,y_0)^{1+2\alpha- \gamma}\begin{pmatrix} C_{nn}(y)& C_{n,n+1}(y)\\
C_{n,n+1}(y)& C_{n+1,n+1}(y)
\end{pmatrix} .
\end{align*}
Here $i,j\in\{1,\dots,n-1\}$.
\end{prop}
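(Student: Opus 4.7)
The plan is to deduce $v\in X_{\alpha,\mu}(\mathcal{B}_1^+)$ via the equivalent characterization of Proposition~\ref{prop:decompI} (c.f. Remark~\ref{rmk:characterize}), using the explicit leading polynomial $v_{\bar y}$ of Lemma~\ref{lem:asymp_profile} as the approximating polynomial $P_{\bar y}$ in Definition~\ref{defi:spaces}, and to read off the asymptotics of the second derivatives from this polynomial together with the error estimates of Proposition~\ref{prop:holder_v}.

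First, for each $\bar y\in P\cap \mathcal{B}_1$ I would identify $P_{\bar y}$ with $v_{\bar y}$. A direct computation of derivatives of $v_{\bar y}$ yields
\begin{align*}
\p_n v(\bar y) &= -g(\bar y), \quad \p_{in}v(\bar y) = \frac{(\nu_{x_0})_i}{(\nu_{x_0})_n},\\
\p_{nnn}v(\bar y) &= -\tfrac{8A^3}{9a(x_0)^2}, \quad \p_{n,n+1,n+1}v(\bar y) = \tfrac{8AB}{9a(x_0)^2},
\end{align*}
with $A=(\nu_{x_0}\cdot A(x_0)\nu_{x_0})/(\nu_{x_0})_n$, $B=a^{n+1,n+1}(x_0)$. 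By Proposition~\ref{prop:asym2} and Remark~\ref{rmk:convention}, the maps $\bar y\mapsto a(x_0),\nu_{x_0},A(x_0)$ are $C^{0,\gamma}$ on $P$, so the coefficients of $P_{\bar y}$, and in particular $a_0(\bar y)=\p_{nnn}v(\bar y)$, $a_1(\bar y)=\p_{n,n+1,n+1}v(\bar y)$, are $C^{0,\gamma}$ on $P$.

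Second, I would estimate the remainder $v-P_{\bar y}$ via a dyadic decomposition in Baouendi-Grushin shells centered at $\bar y$. For $\hat y$ with $\lambda:=\sqrt{\hat y_n^2+\hat y_{n+1}^2}\sim d_G(\hat y,\bar y)$, Proposition~\ref{prop:holder_v} provides a Euclidean $C^{0,\gamma}$ semi-norm bound on $D^{\beta}v-D^{\beta}v_{\hat y''}$ of order $\lambda^{3+2\alpha-\gamma-2|\beta''|-|\beta_n|-|\beta_{n+1}|}$ on $\mathcal{B}_{\lambda/4}^+(\hat y)$, which at this fixed scale is equivalent to a $C^{0,\gamma}_{\ast}$ bound on the same ball; integrating from $\bar y$ outward yields the corresponding pointwise $L^\infty$ estimates. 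Swapping $v_{\hat y''}$ with $v_{\bar y}=P_{\bar y}$ introduces a polynomial correction whose coefficients vary $C^{0,\gamma}$ in the base point and which is absorbed into the remainder terms in the decomposition of Proposition~\ref{prop:decompI}. Assembling these estimates over all dyadic scales and trading a factor $d_G(\cdot,\bar y)^{\gamma-\mu}$ to exchange the Hölder exponent $\gamma$ for $\mu\in(0,\gamma]$ gives $\|v\|_{X_{\alpha,\mu}(\mathcal{B}_1^+)}<\infty$, and at $\mu=\gamma$ yields the sharp asymptotic decay rate stated in the proposition. The explicit forms of the second derivatives then follow by writing $\p^{\beta}v=\p^{\beta}(v-v_{\bar y})+\p^{\beta}v_{\bar y}$, computing $\p^{\beta}v_{\bar y}$ directly (which vanishes for purely tangential $i,j\in\{1,\dots,n-1\}$, gives $\nu_i/\nu_n$ for $\p_{in}$, vanishes for $\p_{i,n+1}$, and produces the affine forms $a_0 y_n$, $a_1 y_{n+1}$, $a_1 y_n$ in the normal block), and absorbing the remainder into the $C_{kl}$. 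The prefactor $(y_n^2+y_{n+1}^2)^{-1}$ in the tangential-tangential asymptotics reflects the identity $Y_iY_j=(y_n^2+y_{n+1}^2)\p_{ij}$ for $i,j\in\{1,\dots,n-1\}$, so that a $C^{0,\gamma}_{\ast}$-bound on $Y_iY_j v$ translates to an estimate on $\p_{ij}v$ with that factor; the vanishing $C_{kl}(y'',0,0)=0$ is immediate from the fact that $D^\beta(v-v_{\bar y})$ vanishes at $\bar y\in P$.

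The principal technical obstacle is the translation between the Euclidean Hölder estimates supplied by Proposition~\ref{prop:holder_v} and the intrinsic Baouendi-Grushin Hölder norms built into $X_{\alpha,\mu}$, together with the triangle-inequality bookkeeping when comparing the error $v-P_{\bar y}$ across distinct dyadic shells centered at $\bar y$ (the approximating polynomial $v_{\hat y''}$ entering Proposition~\ref{prop:holder_v} depends on the shell). The passage from the Hölder rate $\gamma$ provided by Proposition~\ref{prop:holder_v} to any $\mu\in(0,\gamma]$ is handled by the weight $d_G(\cdot,\bar y)^{-(1+2\alpha-\mu)}$, which absorbs precisely the exchange of Hölder exponents.
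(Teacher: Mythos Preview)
Your approach is essentially the same as the paper's: identify the approximating polynomial $P_{\bar y}$ with the explicit profile $v_{\bar y}$ from Lemma~\ref{lem:asymp_profile}, use Proposition~\ref{prop:holder_v} on dyadic Baouendi--Grushin shells together with a chain-of-balls argument to obtain the estimates in the non-tangential region, and then extend to the full neighborhood by the triangle inequality using the H\"older regularity of the coefficients along $P$ to switch base points from $\hat y''$ to $\bar y$. One small slip: Proposition~\ref{prop:asym2} only gives $a(x_0),\nu_{x_0}\in C^{0,\alpha}$ along the free boundary (with the exponent $\alpha$ from the asymptotic expansion), not $C^{0,\gamma}$; this is all that is needed, and it is the exponent the paper uses in its triangle-inequality step.
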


\begin{rmk}
We emphasize that in the expression for $\p_{ij}v$ it is not possible to replace $(y_n^2 + y_{n+1}^2)^{-1}$ by $d_G(y,y_0)^{-2}$. This is in analogy with Remark \ref{rmk:improved_reg}.
\end{rmk}

\begin{proof}
For each $y_0\in P$ the proof of the asymptotics in the non-tangential cone $\mathcal{N}_G(y_0)$ follows directly from Proposition~\ref{prop:holder_v}, Lemma \ref{lem:asymp_profile} (which yields the explicit expressions of the leading order asymptotic expansions) and a chain of balls argument. In order to obtain the asymptotic expansion in the whole of $\mathcal{B}_{1}^+(y_0)$, we use the regularity of the coefficient functions and the triangle inequality. We only present the argument for $\p_{in}v$, since the reasoning for the other partial derivatives is analogous. Hence, let $y\in \mathcal{B}_{1}^+(y_0)$ but $y\notin \mathcal{N}_G(y_0)$. Let $\bar{y}\in P\cap \mathcal{B}_1$ denote the projection of $y$ onto $P$. Then, by the triangle inequality, we may assume that $d_G(y,\bar{y}), d_G(\bar{y},y_0)\leq C d_G(y,y_0)$. Thus, by virtue of the regularity of $\nu_{T^{-1}(y_0)}$, we have that
\begin{align*}
\left|\p_{in}v(y)- \frac{(e_i\cdot \nu_{T^{-1}(y_0)})}{(e_n\cdot \nu_{T^{-1}(y_0)})} \right| &\leq \left| \p_{in}v(y) -\frac{(e_i\cdot \nu_{T^{-1}(\bar{y})})}{(e_n\cdot \nu_{T^{-1}(\bar{y})})}\right| \\
& \quad + \left| \frac{(e_i\cdot \nu_{T^{-1}(\bar{y})})}{(e_n\cdot \nu_{T^{-1}(\bar{y})})} -\frac{(e_i\cdot \nu_{T^{-1}({y_0})})}{(e_n\cdot \nu_{T^{-1}({y_0})})} \right|\\
& \leq C d_G(y,\bar{y})^{2\alpha} +C|\bar{y}- y_0|^{\alpha} \leq C d_G(y,y_0)^{2\alpha}.
\end{align*}
The Hölder estimates are analogous.
\end{proof}

\begin{rmk}
\label{rmk:close}
For later reference we conclude this section by noting that the closeness condition (A5) (and the asymptotics from Proposition \ref{prop:holder_v}) implies that 
$$\left\| v- v_{0}\right\|_{X_{\alpha,\epsilon}(\mathcal{B}_{1}^+)} \leq C\max\{\epsilon_0,c_\ast\},$$ 
where $v_0(y)= -\frac{1}{3}(y_n^3-3y_ny_{n+1}^2)$ is the leading order expansion of $v$ at the origin.
This will be used in the perturbation argument in Section \ref{sec:grushin} and in the application of the implicit function theorem in Section \ref{sec:IFT1}.
\end{rmk}

\subsection{Improvement of regularity}
\label{subsec:improvement}
In this section we present a bootstrap argument to infer higher regularity of the Legendre function. By virtue of the previous section, we have that $v\in X_{\alpha,\epsilon}(\mathcal{B}_1^+)$ for some potentially very small value of $\alpha\in(0,\gamma]$. In this section we improve this regularity modulus further by showing that $\alpha$ can be chosen arbitrarily close to one. To this end we argue in two steps: By an expansion, we first identify the structure of $F$ in terms of a leading order linear operator and additional higher order controlled contributions (c.f. Proposition \ref{prop:error_gain}). Then in a second step, we use this to bootstrap regularity (c.f. Proposition \ref{prop:error_gain2}). \\

In the sequel we use the following abbreviations:
\begin{align*}
G^{ij}(v)&:=-\det\begin{pmatrix}
\p_{ij}v& \p_{in}v & \p_{i,n+1}v\\
\p_{jn}v& \p_{nn}v & \p_{n,n+1}v\\
\p_{j,n+1}v & \p_{n,n+1}v &\p_{n+1,n+1}v
\end{pmatrix}, \ i,j\in\{1,\dots,n-1\},\\
G^{i,n}(v)&:=2\det\begin{pmatrix}
\p_{in}v & \p_{i,n+1}v\\
\p_{n,n+1}v & \p_{n+1,n+1}v
\end{pmatrix}, \ i\in\{1,\dots,n-1\},\\
G^{i,n+1}(v)&:=2\det\begin{pmatrix}
\p_{i,n+1}v & \p_{in}v\\
\p_{n,n+1}v & \p_{nn}v
\end{pmatrix}, \ i\in\{1,\dots,n-1\},\\
G^{n,n}(v)&:=\p_{n+1,n+1}v,\\
G^{n+1,n+1}(v)&:=\p_{nn}v,\\
G^{n,n+1}(v)&:=-\p_{n,n+1} v,\\
J(v)&:=\det\begin{pmatrix}
\p_{nn}v &\p_{n,n+1}v\\
\p_{n,n+1}v &\p_{n+1,n+1}v
\end{pmatrix}.
\end{align*}
With slight abuse of notation, we thus interpret $G^{ij}$ and $J$ as functions from the symmetric matrices $\R^{(n+1)\times(n+1)}_{sym}$ to $\R$ and recall the notation for partial derivatives of $G^{ij}$ with respect to the components $m_{k\ell}$ from Section \ref{sec:notation}:
\begin{align*}
\p_{m_{k\ell}}G^{ij}(M)=\frac{\p G^{ij}(M)}{\p m_{k\ell}}, \ M=(m_{k\ell})\in \R^{(n+1)\times (n+1)}_{sym}.
\end{align*}
With these conventions, the nonlinear equation \eqref{eq:nonlineq1} from Section~\ref{sec:Legendre}, which is satisfied by $v$, turns into
\begin{align}\label{eq:nonlin_2}
F(v,y):=\sum_{i,j=1}^{n+1}\tilde{a}^{ij}(y)G^{ij}(v)-J(v)\left(\sum_{j=1}^{n-1}\tilde{b}^j\p_j v+\tilde{b}^ny_n+\tilde{b}^{n+1}y_{n+1}\right)=0.
\end{align}

Relying on this structure, we derive a (self-improving) linearization. More precisely,
for each $y_0\in P\cap \mathcal{B}_{1}^+$, we will linearize the equation at $v_{y_0}$, where
\begin{equation}\label{eq:v0}
\begin{split}
v_{y_0}(y)&=\p_nv(y_0)y_n+\sum_{i=1}^{n-1}\p_{in}v(y_0)(y_i-(y_0)_i)y_n\\
&+\frac{\p_{nnn}v(y_0)}{6}y_n^3+\frac{\p_{n,n+1,n+1}v(y_0)}{2}y_ny_{n+1}^2,
\end{split}
\end{equation}
is the up to order three asymptotic expansion of $v$ at $y_0$. 
The linearization then leads to the following self-improving structure:

\begin{prop}\label{prop:error_gain}
Let $a^{ij}(x)\in C^{1,\gamma}$ for some $\gamma\in (0,1]$.
Assume that $v\in X_{\alpha,\epsilon}(\mathcal{B}_{1}^+)$ with $\alpha\in (0,1]$ solves $F(v,y)=0$. Then at each point $y_0\in P\cap \mathcal{B}_{1/2}^+$, we have the following expansion in $\mathcal{B}_r^+(y_0)$ with $0<r<1/2$: 
\begin{align*}
F(v,y)=L_{y_0}v + P_{y_0}(y)+E_{y_0}(y).
\end{align*}
Here 
$$L_{y_0}v=\tilde{a}^{ij}(y_0)\p_{m_{k\ell}}G^{ij}(v_{y_0})\p_{k\ell}v=D_vF\big|_{(v,y)=(v_{y_0},y_0)} v,$$  
$$P_{y_0}(y)=\tilde{a}^{ij}(y_0)\left(G^{ij}(v_{y_0})-\p_{m_{k\ell}}G^{ij}(v_{y_0})\p_{k\ell}v_{y_0}\right)\in \mathcal{P}_{1}^{hom},$$
$P_{y_0}(y)$ is of the form $c_0(y_0)y_n$ and $E_{y_0}(y)$ is an error term satisfying
\begin{equation*}
\begin{split}
&\left\|d_G(\cdot,y_0)^{-\eta_0}E_{y_0}\right\|_{L^\infty(\mathcal{B}_{1/2}^+(y_0))}+\left[d_G(\cdot,y_0)^{-(\eta_0-\epsilon)}E_{y_0}\right]_{\dot{C}^{0,\epsilon}_\ast(\mathcal{B}_{1/2}^+(y_0))}\leq C,
\end{split}
\end{equation*}
for $\eta_0=\min\{1+4\alpha,3\}$.
\end{prop}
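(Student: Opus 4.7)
The plan is to Taylor expand $F(v,y)$ around $(v_{y_0},y_0)$ in the $v$-variable and reorganize the result. Starting from \eqref{eq:nonlin_2} with the abbreviation $B(v,y):=\sum_{j=1}^{n-1}\tilde{b}^j(y)\partial_j v+\tilde{b}^n(y)y_n+\tilde{b}^{n+1}(y)y_{n+1}$, one has the algebraic identity $F(v,y)-L_{y_0}v-P_{y_0}(y)=E_1+E_2+E_3$, with
\begin{align*}
E_1 &:= (\tilde a^{ij}(y)-\tilde a^{ij}(y_0))G^{ij}(v),\\
E_2 &:= \tilde a^{ij}(y_0)\big[G^{ij}(v)-G^{ij}(v_{y_0})-\partial_{m_{k\ell}}G^{ij}(v_{y_0})\partial_{k\ell}(v-v_{y_0})\big],\\
E_3 &:= -J(v)B(v,y).
\end{align*}

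\textbf{Identification of $P_{y_0}$.} Each $G^{ij}$ is homogeneous in $M$ of integer degree $d_{ij}\in\{1,2,3\}$: degree $3$ for $i,j\leq n-1$ (a $3\times 3$ determinant), degree $2$ for $G^{i,n}$ and $G^{i,n+1}$ with $i\leq n-1$, and degree $1$ for $G^{nn},G^{n+1,n+1},G^{n,n+1}$. Euler's identity then gives $\partial_{m_{k\ell}}G^{ij}(v_{y_0})\partial_{k\ell}v_{y_0}=d_{ij}\,G^{ij}(v_{y_0})$, whence $P_{y_0}(y)=\sum\tilde a^{ij}(y_0)(1-d_{ij})G^{ij}(v_{y_0})$. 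Using the explicit form of $v_{y_0}$ from \eqref{eq:v0}, each $G^{ij}(v_{y_0})$ is a Grushin--homogeneous polynomial of degree $1$ and is a linear combination of $y_n$ and $y_{n+1}$; the $y_{n+1}$-contributions come only from $G^{i,n+1}(v_{y_0})$ and $G^{n,n+1}(v_{y_0})$. Their coefficients $\tilde a^{i,n+1}(y_0),\tilde a^{n,n+1}(y_0)$ vanish on $P\ni y_0$ by the off-diagonal normalization (A3), leaving $P_{y_0}(y)=c_0(y_0)y_n$.

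\textbf{Estimate of $E_{y_0}$.} The decay bounds are extracted from Proposition \ref{prop:decompI} together with the $C^{1,\gamma}$-regularity of $a^{ij}$. The decisive observation for $E_3$ is that $B(v,y_0)=0$: Proposition \ref{prop:decompI}(ii) yields $\partial_j v(y_0)=0$ for $j\leq n-1$, while $(y_0)_n=(y_0)_{n+1}=0$ kills the remaining terms. Combined with $|J(v)|\leq C d_G(y,y_0)^2$ (from the leading form of $\partial_{nn}v,\partial_{n,n+1}v,\partial_{n+1,n+1}v$), this produces $|E_3|\leq C d_G(y,y_0)^3$. For $E_1$, Proposition \ref{prop:decompI}(ii) gives $|y''-y_0''|\leq C d_G(y,y_0)^2$, $|\partial_n v(y)-\partial_n v(y_0)|\leq C d_G(y,y_0)^2$, $|\partial_{n+1}v(y)|\leq C d_G(y,y_0)^2$; the Lipschitz bound on $a^{ij}$ then forces $|\tilde a^{ij}(y)-\tilde a^{ij}(y_0)|\leq C d_G(y,y_0)^2$, and bounding $|G^{ij}(v)|\leq C d_G(y,y_0)$ (using the determinantal expansion and the fact that the singular tangential factor $\partial_{ij}v$ enters only paired with its cofactor $J(v)$ of Grushin order two) yields $|E_1|\leq C d_G(y,y_0)^3$. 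For $E_2$, the polynomial nature of $G^{ij}$ (degree at most three in $M$) reduces the Taylor remainder to a sum of quadratic and cubic terms in $\delta m_{k\ell}:=\partial_{k\ell}(v-v_{y_0})$ with coefficients that are bounded polynomial expressions in the entries of $v_{y_0}$. Proposition \ref{prop:decompI}(ii) provides $|\delta m_{k\ell}|\leq C d_G(y,y_0)^{1+2\alpha}$ for $k,\ell\in\{n,n+1\}$, $|\delta m_{k,n}|,|\delta m_{k,n+1}|\leq C d_G(y,y_0)^{2\alpha}$ for $k\leq n-1$, and the singular tangential entry $|\delta m_{ij}|\leq C r^{-1+2\alpha}$ (for $i,j\leq n-1$) appears inside the determinant only multiplied by an entry of $v_{y_0}$ of size $\leq Cr$; consequently every resulting product is bounded by $C d_G(y,y_0)^{1+4\alpha}$. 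Taking the worst of the three contributions gives the pointwise bound with $\eta_0=\min\{1+4\alpha,3\}$.

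\textbf{Hölder seminorm and main obstacle.} The weighted $\dot C^{0,\epsilon}_\ast$-estimate for $d_G(\cdot,y_0)^{-(\eta_0-\epsilon)}E_{y_0}$ is obtained by applying the product and composition rules for the generalized Hölder spaces to the same three decompositions, invoking the $C^{0,\epsilon}_\ast$-regularity of the remainder functions $C_{ij},V_j,C_1$ in Proposition \ref{prop:decompI} and the $\dot C^{0,\gamma}$-regularity of $\nabla a^{ij}$. I expect the principal obstacle to lie in the bookkeeping for $E_2$: the singular factor $\partial_{ij}v\sim r^{-1+2\alpha-\epsilon}$ must be shown to always pair (inside the $3\times 3$ determinant) either with its cofactor $J(v)\sim r^2$ or with an entry of $v_{y_0}$ of size $\sim r$, so that the net contribution stays regular up to $P$ and satisfies the required decay uniformly in $y_0$. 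Together with the vanishing of $B(v,y_0)$ on $P$, which upgrades the naive $d_G(\cdot,y_0)^2$-bound on $E_3$ to $d_G(\cdot,y_0)^3$, and the off-diagonal assumption (A3) responsible for the simple form $c_0(y_0)y_n$ of $P_{y_0}$, this compensation is what drives the gain encoded in the exponent $\eta_0=\min\{1+4\alpha,3\}$.
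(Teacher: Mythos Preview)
Your proposal is correct and follows essentially the same route as the paper: the decomposition $E_{y_0}=E_1+E_2+E_3$ coincides with the paper's splitting into the coefficient error $E_2^{y_0,ij}G^{ij}(v)$, the nonlinearity remainder $\tilde a^{ij}(y_0)E_1^{y_0,ij}$, and the lower order term, and the bounds you extract from Proposition~\ref{prop:decompI} match those in the paper's Steps~1a--1c and Step~2. Your use of Euler's identity to identify $P_{y_0}$ is a tidy addition not spelled out in the paper, and your observation about the pairing of the singular factor $\partial_{ij}(v-v_{y_0})$ inside the determinant is exactly what underlies the paper's explicit expansion \eqref{eq:det}.
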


\begin{rmk}[The role of $\alpha$, $2\alpha$ and $4\alpha$]
As already seen in Proposition~\ref{prop:decompI}, the parameter $\alpha$ in the space $Y_{\alpha,\epsilon}$ refers to the (tangential) H\"older regularity (w.r.t. the Euclidean metric) of the quotient $\frac{f}{y_n}\big|_P$ for any function $f\in Y_{\alpha,\epsilon}$. The parameter $2\alpha$ originates from the different scalings of the Euclidean metrics and Baouendi-Grushin metrics (c.f. Remark~\ref{rmk:equi_dist}). More precisely, for any $y''_1,y''_2\in P$, $|y''_1-y''_2|\sim d_G(y''_1,y''_2)^{2}$ by Remark~\ref{rmk:equi_dist}, which accounts for the $2\alpha$ in the definition of the norm $\|f\|_{Y_{\alpha,\epsilon}}=\sup_{\bar y\in P}[d_G(\cdot, \bar y)^{-(1+2\alpha-\epsilon)}(f-P_{\bar y})]_{\dot{C}^{0,\epsilon}_\ast}$. The parameter $4\alpha$ in Proposition~\ref{prop:error_gain} indicates an \emph{improvement} of the tangential regularity of $L_{y_0}v/y_n$ at $y_0$ from $C^{0,\alpha}$ to $C^{0,2\alpha}$. 
\end{rmk}

\begin{rmk}\label{rmk:error_gain3}
By using the explicit expression of $v_{y_0}$ and $G^{ij}(v)$, it is possible to compute the form of the leading order operator:
\begin{align*}
L_{y_0}&:=\sum_{i,j=1}^{n+1}\tilde{a}^{ij}(y_0)\p_{m_{k\ell}}G^{ij}(v_{y_0})\p_{k\ell}\\
&=\sum_{i,j=1}^{n-1}\tilde{a}^{ij}\left((A_1)^2y_{n+1}^2-A_0A_1y_n^2\right)\p_{ij}\\
&\quad +2\sum_{i,j=1}^{n-1}\tilde{a}^{ij}\left(B_jA_1y_n\p_{in}-B_jA_1y_{n+1}\p_{i,n+1}\right)+\sum_{i,j=1}^{n-1}\tilde{a}^{ij}B_iB_j\p_{n+1,n+1}\\
&\quad +2\sum_{i=1}^{n-1}\tilde{a}^{in}\left(A_1y_n\p_{in}-A_1y_{n+1}\p_{i,n+1}+B_i\p_{n+1,n+1}\right)\\
& \quad +\tilde{a}^{nn}\p_{n+1,n+1}+\tilde{a}^{n+1,n+1}\p_{n,n}.
\end{align*}
Here the coefficients $\tilde{a}^{ij}$ are evaluated at $y_0$ and $A_0,A_1,B_j$ are constants depending on $y_0$:
\begin{align*}
A_0&:=\p_{nnn}v(y_0),\quad A_1:=\p_{n,n+1,n+1}v(y_0),\\
B_j&:=\p_{jn}v(y_0),\quad j\in \{1,\dots, n-1\}.
\end{align*}
To obtain this, we have used the off-diagonal assumption (A3) for the metric $a^{ij}$, i.e. $a^{i,n+1}(x',0)=0$ for $i\in\{1,\dots, n\}$.\\
We note that the operator $L_{y_0}$ is a self-adjoint, constant coefficient Baouendi-Grushin type operator. It is hypoelliptic as an operator on $\R^{n+1}$ after an odd reflection in the $y_n$ variable and an even reflection in the $y_{n+1}$ variable (c.f. \cite{JSC87}).
\end{rmk}

\begin{proof}[Proof of Proposition~\ref{prop:error_gain}]
The proof of this result relies on a successive expansion of the coefficients and the nonlinearities.
Thus, we first expand $\tilde{a}^{ij}(y)$, $G^{ij}(v)$ and the lower order term $J(v)\left(\cdots\right)$ in \eqref{eq:nonlin_2} separately and then combine the results to derive the desired overall expansion. \\

\emph{Step 1: Expansion of the leading term.}

\emph{Step 1 a: Expansion of the coefficients.} For the coefficients $\tilde{a}^{ij}$ we have
\begin{align*}
\tilde{a}^{ij}(y)=\tilde{a}^{ij}(y_0)+E_2^{y_0,ij}(y),
\end{align*}
where
the error term $E^{y_0,ij}_2(y)$ satisfies
\begin{equation}
\label{eq:Holderweight0}
\begin{split}
\left\|d_G(\cdot,y_0)^{-2}E^{y_0,ij}_2\right\|_{L^\infty(\mathcal{B}_{1}^+(y_0))}
+\left[d_G(\cdot,y_0)^{-(2-\epsilon)}E^{y_0,ij}_2\right]_{\dot{C}^{0,\epsilon}_\ast(\mathcal{B}_{1}^+(y_0))}\leq C.
\end{split}
\end{equation}

\emph{Proof of Step 1a:}
The claim follows from the differentiability of $a^{ij}(x)$ and the asymptotics of $\nabla v$. Using the abbreviations $\xi(y):=(y'',-\p_nv(y),-\p_{n+1}v(y))$ and $\xi_0:=(y_0'',-\p_{n}v(y_0),-\p_{n+1}v(y_0))=(y_0'',-\p_nv(y_0),0)$, we obtain
\begin{equation}
\label{eq:expansion_1}
\begin{split}
\tilde{a}^{ij}(y)=a^{ij}(\xi(y))&=a^{ij}(\xi_0)+ (a^{ij}(\xi)-a^{ij}(\xi_0))\\
&=a^{ij}(\xi_0) + \int\limits_{0}^{1}\nabla_x a^{ij}((1-t)\xi_0 + t \xi(y))dt \cdot (\xi_0 - \xi(y)) .
\end{split} 
\end{equation}
Hence, (\ref{eq:expansion_1}) turns into 
\begin{equation}
\label{eq:expansion_2}
\begin{split}
\tilde{a}^{ij}&=\tilde{a}^{ij}(y_0) +E_2^{y_0,ij}(y),
\end{split} 
\end{equation}
where
\begin{align*}
E_2^{y_0,ij}(y)&:=\int\limits_{0}^{1}\nabla_x'' a^{ij}((1-t)\xi_0 + t \xi(y)) dt \cdot (y'' - y_0'')\\
&  \quad  + \sum\limits_{k=n}^{n+1}\int\limits_{0}^{1}\nabla_{x_k} a^{ij}((1-t)\xi_0 + t \xi(y)) dt \cdot (\p_k v(y) - \p_k v(y_0)).
\end{align*}
Recalling the asymptotics of $v$ from Definition~\ref{defi:spaces} for $v$, we infer that for all $y\in \mathcal{B}_{1/2}^+(y_0)$
\begin{align*}
|E_2^{y_0,ij}(y)|&\leq C \left|(y''-y''_0,-\p_nv(y) + \p_{n}v(y_0),-\p_{n+1}v(y))\right|\leq Cd_G(y,y_0)^{2}.
\end{align*}
Thus, we have shown that $d_G(y,y_0)^{-2}E_2^{y_0,ij}(y)\in L^\infty(\mathcal{B}_{1/2}^+(y_0))$.
Similarly, we also obtain $[d_G(y,y_0)^{-(2-\epsilon)}E_2^{y_0, ij}]_{\dot{C}^{0,\epsilon}(\mathcal{B}_{1/2}^+(y_0))}\leq C$. This, together with Definition~\ref{defi:spaces}, yields the second estimate in (\ref{eq:Holderweight0}).\\

\emph{Step 1 b: Expansion of the functions $G^{ij}$.} 
For the (nonlinear) functions $G^{ij}(v)$ we have for all $i,j\in\{1,\dots,n+1\}$
\begin{align*}
G^{ij}(v) = G^{ij}(v_{y_0})+\p_{m_{k \ell}}G^{ij}(v_{y_0}) \p_{k \ell} (v-v_{y_0}) + E^{y_0,ij}_1(y),
\end{align*}
where
the error $E^{y_0,ij}_{1}(y)$ satisfies the bounds
\begin{equation}
\label{eq:Holderweight}
\begin{split}
\left\|d_G(\cdot,y_0)^{-(1+4\alpha)}E^{y_0,ij}_1\right\|_{L^\infty(\mathcal{B}_{1}^+(y_0))}
+\left[d_G(\cdot,y_0)^{-(1+4 \alpha -\epsilon)}E^{y_0,ij}_1\right]_{\dot{C}^{0,\epsilon}_\ast(\mathcal{B}_1^+(y_0))}\leq C.
\end{split}
\end{equation}

\emph{Proof of Step 1b:}
To show the claim, we first expand
\begin{equation}
\label{eq:expand_v}
\begin{split}
G^{ij}(v)&=G^{ij}(v_{y_0})+\p_{m_{k\ell}}G^{ij}(v_{y_0})\p_{k\ell}(v -v_{y_0})\\
&\quad +\frac{1}{2}\p^2_{m_{k\ell}m_{\xi\eta}}G^{ij}(v_{y_0})\p_{k\ell}(v-v_{y_0})\p_{\xi\eta}(v-v_{y_0})\\
&\quad +\frac{1}{6}\p^3_{m_{k\ell}m_{\xi\eta}m_{hs}}G^{ij}(v_{y_0})\p_{k\ell}(v-v_{y_0})\p_{\xi\eta}(v-v_{y_0})\p_{hs}(v-v_{y_0})\\
&=G^{ij}(v_{y_0})+\p_{m_{k \ell}}G^{ij}(v_{y_0}) \p_{k \ell} (v-v_{y_0})+ E^{y_0,ij}_1(y).
\end{split}
\end{equation}
Here the error term is given by
\begin{align*}
E^{y_0,ij}_1(y)&=\frac{1}{2}\p^2_{m_{k\ell}m_{\xi\eta}}G^{ij}(v_{y_0})\p_{k\ell}(v-v_{y_0})\p_{\xi\eta}(v-v_{y_0})\\
&+\frac{1}{6}\p^3_{m_{k\ell}m_{\xi\eta}m_{hs}}G^{ij}(v_{y_0})\p_{k\ell}(v-v_{y_0})\p_{\xi\eta}(v-v_{y_0})\p_{hs}(v-v_{y_0}).
\end{align*}
Hence, it remains to prove the error estimate (\ref{eq:Holderweight}). To this end we estimate each term from the expression for $E_1^{y_0,ij}$ separately. We begin by observing that
\begin{equation}
\label{eq:det}
\begin{split}
 e^{y_0,ij}(y):&=\p_{m_{k\ell}m_{\xi\eta}}G^{ij}(v_{y_0})\p_{k\ell}(v-v_{y_0})\p_{\xi\eta}(v-v_{y_0})\\
&= \det \begin{pmatrix} \p_{ij}v_{y_0} & \p_{in}(v-v_{y_0}) & \p_{i,n+1}(v-v_{y_0}) \\
\p_{jn}v_{y_0} & \p_{nn}(v-v_{y_0}) & \p_{n,n+1}(v-v_{y_0}) \\
\p_{j,n+1}v_{y_0} & \p_{n,n+1}(v-v_{y_0}) & \p_{n+1,n+1}(v-v_{y_0})
  \end{pmatrix}\\
& \quad + \det \begin{pmatrix} \p_{ij}(v-v_{y_0}) & \p_{in}v_{y_0} & \p_{i,n+1}(v-v_{y_0}) \\
\p_{jn}(v-v_{y_0}) & \p_{nn}v_{y_0} & \p_{n,n+1}(v-v_{y_0}) \\
\p_{j,n+1}(v-v_{y_0}) & \p_{n,n+1}v_{y_0} & \p_{n+1,n+1}(v-v_{y_0})
  \end{pmatrix}\\
&\quad +\det \begin{pmatrix} \p_{ij}(v-v_{y_0}) & \p_{in}(v-v_{y_0}) & \p_{i,n+1}v_{y_0} \\
\p_{jn}(v-v_{y_0}) & \p_{nn}(v-v_{y_0}) & \p_{n,n+1}v_{y_0} \\
\p_{j,n+1}(v-v_{y_0}) & \p_{n,n+1}(v-v_{y_0}) & \p_{n+1,n+1}v_{y_0}
  \end{pmatrix},
  \end{split}
  \end{equation}
  and 
  \begin{align*}
 \tilde{e}^{y_0,ij}(y)&:=\p^3_{m_{k\ell}m_{\xi\eta}m_{hs}}G^{ij}(v_{y_0})\p_{k\ell}(v-v_{y_0})\p_{\xi\eta}(v-v_{y_0})\p_{hs}(v-v_{y_0})\\
 &=\left\{ 
\begin{array}{ll}
3G^{ij}(v-v_{y_0})\text{ if } i,j\in\{1,\dots, n-1\},\\
0 \quad \text{ if } i \text{ or }j\in\{n,n+1\}.
\end{array}
\right.
\end{align*}
For simplicity we only present the estimate for $e^{y_0,ij}(y)$ in detail.
To estimate the difference $\p_{k\ell}(v-v_{y_0})$ for $\ell,k\in\{1,\dots, n+1\}$ in $\mathcal{B}_{1/2}^+(y_0)$, we use the definition of our function space $X_{\alpha,\epsilon}$ (in the form of Definition~\ref{defi:spaces} or in the form of the decomposition from Proposition~\ref{prop:decompI}) to obtain that for $y\in \mathcal{B}_{1/2}^+(y_0)$
\begin{align*}
|\p_{ij}(v-v_{y_0})(y)|&\leq C d_G(y,P)^{-1+2\alpha},\\
|\p_{in}(v-v_{y_0})(y)|&\leq C d_G(y,y_0)^{2\alpha},\\
|\p_{nn}(v-v_{y_0})(y)|&\leq C d_G(y,y_0)^{1+2\alpha}.
\end{align*}
Using this and plugging the explicit expression for $\p_{ij}v_{y_0}$ (c.f. \eqref{eq:v0}) into \eqref{eq:det} gives
$
|e^{y_0,ij}(y)| \leq Cd_G(y,y_0)^{1+4\alpha}.
$
Similarly,  $|\tilde{e}^{y_0,ij}(y)|\leq Cd_G(y,y_0)^{1+6\alpha}$.
Hence, we obtain 
\begin{align*}
|E^{y_0,ij}_1(y)|\leq C|e^{y_0,ij}(y)|+C|\tilde{e}^{y_0,ij}(y)|\leq d_G(y,y_0)^{1+4\alpha}.
\end{align*}
Moreover, it is not hard to deduce that $d_G(y,y_0)^{-(1+4\alpha-\epsilon)}E_1^{y_0,ij}(y)\in \dot{C}^{0,\epsilon}_\ast(\mathcal{B}_{1/2}^{+}(y_0))$. This concludes the proof of Step 1b.\\

\emph{Step 1c: Concatenation.}
We show that the leading order term $\tilde{a}^{ij}G^{ij}(v)$ has the expansion
\begin{align*}
\tilde{a}^{ij}G^{ij}(v)=\tilde{a}^{ij}(y_0)\p_{m_{k\ell}}G^{ij}(v_{y_0})\p_{k\ell}v + P_{y_0}(y)+E^{y_0}_3(y),
\end{align*}
where 
\begin{align}
\label{eq:P3}
P_{y_0}(y)=\tilde{a}^{ij}(y_0)\left(G^{ij}(v_{y_0})-\p_{m_{k\ell}}G^{ij}(v_{y_0})\p_{k\ell}v_{y_0}\right)\in  \mathcal{P}_1^{hom},
\end{align}
is a polynomial of the form $c(y_0)y_n$, and $E^{y_0}_3(y)$ satisfies for $\eta_0=\min\{3,1+4\alpha\}$
\begin{equation}
\label{eq:err_3}
\|d_G(\cdot,y_0)^{-\eta_0}E^{y_0}_3\|_{L^\infty(\mathcal{B}_{1/2}^+(y_0))}+\left[d_G(\cdot,y_0)^{-(\eta_0-\epsilon)}E^{y_0}_3\right]_{\dot{C}^{0,\epsilon}_\ast(\mathcal{B}_{1/2}^+(y_0))}\leq C.
\end{equation}

\emph{Proof of Step 1c:} Using the expansions for $\tilde{a}^{ij}$ and for $G^{ij}(v)$ from Steps 1a and 1b, we obtain
\begin{align*}
\tilde{a}^{ij}(y)G^{ij}(v)&=\tilde{a}^{ij}(y_0)G^{ij}(v)+E_{2}^{y_0,ij}(y)G^{ij}(v)\\
&=\tilde{a}^{ij}(y_0)\left(G^{ij}(v_{y_0})+\p_{m_{k\ell}}G^{ij}(v_{y_0})\p_{k\ell}(v-v_{y_0}) +E_1^{y_0,ij}(y)\right)\\
&\quad +E_2^{y_0,ij}(y)G^{ij}(v)\\
&=\tilde{a}^{ij}(y_0)\p_{m_{k\ell}}G^{ij}(v_{y_0})\p_{k\ell}v +P_{y_0}(y) +E_3^{y_0}(y),
\end{align*}
where 
\begin{align*}
E_3^{y_0}(y) := \tilde{a}^{ij}(y_0)E_1^{y_0,ij}(y)+E^{y_0,ij}_2(y)G^{ij}(v).
\end{align*}
Recalling the error bounds from Steps 1a, 1b and further observing 
\begin{align*}
\left| G^{ij}(v) \right| \leq Cd_G(y,y_0),
\end{align*}
entails (\ref{eq:err_3}).\\

\emph{Step 2: Expansion of the lower order contributions.} 
For the lower order contribution the asymptotics of $v$ immediately yield
\begin{align*}
\left|J(v)(y)\left( \sum\limits_{j=1}^{n-1} \tilde{b}^j(y) \p_j v(y) + \tilde{b}^n(y)y_n + \tilde{b}^{n+1}(y)y_{n+1}\right)\right|\leq Cc_{\ast} d_G(y,P)^3.
\end{align*}
Here we used that $\|\nabla a^{ij}\|_{L^{\infty}}\leq C c_{\ast}$. Hence, this error is small compared with the error term $E^{y_0}_3(y)$ from the leading order expansion (c.f. \eqref{eq:err_3}). 
\end{proof}

The previous proposition allows us to apply an iterative bootstrap argument to obtain higher regularity for $v$.

\begin{prop}
\label{prop:error_gain2}
Assume that $v\in X_{\alpha,\epsilon}(\mathcal{B}_{1}^+)$ for some $\alpha\in (0,1]$, $\epsilon \in (0,\gamma]$, and that it satisfies $F(v,y)=0$ with $a^{ij}(x)\in C^{1,\gamma}$ for some $\gamma\in (0,1]$. Then 
 $v\in X_{\delta,\epsilon}(\mathcal{B}_{1/2}^+)$ for any $\delta\in (0,1)$. 
\end{prop}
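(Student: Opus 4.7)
The plan is an iterative bootstrap driven by the self-improving structure identified in Proposition~\ref{prop:error_gain}. Starting from $v\in X_{\alpha,\epsilon}(\mathcal{B}_1^+)$, the equation $F(v,y)=0$ together with Proposition~\ref{prop:error_gain} yields, at every $y_0\in P\cap \mathcal{B}_{1/2}$,
\[
L_{y_0}v(y) = -P_{y_0}(y)-E_{y_0}(y) \quad \text{on } \mathcal{B}_{1/2}^+(y_0),
\]
where $P_{y_0}(y)=c(y_0)y_n$ with $c(y_0)$ Hölder continuous in $y_0$ (it is a polynomial in $\tilde a^{ij}(y_0)$ and in the Taylor coefficients of $v_{y_0}$, each of which is Hölder in $y_0$ by Proposition~\ref{prop:decompI}), and $E_{y_0}$ satisfies the improved decay estimates of Proposition~\ref{prop:error_gain} with exponent $\eta_0=\min\{1+4\alpha,3\}$. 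The crucial point is that $\eta_0>1+2\alpha$ whenever $\alpha<1$; comparing with the $Y$-norm in Definition~\ref{defi:spaces} and using the characterization in Proposition~\ref{prop:decompI} this means that $-P_{y_0}-E_{y_0}$ fits, locally around $y_0$, into the class $Y_{\alpha',\epsilon}$ for any $\alpha'\le \min\{2\alpha,\,1-\tfrac{\epsilon}{2}\}$.

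To extract regularity from $L_{y_0}v=\ldots$ I would compare $L_{y_0}$ with a scalar multiple of the Baouendi-Grushin Laplacian. By Remark~\ref{rmk:error_gain3}, $L_{y_0}$ is a hypoelliptic constant-coefficient (in the Grushin sense) operator whose coefficients depend on $A_0(y_0),A_1(y_0),B_j(y_0)$ and $\tilde a^{ij}(y_0)$. Remark~\ref{rmk:close} and assumption~(A6) imply that these coefficients lie within $C\max\{\epsilon_0,c_\ast\}$ of those of the linearization at $v_0$ with $a^{ij}=\delta^{ij}$, which after a trivial tangential rescaling is a scalar multiple of $\Delta_G$. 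Writing $L_{y_0}=\lambda\Delta_G+R_{y_0}$ with the perturbation $R_{y_0}$ having coefficients of size $\max\{\epsilon_0,c_\ast\}$ gives
\[
\Delta_G v = \lambda^{-1}\bigl(-P_{y_0}-E_{y_0}-R_{y_0}v\bigr),
\]
where $R_{y_0}v\in Y_{\alpha',\epsilon}$ with small norm $\le C\max\{\epsilon_0,c_\ast\}\|v\|_{X_{\alpha',\epsilon}}$ thanks to the decomposition of $X_{\alpha',\epsilon}$ in Proposition~\ref{prop:decompI}.

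Applying the a priori estimate of Proposition~\ref{prop:invert}, localized to $\mathcal{B}_{1/2}^+$ by a standard cutoff (whose commutator with $\Delta_G$ is supported where $y_n^2+y_{n+1}^2\sim 1$ and $\Delta_G$ is uniformly elliptic, so that the resulting error is handled by classical Schauder theory), we obtain an inequality of the form $\|v\|_{X_{\alpha',\epsilon}(\mathcal{B}_{1/2}^+)} \le C+C\max\{\epsilon_0,c_\ast\}\|v\|_{X_{\alpha',\epsilon}(\mathcal{B}_{1/2}^+)}$. Absorbing the perturbative term on the right using the smallness of $\max\{\epsilon_0,c_\ast\}$ gives $v\in X_{\alpha',\epsilon}(\mathcal{B}_{1/2}^+)$ with $\alpha'=\min\{2\alpha,\,1-\tfrac{\epsilon}{2}\}$. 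Iterating this step, the index at least doubles at each stage until it saturates near $1$, so any prescribed $\delta\in(0,1)$ is reached after finitely many iterations. The main obstacle is the compatibility of the local expansion (anchored at each $y_0$) with the global Schauder estimate: one has to verify \emph{uniformly in $y_0$} that $R_{y_0}v$ and the cutoff commutator can be placed into $Y_{\alpha',\epsilon}$ and absorbed. This hinges on the smallness built into (A5)-(A6) and on the structural fit between $X_{\alpha,\epsilon}$, $Y_{\alpha,\epsilon}$ and $\Delta_G$ captured by Propositions~\ref{prop:decompI} and~\ref{prop:invert}.
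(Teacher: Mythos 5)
Your overall plan---exploit the self-improving structure of Proposition~\ref{prop:error_gain} and iterate---matches the paper, but the mechanism you propose for extracting regularity from $L_{y_0}v=L_{y_0}v_{y_0}+\tilde f$ has a genuine gap. Once you split $L_{y_0}=\lambda\Delta_G+R_{y_0}$ and move $R_{y_0}v$ to the right-hand side, the fixed function $g:=\Delta_G v$ must be shown to lie in $Y_{\alpha',\epsilon}$ \emph{before} you can invoke Proposition~\ref{prop:invert}. Near a point $\bar y\in P$, $g=\lambda^{-1}(L_{\bar y}v_{\bar y}+\tilde f_{\bar y}-R_{\bar y}v)$; the term $\tilde f_{\bar y}$ indeed decays like $d_G(\cdot,\bar y)^{1+4\alpha}$, but $R_{\bar y}v$ is a second-order expression in $v$, and from the only hypothesis available ($v\in X_{\alpha,\epsilon}$) it merely decays like $d_G(\cdot,\bar y)^{1+2\alpha}$ --- e.g. the coefficient of $\partial_{ij}$ in $R_{\bar y}$ is $O((y_n^2+y_{n+1}^2))$ while $|\partial_{ij}v|\lesssim (y_n^2+y_{n+1}^2)^{-1}d_G^{1+2\alpha-\epsilon}$. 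Thus $g\notin Y_{\alpha',\epsilon}$ for any $\alpha'>\alpha$ without already knowing $v\in X_{\alpha',\epsilon}$, and the estimate $\|R_{\bar y}v\|_{Y_{\alpha',\epsilon}}\le C\max\{\epsilon_0,c_\ast\}\|v\|_{X_{\alpha',\epsilon}}$ that you invoke is circular: the absorption inequality $\|v\|_{X_{\alpha',\epsilon}}\le C+C\max\{\epsilon_0,c_\ast\}\|v\|_{X_{\alpha',\epsilon}}$ holds vacuously when the right side is infinite and cannot force finiteness. The smallness of $R_{y_0}$ (which is qualitative closeness of the coefficients) does not supply the extra \emph{decay} in $d_G$ that defines $Y_{\alpha',\epsilon}$; the entire gain in Proposition~\ref{prop:error_gain} lives in the quadratic remainder $\tilde f$, and treating part of the \emph{leading} operator as data throws that gain away.

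The paper sidesteps this by never reducing to $\Delta_G$. It applies the Campanato-type compactness/approximation argument (the one behind Proposition~\ref{prop:Hoelder0}) \emph{directly to $L_{y_0}$} at each $y_0\in P$. That argument starts only from $v\in L^2$ plus the $d_G^{1+4\alpha}$ decay of $\tilde f$; it works for $L_{y_0}$ because $L_{y_0}$ is a self-adjoint, constant-coefficient Baouendi--Grushin-type operator which is hypoelliptic after the odd/even reflections, its homogeneous solutions with the mixed Dirichlet--Neumann symmetry form an orthogonal polynomial basis, and --- crucially for jumping from a cubic to a better-than-cubic approximation --- there is no fourth-order homogeneous polynomial with that symmetry solving $L_{y_0}p=0$. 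This one-directional Campanato step improves the local approximation of $v$ by third-order polynomials from rate $3+2\alpha$ to $3+\min\{4\alpha,2\}$ without any a priori assumption that $v$ is already in $X_{\alpha',\epsilon}$. If you want a route close in spirit to yours, you would have to replace your Schauder-plus-absorption step with a Wang-style compactness iteration that treats $R_{y_0}v$ as a \emph{small error at each scale} rather than as right-hand-side data in $Y_{\alpha',\epsilon}$, but at that point you have essentially reconstructed the paper's argument for $L_{y_0}$.
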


\begin{proof}
If $1+4\alpha<3$, i.e. $0\leq \alpha<1/2$, Proposition~\ref{prop:error_gain} and Remark~\ref{rmk:error_gain3} yield that for each fixed $y_0\in P\cap \mathcal{B}_{1/2}$ the Legendre function $v$ solves  
\begin{align}
\label{eq:eq}
L_{y_0}v=L_{y_0}v_{y_0}+\tilde{f} \text{ in } \mathcal{B}_{1/2}^+(y_0),
\end{align}
where $L_{y_0}$ is the ``constant coefficient" Baouendi-Grushin type operator from Remark~\ref{rmk:error_gain3}, $L_{y_0}v_{y_0}=c(y_0)y_n$ and the function $\tilde{f}(y)$ is such that
$$\|d_G(\cdot,y_0)^{-(1+4\alpha)}\tilde{f}\|_{L^\infty(\mathcal{B}_{1/2}^+(y_0))}+[d_G(\cdot,y_0)^{-(1+4\alpha-\epsilon)}\tilde{f}]_{\dot{C}^{0,\epsilon}_\ast(\mathcal{B}_{1/2}^+(y_0))}\leq C,$$ 
where $C$ depends on $\|v\|_{X_{\alpha,\epsilon}}$ and $[D a^{ij}]_{\dot{C}^{0,\gamma}}$ and is in particularly independent of $y_0$. We apply the compactness argument from the Appendix (c.f. the proof of Proposition~\ref{prop:Hoelder0}) at each point $y_0\in P\cap \mathcal{B}_{1/2}$. This is possible as $L_{y_0}$ is a self-adjoint, constant coefficient subelliptic operator of Baoundi-Grushin type which is hypoelliptic after suitable reflections (c.f. Remark \ref{rmk:error_gain3} and \cite{JSC87}). We note that as in the case of the Grushin operator there are no fourth order homogeneous polynomials with symmetry (even about $y_{n+1}$ and odd about $y_n$) which are solutions to the equation $L_{y_0}v=0$. Combining the above approximation result along $P\cap \mathcal{B}_{1/2}$ with the $C^{2,\epsilon}_\ast$, $\epsilon\leq \gamma$, estimate in the corresponding non-tangential region (with respect to $P$) leads to $v\in X_{2\alpha,\epsilon}(\mathcal{B}_{1/4}^+)$. 
We repeat the above procedure until after finitely many, say, $k$, steps, $1+4k\alpha>3$. This results in $v\in X_{\delta,\epsilon}(\mathcal{B}_{1/2^{k}}^+)$ for every $\delta\in (0,1)$ (where we used the nonexistence of homogeneous fourth order approximating polynomials). Repeating this procedure in $\mathcal{B}^+_{1/2}(\bar y)$ for $\bar y\in P\cap \mathcal{B}_{1/2}^+$ and by a covering argument, we obtain that $v\in X_{\delta,\epsilon}(\mathcal{B}_{1/2}^+)$ for every $\delta\in (0,1)$.
\end{proof}

\section[Free Boundary Regularity]{Free Boundary Regularity for $C^{k,\gamma}$ Metrics, $k\geq 1$}
\label{sec:fb_reg}
In this section we apply the implicit function theorem to show that  the regular free boundary is locally in $C^{k+1,\gamma}$, if $a^{ij}\in C^{k,\gamma}$ with $k\geq 1$ and $\gamma\in (0,1)$. Moreover, we also argue that the regular free boundary is locally real analytic, if $a^{ij}$ is real analytic. \\

In order to invoke the implicit function theorem, we discuss the mapping properties of the nonlinear function $F$ in the next two sections. More precisely, we prove that 
\begin{itemize}
\item the nonlinearity $F$ maps $X_{\delta,\epsilon}(\mathcal{B}_1^+)$ to $Y_{\delta,\epsilon}(\mathcal{B}_1^+)$ for any $\delta\in (0,1)$ and $\epsilon$ sufficiently small (c.f. Section \ref{sec:nonlinmap}),
\item and that its linearization in a neighborhood of $v_0$  is a perturbation of the Baouendi-Grushin Laplacian and is hence invertible (c.f. Section \ref{sec:grushin}). 
\end{itemize}
Then in Section \ref{subsec:IFT0} we introduce an one-parameter family of diffeomorphisms which will form the basis of our application of the implicit function theorem in Section \ref{sec:IFTAppl}. In Section \ref{sec:IFTAppl} we apply the implicit function theorem argument to show the regularity of the free boundary in $C^{k,\gamma}$, $k\geq 1$ metrics and analytic metrics, which   yields the desired proof of Theorem \ref{thm:higher_reg}.

\subsection{Mapping properties of $F$}
\label{sec:nonlinmap}
As a consequence of the representation of $F$ which was derived in Proposition \ref{prop:error_gain} we obtain the following mapping properties for our nonlinear function $F$. 

\begin{prop}
\label{prop:nonlin_map}
Let $a^{ij}:B_1^+ \rightarrow \R^{(n+1)\times (n+1)}_{sym}$ be a $C^{k,\gamma}$ tensor field with $k\geq 1$ and $\gamma \in (0,1]$. Assume that  the nonlinear function $F$ is as in (\ref{eq:nonlin_2}). Then for any $\delta\in (0,1]$ and $\epsilon\in (0,\gamma)$, we have that
\begin{align*}
F:X_{\delta,\epsilon}(\mathcal{B}_1^+) \rightarrow Y_{\delta,\epsilon}(\mathcal{B}_1^+).
\end{align*}
\end{prop}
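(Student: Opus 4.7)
My plan is to verify the three defining properties of $Y_{\delta,\epsilon}(\mathcal{B}_1^+)$ in turn: the global $C^{0,\epsilon}_\ast$-regularity together with the support condition, the vanishing of $F$ and $\partial_{n+1}F$ on $P$, and the $C^{1,\delta}_\ast$-structure at $P$ with the correct decomposition $F=y_n\,\partial_n F|_P+r^{1+2\delta-\epsilon}C^{0,\epsilon}_\ast$. The central observation is that the algebraic expansion
\[
F(v,y)=L_{y_0}v+P_{y_0}(y)+E_{y_0}(y)
\]
from Proposition~\ref{prop:error_gain} (whose derivation never uses that $v$ solves $F=0$) combined with the pointwise asymptotic decomposition of $v$ and its derivatives provided by Proposition~\ref{prop:decompI} contains all the structural information needed.

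For the ambient regularity I would write $F(v,\cdot)$ as a bounded multilinear polynomial in the entries of $D^2 v\in C^{0,\epsilon}_\ast(\mathcal{B}_1^+)$ with coefficients $\tilde{a}^{ij}$ and $\tilde{b}^j\,\partial_j v$. The coefficients $\tilde{a}^{ij}$ and $\tilde{b}^j$ are compositions of $C^{k,\gamma}$ (hence $C^{0,\epsilon}$) functions with the $C^{1,\epsilon}_\ast$ map $y\mapsto(y'',-\partial_n v(y),-\partial_{n+1}v(y))$, so they inherit $C^{0,\epsilon}_\ast$-regularity; standard algebra properties of the Baouendi--Grushin Hölder spaces then give $F(v,\cdot)\in C^{0,\epsilon}_\ast$. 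The support condition follows since $\supp(\Delta_G v)\subset \mathcal{B}_1^+$ forces $v$ to agree with its third-order Baouendi--Grushin Taylor polynomial outside $\mathcal{B}_1^+$, on which $F(v,\cdot)$ reduces to the trivial contribution.

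For the behaviour at $P$, fix $y_0\in P$ and apply Proposition~\ref{prop:error_gain} with exponent $\eta_0=\min\{3,1+4\delta\}\ge 1+2\delta$. Substituting the decomposition of $D^2 v$ from Proposition~\ref{prop:decompI} into the explicit form of $L_{y_0}$ in Remark~\ref{rmk:error_gain3}, I expect $L_{y_0}v(y)=c_1(y_0)y_n+\tilde{E}_{y_0}(y)$ with $\tilde{E}_{y_0}$ obeying the same type of weighted bound as $E_{y_0}$. Crucially, no $y_{n+1}$ term appears at linear order because (A3) gives $\tilde{a}^{i,n+1}(y_0)=0$ and the symmetry condition $\partial_{n+1}v|_{y_{n+1}=0}=0$ forces every remaining combination in $L_{y_0}v$ to start at order $y_n$ or higher. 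Hence $F(v,y_0)=0$, $\partial_{n+1}F(v,\cdot)(y_0)=0$, and the linear part at $y_0$ is the single contribution $(c_0(y_0)+c_1(y_0))y_n$.

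The main remaining obstacle is to show that $y_0\mapsto c_0(y_0)+c_1(y_0)$ lies in $C^{0,\delta}(P)$ with norm controlled by $\|v\|_{X_{\delta,\epsilon}}$, and that the pieced-together remainder is uniformly in the weighted $C^{0,\epsilon}_\ast$-class required by $Y_{\delta,\epsilon}$. This reduces to two ingredients that are already in place: the Hölder regularity $y_0\mapsto a^{ij}(T^{-1}(y_0))\in C^{0,\delta}(P)$ (from the $C^{0,\gamma}$-regularity of $a^{ij}$ together with Proposition~\ref{prop:invertibility}), and the membership of the leading Taylor coefficients $\partial_{in}v(y_0)$, $\partial_{nnn}v(y_0)$, $\partial_{n,n+1,n+1}v(y_0)$ in $C^{0,\delta}(P)$ supplied by Proposition~\ref{prop:decompI}. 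Combining these with the error bound of Proposition~\ref{prop:error_gain} and an elementary chaining of the pointwise expansions at nearby $y_0$'s then yields the finiteness of $\|F(v,\cdot)\|_{Y_{\delta,\epsilon}(\mathcal{B}_1^+)}$.
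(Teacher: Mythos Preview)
Your approach is essentially the same as the paper's: both rely on the expansion $F(u,y)=L_{y_0}u+P_{y_0}(y)+E_{y_0}(y)$ from Proposition~\ref{prop:error_gain}, the explicit form of $L_{y_0}$ in Remark~\ref{rmk:error_gain3}, and the decomposition in Proposition~\ref{prop:decompI} to identify the $y_n C^{0,\delta}+r^{1+2\delta-\epsilon}C^{0,\epsilon}_\ast$ structure. The paper's version is terser, but the substance is identical.

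Two minor inaccuracies worth flagging. First, the spaces in the statement are the \emph{local} spaces $X_{\delta,\epsilon}(\mathcal{B}_1^+)$, $Y_{\delta,\epsilon}(\mathcal{B}_1^+)$ of Definition~\ref{defi:spaces_loc}, which carry no support condition on $\Delta_G v$ or on $f$; your paragraph about the support condition is therefore unnecessary (and the claim that $\supp(\Delta_G v)\subset\mathcal{B}_1^+$ forces $v$ to be a third-order polynomial outside $\mathcal{B}_1^+$ would in any case need the decay assumption $v\in C_0(Q_+)$ from the global space). Second, for a general $u\in X_{\delta,\epsilon}(\mathcal{B}_1^+)$ the coefficients are $\tilde a^{ij}(y_0)=a^{ij}(y_0'',-\partial_n u(y_0),0)$, and their $C^{0,\delta}(P)$ regularity comes directly from $\partial_n u\in C^{1,\delta}(P)$ (Proposition~\ref{prop:decompI}) together with $a^{ij}\in C^{0,\gamma}$; invoking Proposition~\ref{prop:invertibility} here is not appropriate, since that result concerns the specific Hodograph map $T$ associated with the actual Legendre function, not an arbitrary element of $X_{\delta,\epsilon}$. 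Neither of these affects the correctness of your overall plan.
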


These properties will be used in Propositions \ref{prop:reg_a} and \ref{prop:invertible} to establish the mapping properties of the nonlinear function to which we apply the implicit function theorem in Section \ref{sec:IFT1}

\begin{proof}
The mapping properties of $F$ are an immediate consequence of the representation for $F$ that was obtained in Proposition \ref{prop:error_gain}. Indeed, given any $u\in X_{\delta,\epsilon}$, by Proposition \ref{prop:error_gain}, we have
\begin{align*}
F(u, y) = \sum\limits_{i,j=1}^{n+1} \tilde{a}^{ij}(y_0) \p_{m_{k \ell}} G^{ij}(u_{y_0}) \p_{k \ell } u + P_{y_0}(y) + E_{y_0}(y).
\end{align*}
Due to Proposition~\ref{prop:decompI}, Proposition~\ref{prop:error_gain} and Remark~\ref{rmk:error_gain3} we infer that $P_{y_0}(y) + E_{y_0}(y)\in Y_{\delta,\epsilon}(\mathcal{B}_1^+)$. For the remaining linear term we note that similarly as in the proof of Proposition \ref{prop:error_gain}
\begin{align*}
\tilde{a}^{ij}(y_0) \p_{m_{k \ell}} G^{ij}(u_{y_0}) \p_{k \ell } u & = \tilde{a}^{ij}(y_0) \p_{m_{k \ell}} G^{ij}(u_{y_0}) \p_{k \ell } u_{y_0} \\
& \quad + \tilde{a}^{ij}(y_0) \p_{m_{k \ell}} G^{ij}(u_{y_0}) \p_{k \ell } (u-u_{y_0}) \\
&  =  c(y'')y_n + r^{1+2\alpha-\epsilon}f(y),
\end{align*}
with $c(y'')\in C^{0,\delta}(\R^{n-1}\cap \mathcal{B}_1^+)$ and $f(y)\in C^{0,\epsilon}_{\ast}(\mathcal{B}_1^+)$.
This implies the result.
\end{proof}

\subsection{Linearization and the Baouendi-Grushin operator}
\label{sec:grushin}
In this section we compute the linearization of $F$ and show that it can be interpreted as a perturbation of the Baouendi-Grushin Laplacian. We treat the cases $a^{ij}\in C^{k,\gamma}$ with $k=1$ and $k\geq 2$ simultaneously as only small modifications are needed in the argument.
If $a^{ij}\in C^{k,\gamma}$ with $k\geq 2$, by using the notation in \eqref{eq:nonlin_2}, 
\begin{align*}
F(D^2v, Dv,y):=&\sum_{i,j=1}^{n+1}a^{ij}(y'',-\p_nv,-\p_{n+1}v)G^{ij}(v)-J(v)\sum_{j=1}^{n-1}b^j(y'',-\p_nv,-\p_{n+1}v)\p_j v\\
&-J(v)b^n(y'',-\p_nv,-\p_{n+1}v)y_n-J(v)b^{n+1}(y'',-\p_nv,-\p_{n+1}v)y_{n+1},
\end{align*} 
In the case of $a^{ij}\in C^{1,\gamma}$ we view $F$ as 
\begin{align*}
F(D^2v, Dv,y)&=\sum_{i,j}^{n+1}a^{ij}(y'',-\p_nv,-\p_{n+1}v)G^{ij}(v)+ f(y),\\
&\text{where } f(y)=-J(v(y))\left(\sum_{j=1}^{n-1}\tilde{b}^j(y)\p_j v(y)+\tilde{b}^n(y)y_n+\tilde{b}^{n+1}(y)y_{n+1}\right).
\end{align*}
Here we view $F$ as a mapping $F:\R^{(n+1)\times(n+1)}_{sym}\times \R^{n+1}\times U\rightarrow \R$ and introduce the abbreviations
\begin{align*}
F_{k\ell}(M,P,y)&:=\frac{\partial F(M,P,y)}{\partial m_{k\ell}}, \quad  M=(m_{k\ell})\in \R^{(n+1)\times(n+1)}_{sym},\ P\in \R^{n+1},\ y\in \R^{n+1},\\
F_k(M,P,y)&:=\frac{\partial F(M,P,y)}{\partial p_{k}}.
\end{align*}
For notational convenience, we use the conventions $$F_{k\ell}(v,y):=F_{k\ell}(D^2v, Dv, y), \quad F_k(v,y):=F_{k}(D^2v, Dv,y), \quad F(v,y):=F(D^2v, Dv,y).$$
The linearization of $F$ at $v$ is 
\begin{align}
\label{eq:op_lin}
L_v:=F_{k\ell}(v,y) \p_{k \ell}  + F_k (v,y) \p_k.
\end{align}
In the case $a^{ij}\in C^{k,\gamma}$ with $k\geq 2$, we have
\begin{align*}
F_{k\ell}(v,y) &= \sum\limits_{i,j=1}^{n+1} \tilde{a}^{ij} \p_{m_{k \ell}} G^{ij}(v) - \p_{m_{k \ell}} J(v) \left(\sum_{j=1}^{n-1}\tilde{b}^j\partial_jv+\tilde{b}^ny_n+\tilde{b}^{n+1}y_{n+1}\right),\\
F_k(v,y) &= -J(v)\sum_{j=1}^{n-1}\tilde{b}^j, \mbox{ for } k\in\{1,\dots,n-1\},\\
F_{k}(v,y) &= \sum\limits_{i,j=1}^{n+1} b^{ij}_k G^{ij}(v)- J(v)\left(\sum_{j=1}^{n-1}b^j_k\partial_jv+b^n_k y_n+b^{n+1}_ky_{n+1}\right), \mbox{ for } k\in\{n,n+1\}.
\end{align*}
Here
\begin{align*}
\tilde{a}^{ij}&:= a^{ij}|_{(y'',-\p_n v(y), - \p_{n+1} v(y))}, \quad \tilde{b}^j=\sum_{i=1}^{n+1}\p_{x_i}a^{ij}|_{(y'',-\p_nv(y),-\p_{n+1}v(y))}\\
b^{ij}_k &:= \p_{x_k} a^{ij}|_{(y'',-\p_n v(y), - \p_{n+1} v(y))},\\
b^{j}_k &:= \sum_{i=1}^{n+1}\p_{x_k x_i} a^{ij}|_{(y'',-\p_n v(y), - \p_{n+1} v(y))}.
\end{align*}
In particular, we note that the linearization of $F$ already involves second order derivatives of our metric $a^{ij}$. \\
In the case $a^{ij}\in C^{1,\gamma}$, we have
\begin{align*}
F_{k\ell}(v,y)&=\sum_{i,j=1}^{n+1}\tilde{a}^{ij}\p_{m_{k\ell}}G^{ij}(v), \quad k,\ell\in\{1,\dots,n+1\},\\
F_k(v,y)&=0,\ k\in\{1,\dots, n-1\}; \quad F_k(v,y)=\sum\limits_{i,j=1}^{n+1} b^{ij}_k G^{ij}(v), \quad k\in\{n,n+1\}.
\end{align*} 
Let 
$v_0(y):=-\frac{1}{6}\left(y_n^3-3y_ny_{n+1}^2\right)
$ be the (scaled) blow-up of $v$ at $0$.
A direct computation shows that $F_{k\ell}(v_0,0)\p_{k\ell}=\Delta_G$, where $\Delta_G$ is the standard Baouendi-Grushin Laplacian in Section~\ref{sec:intrinsic}. Thus, we write
\begin{align*}
L_v&=\Delta_G+\left(F_{k \ell}(v,y)-F_{k \ell}(v_0,0)\right)\p_{k \ell} +F_k(v,y)\p_k\\
&=:\Delta_G+\mathcal{P}_v.
\end{align*}

With this at hand, we can prove the following mapping properties for $L_v$:

\begin{prop}
\label{prop:linear}
Let $L_v, \mathcal{P}_v$ be as above. Assume furthermore in the case of $a^{ij}\in C^{k,\gamma}$ with $k\geq 2$ that $[D^2_xa^{ij}]_{\dot{C}^{0,\gamma}}\leq c_\ast$. Given $\delta\in (0,1]$ and $\epsilon\in (0,\min\{\delta,\gamma\})$, let $X_{\delta,\epsilon}(\mathcal{B}_1^+)$ and $Y_{\delta,\epsilon}(\mathcal{B}_1^+)$ be the spaces from Definition~\ref{defi:spaces_loc}. Then 
\begin{align*}
L_v : X_{\delta,\epsilon}(\mathcal{B}_1^+) \rightarrow Y_{\delta,\epsilon}(\mathcal{B}_1^+).
\end{align*}
Moreover, if $\|v-v_0\|_{X_{\delta,\epsilon}(\mathcal{B}_1^+}\leq \delta_0$, then 
$$\|\mathcal{P}_v w\|_{Y_{\delta,\epsilon}(\mathcal{B}_1^+)}\lesssim \max\{\delta_0,c_\ast\}\|w\|_{X_{\delta,\epsilon}(\mathcal{B}_1^+)}, \quad \text{for all } w\in X_{\delta,\epsilon}(\mathcal{B}_1^+).$$  
\end{prop}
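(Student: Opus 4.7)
The plan is to split $\mathcal{P}_v = L_v-\Delta_G$ into contributions coming from (a) the difference between $v$ and $v_0$ and (b) the variation of the coefficients $\tilde{a}^{ij}(y)$ (and, when $k\geq 2$, also of $\tilde{b}^{j}$, $b^{ij}_k$, $b^j_k$) along $y$, and then to verify that each piece maps $X_{\delta,\epsilon}(\mathcal{B}_1^+)$ into $Y_{\delta,\epsilon}(\mathcal{B}_1^+)$ with the stated norm bound. Concretely, I would write
\begin{align*}
F_{k\ell}(v,y)-F_{k\ell}(v_0,0)=\bigl(F_{k\ell}(v,y)-F_{k\ell}(v_0,y)\bigr)+\bigl(F_{k\ell}(v_0,y)-F_{k\ell}(v_0,0)\bigr),
\end{align*}
and analogously for $F_k$. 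The first bracket is expanded by the fundamental theorem of calculus into a combination of $\partial_{m_{\xi\eta}}F_{k\ell}(v_t,y)\partial_{\xi\eta}(v-v_0)$ and $\partial_{p_{\xi}}F_{k\ell}(v_t,y)\partial_{\xi}(v-v_0)$ with $v_t:=tv+(1-t)v_0$; the differentiability of $F_{k\ell}$ along this segment rests on the non-degeneracy of $J(v_t)$, which is guaranteed by the smallness hypothesis $\|v-v_0\|_{X_{\delta,\epsilon}}\leq\delta_0$. The second bracket is expanded by a first order Taylor formula (second order in the $k\geq 2$ case) in the $x$-variables, producing factors $\nabla_x a^{ij}$ (respectively $\nabla_x^2 a^{ij}$) multiplied by the displacement $(y''-0,\,-\partial_nv_0(y),\,-\partial_{n+1}v_0(y))$, which itself vanishes like $d_G(y,0)^2$ by the explicit form of $v_0$; this is what accounts for the factor $c_\ast$ in the final estimate.

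With the coefficients in this decomposed form, I would multiply by the second derivatives $\partial_{k\ell} w$ of $w\in X_{\delta,\epsilon}$ and identify each product as an element of $Y_{\delta,\epsilon}$. Here the key tool is Proposition~\ref{prop:decompI}(ii), which yields $|\partial_{ij}w|\lesssim r^{-1+2\delta-\epsilon}\|w\|_{X_{\delta,\epsilon}}$ for $i,j\in\{1,\ldots,n-1\}$, $\partial_{in}w=\partial_{in}w(y'')+O(r^{2\delta-\epsilon})$ and analogous decompositions for the remaining entries. Matching these singular factors with the vanishing of the coefficients established above, each product splits into a piece of the form $y_n\cdot(\text{H\"older function at }P)$ plus a piece of the form $r^{1+2\delta-\epsilon}\cdot C^{0,\epsilon}_\ast$, which by the equivalent characterization $Y_{\delta,\epsilon}=y_n C^{0,\delta}+r^{1+2\delta-\epsilon}C^{0,\epsilon}_\ast$ from Remark~\ref{rmk:characterize} gives membership in $Y_{\delta,\epsilon}$. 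The boundary conditions $\mathcal{P}_vw=0$ and $\partial_{n+1}\mathcal{P}_vw=0$ on $P$ follow from the Dirichlet-Neumann symmetries built into $X_{\delta,\epsilon}$ together with the even/odd structure of the operator in the $y_{n+1}$ variable. The quantitative bound $\|\mathcal{P}_vw\|_{Y_{\delta,\epsilon}}\lesssim\max\{\delta_0,c_\ast\}\|w\|_{X_{\delta,\epsilon}}$ is then obtained by collecting the factor of $\delta_0$ from the first bracket and the factor of $c_\ast$ from the second, while the unconditional mapping property $L_v:X_{\delta,\epsilon}\to Y_{\delta,\epsilon}$ follows from the same decomposition by dropping the smallness request.

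The main obstacle is the most singular tangential Hessian contribution $\partial_{ij}w$ with $i,j\in\{1,\ldots,n-1\}$: its $r^{-1+2\delta-\epsilon}$ blow-up at $P$ has to be absorbed by the vanishing of the accompanying coefficient. At $(v_0,0)$ a direct computation from the cofactor structure of $G^{ij}$ gives $\partial_{m_{ij}}G^{ij}(v_0)=y_n^2+y_{n+1}^2$, producing exactly the Baouendi-Grushin principal part, so one must show that the integrated Taylor expansion of $F_{ij}(v,y)-(y_n^2+y_{n+1}^2)\delta_{ij}$ exhibits a surplus factor of $r^{2+2\delta-\epsilon}$ with a $C^{0,\epsilon}_\ast$ remainder. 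This is the linearized analogue of the error estimates proved in Proposition~\ref{prop:error_gain}, and is established through the same asymptotic analysis of $v$ at $P$ furnished by Proposition~\ref{prop:decompI}. Once this estimate is in place, the remaining entries of $D^2w$ are strictly less singular and yield to the same scheme with simpler bookkeeping.
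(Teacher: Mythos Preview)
Your proposal is correct and follows essentially the same strategy as the paper: both split $\mathcal{P}_v$ into a contribution from $v-v_0$ (yielding the $\delta_0$ factor) and a contribution from the variation of the metric coefficients (yielding the $c_\ast$ factor), handle the critical tangential block $\partial_{ij}w$ via the $r^2$-vanishing of its coefficient against the $r^{-1+2\delta-\epsilon}$ singularity from Proposition~\ref{prop:decompI}, and invoke the characterization $Y_{\delta,\epsilon}=y_nC^{0,\delta}+r^{1+2\delta-\epsilon}C^{0,\epsilon}_\ast$. The only cosmetic difference is that the paper works directly with the explicit cofactor identity $F_{k\ell}(v,y)=-\tilde a^{k\ell}(y)J(v)$ for $k,\ell\le n-1$ and splits the product, whereas you use an abstract fundamental-theorem-of-calculus expansion along $v_t$; note incidentally that no non-degeneracy of $J(v_t)$ is needed there, since $F_{k\ell}$ is polynomial in $D^2v$.
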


\begin{proof}
We first show the claims of the proposition in the case of $a^{ij}\in C^{k,\gamma}$ with $k\geq 2$.  We begin by arguing that $L_v w \in Y_{\delta,\epsilon}(\mathcal{B}_1^+)$ if $w\in X_{\delta, \epsilon}(\mathcal{B}_1^+)$. 
\begin{itemize}
\item[(a)] To show this, we observe that
\begin{align*}
&\sum_{k,\ell}\p_{m_{k\ell}}G^{ij}(v)\p_{k\ell}w
=\det\begin{pmatrix}
\p_{ij}w & \p_{in}w & \p_{i,n+1}w\\
\p_{in}v & \p_{nn}v & \p_{n,n+1}v\\
\p_{i,n+1}v & \p_{n+1,n}v & \p_{n+1,n+1}v
\end{pmatrix} \\
&+ \det\begin{pmatrix}
\p_{ij}v & \p_{in}v & \p_{i,n+1}v\\
\p_{in}w & \p_{nn}w & \p_{n,n+1}w\\
\p_{i,n+1}v & \p_{n+1,n}v & \p_{n+1,n+1}v
\end{pmatrix}
+\det\begin{pmatrix}
\p_{ij}v & \p_{in}v & \p_{i,n+1}v\\
\p_{in}v & \p_{nn}v & \p_{n,n+1}v\\
\p_{i,n+1}w & \p_{n+1,n}w & \p_{n+1,n+1}w
\end{pmatrix},
\end{align*}
if $i,j\in\{1,\dots, n-1\}$ and for the remaining indices $(i,j)$ the expression $\sum_{k,\ell}G^{ij}(v)\p_{k\ell}w$ is similar. Thus, the mapping property of $\sum_{i,j}\sum_{k,\ell}\tilde{a}^{ij}\p_{m_{k\ell}}G^{ij}(v)\p_{k\ell}w$ follows along the lines of the proof of Proposition~\ref{prop:nonlin_map} and Proposition~\ref{prop:error_gain}.
\item[(b)] We discuss the term $\sum\limits_{i,j=1}^{n+1}\sum\limits_{\ell=n}^{n+1}b^{ij}_\ell\p_\ell w G^{ij}(v)$. As $b^{ij}_\ell$ is $C^{1,\gamma}$, it satisfies the same decomposition as $\tilde{a}^{ij}$ in \eqref{eq:Holderweight0}. Using the characterization for $\p_nw$, $\p_{n+1}w$ from Proposition~\ref{prop:decompI} (ii), we have that around $y_0\in P\cap \mathcal{B}_{1/2}$
\begin{align*}
\sum_{\ell=n,n+1}b^{ij}_\ell\p_\ell w = \tilde{b}^{ij}_n(y_0)\p_nw(y_0)+ E_{y_0}^{ij},
\end{align*}
where $E_{y_0}^{ij}$ satisfies the same error bounds as (\ref{eq:Holderweight0}). Moreover, the functions $\tilde{b}^{ij}_n$ inherit the off-diagonal condition of $a^{ij}$, i.e. $\tilde{b}^{i,n+1}_n=0$ on $\{y_{n+1}=0\}$ for $i\in \{1,\dots, n\}$. Thus, using exactly the same estimate as for $\tilde{a}^{ij}G^{ij}(v)$ in Step 1 of Proposition~\ref{prop:error_gain}, we obtain that $\sum\limits_{i,j=1}^{n+1}\sum\limits_{\ell=n}^{n+1}b_\ell^{ij}\p_\ell w G^{ij}(v)\in Y_{\delta, \epsilon}(\mathcal{B}_1^+)$.
\item[(c)] The term 
\begin{align*}
-J(v)\left( \sum\limits_{j=1}^{n-1}b^j_k \p_j v + b^n_k y_n + b^{n+1}_k y_{n+1} \right)\p_k w,\quad k\in \{n,n+1\}
\end{align*}
is a lower order term contained in $Y_{\delta,\epsilon}(\mathcal{B}_1^+)$ as it is bounded by $r(y)^3=\dist(y,P)^3$ and satisfies the right Hölder bounds.
\item[(d)] Similarly, the contribution 
\begin{align*}
-\p_{m_{k\ell}}J(v)\left( \sum\limits_{j=1}^{n-1}\tilde{b}^j \p_j v + \tilde{b}^n y_n + \tilde{b}^{n+1} y_{n+1} \right)\p_{k\ell} w,\quad k,\ell\in \{n,n+1\}
\end{align*}
is a lower order term contained in $Y_{\delta,\epsilon}(\mathcal{B}_1^+)$ as it is also bounded by $\dist(y,P)^3$ and satisfies the right Hölder bounds.
\end{itemize}

We continue by proving the bounds for $\mathcal{P}_v$. To this end, we again consider the individual terms in the linearization separately:
\begin{itemize}
\item Estimate of $\left(F_{k\ell}(v,y)-F_{k\ell}(v_0,0)\right)\p_{k\ell}w$. We will only present the details of the estimate for $k,\ell\in \{1,\dots,n-1\}$. The estimates for the remaining terms are similar. \\ 
For $k,\ell\in \{1,\dots, n-1\}$, from \eqref{eq:op_lin} we have
\begin{align*}
F_{k\ell}(v,y)=-\tilde{a}^{k\ell}(y)J(v).
\end{align*}
By assumption $v\in X_{\delta,\epsilon}(\mathcal{B}_1^+)$ is in a $\delta_0$ neighborhood of $v_0$ and by Proposition~\ref{prop:decompI} we have the decomposition ($r=r(y)=(y_n^2+y_{n+1}^2)^{1/2}$)
\begin{align*}
\p_{ij}v(y)&=r^{-1+2\delta-\epsilon}C_{ij}(y),\\
\p_{in}v(y)&=B_i(y'')+r^{2\delta-\epsilon}C_{in}(y),\\
\p_{i,n+1}v(y)&=r^{2\delta-\epsilon}C_{i,n+1}(y),\\
\p_{nn}v(y)&=A_0(y'')y_n+r^{1+2\delta-\epsilon}C_{n,n}(y),\\
\p_{n,n+1}v(y)&=A_1(y'')y_{n+1}+r^{1+2\delta-\epsilon}C_{n,n+1}(y),\\
\p_{n+1,n+1}v(y)&=A_1(y'')y_{n}+r^{1+2\delta-\epsilon}C_{n+1,n+1}(y),
\end{align*}
where $B_i, A_0, A_1\in C^{0,\delta}(P\cap \mathcal{B}_1)$ with 
\begin{equation}\label{eq:smallness}
[B_i]_{\dot{C}^{0,\delta}}+\|A_0-1\|_{L^\infty}+\|A_1-(-1)\|_{L^\infty}\lesssim \delta_0,
\end{equation}
and $C_{ij}\in C^{0,\epsilon}_\ast(\mathcal{B}_1^+)$ with $[C_{ij}]_{\dot{C}^{0,\epsilon}_\ast(\mathcal{B}_1^+)}\lesssim \delta_0$.
Thus, for $k,\ell\in \{1,\dots, n-1\}$
\begin{align*}
&\quad F_{k\ell}(v,y)-F_{k\ell}(v_{0},0)\\
&=-\left(\tilde{a}^{k\ell}(y)-\tilde{a}^{k\ell}(0)\right)J(v_{0})-\tilde{a}^{k\ell}(y)\left(J(v)-J(v_{0})\right)\\
&\lesssim c_\ast(y_n^2+y_{n+1}^2)+ \delta_0 (y_n^2+y_{n+1}^2).
\end{align*}
Consequently,
\begin{align*}
\left|\left(F_{k\ell}(v,y)-F_{k\ell}(v_0,0)\right)\p_{k\ell}w(y)\right|\lesssim \max\{c_\ast,\delta_0\}r^{1+2\delta}|r^{-(1-2\delta)}\p_{k\ell}w(y)|.
\end{align*}
Moreover, it is not hard to check that 
\begin{align*}
\left[r(\cdot)^{-(1+2\delta-\epsilon)}(F_{k\ell}(v,\cdot)-F_{k\ell}(v_0,0))\p_{k\ell}w\right]_{\dot{C}^{0,\epsilon}_\ast(\mathcal{B}_{1}^+)}\lesssim \max\{c_\ast,\delta_0\}\|w\|_{X_{\delta,\epsilon}(\mathcal{B}_1^+)}.
\end{align*}
Hence, for $k,\ell\in \{1,\dots, n-1\}$
$$\left\|(F_{k\ell}(v,y)-F_{k\ell}(v_0,0))\p_{k\ell}w\right\|_{Y_{\delta,\epsilon}(\mathcal{B}_1^+)}\lesssim \max\{c_\ast,\delta_0\}\|w\|_{X_{\delta,\epsilon}(\mathcal{B}_1^+)}.$$

For the remaining second order terms in the linearization we argue similarly. 

\item Estimate for $F_k(v,y)\p_k$. To estimate the lower order terms $F_k(v,y)$, we need to assume that $a^{ij}\in C^{2,\gamma}$ for some $\gamma>0$. Furthermore, we assume $[D^2_xa^{ij}]_{\dot{C}^{0,\gamma}}\leq c_\ast$ (this assumption on the second derivatives becomes necessary as the term $F_k(v,y)$ with $k\in \{n,n+1\}$ involves $D^2a$, c.f. \eqref{eq:op_lin}). Note that combining this with the assumption (A6) and using an interpolation estimate we have $\|D^2_xa^{ij}\|_{L^\infty}\leq Cc_\ast$.\\
We begin with the terms with $k\in\{1,\dots, n-1\}$, i.e. with
\begin{align*}
F_k(v,y)&=-J(v)\sum_{j=1}^{n-1}\tilde{b}^j, \quad \tilde{b}^j=\sum_{i=1}^{n+1}\p_{x_i}a^{ij}|_{(y'',-\p_nv,-\p_{n+1}v)}.
\end{align*}
As by our assumption (A6) $\|\tilde{b}^i\|_{L^\infty}\lesssim c_\ast$, the asymptotics of $J(v)$ immediately yield that $\|F_k(v,y)\p_k w\|_{Y_{\delta,\epsilon}(\mathcal{B}_1^+)}\lesssim c_\ast\|w\|_{X_{\delta,\epsilon}(\mathcal{B}_1^+)}$ as long as $\epsilon\leq \gamma$.\\
For the contributions with $k\in\{n,n+1\}$, $F_k(v,y)$ is of the same structural form as the original nonlinear function $F(v,y)$, however with coefficients which contain an additional derivative, i.e. $a^{ij}$ is replaced by $b^{ij}_k$ and $\tilde{b}^j$ is replaced by $b^j_k$ (c.f. \eqref{eq:op_lin}). 
Thus, by the argument in Section \ref{sec:nonlinmap} on the mapping properties of the nonlinear function $F(v,y)$, we infer that $F_k(v,y)\p_k w \in Y_{\alpha,\epsilon}(\mathcal{B}_1^+)$, $k\in\{n,n+1\}$ (for $\epsilon \leq \gamma$) for any $w\in X_{\alpha,\epsilon}(\mathcal{B}_1^+)$. Moreover, it satisfies $\|F_{k}(v,y)\p_k w\|_{Y_{\delta,\epsilon}(\mathcal{B}_1^+)}\lesssim C [D^2 a^{ij}]_{\dot{C}^{0,\gamma}}\|w\|_{X_{\delta,\epsilon}(\mathcal{B}_1^+)}$.  
\end{itemize}
Combining the previous observations concludes the proof of Proposition \ref{prop:linear} in the case that $k\geq 2$.\\

For the case $k=1$, we notice that if $a^{ij}\in C^{1,\gamma}$ the linearization is simply given by $L_{v}=F_{k\ell}(v,y)\p_{k\ell}+\sum\limits_{i,j=1}^{n+1} b^{ij}_k G^{ij}(v)\p_k$. Thus, a similar proof as for the case $C^{k,\gamma}$ with $k\geq 2$ applies.
\end{proof}

\subsection{Hölder Regularity and Analyticity}
\label{sec:IFT1}
In this section we apply the implicit function theorem to show that if $a^{ij}\in C^{k,\gamma}$ with $k\geq 1$ and $\gamma\in (0,1)$, then the regular free boundary is locally in $C^{k+1,\gamma}$. \\
To this end, we first define a one-parameter family of diffeomorphisms which we compose with our Legendre function to create an ``artificially parameter-dependent problem''. Due to the regularity properties of $F$, this is then exploited to deduce the existence of a solution to the parameter-dependent problem, which enjoys good regularity properties in the artificially introduced parameter (c.f. Proposition \ref{prop:reg_a}). Finally, this regularity is transfered from the parameter variable to the original variables yielding the desired regularity properties of our Legendre function (c.f. Theorems \ref{prop:hoelder_reg_a}, \ref{prop:analytic}). This then proves the claims of Theorem \ref{thm:higher_reg}.\\

In the sequel, we will always assume that $v$ is a Legendre function (c.f. (\ref{eq:legendre})) which is associated with a solution of the variable coefficient thin obstacle problem (\ref{eq:varcoeff}), which satisfies the normalizations (A4) and (A5). The coefficient metric $a^{ij} \in C^{k,\gamma}$, $k\geq 1$, is assumed to obey the conditions (A1)-(A3) as well as (A6). We also suppose that $[D^2_xa^{ij}]_{\dot{C}^{0,\gamma}}\leq c_\ast$ if $k\geq 2$. By rescaling we assume that $v$ is well defined in $\mathcal{B}_2^+$. We have shown in Proposition~\ref{prop:error_gain2} that $v\in X_{\delta,\epsilon}(\mathcal{B}_1^+)$ for any $\delta\in (0,1)$ and $\epsilon\in (0,\gamma]$. Furthermore, we recall that  $\|v-v_0\|_{X_{\delta,\epsilon}(\mathcal{B}_1^+)}\leq C\max\{\epsilon_0,c_\ast\}$ (c.f. Remark~\ref{rmk:close}), where $v_0(y)=-\frac{1}{6}(y_n^3-3y_ny_{n+1}^2)$ is the model solution, which is the (rescaled) blow-up of $v$ at $0$.

\subsubsection{An infinitesimal translation.}
\label{subsec:IFT0}
For $y\in \R^{n+1}$ and $a\in \R^{n-1}$ fixed, we consider the following ODE 
\begin{equation}
\label{eq:ODE}
\begin{split}
\phi'(t)&=a ((3  /4 )^2-|\phi(t)|^2)^5_+ \eta(y_n,y_{n+1}),\\
\phi(0)&=y''.
\end{split}
\end{equation}
Here $\eta$ is an in the $y_n, y_{n+1}$ variables radially symmetric smooth cut-off function supported in $\{(y_n,y_{n+1})| \ y_n^2+y_{n+1}^2<1/2\}$, which is equal to one in $\{y_n^2+y_{n+1}^2\leq 1/4\}$. We denote the unique solution to the above ODE by $\phi_{a,y}(t)$ and let $$\Phi_a(y):= (\phi_{a,y}(1),y_n,y_{n+1}).$$ 
Due to the $C^5$ regularity of the right hand side of (\ref{eq:ODE}), we obtain that $\phi_{a,y}(1)$ is $C^5$ in $y$. 
Moreover, an application of a fixed point argument yields that $\phi_{a,y}(1)$ is analytic in the parameter $a$. We summarize these properties as:

\begin{lem}
For each $a\in \R^{n-1}$, $\Phi_a:\R^{n+1}\rightarrow \R^{n+1}$ is a $C^5$ diffeomorphism. The mapping $\R^{n-1}\ni a\mapsto \Phi_a\in C^5(\R^{n+1})$ is analytic. 
\end{lem}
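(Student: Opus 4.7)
The lemma combines two standard facts from ODE theory: $C^k$ smooth dependence of flows on initial data, and analytic dependence of flows on parameters when the right-hand side is itself analytic (here even linear) in the parameter. I would argue the two statements separately in three short steps.

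First, I would establish global existence and uniqueness for (\ref{eq:ODE}). The right-hand side
\begin{align*}
F(\phi; a, y) := a \bigl((3/4)^2 - |\phi|^2\bigr)^5_+ \eta(y_n, y_{n+1})
\end{align*}
is globally bounded by $|a|\,(3/4)^{10}\|\eta\|_\infty$ and Lipschitz continuous in $\phi$, so Picard-Lindel\"of yields a unique global solution $\phi_{a,y}(t)$. Two elementary invariance observations control the picture: the last two coordinates $(y_n, y_{n+1})$ are not dynamic, so $\eta(y_n, y_{n+1})$ is a constant along each trajectory; and $F$ vanishes identically on $\{|\phi| \geq 3/4\}$, so trajectories starting in $\{|y''|<3/4\}$ remain in that ball, while trajectories starting outside are stationary. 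In particular $\Phi_a$ coincides with the identity outside a fixed compact set.

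Second, I would invoke the classical smooth-dependence theorem for ODEs: since $F$ carries $C^5$ regularity jointly in $(\phi, y)$ (the cut-off factor $((3/4)^2 - |\cdot|^2)^5_+$ being five times continuously differentiable in $\phi$, and $\eta$ smooth in $(y_n,y_{n+1})$) and since $\Phi_a$ extends trivially in the last two coordinates through the identity, the time-$1$ flow map lies in $C^5(\R^{n+1};\R^{n+1})$. For the diffeomorphism property, observe that $(y_n, y_{n+1})$ are invariants of the flow, so $\Phi_a^{-1}$ is the time-$(-1)$ flow of the same vector field, equivalently the time-$1$ flow of $-F$; applying the smooth-dependence theorem to $-F$ yields $\Phi_a^{-1} \in C^5(\R^{n+1};\R^{n+1})$.

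Finally, the analyticity in $a$ exploits the decisive structural feature that $F$ is \emph{linear} in $a$. I would view $\phi_{a,y}(\cdot)$ as the unique fixed point of the contraction
\begin{align*}
T_a[\phi](t) := y'' + \int_0^t a\bigl((3/4)^2 - |\phi(s)|^2\bigr)^5_+ \eta(y_n, y_{n+1})\, ds
\end{align*}
acting on the closed ball of radius $3/4$ in $C([0,1]; C^5(\R^{n+1}; \R^{n-1}))$; the operator is affine in $a$, and on every bounded set $\{|a|\leq M\}$ it is a uniform contraction after subdividing $[0,1]$ into finitely many subintervals. The parametric Banach fixed-point theorem then produces a fixed point that is real-analytic in $a$ with $C^5$-valued coefficients. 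Equivalently, the Picard iterates $\phi^{(k)} = T_a^k(y'')$ are polynomials of degree $\leq k$ in $a$ whose coefficients are $C^5$ functions of $y$, and they converge geometrically uniformly in $|a|\leq M$; the resulting norm-convergent Taylor series $\phi_{a,y}(1) = y'' + \sum_{k\geq 1} P_k(y)(a^{\otimes k})$ is the claimed analytic expansion. The only nontrivial bookkeeping is verifying that differentiating the integral equation up to order five in $y$ produces chain-rule bounds whose constants remain summable in $k$, which follows from the compact support of the integrand and a routine induction on the order of differentiation; all other steps are direct invocations of classical ODE theorems.
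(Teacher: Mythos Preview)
Your argument is correct and follows exactly the route the paper sketches in the paragraph preceding the lemma: smooth dependence of ODE flows on initial data for the $C^5$ claim, and a Banach fixed-point/Picard-iteration argument exploiting the linearity of the right-hand side in $a$ for the analyticity. (One small shared imprecision: the factor $\bigl((3/4)^2-|\phi|^2\bigr)^5_+$ is only $C^{4,1}$, not $C^5$, at $|\phi|=3/4$; this does not affect anything downstream, where only second derivatives of $\Phi_a$ are actually used.)
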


Moreover, we note that $\Phi_a$ enjoys further useful properties:

\begin{lem}
\label{prop:psi}
\begin{itemize}
\item[(i)] For each $a\in \R^{n-1}$, $\Phi_a(\{y_n=0\})\subset\{y_n=0\}$ and $\Phi_a(\{y_{n+1}=0\})\subset\{y_{n+1}=0\}$. Moreover, $\Phi_a(y)=y$ if $y\notin \{y\in \R^{n+1}| \ |y''|<\frac{3 }{4}, \ y_n^2+y_{n+1}^2<\frac{1}{2}\}$.
\item[(ii)] For each $a\in \R^{n-1}$ and $y\in \{y\in\R^{n+1}| \ y_n^2+y_{n+1}^2<\frac{1}{4}\}$, we have $\Phi_a(y)=(\phi_{a,y''}(1),y_n,y_{n+1})$, i.e. $\Phi_a$ only acts on the tangential variables. 
\item[(iii)] For each $a\in \R^{n-1}$, $\p_{n+1}\Phi_a(y'',y_n,0)=(0,\dots, 0,1)$.
\item[(iv)] At $a=0$, $\Phi_0(y)=y$ for all $y\in \R^{n+1}$.
\end{itemize}
\end{lem}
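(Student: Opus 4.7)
The proof proposal is to verify each of the four properties by direct inspection of the ODE \eqref{eq:ODE} together with standard uniqueness arguments. The structure is that $\Phi_a$ only alters the tangential coordinates $y''$, so properties (i)--(iii) reduce to statements about the flow $t\mapsto \phi_{a,y}(t)$ in $\R^{n-1}$, and (iv) is immediate.

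For (i), the statement that $\Phi_a$ preserves $\{y_n=0\}$ and $\{y_{n+1}=0\}$ is tautological from the definition $\Phi_a(y)=(\phi_{a,y}(1),y_n,y_{n+1})$, since the $n$-th and $(n+1)$-th coordinates are never touched. To show $\Phi_a(y)=y$ whenever $|y''|\geq 3/4$ or $y_n^2+y_{n+1}^2\geq 1/2$: in the second case the cutoff $\eta(y_n,y_{n+1})$ vanishes identically in $t$, so $\phi'\equiv 0$ and $\phi(1)=y''$. In the first case, the constant curve $\phi(t)\equiv y''$ is a solution of \eqref{eq:ODE} (since $((3/4)^2-|y''|^2)_+^5=0$); because the right-hand side is $C^4$ (in particular locally Lipschitz in $\phi$), uniqueness for ODEs gives $\phi_{a,y}(1)=y''$.

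For (ii), if $y_n^2+y_{n+1}^2\leq 1/4$, then $\eta(y_n,y_{n+1})=1$, so the ODE reduces to $\phi'(t)=a((3/4)^2-|\phi(t)|^2)_+^5$ with $\phi(0)=y''$. Since neither the right-hand side nor the initial datum depends on $(y_n,y_{n+1})$, the solution depends only on $y''$, and we may write it as $\phi_{a,y''}(1)$ as claimed. For (iv), setting $a=0$ in \eqref{eq:ODE} yields $\phi'\equiv 0$, hence $\phi_{0,y}(1)=y''$ and $\Phi_0=\mathrm{Id}$.

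The slightly more delicate point is (iii). Differentiating $\Phi_a$ in $y_{n+1}$ gives $\partial_{n+1}\Phi_a(y)=(\partial_{n+1}\phi_{a,y}(1),0,1)$, so it suffices to show that $\psi(t):=\partial_{n+1}\phi_{a,y}(t)\big|_{y_{n+1}=0}$ vanishes identically. Because $\eta$ is radially symmetric in $(y_n,y_{n+1})$, one has $\partial_{n+1}\eta(y_n,0)=0$. Differentiating \eqref{eq:ODE} in $y_{n+1}$ and restricting to $y_{n+1}=0$ therefore gives a homogeneous linear ODE
\[
\psi'(t)=-10\,a\,\bigl((3/4)^2-|\phi(t)|^2\bigr)_+^4\,\bigl(\phi(t)\cdot\psi(t)\bigr)\,\eta(y_n,0),
\]
with initial condition $\psi(0)=\partial_{n+1}y''=0$. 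By uniqueness, $\psi\equiv 0$, whence $\partial_{n+1}\phi_{a,y}(1)\big|_{y_{n+1}=0}=0$, establishing (iii).

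Overall, the proof is routine once one recognizes the radial symmetry of $\eta$ as the ingredient responsible for (iii); this symmetry is what ensures $\Phi_a$ infinitesimally fixes the Neumann face $\{y_{n+1}=0\}$, which is essential for the implicit-function-theorem argument in Section~\ref{sec:IFTAppl}. No step poses any serious obstacle; the only subtlety is keeping track of the fact that the $C^4$ regularity of the cutoff $(\cdot)_+^5$ is sufficient for uniqueness both in proving the identity behavior on $\{|y''|\geq 3/4\}$ and in propagating the vanishing of $\psi$.
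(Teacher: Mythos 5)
Your proof is correct and carries out exactly the ``short calculation'' the paper alludes to: a direct verification from the ODE \eqref{eq:ODE}, where (i), (ii), (iv) are immediate from the definition of $\Phi_a$ and uniqueness, and (iii) follows by differentiating the ODE in $y_{n+1}$, using the radial symmetry of $\eta$ to make $\partial_{n+1}\eta(y_n,0)=0$, and applying uniqueness to the resulting homogeneous linear ODE for $\psi=\partial_{n+1}\phi_{a,y}$ with $\psi(0)=0$. The only nit is that $(\cdot)_+^5$ is $C^{4}$ with Lipschitz fourth derivative rather than $C^5$, but as you note this is more than enough for the uniqueness arguments you invoke.
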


\begin{proof}
This follows directly from the definition of $\Phi_a$ and a short calculation.
\end{proof}

Let $v$ be a Legendre function as described at the beginning of Section~\ref{sec:IFT1}. We use the family of diffeomorphisms $\Phi_a$ to define a one-parameter family of functions:
$$v_a(y):=v(\Phi_a(y)).$$
We first observe that the space $X_{\delta,\epsilon}$ (c.f. Definition \ref{defi:spaces}) is stable under the diffeomorphism $\Phi_a$:

\begin{lem} \label{prop:psi2}
If $v\in X_{\delta,\epsilon}(\mathcal{B}_{1}^+)$, then $v_a=v\circ \Phi_a\in X_{\delta,\epsilon}(\mathcal{B}_{1}^+)$ as well.
\end{lem}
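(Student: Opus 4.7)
The plan is to verify the four defining conditions of $X_{\delta,\epsilon}(\mathcal{B}_1^+)$ for $v_a = v\circ\Phi_a$ using the explicit geometric properties of $\Phi_a$ collected in Lemma \ref{prop:psi}. The three structural conditions are straightforward chain-rule verifications. First, the Dirichlet condition $v_a = 0$ on $\{y_n=0\}$ is immediate from (i), which says $\Phi_a$ preserves $\{y_n=0\}$. The Neumann condition $\p_{n+1}v_a = 0$ on $\{y_{n+1}=0\}$ follows by the chain rule together with (iii): on $\{y_{n+1}=0\}$ one has $\p_{n+1}\Phi_a = e_{n+1}$, so $\p_{n+1}v_a(y) = \p_{n+1}v(\Phi_a(y))$, which vanishes because $\Phi_a$ sends $\{y_{n+1}=0\}$ into itself. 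The compatibility condition $\p_{nn}v_a = 0$ on $P$ uses (ii): in a neighbourhood of $P$ the map $\Phi_a$ acts only on the tangential coordinates, so $\p_{nn}v_a(y) = \p_{nn}v(\Phi_a(y))$, which vanishes on $P$.

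For the regularity and the norm bound I would split the argument geometrically. Away from $P$ the Baouendi--Grushin metric is equivalent to the Euclidean metric, and $\Phi_a$ is a $C^5$ diffeomorphism that equals the identity outside a fixed compact set; hence $v_a \in C^{2,\epsilon}$ in the classical sense there, with norm controlled by $\|v\|_{C^{2,\epsilon}_\ast}$ and $\|\Phi_a\|_{C^5}$. Near $P$, property (ii) reduces $\Phi_a$ to $y \mapsto (\Psi_a(y''), y_n, y_{n+1})$ with $\Psi_a(y'') := \phi_{a,y''}(1)$ a $C^5$ tangential diffeomorphism. The modified Baouendi--Grushin vector fields then transform in a controlled way: $\tilde Y_{2n-1}, \tilde Y_{2n}$ commute with $\Phi_a$, while the tangential fields $y_n\p_i$ and $y_{n+1}\p_i$ go into linear combinations of themselves with coefficients coming from the Jacobian of $\Psi_a$, which is $C^4$ in $y''$ and analytic in $a$. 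From these transformation rules one reads off $v_a \in C^{2,\epsilon}_\ast$ with the appropriate bound.

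The main work is then the $C^{3,\alpha}_\ast$-expansion at $P$ and the matching weighted norm. The approximating polynomial $P^{v_a}_{\bar y}$ at $\bar y \in P$ should be obtained by composing the expansion $P^{v}_{\Phi_a(\bar y)}$ of $v$ at $\Phi_a(\bar y)$ with the Taylor polynomial of $\Psi_a$ about $\bar y''$ to sufficient order, and then retaining only those monomials which are Grushin-homogeneous of degree at most three. The extra terms generated by the nonlinearity of $\Psi_a$ either fit into the third-order model polynomial at $\bar y$ or are remainders of Grushin-order $\geq 4$, which are absorbed by the weight $d_G(\cdot,\bar y)^{3+2\delta}$. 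The weights themselves behave well because $\Phi_a$ acts purely tangentially near $P$, and on $P$ the Grushin distance in the tangential direction is $|y''-\bar y''|^{1/2}$, preserved up to multiplicative constants by the bi-$C^5$ map $\Psi_a$. Combining these ingredients with the characterization in Proposition \ref{prop:decompI} (and Remark \ref{rmk:characterize}) yields $\|v_a\|_{X_{\delta,\epsilon}(\mathcal{B}_1^+)} \lesssim \|v\|_{X_{\delta,\epsilon}(\mathcal{B}_1^+)}$, with a constant depending on $\|\Phi_a\|_{C^5}$. The main obstacle is organizing the composition so that the Grushin-homogeneity of $P^{v_a}_{\bar y}$ is preserved and the remainders are correctly identified; once this bookkeeping is in place, the weighted Hölder estimates reduce to routine chain-rule computations.
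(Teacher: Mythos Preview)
Your proposal is correct and follows essentially the same approach as the paper: verify the Dirichlet/Neumann/compatibility boundary conditions via the chain rule together with properties (i)--(iii) of Lemma~\ref{prop:psi}, then split the domain into the region $\{y_n^2+y_{n+1}^2<1/4\}$ where $\Phi_a$ acts purely tangentially and the complement where ordinary $C^{2,\epsilon}$ regularity suffices, and near $P$ use the explicit chain-rule expressions for $\p_i v_a$, $\p_{ij}v_a$, $\p_{in}v_a$ to verify the decomposition of Proposition~\ref{prop:decompI}. The paper's proof is more terse---it simply writes out the three relevant chain-rule identities and asserts that the decomposition is ``not hard to check''---whereas you spell out more of the bookkeeping for the approximating polynomial; but the underlying argument is the same.
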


\begin{proof}
We first check that $v_a$ satisfies the Dirichlet-Neumann boundary condition. Indeed, by (i) in Lemma~\ref{prop:psi}, if $v=0$ on $\{y_n=0\}$, then $v_a=0$ on $\{y_n=0\}$ as well. To verify the Neumann boundary condition, we compute
\begin{align*}
\p_{n+1}v_a(y)=\sum_{k=1}^{n+1}\p_{k}v(z)\big|_{z=\Phi_a(y)}\p_{n+1} \Phi_a^k(y).
\end{align*}
Thus by (i) and (iii) of Lemma~\ref{prop:psi}, $\p_{n+1}v_a=0$ on $\{y_{n+1}=0\}$. 

Next by property (ii) of Lemma~\ref{prop:psi}, for $y\in \{y\in \R^{n+1}| y_n^2+y_{n+1}^2<\frac{1}{4}\}$ and $i,j\in\{1,\dots,n-1\}$,
\begin{align*}
\p_iv_a(y)&=\sum_{k=1}^{n-1}\p_k v(z)\big|_{z=\Phi_a(y)}\p_i\Phi^k_a(y''), \\
\p_{ij}v_a(y)&=\sum_{k,\ell=1}^{n-1}\p_{k\ell}v(z)\big|_{z=\Phi_a(y)}\p_i\Phi^k_a(y'') \p_j\Phi^\ell_a(y'') + \sum_{k=1}^{n-1}\p_kv(z)\big|_{z=\Phi_a(y)}\p_{ij}\Phi_a^k(y''),\\
\p_{in}v_a(y)&=\sum_{k=1}^{n-1}\p_{kn}v(z)\big|_{z=\Phi_a(y)}\p_i\Phi_a^{k}(y'').
\end{align*}
Thus, combining these calculations with the fact that $\Phi_a$ fixes the $(y_n,y_{n+1})$ variables ((ii) of Lemma~\ref{prop:psi}), it is not hard to check that $v_a$ satisfies the decomposition in Proposition~\ref{prop:decompI} in the region $\{y\in \R^{n+1}| y_n^2+y_{n+1}^2<\frac{1}{4}\}$. The regularity of $\Phi_a$ and of $v$, entails that $v_a\in C^{2,\epsilon}_\ast$ outside of the region $\{y\in \R^{n+1}| y_n^2+y_{n+1}^2<\frac{1}{4}\}$. Thus, $v_a\in X_{\delta,\epsilon}(\mathcal{B}_1^+)$. 
\end{proof}

Since $v$ satisfies $F(v,y)=0$, the function $v_a(y)=v(\Phi_a(y))$ solves a new equation $F_a(u,y)=0$. Here 
\begin{align}\label{eq:Fa}
F_a(u,y)=F(u(\Phi^{-1}_a(z)),z)\big|_{z=\Phi_a(y)}.
\end{align}
For this equation we note the following properties:

\begin{prop}\label{prop:reg_a}
Let $a^{ij}\in C^{k,\gamma}$ with $k\geq 1$, $\gamma\in (0,1]$. Then for each $a\in \R^{n-1}$, $F_a$ maps $X_{\delta,\epsilon}(\mathcal{B}_{1}^+)$ into $Y_{\delta,\epsilon}(\mathcal{B}_{1}^+)$. Moreover, 
\begin{itemize}
\item[(i)] for each $a\in \R^{n-1}$, the mapping 
\begin{align*}
F_a(\cdot, y): X_{\delta,\epsilon}(\mathcal{B}_1^+)\rightarrow Y_{\delta,\epsilon}(\mathcal{B}_1^+), \quad u\mapsto F_a(u,y),
\end{align*}
is $C^{k-1,\gamma-\epsilon}$ in $u$. 
\item[(ii)] For each $u\in X_{\alpha,\epsilon}(\mathcal{B}_1^+)$, the mapping
\begin{align*}
F_{\cdot}(u,y):\R^{n-1} \rightarrow Y_{\delta,\epsilon}(\mathcal{B}_1^+), \ a  \mapsto F_a(u,y),
\end{align*}
is $C^{k-1,\gamma-\epsilon}$ in $a$. If $a^{ij}$ is real analytic in $B_1^+$, then $F_a$ is real analytic in $a$. 
\end{itemize} 
\end{prop}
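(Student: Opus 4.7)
The plan is to combine three ingredients: the mapping property of $F$ from Proposition~\ref{prop:nonlin_map}, the geometric properties of $\Phi_a$ recorded in Lemma~\ref{prop:psi}, and the explicit polynomial form of $F$ from Proposition~\ref{prop:bulk_eq}.

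For the basic mapping property I would factor $F_a$ as the composition $u\mapsto u\circ\Phi_a^{-1}\mapsto F(u\circ\Phi_a^{-1},\cdot)\mapsto F(u\circ\Phi_a^{-1},\cdot)\circ\Phi_a$, and show that the first step preserves $X_{\delta,\epsilon}(\mathcal{B}_1^+)$ by the argument of Lemma~\ref{prop:psi2} (since $\Phi_a^{-1}$ shares the properties of $\Phi_a$ collected in Lemma~\ref{prop:psi}), the second step is Proposition~\ref{prop:nonlin_map}, and the third step preserves $Y_{\delta,\epsilon}(\mathcal{B}_1^+)$ by the analogous argument, using the decomposition of Proposition~\ref{prop:decompI}\,(i) together with the fact that $\Phi_a$ fixes the edge $P$ and the coordinates $y_n,y_{n+1}$ near $P$.

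To prove (i), I would exploit that $F$ is a polynomial of degree at most three in $D^2u$ whose coefficients are $\tilde a^{ij}(y)=a^{ij}(y'',-\p_nu,-\p_{n+1}u)$ and $\tilde b^j$ (the latter already involving one spatial derivative of $a^{ij}$). Fr\'echet differentiation in $u$ thus reduces to algebraic differentiation of the polynomial part, which costs nothing, and chain-rule differentiation of $\tilde a^{ij},\tilde b^j$, each costing one spatial derivative of $a^{ij}$. Performing $k-1$ Fr\'echet derivatives therefore consumes exactly the regularity $a^{ij}\in C^{k,\gamma}$, and the $(\gamma-\epsilon)$-H\"older modulus of the top derivative in $u$ follows by interpolation between the pointwise bound and the $C^{0,\gamma}$ estimate on the highest appearing spatial derivative of $a^{ij}$ composed with $\p u$, measured in the $C^{0,\epsilon}_\ast$ topology that defines $\|\cdot\|_{Y_{\delta,\epsilon}}$. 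The precompositions with $\Phi_a^{\pm 1}$ do not affect these counts for fixed $a\in\R^{n-1}$.

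For (ii), the key input is that $a\mapsto\Phi_a\in C^5(\R^{n+1})$ is real analytic. Writing $F_a(u,\cdot)=F(u\circ\Phi_a^{-1},\cdot)\circ\Phi_a$ and applying the chain rule, the $C^{k-1,\gamma-\epsilon}$ regularity in $a$ reduces to the analysis in (i) combined with this analytic dependence of $\Phi_a^{\pm 1}$ on $a$. When $a^{ij}$ is real analytic in $B_1^+$, each Taylor coefficient of $F_a$ in $a$ is well defined by the polynomial structure of $F$ in its arguments and the analytic dependence of the coefficients on $(-\p_nu,-\p_{n+1}u)$, and the resulting series converges in $Y_{\delta,\epsilon}(\mathcal{B}_1^+)$ on a small neighborhood of $a=0$ in $\R^{n-1}$. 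The main technical hurdle I anticipate lies in (i): one must verify not just pointwise $C^{0,\epsilon}_\ast$ regularity of the chain-rule derivatives of $\tilde a^{ij}$, but that their structural decomposition near $P$ matches the one encoded in the definition of $Y_{\delta,\epsilon}$. This is achieved by combining the decomposition for $v\in X_{\delta,\epsilon}$ from Proposition~\ref{prop:decompI} with the H\"older regularity of the higher order derivatives of $a^{ij}$, in the same spirit as the structural calculation in the proof of Proposition~\ref{prop:error_gain}.
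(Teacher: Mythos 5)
Your proposal is essentially the paper's own argument: you factor $F_a$ as the same three-step composition (precompose $u$ by $\Phi_a^{-1}$, apply $F$, postcompose by $\Phi_a$), invoke Lemma~\ref{prop:psi2} and Proposition~\ref{prop:nonlin_map} for the basic mapping property, count spatial derivatives of $a^{ij}$ consumed by Fr\'echet differentiation of $\tilde a^{ij}$ and $\tilde b^j$ for (i), and use the analyticity of $a\mapsto\Phi_a$ together with the regularity of $a^{ij}$ for (ii). The only place where your write-up is slightly looser than the paper is in (ii): for fixed $u$ the crucial $a$-dependence enters through $a^{ij}\bigl((\Phi_a(y))'',-\p_n u(y),-\p_{n+1} u(y)\bigr)$, i.e.\ through the tangential shift in the \emph{first} slot of $a^{ij}$ (since $\Phi_a$ fixes $y_n,y_{n+1}$ near $P$), rather than through the arguments $(-\p_n u,-\p_{n+1} u)$ as you emphasize --- the paper makes this explicit, but the distinction does not change the structure of the argument.
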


\begin{proof}
We first check the mapping property of $F_a$. Let $\Psi_a(z):=\Phi_a^{-1}(z)$ and let $\tilde{u}_a(z):=u(\Phi_a^{-1}(z))$. A direct computation shows that for $ i,j\in\{1,\dots,n-1\}$, $\eta,\xi\in\{n,n+1\}$ and $y\in\{y_n^2+ y_{n+1}^2 \leq \frac{{1}}{4}\}$
\begin{align*}
\p_{ij}\tilde{u}_a(z)&=\sum_{k,\ell=1}^{n-1}\p_{k\ell}u(\Psi_a(z))\p_i \Psi_a^{k}(z) \p_j\Psi_a^\ell(z)+\sum_{k=1}^{n-1}\p_k u(\Psi_a(z))\p_{ij}\Psi_a^k(z),\\
\p_{i\xi}\tilde{u}_a(z)&=\sum_{k=1}^{n-1}\p_{k\xi}u(\Psi_a(z))\p_i\Psi^{k}_a(z),\\
\p_{\xi}\tilde{u}_a(z)&=\p_{\xi}u(\Psi_a(z)),\\
\p_{\eta\xi}\tilde{u}_a(z)&=\p_{\eta\xi}u(\Psi_a(z)).
\end{align*}
By property (ii) of Lemma~\ref{prop:psi} and a similar argument as in Lemma~\ref{prop:psi2} we have that $\tilde{u}_a=u\circ \Psi_a \in X_{\delta,\epsilon}(\mathcal{B}_{{1}/4}^+)$, if $u\in X_{\delta,\epsilon}(\mathcal{B}_{1}^+)$. Thus, by Proposition \ref{prop:nonlin_map}, $F(\tilde{u}_a,z)\in Y_{\delta,\epsilon}(\mathcal{B}_{1/4}^+)$. By (ii) in Lemma~\ref{prop:psi}, $F(\tilde{u}_a,z)\big|_{z=\Phi_a(y)}\in Y_{\delta,\epsilon}(\mathcal{B}_{1/4}^+)$ as well. Outside of $\{y_{n}^2 + y_{n+1}^2 \leq \frac{1}{4}\}$, the statement follows without difficulties.\\
Next we show the regularity of $F_a(u,y)$ in $u$ and in the parameter $a$ which were claimed in the statements (i) and (ii). We first show that when $a=0$, $u\mapsto F(u,y)$ is $C^{k-1,\gamma-\epsilon}$. Indeed, we recall the expression of $F(v,y)$ from the beginning of Section~\ref{sec:grushin}. By a similar estimate as in Proposition~\ref{prop:linear} we have that $u\mapsto \sum_{i,j}a^{ij}(z'',-\p_nu,-\p_{n+1}u)G^{ij}(u)$ is $C^{k-1,\gamma-\epsilon}$ regular. To estimate the contribution $J(u)(b^j(u)\p_ju+b^n(u)y_n+b^{n+1}(u)y_{n+1})$ (in the case $k\geq 2$), we use the bound
\begin{align*}
\left|b(u_1,x)-b(u_2,y)\right|\lesssim \|b\|_{\dot{C}^{0,\gamma}}\|u_1-u_2\|^{\gamma-\epsilon}_{X_{\alpha,\epsilon}(\mathcal{B}_1^+)}|x-y|^\epsilon.
\end{align*}
Here we used the decomposition property of $u\in X_{\delta,\epsilon}(\mathcal{B}_1^+)$ from Proposition~\ref{prop:decompI}, that $b$ is $C^{0,\gamma}$ as a function of its arguments and the definition $b(u,y):=b(y'',-\p_nu,-\p_{n+1}u)$. Combining this we infer that 
$$\|(D^{k-1}_{u_1}F-D^{k-1}_{u_2}F)(h^{k-1})\|_{Y_{\alpha,\epsilon}(\mathcal{B}_1^+)}\lesssim_k\|u_1-u_2\|_{X_{\alpha,\epsilon}(\mathcal{B}_1^+)}^{\gamma-\epsilon}\|h\|_{X_{\alpha,\epsilon}(\mathcal{B}_1^+)}^{k-1}.$$ 
To show the regularity of $u\mapsto F_a(u,y)$ for nonzero $a$, we use the definition of $F_a$ in \eqref{eq:Fa} and the computation for $D^2\tilde{u}_a$ from above. The argument is the same as for $a=0$.\\
Now we show the regularity of $F_a(u,y)$ in $a$ for fixed $u$. We only show the case when $k=1$. The remaining cases follow analogously. We recall that
\begin{align*}
F(u,z)&=\sum_{i,j}^{n+1}a^{ij}(z'',-\p_{n}u,-\p_{n+1}u)G^{ij}(u)+ f(z),\\
&\text{where } f(z)=-J(v(z))\left(\sum_{j=1}^{n-1}\tilde{b}^j(z)\p_j v(z)+\tilde{b}^n(z)z_n+\tilde{b}^{n+1}(z)z_{n+1}\right),
\end{align*} 
and that $F_a(u,y)=F(u(\Psi_a(z)),z)\big|_{z=\Phi_a(y)}$ from \eqref{eq:Fa}.
Since $a\mapsto \Psi_a$ and $a\mapsto \Phi_a$ are real analytic and since $F(v,z):X_{\delta, \epsilon}(\mathcal{B}_{1}^+)\rightarrow Y_{\delta,\epsilon}(\mathcal{B}_{1}^+)$ is $C^{k-1,\gamma-\epsilon}$ regular in $v$, it suffices to note the regularity of the mappings 
\begin{align*}
a &\mapsto a^{ij}((\Phi_a(y))'',-\p_n u,-\p_{n+1}u)G_a^{ij}(u)
\end{align*} 
in $a$ as functions from $\R^{n-1}$ to $Y_{\delta,\epsilon}(\mathcal{B}_1^+)$. For the term $f(z)|_{z=\Phi_a(y)}$, since $f(z)$ has the form $f(z)=r(z)^{3-\epsilon}\tilde{f}(z)$ with $r(z)=(z_n^2+z_{n+1}^2)^{1/2}$ and since $\tilde{f}(z)$ is $C^{0,\gamma}$ in its tangential variables (due to the regularity of $\tilde{b}^j$ and the fact that $f(z)$ involves only lower order derivatives of $v$), the map 
$$\R^{n-1}\ni a\mapsto \tilde{f}_a(y):=\tilde{f}(\Phi_a(y))\in C^{0,\epsilon}_\ast$$ 
is $C^{0,\gamma-\epsilon}$ regular.
\end{proof}

\subsubsection{Application of the implicit function theorem and regularity}
\label{sec:IFTAppl}

With this preparation we are now ready to invoke the implicit function theorem. We seek to apply the implicit function theorem in the spaces $X_{\delta,\epsilon}$ and $Y_{\delta,\epsilon}$ from Definition \ref{defi:spaces}. However, the Legendre function $v$ is only defined in $\mathcal{B}_1^+$. Thus, we extend it into the whole quarter space $Q_+$. In order to avoid difficulties at (artificially created) boundaries, we base our argument not on $v_a$ but instead consider $w_a:= v_a - v$, where $v$ is the original Legendre function. For this function we first note that $\supp(w_a) \Subset \mathcal{B}_{3 /4}^+$, which follows from the definition of the diffeomorphism $\Phi_a$. Moreover, $w_a$ solves the following fully nonlinear, degenerate elliptic equation:
\begin{align*}
\tilde{F}_a(w_a,y) := F_a(w_a+v,y)=0 \mbox{ in } \mathcal{B}_{1}^+.
\end{align*}
We extend $w_a$ to the whole quarter space $Q_+$ by setting $w_a=0$ in $Q_+\setminus \mathcal{B}_{1}^+$. Using $w_a=0$ in $Q_+\setminus \mathcal{B}_{3/4}^+$, the function $w_a$ solves the equation 
\begin{align*}
G_a(w_a,  y):= \eta(d_G(y,0)) \tilde{F}_a(w_a + v,y)  + (1-\eta(d_G(y,0))) \D_G w_a= 0, 
\end{align*}
in $Q_+$. Here $\eta: [0,\infty) \rightarrow \R$ is a smooth cut-off function with $\eta(s)=1$ for $s\leq \frac{3 }{4}$ and $\eta(s)=0$ for $s\geq 1$.
This extension is chosen such that the operator is of ``Baouendi-Grushin type'' around the degenerate set $P=\{y_n=y_{n+1}=0\}$ and the Baouendi-Grushin type estimates from Proposition \ref{prop:invert} and from the Appendix, Section \ref{sec:quarter_Hoelder}, can be applied in a neighborhood of $P$. The function $G_a$ satisfies the following mapping properties:

\begin{prop}\label{prop:invertible}
Assume that $a^{ij}\in C^{k,\gamma}$ with $k\geq 1$ and $\gamma\in (0,1]$. Given $a\in \R^{n-1}$, $G_a$ maps from $X_{\delta,\epsilon}$ into $Y_{\delta,\epsilon}$ for each $\delta\in (0,1)$ and $\epsilon\in (0,\gamma)$. Let $L:=D_w G_a\big|_{(a,w)=(0,0)}$ be the linearization of $G_a$ at $w=0$ and $a=0$. Then $L:X_{\delta,\epsilon}\rightarrow Y_{\delta,\epsilon}$ is invertible.
\end{prop}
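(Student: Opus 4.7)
The plan is to dispatch the mapping property directly, then identify $L$ explicitly at $(a,w)=(0,0)$ and finally obtain invertibility via a Neumann series, using the Baouendi–Grushin estimate of Proposition~\ref{prop:invert} as the unperturbed isomorphism.

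For the mapping property, I would note that $G_a$ is the convex combination (with weight $\eta(d_G(\cdot,0))$) of $\tilde F_a(w,\cdot)=F_a(w+v,\cdot)$ and $\Delta_G$. Proposition~\ref{prop:error_gain2} gives $v\in X_{\delta,\epsilon}(\mathcal{B}_1^+)$, and the decomposition of Proposition~\ref{prop:decompI} is stable under addition; so for any $w\in X_{\delta,\epsilon}$ one has $w+v\in X_{\delta,\epsilon}(\mathcal{B}_1^+)$. Proposition~\ref{prop:reg_a} then delivers $F_a(w+v,\cdot)\in Y_{\delta,\epsilon}(\mathcal{B}_1^+)$, while $\Delta_G$ maps $X_{\delta,\epsilon}$ into $Y_{\delta,\epsilon}$ by construction. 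The cut-off $\eta(d_G(\cdot,0))$ equals $1$ on a neighborhood of $P$, hence multiplication by $\eta$ preserves both spaces (the approximating polynomials on $P$ are untouched), and the support condition built into $G_a$ is automatic from $\mathrm{supp}\,\eta\subset\{d_G(\cdot,0)\le 1\}$.

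For the linearization, Lemma~\ref{prop:psi}(iv) gives $\Phi_0=\mathrm{Id}$, so $F_0(u,y)=F(u,y)$ and $\tilde F_0(w,y)=F(w+v,y)$. Since $F(v,\cdot)=0$, differentiating at $w=0$ yields the linearization $L_v$ of $F$ at $v$ from Section~\ref{sec:grushin}. Therefore
\begin{equation*}
L \;=\; \eta(d_G(\cdot,0))\,L_v \,+\, (1-\eta(d_G(\cdot,0)))\,\Delta_G \;=\; \Delta_G \,+\, \eta(d_G(\cdot,0))\,\mathcal{P}_v,
\end{equation*}
where $\mathcal{P}_v=L_v-\Delta_G$ is the perturbation analyzed in Proposition~\ref{prop:linear}.

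Invertibility then follows from a perturbation argument. Remark~\ref{rmk:close} provides $\|v-v_0\|_{X_{\delta,\epsilon}(\mathcal{B}_1^+)}\le C\max\{\epsilon_0,c_\ast\}$, so Proposition~\ref{prop:linear} yields the operator bound $\|\mathcal{P}_v w\|_{Y_{\delta,\epsilon}}\lesssim \max\{\epsilon_0,c_\ast\}\,\|w\|_{X_{\delta,\epsilon}}$, which is preserved under multiplication by the bounded factor $\eta$. By Proposition~\ref{prop:invert}, $\Delta_G\colon X_{\delta,\epsilon}\to Y_{\delta,\epsilon}$ is invertible (the a priori estimate is an isomorphism statement, with surjectivity encoded in the polynomial-approximation construction of the norms from Section~\ref{sec:quarter_Hoelder}). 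Writing $L=\Delta_G\bigl(I+\Delta_G^{-1}\eta\,\mathcal{P}_v\bigr)$ and choosing $\epsilon_0,c_\ast$ small enough so that $\|\Delta_G^{-1}\eta\,\mathcal{P}_v\|<1$, the second factor is invertible by Neumann series; hence so is $L$.

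The main obstacle I anticipate is the surjectivity half of the isomorphism claim for $\Delta_G$ on the corner domain $Q_+$ with mixed Dirichlet–Neumann data, since the a priori estimate of Proposition~\ref{prop:invert} is only stated in the form $\|v\|_{X_{\delta,\epsilon}}\le C\|\Delta_Gv\|_{Y_{\delta,\epsilon}}$. This should nevertheless follow from the same approximation-by-homogeneous-solutions scheme used to prove Proposition~\ref{prop:invert}, once one verifies (as the Appendix will) that the only admissible homogeneous Baouendi–Grushin solutions compatible with the symmetry and boundary conditions are the ones explicitly enumerated there; no obstructions arise at the relevant orders, so every datum in $Y_{\delta,\epsilon}$ can be matched.
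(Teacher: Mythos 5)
Your argument follows the paper's proof essentially verbatim: same decomposition $G_a=\bar\eta\,\tilde F_a+(1-\bar\eta)\Delta_G$, same identification $L=\Delta_G+\bar\eta\,\mathcal{P}_v$ via Lemma~\ref{prop:psi}(iv), and the same smallness-driven perturbation closing via Propositions~\ref{prop:reg_a}, \ref{prop:linear} and Remark~\ref{rmk:close}. The one place where your proposal diverges from what the paper actually does is the surjectivity of $\Delta_G\colon X_{\delta,\epsilon}\to Y_{\delta,\epsilon}$, which you correctly flag as the remaining gap but propose to close by pushing the polynomial-approximation scheme further; the paper instead proves it in Lemma~\ref{lem:inverse} (Appendix, Section~\ref{sec:XY}) by reflecting $f$ across $\{y_n=0\}$ and $\{y_{n+1}=0\}$ and convolving with the whole-space fundamental solution of $\Delta_G$ (Lemma~\ref{lem:ker}), then using the Schauder estimate of Section~\ref{sec:quarter_Hoelder} only to verify that the resulting $u$ lies in $X_{\delta,\epsilon}$. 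The approximation scheme alone is an a priori estimate and does not directly produce a solution, so this kernel construction is the missing ingredient; with it, your Neumann-series argument is exactly the paper's.
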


\begin{proof}
Let $\bar{\eta}(y):= \eta(d_G(y,0))$. By Proposition~\ref{prop:error_gain} the Legendre function $v\in X_{\delta,\epsilon}(\mathcal{B}_{1}^+)$ for any $\delta\in (0,1)$. Thus by Proposition~\ref{prop:reg_a}, $\tilde{F}_a(w,y)=F_a(w+v,y)\in Y_{\delta,\epsilon}(\mathcal{B}_{1}^+)$ for any $w\in X_{\delta,\epsilon}$ and $y\in \mathcal{B}_{1}^+$. Since the Baouendi-Grushin Laplacian also has this mapping property, i.e. $\Delta_G:X_{\delta,\epsilon}\rightarrow Y_{\delta,\epsilon}$, and using the support assumption of $\eta$, we further observe that $G_a=\bar \eta F_a+(1-\bar\eta)\Delta_G$ maps $X_{\delta,\epsilon}$ into $Y_{\delta,\epsilon}$.

By (iv) in Proposition~\ref{prop:psi}, it is not hard to check that the linearization of $G_a$ at $(0,0)$ is given by
\begin{equation}
\begin{split}
\label{eq:lin2}
L=(D_w G_a)|_{(0,0)}= \bar{\eta} \left(F_{k\ell}(v,y)\p_{k\ell}+  F_{k}(v,y)\p_k\right) + (1-\bar{\eta})\D_G.
\end{split}
\end{equation}
Firstly, by Proposition \ref{prop:linear}, $L$ maps $X_{\delta,\epsilon}$ into $Y_{\delta,\epsilon}$. Moreover, $L$ can be written as $L=\Delta_G+\bar\eta\mathcal{P}_v$. Since $\|v-v_0\|_{X_{\alpha,\epsilon}(\mathcal{B}_1^+)}\lesssim \max\{\epsilon_0,c_\ast\}$ by Remark~\ref{rmk:close}, Proposition~\ref{prop:linear} implies that $\|\bar\eta \mathcal{P}_v(w)\|_{Y_{\delta,\epsilon}}\lesssim \max\{\epsilon_0,c_\ast\}\|w\|_{X_{\delta,\epsilon}}$ for $\epsilon\in (0,\gamma)$. Thus if $\epsilon_0$ and $c_\ast$ are sufficiently small, $L:X_{\delta,\epsilon}\rightarrow Y_{\delta,\epsilon}$ is invertible, as $\Delta_G:X_{\delta,\epsilon}\rightarrow Y_{\delta,\epsilon}$ is invertible (c.f. Lemma~\ref{lem:inverse}).
\end{proof}

\begin{thm}[H\"older regularity]
\label{prop:hoelder_reg_a}
Let $a^{ij}\in C^{k,\gamma}(B_1^+,\R^{(n+1)\times(n+1)}_{sym})$ with $k\geq 1$ and $\gamma\in (0,1)$. Let $w:B_1^+\rightarrow \R$ be a solution of the variable coefficient thin obstacle problem with metric $a^{ij}$. Then locally $\Gamma_{3/2}(w)$ is a $C^{k+1,\gamma}$ graph.
\end{thm}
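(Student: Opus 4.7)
The plan is to apply the Banach space implicit function theorem to the parameter-dependent equation $G_a(w,y)=0$ set up in Section~\ref{sec:IFTAppl}. Since the Legendre function $v$ lies in $X_{\delta,\epsilon}(\mathcal{B}_1^+)$ for every $\delta\in(0,1)$ and $\epsilon\in(0,\gamma]$ by Proposition~\ref{prop:error_gain2}, and since the regular free boundary is the graph of $y''\mapsto-\partial_n v(y'',0,0)$ by \eqref{eq:boundaryLH}, the problem reduces to proving $C^{k+1,\gamma}$ regularity of this trace as a function of the tangential variable $y''\in\R^{n-1}$.

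Next I would verify the IFT hypotheses. Proposition~\ref{prop:invertible} shows that $G_a$ maps $X_{\delta,\epsilon}$ into $Y_{\delta,\epsilon}$ and that its linearization $L=D_wG_a|_{(a,w)=(0,0)}$ is an invertible perturbation of the Baouendi-Grushin Laplacian in $Q_+$ with the appropriate mixed Dirichlet--Neumann boundary conditions, hence an isomorphism $X_{\delta,\epsilon}\to Y_{\delta,\epsilon}$. Proposition~\ref{prop:reg_a} asserts that $(w,a)\mapsto G_a(w,\cdot)$ is jointly $C^{k-1,\gamma-\epsilon}$ as a map $X_{\delta,\epsilon}\times\R^{n-1}\to Y_{\delta,\epsilon}$. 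Invoking the standard Banach IFT in its $C^{k-1,\gamma-\epsilon}$ form produces, for $|a|$ sufficiently small, a unique local curve $a\mapsto W_a\in X_{\delta,\epsilon}$ depending $C^{k-1,\gamma-\epsilon}$-regularly on $a$ with $G_a(W_a,y)=0$ and $W_0=0$.

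To identify this abstract solution with $v\circ\Phi_a-v$, I would observe that by Lemma~\ref{prop:psi2} the a priori candidate $w_a:=v\circ\Phi_a-v$ lies in $X_{\delta,\epsilon}$, solves $G_a(w_a,y)=0$, and vanishes at $a=0$ by Lemma~\ref{prop:psi}(iv); continuity of $a\mapsto w_a$ then keeps it inside the IFT uniqueness neighborhood for small $a$, so that $W_a=w_a$. Differentiating the identity $W_a(y)=v(\Phi_a(y))-v(y)$ in $a$ at $a=0$ and using that $\partial_{a_i}\Phi_a|_{a=0}$ acts as a purely tangential cutoff translation (via the ODE defining $\Phi_a$ together with Lemma~\ref{prop:psi}), I conclude that every tangential derivative $\partial_{y''}^\beta v$ with $|\beta|\le k-1$ lies in $X_{\delta,\epsilon}$ locally near the origin, with the top-order derivative additionally $(\gamma-\epsilon)$-Hölder continuous in $y''$. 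Feeding each of these into Proposition~\ref{prop:decompI}(ii) yields $\partial_n(\partial_{y''}^\beta v)|_P\in C^{1,\delta}(P)$, from which one reads off classical tangential derivatives of the trace $y''\mapsto \partial_n v(y'',0,0)$; combining, sending $\delta\to 1^-$, $\epsilon\to 0^+$, and incorporating the additional top-order Hölder control from the IFT yields $\partial_n v|_P\in C^{k+1,\gamma}$ as a function of $y''$, so that $\Gamma_{3/2}(w)$ is locally a $C^{k+1,\gamma}$ graph.

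The main obstacle is the concluding bookkeeping of Hölder exponents and derivative orders: one must verify that the interplay between the $X_{\delta,\epsilon}$-characterization (with $\delta$ arbitrary in $(0,1)$) and the parametric $C^{k-1,\gamma-\epsilon}$ regularity produces exactly $C^{k+1,\gamma}$ rather than a weaker class. The mechanism is that the normal-direction asymptotic information encoded by Proposition~\ref{prop:decompI}(ii) converts each $X_{\delta,\epsilon}$-regular tangential derivative of $v$ into one full extra classical derivative of $\partial_n v|_P$ together with $\delta$-Hölder continuity, and the Hölder exponent is then upgraded from $\gamma-\epsilon$ to $\gamma$ by combining this normal-trace information with the tangential polynomial-approximation structure of $X_{\delta,\epsilon}$. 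A secondary technical point is ensuring the local uniqueness in the IFT step, for which one uses that the norm $\|w_a\|_{X_{\delta,\epsilon}}$ depends continuously on $a$ and vanishes at $a=0$.
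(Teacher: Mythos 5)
Your IFT setup, the identification of the abstract solution $W_a$ with $v\circ\Phi_a - v$ via local uniqueness, and the transfer of parametric regularity to tangential derivatives of $v$ all match Step 1 of the paper's proof. The gap is in your final paragraph, exactly where you flag the ``main obstacle.''

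The issue is that the Banach IFT in the spaces $X_{1-\epsilon,\epsilon}$, $Y_{1-\epsilon,\epsilon}$ only produces $C^{k-1,\gamma-\epsilon}$ dependence on $a$, and Proposition~\ref{prop:decompI}(ii) gives $C^{1,1-\epsilon}$ regularity of $\partial_n(\partial_{y''}^\beta v)|_P$; the synthesis (the paper invokes agreement of H\"older and H\"older--Zygmund scales) yields at most $C^{k+1,\gamma-2\epsilon}$ for the trace. Your suggestion to ``send $\delta\to 1^-$, $\epsilon\to 0^+$'' does not close this gap, because the operator norm of the IFT inverse $L^{-1}:Y_{\delta,\epsilon}\to X_{\delta,\epsilon}$ (and thus every constant downstream) depends on $\delta$ and $\epsilon$ through the Campanato iteration in Proposition~\ref{prop:invert}; nothing guarantees uniformity as the exponents approach the endpoints. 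The claimed ``upgrade from $\gamma-\epsilon$ to $\gamma$ by combining normal-trace information with the tangential polynomial-approximation structure'' is stated without a mechanism, and as written it does not exist. What the paper actually does in Step 2 is a genuinely separate scaling argument: it applies the estimate \eqref{eq:differences} to the rescaled Legendre functions $v_\lambda(y)=\lambda^{-3}v(\delta_\lambda y)$, tracks the scaling of each term in $G_a$ (isolating the contribution from the coefficients $\tilde{b}^j$ and from the inhomogeneous quantities), and extracts a factor $\lambda^{4\epsilon}$ in the difference quotient of $\partial_{ijn}v$. Testing this on pairs of points $x,y\in P$ with $|x-y|=\lambda^2$ converts $[\partial_{ijn}v]_{C^{0,\gamma-2\epsilon}}\lesssim\lambda^{4\epsilon}$ into $[\partial_{ijn}v]_{C^{0,\gamma}}\leq C$, which is the upgrade. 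This is a quantitative use of scale invariance of the equation, not a limit over function-space parameters, and it is a step your outline is missing.
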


\begin{proof}
\emph{Step 1: Almost optimal regularity.}
We apply the implicit function theorem to $G_a: X_{\delta,\epsilon}\rightarrow Y_{\delta,\epsilon}$ with $\delta$ and $\epsilon$ chosen such that $\epsilon\in (0,\gamma/2)$, $\delta=1-\epsilon \in (0,1)$ (as explained above, for $k=1$ we here interpret the lower order term as a function of $y$ in the linearization). 
We note that as a consequence of Proposition~\ref{prop:reg_a}, for $v\in X_{\delta,\epsilon}$, $G_a(v)$ (interpreted as the function $G_{\cdot}(v): \R^{n-1}\ni a \mapsto G_a(v)\in Y_{\delta,\epsilon}$) is $C^{k-1,\gamma-\epsilon}$ in $a$. Thus, the implicit function theorem yields a unique solution $\tilde{w}_a$ in a neighborhood $B''_{\epsilon_0}(0)\times \mathcal{U}$ of $(0,0)\in \R^{n-1}\times X_{\delta,\epsilon}$ (c.f. Proposition \ref{prop:reg_a}). Moreover, the map $\R^{n-1}\ni a\mapsto \tilde{w}_a\in X_{\delta,\epsilon}$ is $C^{k-1,\gamma-\epsilon}$. 
Hence, for all multi-indices $\beta=(\beta'',0,0)$ with $|\beta|=k-1$,
\begin{align*}
\left\| \frac{\partial^{\beta}_a \tilde{w}_{a_1}- \partial^{\beta}_{a} \tilde{w}_{a_2}}{|a_1-a_2|^{\gamma-\epsilon}}\right\|_{X_{\delta,\epsilon}}\leq C \left\| \partial_a^{\beta} \frac{G_{a_1}(\tilde{w}_{a_1})-G_{a_2}(\tilde{w}_{a_1})}{|a_1-a_2|^{\gamma-\epsilon}} \right\|_{Y_{\delta,\epsilon}} <\infty.
\end{align*}
In particular,
\begin{align}
\label{eq:differences}
\left[ \frac{\partial^{\beta}_a \p_{in}\tilde{w}_{a_1}- \partial^{\beta}_{a} \p_{in}\tilde{w}_{a_2}}{|a_1-a_2|^{\gamma-\epsilon}}\right]_{\dot{C}^{0,\delta}(P)}\leq C \left\| \partial_a^{\beta} \frac{G_{a_1}(\tilde{w}_{a_1})-G_{a_2}(\tilde{w}_{a_1})}{|a_1-a_2|^{\gamma-\epsilon}} \right\|_{Y_{\delta,\epsilon}}  <\infty.
\end{align}
Since by Lemma~\ref{prop:psi2} $w_a=v_a-v\in \mathcal{U}$ if $a\in B''_{\epsilon_1}(0)$ for some sufficiently small radius $\epsilon_1$, the local uniqueness of the solution implies that $\tilde{w}_a=w_a$ for $a\in B''_{\epsilon_1}(0)$. Thus, $v_a=v+w_a=v+\tilde{w}_a$ is $C^{k-1,\gamma-\epsilon}$ in $a$. Combined with (\ref{eq:differences}) this in particular implies that for any multi-index $\beta=(\beta'',0,0)$ with $|\beta|= k-1$
\begin{align}
\label{eq:reg_tan}
\left[\frac{\partial^{\beta}_a \p_{in}v_{a_1} - \partial^{\beta}_a \p_{in}v_{a_2}}{|a_1 - a_2|^{\gamma - \epsilon}} \right]_{\dot{C}^{0,\delta}(P)}\leq  C <\infty.
\end{align} 
Recalling that the $a$-derivative corresponds to a tangential derivative in $\mathcal{B}_{1/2}$ and the fact that Hölder and Hölder-Zygmund spaces agree for non-integer values (c.f. \cite{Triebel}), this implies that for any multi-index $\beta=(\beta'',0,0)$ with $|\beta|\leq k-1$, $\p^\beta\p_{in} v\in C^{1,\gamma-2\epsilon}(P\cap \mathcal{B}_{1/2})$. By the characterization of the free boundary as $\Gamma_w = \{x \in B_{1}'| x_n = -\p_{n}v(x'',0,0)\}$ this implies that $\Gamma_w$ is a  $C^{k+1,\gamma-2\epsilon}$ graph for any $\epsilon\in (0,\gamma/2)$. As $\epsilon>0$ can be chosen arbitrarily small, this completes the proof of the \emph{almost} optimal regularity result.\\

\emph{Step 2: Optimal regularity.}
In order to infer the \emph{optimal} regularity result, we argue by scaling and our previous estimates. More precisely, we have that
\begin{equation*}
\label{eq:est}
\begin{split}
[\Delta_{a}^{\gamma-\epsilon}\p_{in}\partial_a^{\beta} \tilde{w}_a ]_{\dot{C}^{0,\delta}}
&\leq \|\Delta_{a}^{\gamma-\epsilon}\p_{in}\partial_a^{\beta} \tilde{w}_a\|_{X_{\delta,\epsilon}} \leq C \| \Delta_{a}^{\gamma-\epsilon}\partial_a^{\beta} G_a(\tilde{w}_{a_1}) \|_{Y_{\delta,\epsilon}}\\
& \leq C \left( \| \Delta_{a}^{\gamma-\epsilon}\partial_a^{\beta} G_a^1(\tilde{w}_{a_1}) \|_{Y_{\delta,\epsilon}} + \| \Delta_{a}^{\gamma-\epsilon}\partial_a^{\beta} G_a^2(\tilde{w}_{a_1}) \|_{Y_{\delta,\epsilon}}\right).
\end{split}
\end{equation*}
Here $G^{1}_a(\cdot)$ is the term that originates from $ F^1(v,y)= \sum\limits_{i,j=1}^{n+1}\tilde{a}^{ij}(y)G^{ij}(v)$ and $G^{2}_a(\cdot)$ is the contribution that originates from the lower order contribution 
$$ F^2(v,y)= -J(v(y))\left(\sum_{j=1}^{n-1}\tilde{b}^j(y)\p_j v(y)+\tilde{b}^n(y)y_n+\tilde{b}^{n+1}(y)y_{n+1}\right).$$
The notation $\D_a^{\gamma-\epsilon}$ denotes the difference quotient in $a$ with exponent $\gamma-\epsilon$.
We now consider the norms on the right hand side of (\ref{eq:est}) more precisely and consider their rescalings. A typical contribution of $\| \Delta_{a}^{\gamma-\epsilon}\partial_a^{\beta} G_a^2(\tilde{w}_{a_1}) \|_{Y_{\delta,\epsilon}}$ for instance is
\begin{align*}
[ r^{-(1+2\delta-\epsilon)}\Delta_{a}^{\gamma-\epsilon} \partial^{\beta}_a \tilde{b}^{j}_a J(v_{a_1})\p_j v_{a_1} ]_{C^{0,\epsilon}_{\ast}(\mathcal{B}_{2}^+)}.
\end{align*}
We focus on this contribution and on the case $k=1$. The other terms can be estimated by using similar ideas. We consider the rescaled function $v_{\lambda,a}(y)$, where $v_{\lambda}(y):=\frac{v(\delta_{\lambda}(y))}{\lambda^{3}}$ (with $\delta_\lambda(y)=(\lambda^2y'',\lambda y_n,\lambda y_{n+1})$) and $v_{\lambda,a}(y):= v_{\lambda}(\Phi_a(y))$. The function $w_{\lambda, a}(y):=v_{\lambda,a}(y)-v_{\lambda}(y)$ is defined as its analogue from above. It is compactly supported in $\mathcal{B}_{3/4}^+$ (by definition of $\Phi_a$) and the functions $v_{\lambda}$ and $w_{\lambda,a}$ satisfy similar equations as $v, w_a$. Thus, we may apply estimate (\ref{eq:differences}) to $w_{\lambda,a}$. Inserting $\delta =1 -\epsilon$, using the support condition for $w_{\lambda,a}$ yields (with slight abuse of notation, as there are additional right hand side contributions, which however by the compact support assumption on $w_{\lambda,a}$ have the same or better scaling)
\begin{align*}
 [\p_{ijn}v_{\lambda}]_{C^{0,\gamma-2\epsilon}(\mathcal{B}_{1}^+\cap P)} &\leq C \lambda^{-1}[r^{-3+3\epsilon}J(v)|_{\delta_{\lambda}(y)}\p_j v|_{\delta_{\lambda}(y)}]_{C^{0,\epsilon}_{\ast}}[\tilde{b}^{j}|_{\delta_{\lambda}(y)}]_{C^{0,\gamma}(\mathcal{B}_{2}^+)}\\
& \leq C \lambda^{-1}[r^{-3+\epsilon}J(v)|_{\delta_{\lambda}(y)}\p_j v|_{\delta_{\lambda}(y)}]_{C^{0,\epsilon}_{\ast}(\mathcal{B}_{2}^+)}[\tilde{b}^{j}|_{\delta_{\lambda}}]_{C^{0,\gamma}(\mathcal{B}_{2}^+)}.
\end{align*}
Comparing this to the left hand side of the estimate and rescaling both sides of the inequality therefore amounts to
\begin{align*}
\lambda^{2+2\gamma-4\epsilon} [\p_{ijn}v]_{C^{0,\gamma-2\epsilon}(\mathcal{B}_{ \lambda}^+\cap P)} & \leq C \lambda^{2+2\gamma}[r^{-3+\epsilon}J(v)\p_j v]_{C^{0,\epsilon}_{\ast}(\mathcal{B}_{2 \lambda}^+)}[\tilde{b}^{j}]_{C^{0,\gamma}(\mathcal{B}_{2  \lambda}^+}),
\end{align*}
which yields
\begin{align*}
 [\p_{ijn}v]_{C^{0,\gamma-2\epsilon}(\mathcal{B}_{ \lambda}^+\cap P)} & \leq C \lambda^{4\epsilon}[r^{-3+\epsilon}J(v)\p_j v]_{C^{0,\epsilon}_{\ast}(\mathcal{B}_{2\lambda}^+)}[\tilde{b}^{j}]_{C^{0,\gamma}(\mathcal{B}_{2 \lambda}^+)}.
\end{align*}
As a result considering two points $x,y\in P$ with $|x-y|= \lambda^2$, yields
\begin{align*}
\frac{|\p_{ijn}v(x)-\p_{ijn}v(y)|}{|x-y|^{\gamma-2\epsilon}} \leq C \lambda^{4\epsilon} = C |x-y|^{2\epsilon}.
\end{align*}
Thus,
\begin{align*}
[\p_{ijn}v]_{C^{0,\gamma}(\mathcal{B}_{ \lambda}^+\cap P)} \leq C,
\end{align*}
which proves the optimal regularity result.
\end{proof}

\begin{rmk}[$\gamma=1$]
\label{rmk:gamma=1} 
As expected from elliptic regularity, we can only deduce the full $C^{k+1,\gamma}$ regularity of the free boundary in the presence of $C^{k,\gamma}$ metrics, if $\gamma=1$. This is essentially a consequence of the elliptic estimates of Proposition \ref{prop:error_gain2}. On a technical level this is exemplified in the fact that in Step 2 of the previous proof, we for instance also have to deal with the term $(\D_a^{\gamma-\epsilon} a^{n,n})\p_{n+1,n+1}v_{\lambda}$ with the expansion $\p_{n+1,n+1}v(y) = a_1(y'')y_n + r^{1+2\delta-\epsilon}C_{n+1,n+1}(y)$ with $\delta\in (0,1)$. As the coefficients $a_1(y'')$ are in general not better than $C^{0,\delta}$, we do not have the full gain of $\lambda^{4\epsilon}$ if $\gamma=1$.
\end{rmk}

\begin{rmk}[Optimal regularity]
\label{rmk:optreg}
Let us comment on the optimality of the gain of the free boundary regularity with respect to the regularity of the metric $a^{ij}$:
Proposition \ref{prop:bulk_eq} in combination with our linearization results (c.f. Example \ref{ex:linear} and Section \ref{sec:grushin}) illustrates that $F$ can be viewed as a nonlinear perturbation of the degenerate, elliptic (second order) Baouendi-Grushin operator with metric $a^{ij}$. As such, we can not hope for a gain of more than \emph{two orders} of regularity for $v$ compared to the regularity of the metric $a^{ij}$ (by interior regularity in appropriate Hölder spaces, c.f. Section \ref{sec:holder}). Hence, for the regular free boundary we can in general hope for a gain of at most \emph{one order} of regularity with respect to the regularity of the metric. This explains our expectation that the regularity results from Theorem \ref{thm:higher_reg} are sharp higher order regularity results.\\

By a simple transformation it is possible to construct an example to the sharpness of this claim: 
In $\R^3$ the function  $w(x_1,x_2,x_3)=\Ree((x_2-x_1)/\sqrt{2}+ix_3)^{3/2}$ is a solution to the thin obstacle problem $\Delta w=0$ in $\R^3_+$ with the free boundary $\Gamma_w=\{(x_1,x_2,0)\in B'_1: x_2=x_1\}$. Applying a transformation of the form $$y(x):=(x_1,h(x_2),x_3),$$
with $h$ being a $W^{k+1,p}$ diffeomorphism from $(-1,1)$ to $(-1,1)$ yields that $\tilde{w}(y):=\Ree((h^{-1}(y_2)-y_1)/\sqrt{2}+iy_3)^{3/2}$ solves the variable coefficient thin obstacle problem 
\begin{align*}
\p_{11}\tilde{w}+ \p_2(h'(x_2)\p_2 \tilde{w})+\p_{33}w=0 \mbox{ in } B'_1,
\end{align*}
with Signorini conditions on $B'_1$. We note that the free boundary of $\tilde{w}$ is given by the graph $\Gamma_{\tilde{w}}=\{(y_1,y_2,0)\in B'_1: y_2=h(y_1)\}$.   If $h$ is not better than $W^{k+1,p}$ regular, the coefficients in the bulk equation are no more than $W^{k,p}$ regular. The free boundary is $W^{k+1,p}$ regular. Since it is a graph, it does not admit a more regular parametrization.\\

We further note that our choice of function spaces was crucial in deducing the full gain of regularity for the free boundary with respect to the metric. Indeed, considering the equation (\ref{eq:nonlineq1}), we note that also second order derivatives of the metric are involved. Yet, in order to deduce regularity of the free boundary (which corresponds to \emph{partial} regularity of the Legendre function $v$) this loss of regularity does not play a role as it is a ``lower order bulk term'' (this is similar in spirit to the gain of regularity obtained in boundary Harnack inequalities).
\end{rmk}

Finally, we give the argument for the analyticity of the free boundary in the case that the coefficients $a^{ij}$ are analytic:

\begin{thm}[Analyticity]
\label{prop:analytic}
Let $a^{ij}:B_{1}^+ \rightarrow \R^{(n+1)\times (n+1)}_{sym}$ be an analytic tensor field. Let $w:B_{1}^+ \rightarrow \R$ be a solution of the variable coefficient thin obstacle problem with metric $a^{ij}$. Then locally $\Gamma_{3/2}(w)$ is an analytic graph.
\end{thm}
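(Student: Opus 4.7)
The strategy is to re-run the implicit function theorem argument used in the proof of Theorem \ref{prop:hoelder_reg_a}, but now in its \emph{analytic} version in Banach spaces, exploiting that every ingredient of the construction is analytic in the parameter $a$ when $a^{ij}$ is real analytic. Fix $\delta\in(0,1)$ and $\epsilon\in(0,\delta)$ and consider the map
\begin{align*}
\mathcal{G}:\R^{n-1}\times X_{\delta,\epsilon}\rightarrow Y_{\delta,\epsilon},\qquad (a,w)\mapsto G_a(w,y),
\end{align*}
defined, as in Section \ref{sec:IFTAppl}, from the extended/cut-off version of the rescaled, translated nonlinearity $\tilde F_a(w,y)=F_a(w+v,y)$. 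By construction $\mathcal{G}(0,0)=0$, and by Proposition \ref{prop:invertible} the linearization $L=D_w\mathcal{G}(0,0):X_{\delta,\epsilon}\rightarrow Y_{\delta,\epsilon}$ is invertible.

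The first key point is that $\mathcal{G}$ is real analytic as a map between the Banach spaces $\R^{n-1}\times X_{\delta,\epsilon}$ and $Y_{\delta,\epsilon}$. Analyticity in $w$ follows because the nonlinearity $F(D^2v,Dv,v,y)$ built in Proposition \ref{prop:bulk_eq} is a polynomial in the second derivatives of $v$ (through the minors $G^{ij}$ and $J$), composed with the real analytic coefficient functions $a^{ij}$ and $\partial_{x_k}a^{ij}$ evaluated at $(y'',-\partial_n v,-\partial_{n+1}v)$; the mapping and continuity properties of this composition on $X_{\delta,\epsilon}$ were already verified in Proposition \ref{prop:nonlin_map} and Proposition \ref{prop:reg_a}(i), and the series expansion of the real analytic functions $a^{ij}$ in their arguments yields absolute convergence of the resulting Taylor series in $X_{\delta,\epsilon}\to Y_{\delta,\epsilon}$ on a neighborhood of $w=0$. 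Analyticity in $a$ is the content of the second half of Proposition \ref{prop:reg_a}(ii): the diffeomorphisms $\Phi_a,\Psi_a$ depend analytically on $a$ by the analytic dependence of ODE flows (the ODE \eqref{eq:ODE} has real analytic right-hand side in $(a,\phi)$), and under the analyticity hypothesis on $a^{ij}$ this analytic dependence is preserved by composition and by the coefficient functions. The cut-off localization $\bar\eta$ does not affect either of these, since it is $a$-independent.

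Having established that $\mathcal{G}$ is real analytic with invertible partial differential in $w$ at the origin, the analytic implicit function theorem in Banach spaces (see e.g.\ Deimling or Whittlesey) produces a unique analytic map $a\mapsto \tilde w_a\in X_{\delta,\epsilon}$, defined on a neighborhood of $0\in\R^{n-1}$, with $\mathcal{G}(a,\tilde w_a)=0$ and $\tilde w_0=0$. On the other hand, the concrete choice $w_a:=v_a-v=v\circ\Phi_a-v$ lies in $X_{\delta,\epsilon}$ by Lemma \ref{prop:psi2} and satisfies $\mathcal{G}(a,w_a)=0$ in a neighborhood of $a=0$. Local uniqueness forces $w_a=\tilde w_a$ in a neighborhood of the origin; hence $a\mapsto v_a=v+w_a\in X_{\delta,\epsilon}$ is real analytic.

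Finally, by properties (i)–(ii) of Lemma \ref{prop:psi}, the family $\Phi_a$ acts only on the tangential variables in a neighborhood of $P$, and at $a=0$ the infinitesimal generators $\partial_{a_1}\Phi_a|_{a=0},\dots,\partial_{a_{n-1}}\Phi_a|_{a=0}$ span the tangent space to $P$. Combining the analyticity of $a\mapsto v_a$ as a map into $X_{\delta,\epsilon}$ (hence in particular a continuous analytic map into $C^{1}(P)$, by the control on $\partial_n v_a|_P$ encoded in Proposition \ref{prop:decompI}) with this transitive infinitesimal action, one reads off that $y''\mapsto \partial_n v(y'',0,0)$ is real analytic on $P\cap\mathcal{B}_{1/2}$. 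Since, by Proposition \ref{prop:bulk_eq}, the regular free boundary is locally parametrized as $x_n=-\partial_n v(y'',0,0)$, this proves local analyticity of $\Gamma_{3/2}(w)$. The only genuinely delicate step above is the analytic (not merely $C^{k,\gamma}$) mapping property in $w$ of the composition $w\mapsto a^{ij}(y'',-\partial_n(w+v),-\partial_{n+1}(w+v))$ into $Y_{\delta,\epsilon}$; this is handled by expanding $a^{ij}$ in a convergent power series around the base point and estimating each multilinear term via the algebra-like estimates underlying Propositions \ref{prop:nonlin_map}–\ref{prop:linear}.
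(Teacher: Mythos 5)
Your proposal is correct and follows essentially the same route as the paper: apply the analytic implicit function theorem to $G_a$ in the spaces $X_{\delta,\epsilon},Y_{\delta,\epsilon}$, identify $\tilde{w}_a$ with $w_a=v\circ\Phi_a-v$ by local uniqueness, and transfer analyticity in $a$ to tangential analyticity of $\partial_n v$ on $P$. You are in fact slightly more careful than the paper's terse argument in flagging that the implicit function theorem needs joint analyticity of $(a,w)\mapsto G_a(w,y)$ as a Banach-space map — Proposition~\ref{prop:reg_a} only explicitly records analyticity in $a$ and $C^{k-1,\gamma-\epsilon}$ regularity in $w$ — and your sketch of how to verify analyticity in $w$ (polynomial dependence on $D^2v$ composed with an analytic superposition operator) is the right way to close that step.
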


\begin{proof}
This follows from the analytic implicit function theorem (c.f. \cite{Dei10}). Indeed, due to Proposition \ref{prop:reg_a}, $F_a$ is a real analytic function in $a$ and hence also $G_a$ is a real analytic function in $a$. Applying the analytic implicit function theorem similarly as in Step 1 of the previous proof, we obtain an in $a$ analytic function $\tilde{w}_a$. As before, this coincides with our function $w_a$. Therefore $w_a$ depends analytically on $a$. As differentiation with respect to $a$ however directly corresponds to differentiation with respect to the tangential directions $y''$, $w$ (and hence $v$) is an analytic function in the tangential variables. 
\end{proof}

\begin{rmk}[Regularity in the normal directions]
In Theorems \ref{prop:hoelder_reg_a} and \ref{prop:analytic} we proved partial analyticity for the Legendre function $v$: We showed that in the \emph{tangential} directions, the regularity of $v$ in a quantitative way matches that of the metric (i.e. a $C^{k,\gamma}$ metric yields $C^{k+1,\gamma}$ regularity for $\p_n v(y'',0,0)$). Although this suffices for the purposes of proving regularity of the (regular) free boundary, a natural question is whether it is also possible to obtain corresponding higher regularity for $v$ in the \emph{normal} directions $y_n, y_{n+1}$. Intuitively, an obstruction for this stems from working in the corner domain $Q_+$. That this set-up of a corner domain really imposes restrictions on the normal regularity can be seen by checking a compatibility condition: As we are considering an expansion close to the regular free boundary point, we know that the Legendre function asymptotically behaves like a multiple of the function $v_0(y)=-(y_1^3 - 3 y_1 y_2^2)$. If additional regularity were true in the normal directions, we could expand the Legendre function $v$ further, for instance into a fifth order polynomial (with symmetry obeying the mixed Dirichlet-Neumann boundary conditions), which has $v_0$ as its leading order expansion. Hence, working in the two-dimensional corner domain $Q_+:=\{y_1\geq 0, y_2\leq 0\}$, we make the ansatz that
\begin{equation}
\label{eq:ansatz}
v(y) = -(y_1^3 - 3y_1 y_2^2) + c_1 y_1^4 + c_2 y_1^2 y_2^2 + c_3 y_1 y_2^3 + c_4 y_1^5 + c_5 y_1^3 y_1^2 + c_6 y_1^2 y_2^3 + c_7 y_1 y_2^4 + h.o.t,
\end{equation}
where $h.o.t$ abbreviates terms of higher order. We seek to find conditions on the metric $a^{ij}$ which ensure that such an expansion for $v$ up to fifth order exists. Without loss of generality we may further assume that
\begin{align*}
a^{ij}(0) = \delta^{ij}, \ a^{12}(x_1,0)=a^{21}(x_1,0)=0,
\end{align*}
which corresponds to a normalization at zero and the off-diagonal condition on the plane $\{x_2=0\}$. Transforming the equation
\begin{align*}
\p_i a^{ij} \p_j w = 0 \mbox{ in } \R^2_+,
\end{align*}
into the Legendre-Hodograph setting with the associated Legendre function $v$ yields
\begin{align*}
&a^{11}(-\p_1 v,  -\p_2 v) \p_{22}v + a^{22}(-\p_1 v,  -\p_2 v) \p_{11}v - 2 a^{12}(-\p_1 v, - \p_2 v) \p_{12}v\\
& - J(v)[\p_1 a^{11}(-\p_1 v,  -\p_2 v) +\p_2 a^{12}(-\p_1 v, - \p_2 v) ] y_1 \\
&- J(v)[\p_1 a^{12}(-\p_1 v, - \p_2 v) +\p_2 a^{22}(-\p_1 v,  -\p_2 v) ] y_2 =0 \mbox{ in } Q_+:= \{y_1,y_1 \geq 0\}.
\end{align*}
Here $J(v) = \det\begin{pmatrix} \p_{11}v & \p_{12}v\\
\p_{21}v & \p_{22}v \end{pmatrix}$.
Carrying out a Taylor expansion of the metric thus gives
\begin{align*}
&\Delta v -2\left(\p_2a^{12}(0)(-\p_2 v)\right)\p_{12}v\\
&+\left((-\p_1 v)\p_1a^{11}(0)+(-\p_2 v)\p_2a^{11}(0)\right)\p_{22}v+\left((-v_1)\p_1a^{22}(0)+(-v_2)\p_2a^{22}(0)\right)\p_{11}v\\
&-\det\begin{pmatrix}
\p_{11}v & \p_{12}v\\
\p_{21}v & \p_{22}v
\end{pmatrix}\left((\p_1a^{11}(0)+\p_2a^{21}(0))y_1+\p_2a^{22}(0)y_2\right)+h.o.t.=0.
\end{align*}
Inserting the ansatz (\ref{eq:ansatz}) into this equation, matching all terms of order up to three and using the off-diagonal condition, eventually yields the compatibility condition
\begin{align*}
\p_2(a^{11} + a^{22})(0)=0.
\end{align*}
Due to our normalization this necessary condition for having a polynomial expansion up to degree five can thus be formulated as
\begin{align*}
(\p_2 \det(a^{ij}))(0)=0.
\end{align*}
In particular this shows that on the transformed side, i.e. in the Legendre-Hodograph variables, one cannot expect arbitrary high regularity for $v$ in the normal directions $y_n, y_{n+1}$ in general. Compatibility conditions involving the metric $a^{ij}$ have to be satisfied to ensure this.
\end{rmk}

\section{$W^{1,p}$ Metrics and Nonzero Obstacles}
\label{sec:W1p}
In this section we consider the previous set-up in the presence of inhomogeneities $f\in L^p$ and possibly only Sobolev regular metrics. More precisely, in this section we assume that  $a^{ij}:B_1^+ \rightarrow \R^{(n+1)\times (n+1)}_{sym}$ is a uniformly elliptic $W^{1,p}$, $p\in (n+1,\infty]$, metric and consider a solution $w$ of the variable coefficient thin obstacle problem with this metric:
\begin{equation}
\label{eq:inhom}
\begin{split}
\p_{i} a^{ij} \p_j w & = f \mbox{ in } B_1^+,\\
 w \geq 0,\ \p_{n+1}w \leq 0, \ w\p_{n+1}w&=0 \mbox{ on } B_1'.
\end{split}
\end{equation}
We will discuss two cases: 
\begin{itemize}
\item[(1)] $f=0$, $a^{ij}\in W^{1,p}$ with $p\in (n+1,\infty]$,
\item[(2)] $a^{ij}\in W^{1,p}$, $f\in L^p$ with $p\in (2(n+1),\infty]$. 
\end{itemize}
In both cases all the normalization conditions (A1)-(A7) from Section \ref{sec:conventions} as well as the asymptotic expansions (c.f. Proposition~\ref{prop:asym2}) hold. We observe that case (2) in particular contains the setting with non-flat obstacles.

\subsection{Hodograph-Legendre transformation for $W^{1,p}$ metrics}
\label{sec:ext}

In the sequel, we discuss how the results from Sections \ref{sec:asymp}- \ref{sec:Legendre} generalize to the less regular setting of $W^{1,p}$, $p\in (n+1,\infty]$, metrics. We note that in this case the solution $w$ is only $W^{2,p}_{loc}(B_1^+\setminus\Gamma_w)$ regular away from the free boundary $\Gamma_w$. Thus, our Hodograph-Legendre transformation method from the previous sections does not apply directly (as it relies on the pointwise estimates of $D^2v$, and hence $D^2w$). Thus, a key ingredient in our discussion of this set-up will be the splitting result, Proposition 3.9, from \cite{KRSI}. In order to apply it, we extend $w$ and the metric $a^{ij}$ from $B_1^+$ to $B_1$ by an even reflection as in \cite{KRSI}. We now split our solution into two components, $w=u+\tilde{u}$, where $\tilde{u}$ solves
\begin{align}
\label{eq:split1}
a^{ij}\p_{ij}\tilde{u}-\dist(x,\Gamma_w)^{-2}\tilde{u}=f - (\p_i a^{ij})\p_j w \text{ in } B_1\setminus \Lambda_w, \quad \tilde{u}=0\text{ on }\Lambda_w,
\end{align}
and the function $u$ solves 
\begin{align}
\label{eq:split2}
a^{ij}\p_{ij}u=-\dist(x,\Gamma_w)^{-2}\tilde{u}\text{ in } B_1\setminus \Lambda_w, \quad u=0\text{ on } \Lambda_w.
\end{align}
As in \cite{KRSI} the intuition is that $\tilde{u}$ is a ``controlled error'' and that $u$ captures the essential behavior of $w$. Moreover, as we will see later, $u$ will be $C^{2,1-\frac{n+1}{p}}_{loc}$ regular away from $\Gamma_w$ and that $\Gamma_w = \Gamma_u$ (c.f. the discussion below Lemma \ref{lem:lower1'}). Thus, in the sequel, we will apply the Hodograph-Legendre transformation to the function $u$.\\

In order to support this intuition, we recall the positivity of $\p_e u$ as well as the fact that $u$ inherits the complementary boundary conditions from $w$.

\begin{lem}[\cite{KRSI}, Lemma 4.11]
\label{lem:lower1'}
Let $a^{ij}\in W^{1,p}(B_1^+, \R^{(n+1)\times(n+1)}_{sym})$ and let $f\in L^p(B_1^+)$. Suppose that either 
\begin{itemize}
\item[(1)] $p\in (n+1,\infty]$ and $f=0$ or, 
\item[(2)] $p\in (2(n+1),\infty]$. 
\end{itemize}
Let $w:B_1^+ \rightarrow \R$ be a solution to the thin obstacle problem with inhomogeneity $f$, and let $u$ be defined as at the beginning of this section.  Then we have that $u\in C^{2,1-\frac{n+1}{p}}_{loc}(B_1^+\setminus \Gamma_w)\cap C^{1,\min\{\frac{1}{2},1-\frac{n+1}{p}\}}_{loc}(B_1^+)$. Moreover, there exist constants $c, \eta>0$ such that for $e\in \mathcal{C}'_\eta(e_n)$, $\p_eu $ satisfies the lower bound
\begin{align*}
\p_{e}u (x) \geq c\dist(x,\Lambda_w)\dist(x,\Gamma_w)^{-\frac{1}{2}}.
\end{align*}
A similar statement holds for $\p_{n+1}u$ if $\Lambda_w$ is replaced by $\Omega_w$.
\end{lem}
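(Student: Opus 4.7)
The plan is to proceed in three steps: first to establish that $\tilde{u}$ and its first derivatives decay strictly faster than $w$ and $\nabla w$, respectively; then to derive the interior regularity of $u$ from its Poisson-type equation; and finally to transfer the pointwise positivity from $w$ (given by Lemma \ref{lem:lower1}) to $u$ via the splitting.

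\textbf{Step 1: Improved decay of $\tilde{u}$.} Equation \eqref{eq:split1} carries the strongly coercive zeroth order term $-\dist(x,\Gamma_w)^{-2}\tilde{u}$, which forces $\tilde{u}$ to inherit decay controlled by the right hand side. The right hand side is bounded by $|f|+|\nabla a^{ij}||\nabla w|$; combining the gradient growth $|\nabla w|\lesssim \dist(x,\Gamma_w)^{1/2}$ from \eqref{eq:interior_est} with $f\in L^p$ and $\nabla a^{ij}\in L^p$, and using the Morrey embedding, I expect to obtain barriers/iteration on dyadic Whitney cubes around $\Gamma_w$ yielding
\begin{align*}
|\tilde{u}(x)|\lesssim \dist(x,\Gamma_w)^{3/2+\mu}, \qquad |\nabla \tilde{u}(x)|\lesssim \dist(x,\Gamma_w)^{1/2+\mu},
\end{align*}
for some $\mu=\mu(n,p)>0$. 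Concretely, $\dist^{3/2}$ is an approximate solution to $a^{ij}\p_{ij}(\cdot)=0$ modulo terms that have gain $\mu$ from the Morrey embedding of $W^{1,p}$ (case (1)) or from the scaling $L^p\to L^\infty$ on balls of radius $\dist(x,\Gamma_w)$ (case (2)), and the coercive term prevents the solution from exceeding any such barrier.

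\textbf{Step 2: Regularity of $u$.} Equation \eqref{eq:split2} reads $a^{ij}\p_{ij}u=g$ with $g(x)=-\dist(x,\Gamma_w)^{-2}\tilde{u}(x)$. By Step 1, $g$ satisfies $|g(x)|\lesssim \dist(x,\Gamma_w)^{-1/2+\mu}$, which is locally in $L^p$ with a further Hölder gain away from $\Gamma_w$. On Whitney balls $B_r(x_0)\subset B_1^+\setminus\Gamma_w$ (where $r\sim \dist(x_0,\Gamma_w)$), standard Calderón--Zygmund and Schauder estimates applied to the rescaled equation for $u$ give the local $C^{2,1-\frac{n+1}{p}}$ bound. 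Combining this with the regularity of $w$ established in \cite{KRSI} and the improved $C^1$-decay of $\tilde{u}$ from Step 1 then yields the claimed global $C^{1,\min\{1/2,1-(n+1)/p\}}_{loc}$ regularity of $u$ on $B_1^+$.

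\textbf{Step 3: Positivity.} Lemma \ref{lem:lower1} already provides the pointwise bound $\p_e w\geq c\dist(x,\Lambda_w)\dist(x,\Gamma_w)^{-1/2}$ for $e\in\mathcal{C}'_\eta(e_n)$ in the $C^{1,\gamma}$ setting, and the proof there relies only on the leading order asymptotic expansion of $w$ from Proposition~\ref{prop:asym2}, which is still valid under the present $W^{1,p}$/$L^p$ hypotheses. Writing $\p_e u=\p_e w-\p_e\tilde{u}$ and using $|\p_e \tilde{u}|\lesssim \dist(x,\Gamma_w)^{1/2+\mu}$, the singular lower bound on $\p_e w$ dominates the perturbation $\p_e\tilde{u}$ at every point close enough to $\Gamma_w$, so $\p_e u\geq\tfrac{c}{2}\dist(x,\Lambda_w)\dist(x,\Gamma_w)^{-1/2}$. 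The analogous statement for $-\p_{n+1}u$ follows identically from the symmetric lower bound on $-\p_{n+1}w$.

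The main obstacle will be Step 1: quantifying the gain $\mu>0$ in the decay of $\tilde{u}$ requires combining the coercivity $-\dist^{-2}\tilde{u}$ with $L^p$-type data in a setting where $\Gamma_w$ is merely $C^{1,\alpha}$. I expect this to be handled by a Moser-type iteration on dyadic annuli around $\Gamma_w$, together with a barrier construction using powers of $\dist(\cdot,\Gamma_w)$, both of which rely crucially on the $C^{1,\alpha}$ flatness of the free boundary guaranteed by \eqref{eq:normal}. Once Step 1 is in place, Steps 2 and 3 are essentially routine.
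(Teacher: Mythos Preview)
Your Step 3 contains a genuine gap. You propose to deduce the lower bound on $\partial_e u$ by writing $\partial_e u = \partial_e w - \partial_e \tilde u$, invoking Lemma~\ref{lem:lower1} for $\partial_e w$, and absorbing $\partial_e \tilde u$ as a higher-order perturbation. This fails for two related reasons.

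First, Lemma~\ref{lem:lower1} is stated under a $C^{1,\gamma}$ hypothesis on the metric, and its conclusion does \emph{not} extend to the $W^{1,p}$ setting: the paper remarks immediately after Lemma~\ref{lem:lower1'} that the lower bound \eqref{eq:lower1} need not hold for $\partial_e w$ here. Your claim that the proof of Lemma~\ref{lem:lower1} ``relies only on the leading order asymptotic expansion'' is not correct: the asymptotic expansion controls $\partial_e w - \partial_e\mathcal W_{x_0}$ only up to $O(|x-x_0|^{1/2+\alpha})$, which at points with $\dist(x,\Lambda_w)\ll\dist(x,\Gamma_w)$ is \emph{larger} than the target lower bound $c\,\dist(x,\Lambda_w)\dist(x,\Gamma_w)^{-1/2}$. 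A separate Hopf-type argument near $\Lambda_w$ is needed, and for that one needs enough regularity of $\partial_e w$, which is exactly what is missing when $a^{ij}\in W^{1,p}$.

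Second, and more decisively, even granting a lower bound on $\partial_e w$, your control of the correction term is too weak. You only claim $|\partial_e\tilde u|\lesssim\dist(x,\Gamma_w)^{1/2+\mu}$, which says nothing about vanishing toward $\Lambda_w$. Near the interior of $\Lambda_w$ (where $\dist(x,\Gamma_w)\sim 1$) the quantity you want to beat is $c\,\dist(x,\Lambda_w)$, while the actual decay of $\partial_e\tilde u$ toward $\Lambda_w$ is only of order $\dist(x,\Lambda_w)^{1-\frac{n+1}{p}}$ (again, this is spelled out in the paper's remark following the lemma). Since $1-\frac{n+1}{p}<1$, the perturbation dominates the main term and the subtraction argument collapses.

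The correct route (carried out in \cite{KRSI}) is to argue \emph{directly} for $\partial_e u$: the point of the splitting is precisely that $u$ solves the non-divergence equation \eqref{eq:split2} with right-hand side $-\dist(\cdot,\Gamma_w)^{-2}\tilde u$, which by \eqref{eq:auxv} has the refined decay $\dist(x,\Lambda_w)\dist(x,\Gamma_w)^{-1/2+\mu}$. This is regular enough that $\partial_e u$ satisfies an equation amenable to the comparison/Hopf argument that underlies Lemma~\ref{lem:lower1}. In other words, the splitting is designed so that the positivity argument can be run on $u$ itself, not so that positivity can be transferred from $w$.
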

 
We remark that the lower bound in Lemma 3.13 does not necessarily hold for $\p_ew$. This is due to the insufficient decay properties of $\p_e \tilde{u}$ in the decomposition $\p_ew = \p_e\tilde{u}+\p_eu$. More precisely, the decay of $\p_e\tilde{u} $ to $\Lambda_w$ is in general only of the order $\dist(x,\Lambda_w)^{1-\frac{n+1}{p}}$, which cannot be controlled by $\dist(x,\Lambda_w)$.\\

We further note that the symmetry of $u$ about $x_{n+1}$ and the regularity of $u$ imply that $\p_{n+1}u=0$ in $B'_1\setminus \Lambda_w$. In particular, this yields the complementary boundary conditions:
\begin{align*}
u \p_{n+1}u = 0 \mbox{ on } B_1'.
\end{align*}
Most importantly, Lemma \ref{lem:lower1'} combined with the previous observations on the behavior of $\nabla u$ on $B_1'$ implies that $\Gamma_w = \Gamma_u$. Hence, seeking to investigate $\Gamma_w$, it suffices to study $u$ and its boundary behavior. In this context, Lemma \ref{lem:lower1'} plays a central role as it allows us to deduce the sign conditions for $\p_e u$ and $\p_{n+1}u$ which are crucial in determining the image of the Legendre-Hodograph transform which we will associate with $u$.\\

In accordance with our intuition that $\tilde{u}$ is a ``controlled error", the function $u$ inherits the asymptotics of the solution $w$ around $\Gamma_w$. As in Proposition \ref{prop:invertibility} in Section \ref{sec:Hodo}, this is of great importance in proving the invertibility of the Legendre-Hodograph transform which we will associate with $u$. We formulate the asymptotic expansions in the following proposition:

\begin{prop}
\label{prop:improved_reg1}
Let $a^{ij}\in W^{1,p}(B_1^+, \R^{(n+1)\times(n+1)}_{sym})$ and let $f\in L^p(B_1^+)$. 
Suppose that either 
\begin{itemize}
\item[(1)] $p\in (n+1,\infty]$ and $f=0$ or, 
\item[(2)] $p\in (2(n+1),\infty]$. 
\end{itemize}
Let $w:B_1^+ \rightarrow \R$ be a solution to the thin obstacle problem with inhomogeneity $f$, and let $u$ be defined as at the beginning of this section. There exist small constants $\epsilon_0>0$ and $c_\ast>0$ depending on $n,p$ such that if 
\begin{itemize}
\item[(i)] $ \|w-w_{3/2}\|_{C^1(B_1^+)}\leq \epsilon_0$,
\item[(ii)]$ \|\nabla a^{ij}\|_{L^p(B_1^+)}+\|f\|_{L^p(B_1^+)}\leq c_\ast,$
\end{itemize}
then the asymptotics (i)-(iii) in Proposition~\ref{prop:asym2} hold for $\p_eu$, $\p_{n+1}u$ and $u$. The exponent $\alpha$ in the error term satisfies $\alpha\in (0,1-\frac{n+1}{p}]$ in case (1) and $\alpha\in (0,\frac{1}{2}-\frac{n+1}{p}]$ in case (2). 
\end{prop}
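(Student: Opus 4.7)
The strategy is to transfer the asymptotic expansion of $w$ from Proposition~\ref{prop:asym2} to $u$ via the splitting $w=u+\tilde u$, by proving that $\tilde u$ decays at a strictly higher rate at the free boundary than the $3/2$-homogeneous leading term of $w$.

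First, I would establish pointwise decay estimates for $\tilde u$ and $\nabla \tilde u$ near $\Lambda_w$. The function $\tilde u$ satisfies the coercive equation
\begin{align*}
a^{ij}\partial_{ij}\tilde u - \dist(x,\Gamma_w)^{-2}\tilde u = f - (\partial_i a^{ij})\partial_j w \quad \text{in } B_1\setminus\Lambda_w,
\end{align*}
with $\tilde u = 0$ on $\Lambda_w$. The intuition is that the $\dist(\cdot,\Gamma_w)^{-2}$ potential forces $\tilde u$ to equilibrate to its source at the length scale $\dist(x,\Gamma_w)$, giving roughly $|\tilde u(x)| \lesssim \dist(x,\Gamma_w)^2 \cdot \|\text{RHS}\|$ on the appropriate cube. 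More precisely, working in dyadic Whitney cubes around $\Gamma_w$ of size $\rho = \dist(x,\Gamma_w)$, rescaling so the coercive term becomes a bounded multiplicative potential, and applying non-divergence $L^p$ theory to the rescaled equation, I expect to obtain
\begin{align*}
|\tilde u(x)| &\lesssim \max\{\epsilon_0,c_\ast\}\,\dist(x,\Lambda_w)^{3/2+\alpha}, \\
|\nabla \tilde u(x)| &\lesssim \max\{\epsilon_0,c_\ast\}\,\dist(x,\Lambda_w)^{1/2+\alpha},
\end{align*}
with $\alpha\in(0,1-\tfrac{n+1}{p}]$ in case (1) and $\alpha\in(0,\tfrac{1}{2}-\tfrac{n+1}{p}]$ in case (2). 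The case split in the range of $\alpha$ reflects the scaling of the two source terms: $(\partial_i a^{ij})\partial_j w$ contributes $\dist^{1/2-(n+1)/p}$ (since $\partial_j w$ scales like $\dist^{1/2}$), while $f\in L^p$ alone contributes $\dist^{-(n+1)/p}$, which is why case (2) requires $p>2(n+1)$ to keep $\alpha>0$. All of this essentially reuses (and re-packages for $\nabla \tilde u$) the splitting machinery of \cite{KRSI}.

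Second, I would read off the asymptotics for $u$. Since $u=w-\tilde u$ and $\dist(x,\Lambda_w)\le|x-x_0|$, combining Proposition~\ref{prop:asym2} with the decay above immediately yields
\begin{align*}
|u(x)-\mathcal W_{x_0}(x)| &\le |w(x)-\mathcal W_{x_0}(x)| + |\tilde u(x)| \lesssim \max\{\epsilon_0,c_\ast\}|x-x_0|^{3/2+\alpha},
\end{align*}
and analogously for $\partial_i u - \partial_i\mathcal W_{x_0}$ ($i\le n$) and $\partial_{n+1}u - \partial_{n+1}\mathcal W_{x_0}$ using the gradient decay of $\tilde u$. For $\partial_{n+1}u$ the even $x_{n+1}$-extension of $\tilde u$ forces $\partial_{n+1}\tilde u=0$ on $B_1'\setminus\Lambda_w$, so the asymptotics of $\partial_{n+1}w$ pass cleanly to $\partial_{n+1}u$.

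The main obstacle is the first step: obtaining the sharp decay exponents for $\tilde u$ and $\nabla \tilde u$ in both regularity regimes. The decay of $\tilde u$ itself follows from a barrier/comparison argument using the coercive term, but upgrading this to a pointwise decay of $\nabla \tilde u$ with a loss of only half a power of the distance requires a scaling-plus-interior-$W^{2,p}$ argument in each Whitney cube, and then a Morrey embedding $W^{1,p}\hookrightarrow C^{0,1-(n+1)/p}$ to convert the $L^p$ bounds on the rescaled right-hand side into the claimed $\alpha$-exponent. The precise value of $\alpha$ and the threshold $p>2(n+1)$ in case (2) drop out of the bookkeeping in exactly the way that the scaling of the two source terms $f$ and $(\partial_i a^{ij})\partial_j w$ predicts.
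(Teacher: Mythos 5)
Your approach is essentially the same as the paper's: split $w=u+\tilde u$, cite (or derive) a decay estimate for $\tilde u$ that beats the $3/2$-homogeneous leading order, and transfer the asymptotics of $w$ from Proposition~\ref{prop:asym2} to $u$ by the triangle inequality. The paper simply invokes the growth estimate from \cite{KRSI} (Remark~3.11), which reads $|\tilde u(x)|\lesssim c_\ast\dist(x,\Lambda_w)\dist(x,\Gamma_w)^{3/2-\frac{n+1}{p}}$ in case (1) and $|\tilde u(x)|\lesssim c_\ast\dist(x,\Lambda_w)\dist(x,\Gamma_w)^{1-\frac{n+1}{p}}$ in case (2), and then bounds $\nabla\tilde u$ by interior rescaling; your Whitney-cube-plus-Morrey sketch is the right way to produce these.

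One small inaccuracy worth flagging: the decay of $\tilde u$ cannot be stated as $|\tilde u(x)|\lesssim\dist(x,\Lambda_w)^{3/2+\alpha}$ (nor $|\nabla\tilde u(x)|\lesssim\dist(x,\Lambda_w)^{1/2+\alpha}$). The Dirichlet condition only forces linear vanishing of $\tilde u$ at $\Lambda_w$, and its normal derivative does not vanish there; at a point with $\dist(x,\Lambda_w)$ small but $\dist(x,\Gamma_w)\sim 1$ the true bound is only linear in $\dist(x,\Lambda_w)$. The correct higher-order vanishing is in $\dist(x,\Gamma_w)$ (equivalently in $|x-x_0|$ for $x_0\in\Gamma_w$), as in the displayed estimate (\ref{eq:auxv}) in the paper. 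Since $\dist(x,\Gamma_w)\le|x-x_0|$, your final triangle-inequality step goes through unchanged once the exponent is attached to $\dist(x,\Gamma_w)$ rather than $\dist(x,\Lambda_w)$.
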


\begin{proof}
By the growth estimate of Remark 3.11 in \cite{KRSI} we have that
\begin{equation}
\label{eq:auxv}
\begin{split}
|\tilde{u}(x)|& \lesssim c_\ast \dist(x,\Lambda_w)\dist(x,\Gamma_w)^{\frac{3}{2}-\frac{n+1}{p}} \text{ in case (1);}\\
|\tilde{u}(x)|& \lesssim c_\ast  \dist(x,\Lambda_w)\dist(x,\Gamma_w)^{1-\frac{n+1}{p}}  \text{ in case (2).}
\end{split}
\end{equation}
In particular this implies that 
\begin{align*}
&|\tilde{u}(x)|\lesssim c_\ast \dist(x,\Gamma_w)^{\frac{3}{2}+\delta_0},\quad
|\nabla \tilde{u}(x)|\lesssim c_\ast \dist(x,\Gamma_w)^{\frac{1}{2}+\delta_0},\\
&\text{where }\delta_0=\left\{\begin{array}{ll}
1-\frac{n+1}{p} &\text{ if } p\in (n+1,2(n+1)],\\
\frac{1}{2}-\frac{n+1}{p} &\text{ if } p\in (2(n+1),\infty].
\end{array}
\right.
\end{align*}
Since $\delta_0>0$, in both cases the functions $\tilde{u}$ and $\nabla \tilde{u}$ are of higher vanishing order at $\Gamma_w$ compared to the leading term in the corresponding asymptotics of $w$ and $\nabla w$ (which are of order $\dist(x,\Gamma_w)^{3/2}$ and $\dist(x,\Gamma_w)^{1/2}$).
\end{proof}

In addition to these results the second order asymptotics for $u$ (not for the whole function $w$) remain valid under the conditions of Proposition \ref{prop:improved_reg1}. More precisely we have the following result:

\begin{prop}
\label{prop:improved_reg'}
Under the same assumptions as in Proposition~\ref{prop:improved_reg1}, we have the following: 
For each $x_0\in \Gamma_w\cap B^+_{1/2}$, for all $x$ in an associated non-tangential cone $\mathcal{N}_{x_0}$ and for all multi-indeces $\beta$ with $|\beta|\leq 2$,
\begin{align*}
\left|\p^\beta u(x)-\p^\beta \mathcal{W}_{x_0}(x)\right|& \leq C_{n,p,\beta} \max\{\epsilon_0, c_*\} |x-x_0|^{\frac{3}{2}+\alpha-|\beta|},\\
\left[\p^\beta u-\p^\beta \mathcal{W}_{x_0}\right]_{\dot{C}^{0,\gamma}(\mathcal{N}_{x_0}\cap (B_{3\lambda/4}(x_0)\setminus B_{\lambda/2}(x_0)))}& \leq C_{n,p,\beta}  \max\{\epsilon_0, c_*\} \lambda^{\frac{3}{2}+\alpha-\gamma-|\beta|}.
\end{align*}
Here $\gamma=1-\frac{n+1}{p}$ and $\lambda \in (0,1)$.
\end{prop}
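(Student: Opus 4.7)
The plan is to split the proof according to the order $|\beta|$. For $|\beta| \leq 1$ the pointwise bounds are essentially contained in Proposition~\ref{prop:improved_reg1} applied to $u$ (the splitting error $\tilde u$ having been absorbed into the remainder already there). The corresponding H\"older-seminorm bounds at scale $\lambda$ can then be obtained by interpolating these pointwise bounds with the interior $C^{2, 1-(n+1)/p}_{loc}(B_1^+\setminus \Gamma_w)$ regularity of $u$ from Lemma~\ref{lem:lower1'}, via the standard inequality $[f]_{\dot C^{0,\gamma}} \lesssim \|f\|_{L^\infty}^{1-\gamma}\|Df\|_{L^\infty}^{\gamma}$ applied on each Whitney ball inside $\mathcal{N}_{x_0} \cap (B_{3\lambda/4}^+(x_0)\setminus B_{\lambda/2}^+(x_0))$.

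The main case is $|\beta|=2$, which I propose to handle by mimicking the Whitney rescaling argument from the proof of Proposition~\ref{prop:improved_reg}, but now applied to $u$ and to its defining non-divergence equation $a^{ij}\partial_{ij}u = -\dist(\cdot,\Gamma_w)^{-2}\tilde{u} =: g$ rather than to $w$. For fixed $x_0 \in \Gamma_w \cap B_{1/4}^+$ and $\lambda\in(0,1/4)$, set
\begin{align*}
\tilde{U}(x) := \frac{u(x_0+\lambda x) - \mathcal{W}_{x_0}(x_0+\lambda x)}{\lambda^{3/2+\alpha}}, \qquad x \in B_1^+.
\end{align*}
Applying Proposition~\ref{prop:improved_reg1} to $u$ yields $\|\tilde{U}\|_{L^\infty(B_1^+)} + \|\nabla\tilde{U}\|_{L^\infty(\mathcal{N}_0)} \lesssim \max\{\epsilon_0,c_\ast\}$. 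Using $a^{ij}(x_0)\partial_{ij}\mathcal{W}_{x_0} = 0$, a direct computation gives
\begin{align*}
a^{ij}(x_0+\lambda x)\,\partial_{ij}\tilde{U}(x) = \lambda^{1/2-\alpha}\bigl[\, g(x_0+\lambda x) - \bigl(a^{ij}(x_0+\lambda x) - a^{ij}(x_0)\bigr)\partial_{ij}\mathcal{W}_{x_0}(x_0+\lambda x)\,\bigr] =: G_\lambda(x).
\end{align*}
Combining the refined decay $|\tilde{u}| \lesssim c_\ast \dist(\cdot,\Gamma_w)^{3/2+\delta_0}$ from Remark 3.11 in \cite{KRSI} (already exploited in the proof of Proposition~\ref{prop:improved_reg1}), the Sobolev embedding $W^{1,p} \hookrightarrow C^{0,1-(n+1)/p}$ for the coefficients, and the explicit bound $|D^2\mathcal{W}_{x_0}(x)| \lesssim |x-x_0|^{-1/2}$, one checks that $G_\lambda$ is uniformly $C^{0,\gamma}$-bounded on $\mathcal{N}_0 \cap (B_1^+\setminus B_{1/4}^+)$ by $C\max\{\epsilon_0,c_\ast\}$, independently of $\lambda$.

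Next I would apply $C^{2,\gamma}$ Schauder estimates to $\tilde{U}$ up to whichever part of $B_1'$ is met by $\mathcal{N}_0 \cap (B_1^+\setminus B_{1/4}^+)$. Since $\mathcal{N}_0$ avoids $\Gamma_w$, on each connected boundary piece one has either the Dirichlet condition $u=0$ on $\Lambda_w$ or the homogeneous Neumann condition $\partial_{n+1}u \equiv 0$ on $B_1'\setminus\Lambda_w$ (the latter inherited from the even symmetry of $u$ about $\{x_{n+1}=0\}$). This yields
\begin{align*}
\|\tilde{U}\|_{C^{2,\gamma}(\mathcal{N}_0 \cap (B_{3/4}^+\setminus B_{1/2}^+))} \leq C\max\{\epsilon_0,c_\ast\},
\end{align*}
and rescaling back produces both the pointwise and the H\"older-seminorm bounds of the proposition for $|\beta|=2$.

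The principal obstacle is absorbing the singular weight $\dist(\cdot,\Gamma_w)^{-2}$ appearing in $g$. It is precisely the strict gain $\delta_0>0$ in the decay of $\tilde{u}$---which is the very reason for including the coercive term $-\dist(\cdot,\Gamma_w)^{-2}\tilde{u}$ in the splitting \eqref{eq:split1}---that compensates this weight and forces the constraints $\alpha \leq 1-(n+1)/p$ in case (1), respectively $\alpha \leq 1/2-(n+1)/p$ in case (2). A secondary, more technical point is the justification of boundary $C^{2,\gamma}$ Schauder estimates in the mixed Dirichlet--Neumann configuration, which is handled via the even/odd reflection arguments about $\{x_{n+1}=0\}$ already exploited in Step 1b of the proof of Proposition~\ref{prop:invertibility}.
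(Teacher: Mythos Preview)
Your proposal is correct and follows essentially the same route as the paper: rescale $u-\mathcal{W}_{x_0}$ by $\lambda^{3/2+\alpha}$, compute the right-hand side of the non-divergence equation, bound it in $C^{0,\gamma}$ on the annular non-tangential region using the decay of $\tilde u$ and the H\"older regularity of the coefficients, apply Schauder, and rescale back. The paper handles the lower-order H\"older seminorms directly from the same Schauder step rather than via your interpolation inequality, but this is only a minor reorganization.
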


\begin{proof}
We only prove the case of $|\beta|=2$, the other cases are already contained in Proposition \ref{prop:improved_reg1}. Since the arguments for case (1) and (2) are similar we only prove case (2), i.e. $p\in (2(n+1),\infty]$. As in Proposition \ref{prop:improved_reg} the result follows from scaling.
We consider the function
\begin{align*}
\bar{u}(x):= \frac{u(x_0+ \lambda x)- \mathcal{W}_{x_0}(x_0+\lambda x)}{\lambda^{3/2 + \alpha}},
\end{align*}
and note that it satisfies
\begin{align*}
a^{ij}(x_0 + \lambda \cdot) \p_{ij }\bar{u} = \tilde{G} + \tilde{g}_1, \quad \ell\in\{1,\dots,n\},
\end{align*}
for $x\in \mathcal{N}_{0}\cap (B_{1}\setminus B_{1/4})$ and $\mathcal{N}_0:= \{x\in B_{1/4}^+| \ \dist(x,\Gamma_{w_{x_0,\lambda}}) > \frac{1}{2}|x|\}$. Here
\begin{align*}
\tilde{G}(x) & := -\lambda^{1/2 - \alpha}\dist(x_0+\lambda x,\Gamma_w)^{-2}\tilde{u}(x_0+\lambda x),\\
&=-\lambda^{-\frac{3}{2}-\alpha}\dist(x,\Gamma_{w_{x_0,\lambda}})^{-2}\tilde{u}(x_0+\lambda x),\\
\tilde{g}_1(x) & := \lambda^{1/2-\alpha} (a^{ij}(x_0 + \lambda x)-a^{ij}(x_0)) \partial_{ij} \mathcal{W}_{x_0}(x_0 +\lambda x).
\end{align*}
In the definition of $\tilde{g}_1$ we have used that $a^{ij}(x_0)\p_{ij}\mathcal{W}_{x_0}=0$ in $ \mathcal{N}_{x_0}\cap (B_{\lambda}(x_0)\setminus B_{\lambda/4}(x_0))$. Using \eqref{eq:auxv} and the regularity of $\tilde{u}$ (and abbreviating $\gamma=1-\frac{n+1}{p}$) yields
\begin{align*}
\|\tilde{G}\|_{C^{0,\gamma}(\mathcal{N}_{0}\cap (B_{1}\setminus B_{1/4}))}\leq C \lambda^{-\frac{3}{2}-\alpha} \lambda^{2-\frac{n+1}{p}}=C\lambda^{\frac{1}{2}-\alpha-\frac{n+1}{p}}.
\end{align*}
Recalling the $C^{0,1-\frac{n+1}{p}}$ regularity of $a^{ij}$ and the explicit expression of $\mathcal{W}_{x_0}$, we estimate
\begin{align*}
\|\tilde{g}_1\|_{C^{0,\gamma}(\mathcal{N}_{0}\cap (B_{1}\setminus B_{1/4}))}\leq C\lambda^{\frac{1}{2}-\alpha}\lambda^{1-\frac{n+1}{p}}\lambda^{-\frac{1}{2}}=C\lambda^{1-\alpha-\frac{n+1}{p}}.
\end{align*}
Hence, applying the interior Schauder estimate to $\bar u$ we obtain
\begin{align*}
\|\bar u\|_{C^{2,\gamma}(\mathcal{N}_{0}\cap (B_{3/4}\setminus B_{1/2}))}&\leq C\left(\|\tilde{ G}\|_{C^{0,\gamma}(\mathcal{N}_0\cap (B_{1}\setminus B_{1/4}))}+\|\tilde{g}_1\|_{C^{0,\gamma}(\mathcal{N}_0\cap (B_{1}\setminus B_{1/4}))}\right.\\
&\quad \left.+\|\bar u\|_{L^\infty(\mathcal{N}_0\cap (B_{1}\setminus B_{1/4}))}\right)\\
&\leq C\left(\lambda^{\frac{1}{2}-\alpha-\frac{n+1}{p}}+1 \right)\leq C \quad (\text{since }\alpha\in (0,\frac{1}{2}-\frac{n+1}{p}]).
\end{align*} 
Scaling back, the error estimates become
\begin{align*}
\|\p_e u -\p_e \mathcal{W}_{x_0}\|_{L^\infty(\mathcal{N}_{x_0}\cap (B_{3\lambda/4}(x_0)\setminus B_{\lambda/2}(x_0)))}&\leq C\lambda^{\frac{1}{2}-\alpha},\\
\|\p_{ee'}u-\p_{ee'}\mathcal{W}_{x_0}\|_{L^\infty(\mathcal{N}_{x_0}\cap (B_{3\lambda/4}(x_0)\setminus B_{\lambda/2}(x_0)))}
&\leq C\lambda^{-\frac{1}{2}-\alpha},\\
[\p_{ee'}u-\p_{ee'}\mathcal{W}_{x_0}]_{\dot{C}^{0,\gamma}(\mathcal{N}_{x_0}\cap (B_{3\lambda/4}(x_0)\setminus B_{\lambda/2}(x_0)))}
&\leq C\lambda^{-\frac{1}{2}-\alpha-\gamma}.
\end{align*}
Since this holds for every $\lambda\in (0,1)$, we obtain the asymptotic expansion for $\p_e w $ and $\p_{e e'}w$. The asymptotics for $\p_{ij}w$ with $i$ or $j=n+1$ are derived analogously. 
\end{proof}

Due to the above discussion, the associated Hodograph transform with respect to $u$,
$$
T(x):=(x'',\p_n u(x), \p_{n+1}u(x)),$$ 
still enjoys all the properties stated in Section \ref{sec:Hodo}. In particular, it is possible to define the associated Legendre function 
\begin{align}
\label{eq:Leg_split}
v(y):= u(x)-x_n y_n - x_{n+1} y_{n+1},
\end{align}
 for $x= T^{-1}(y)$. This function satisfies an analogous nonlinear PDE as the one from Section \ref{sec:Legendre}:
\begin{align*}
\tilde{F}(v,y) = g(y).
\end{align*}
Here,
\begin{equation}\label{eq:w1p}
\begin{split}
\tilde{F}(v,y)&=-\sum_{i,j=1}^{n-1}\tilde{a}^{ij}\det\begin{pmatrix}
\p_{ij}v& \p_{in}v & \p_{i,n+1}v\\
\p_{jn}v& \p_{nn}v & \p_{n,n+1}v\\
\p_{j,n+1}v & \p_{n,n+1}v &\p_{n+1,n+1}v
\end{pmatrix}\\
&+2\sum_{i=1}^{n-1}\tilde{a}^{i,n}\det\begin{pmatrix}
\p_{in}v & \p_{i,n+1}v\\
\p_{n,n+1}v & \p_{n+1,n+1}v
\end{pmatrix}+2 \sum_{i=1}^{n-1}\tilde{a}^{i,n+1}\det\begin{pmatrix}
\p_{i,n+1}v & \p_{in}v\\
\p_{n,n+1}v & \p_{nn}v
\end{pmatrix}\\
&+\tilde{a}^{nn}\p_{n+1,n+1}v+\tilde{a}^{n+1,n+1}\p_{nn}v-2\tilde{a}^{n,n+1}\p_{n,n+1}v,\\
\tilde{a}^{ij}(y)&:=a^{ij}(x)\big|_{x=(y'',-\p_nv(y),-\p_{n+1}v(y))},\\
J(v)& := \p_{nn} v(y) \p_{n+1,n+1}v(y) - (\p_{n,n+1}v(y))^2,\\
g(y) &:= -J(v(y)) \dist(T^{-1}(y), T^{-1}(P))^{-2}\tilde{u}(T^{-1}(y)),\quad P:=\{y_n=y_{n+1}=0\}.
\end{split}
\end{equation}  
From the asymptotics of $J(v)$ and \eqref{eq:auxv} for $\tilde{u}$ we have 
\begin{align}
\label{eq:w1p_g}
|g(y)|\leq \left\{ \begin{array}{ll}\
C(y_n^2+y_{n+1}^2)^{\frac{3}{2}-\frac{n+1}{p}} &\text{ if } p\in (n+1,\infty] \text{ and } f=0,\\
C(y_n^2+y_{n+1}^2)^{1-\frac{n+1}{p}} &\text{ if }p\in (2(n+1),\infty].
\end{array}\right.
\end{align}

The result of Proposition \ref{prop:improved_reg'} in combination with an argument as in the proof of Proposition \ref{prop:holder_v} also yields that $v\in X_{\alpha,\epsilon}$ for a potentially very small $\alpha>0$.\\

We summarize all this in the following Proposition:

\begin{prop}
\label{eq:bulk_new}
Let $a^{ij}\in W^{1,p}$  and let $f\in L^p$. 
Suppose that either 
\begin{itemize}
\item[(1)] $p\in (n+1,\infty]$ and $f=0$ or, 
\item[(2)] $p\in (2(n+1),\infty]$. 
\end{itemize}
Let $v:T(B_{1/2}^+) \rightarrow \R$ be the Legendre function associated with $u$ defined in (\ref{eq:Leg_split}). Then $v\in C^1(T(B_{1/2}^+) )\cap X_{\alpha,\epsilon}(\mathcal{B}_{r_0}^+)$ for some $\alpha\in (0,1)$ (which is the same as in Proposition \ref{prop:improved_reg'}) and it satisfies the fully nonlinear equation
\begin{align*}
\tilde{F}(v,y) = g(y).
\end{align*}
Here $\tilde{F}, g$ are as in (\ref{eq:w1p}) and $g$ satisfies the decay estimate (\ref{eq:w1p_g}).
\end{prop}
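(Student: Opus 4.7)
My plan is to re-run the arguments of Sections~\ref{sec:Hodo}, \ref{sec:Legendre} and \ref{sec:Leg} with $w$ replaced by the ``regular part'' $u$ of the splitting \eqref{eq:split1}--\eqref{eq:split2}. The crucial observation is that, while $w$ itself is only $W^{2,p}_{loc}$ away from $\Gamma_w$, the auxiliary function $u$ is $C^{2,1-\frac{n+1}{p}}_{loc}(B_1^+\setminus\Gamma_w)$ by Lemma~\ref{lem:lower1'}, and still carries the full first and second order asymptotic expansions of Propositions~\ref{prop:improved_reg1} and \ref{prop:improved_reg'}. The low regularity of $a^{ij}$ and $f$ is absorbed into $\tilde u$, whose pointwise decay \eqref{eq:auxv} will reappear only through the right hand side $g$.

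First I would establish the invertibility of $T(x)=(x'',\p_n u,\p_{n+1}u)$ on some $B_{r_0}^+$ by repeating Proposition~\ref{prop:invertibility} with $u$ in place of $w$: Lemma~\ref{lem:lower1'} supplies the sign inclusions analogous to \eqref{eq:mapT}, the leading order asymptotics of $\p_n u,\p_{n+1}u$ from Proposition~\ref{prop:improved_reg1} give the square-root-like structure needed for the injectivity of $T_1=\psi\circ T$, and the non-vanishing of $\det DT$ away from $\Gamma_u$ follows from the second order asymptotics of Proposition~\ref{prop:improved_reg'} in non-tangential cones. Next, starting from the non-divergence form
\[
a^{ij}\p_{ij}u=-\dist(x,\Gamma_w)^{-2}\tilde u \quad\text{in } B_1\setminus\Lambda_w,
\]
I would apply the change of variables identities \eqref{eq:relation}--\eqref{eq:hessianw} and multiply by $-J(v)$ exactly as in the proof of Proposition~\ref{prop:bulk_eq}, arriving at $\tilde F(v,y)=g(y)$ with the expressions in \eqref{eq:w1p}. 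No first order contribution $\tilde b^j\p_j v$ is generated, precisely because the equation for $u$ carries no $(\p_i a^{ij})\p_j u$ term; this is the whole point of the splitting. The $C^1$ regularity of $v$ and the mixed Dirichlet--Neumann boundary data then follow from \eqref{eq:dual} and the regularity of $T^{-1}$.

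The membership $v\in X_{\alpha,\epsilon}(\mathcal{B}_{r_0}^+)$ for some $\alpha\in (0,1)$ will follow by repeating Propositions~\ref{prop:holder_v} and \ref{prop:regasymp}, with Proposition~\ref{prop:improved_reg} replaced by its $u$-analogue Proposition~\ref{prop:improved_reg'}: the scaling argument transfers the non-tangential H\"older bounds on $D^2u$ to the Legendre side, and a chain-of-balls plus triangle inequality across $P\cap\mathcal{B}_{r_0}$ produces the decomposition of Proposition~\ref{prop:decompI}. The bound \eqref{eq:w1p_g} for $g$ is then immediate: the leading expansion of $v$ at $P$ gives $|J(v(y))|\lesssim y_n^2+y_{n+1}^2$, the asymptotic square root behaviour of $T$ gives $\dist(T^{-1}(y),\Gamma_w)\sim (y_n^2+y_{n+1}^2)^{1/2}$, and \eqref{eq:auxv} supplies the remaining factor $\dist(\cdot,\Gamma_w)^{1/2+\delta_0}\dist(\cdot,\Lambda_w)$, with $\delta_0=1-\tfrac{n+1}{p}$ in case (1) and $\delta_0=\tfrac{1}{2}-\tfrac{n+1}{p}$ in case (2). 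The main obstacle is essentially bookkeeping: verifying that every step in Sections~\ref{sec:Hodo}--\ref{sec:Leg} required only pointwise $C^{2,\gamma}$ regularity of the solution away from $\Gamma_w$ and its leading order asymptotics at $\Gamma_w$ (both of which hold for $u$), rather than any structural feature of the divergence form equation for $w$; the splitting from \cite{KRSI} is precisely designed so that this substitution goes through with only notational changes.
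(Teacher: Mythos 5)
Your proposal follows the same route as the paper: split $w=u+\tilde u$, use Lemma~\ref{lem:lower1'} to restore the sign conditions for $\p_e u$ needed for the Hodograph transform, use Propositions~\ref{prop:improved_reg1} and \ref{prop:improved_reg'} in place of Propositions~\ref{prop:asym2} and \ref{prop:improved_reg} to get invertibility of $T$ and the asymptotics of $v$, then transform the non-divergence equation $a^{ij}\p_{ij}u=-\dist(x,\Gamma_w)^{-2}\tilde u$ so that the low-regularity drift term disappears into the right hand side $g$. Your observation that no $\tilde b^j\p_j v$ term appears because the equation for $u$ carries no $(\p_i a^{ij})\p_j u$ contribution is exactly the mechanism that makes the splitting work here.

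There is, however, a computational slip in the final decay estimate for $g$ which would give the wrong exponent. You write that the ``asymptotic square root behaviour of $T$ gives $\dist(T^{-1}(y),\Gamma_w)\sim(y_n^2+y_{n+1}^2)^{1/2}$.'' This is backwards: $T$ is the square-root map ($y$-distances to $P$ scale like $x$-distances to $\Gamma_w$ to the power $1/2$), so $T^{-1}$ is asymptotically a \emph{squaring} map, and in fact
\begin{align*}
\dist\bigl(T^{-1}(y),T^{-1}(P)\bigr)\sim y_n^2+y_{n+1}^2,
\end{align*}
as one reads off from $x_n=-\p_nv(y)\sim y_n^2-y_{n+1}^2$, $x_{n+1}=-\p_{n+1}v(y)\sim y_ny_{n+1}$ (or from the model $x_n+ix_{n+1}\sim(y_n-iy_{n+1})^2$). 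With the correct relation one gets
\begin{align*}
|g(y)|\lesssim \bigl(y_n^2+y_{n+1}^2\bigr)\cdot\bigl(y_n^2+y_{n+1}^2\bigr)^{-2}\cdot\bigl(y_n^2+y_{n+1}^2\bigr)^{\frac32+\delta_0}=\bigl(y_n^2+y_{n+1}^2\bigr)^{\frac12+\delta_0},
\end{align*}
which matches \eqref{eq:w1p_g} both in case (1) ($\delta_0=1-\frac{n+1}{p}$) and case (2) ($\delta_0=\frac12-\frac{n+1}{p}$). With your value $(y_n^2+y_{n+1}^2)^{1/2}$ the three factors multiply to $(y_n^2+y_{n+1}^2)^{\frac34+\frac{\delta_0}{2}}$, which does not recover the stated bound. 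Everything else in the argument is sound; just fix this scaling relation.
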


\begin{rmk}
\label{rmk:decay}
We note that the leading contribution in the decay estimate for $g$ originates from the decay behavior of $\tilde{u}$ in \eqref{eq:auxv}. Therefore, the decay of $g$ is influenced by $-(\p_{i}a^{ij})\p_j w$ and by the inhomogeneity $f$ from \eqref{eq:split1}. 
\end{rmk}

\subsection{Regularity of the free boundary}
\label{sec:free_boundary_reg_1}

In this section we discuss the implications of the results from Section \ref{sec:ext} on the free boundary regularity. In order to understand the different ingredients to the regularity results, we treat two different scenarios: First we address the setting of $W^{1,p}$ metrics with $p\in (n+1,\infty]$, and zero obstacles, i.e. with respect to Sections \ref{sec:HLTrafo} - \ref{sec:fb_reg} we present a result under even weaker regularity assumptions of the metric (c.f. Section \ref{subsec:w1p_zero}). Secondly, in Section \ref{sec:nonzero} we address the set-up with inhomogeneities. This in particular includes the case of non-zero obstacles. We treat this in the $W^{1,p}$ and the $C^{k,\gamma}$ framework.

\subsubsection{$W^{1,p}$ metrics without inhomogeneity}
\label{subsec:w1p_zero}

We now specialize to the setting in which $a^{ij}\in W^{1,p}$ with $p\in (n+1,\infty]$ and $f=0$ in (\ref{eq:inhom}). In this framework, we prove the following quantitative regularity result for the free boundary:

\begin{prop}[$C^{1,1-\frac{n+1}{p}}$ regularity] 
\label{prop:W1p}
Let $a^{ij}\in W^{1,p}$ with $p\in (n+1,\infty]$ and $f=0$. Let $w$ be a solution of (\ref{eq:inhom}) and assume that the normalizations (A1)-(A7) from Section \ref{sec:conventions} hold. Then, if $p<\infty$, $\Gamma_w$ is a $C^{1,1-\frac{n+1}{p}}(B_{1/2}')$ graph and if $p=\infty$, it is a $C^{1,1-}(B_{1/2}')$ graph.
\end{prop}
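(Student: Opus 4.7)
The plan is to adapt the bootstrap improvement of Section~\ref{subsec:improvement} and the parametrization (\ref{eq:boundaryLH}) of the free boundary to the Legendre function associated with the split component $u$, now in the presence of the nonzero right-hand side $g$ produced by Proposition~\ref{eq:bulk_new}.

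First, I would apply Proposition~\ref{eq:bulk_new} to obtain (after a rescaling bringing the domain to $\mathcal{B}_{r_0}^+$) a Legendre function $v \in X_{\alpha,\epsilon}(\mathcal{B}_{r_0}^+)$ with some a priori possibly very small $\alpha>0$, solving the degenerate fully nonlinear equation $\tilde F(v,y)=g(y)$, where by \eqref{eq:w1p_g} one has $|g(y)| \lesssim r(y)^{3-2(n+1)/p}$ with $r(y)=(y_n^2+y_{n+1}^2)^{1/2}$. The second step is to locate $g$ in the appropriate Banach space: using the characterization of $Y_{\beta,\epsilon}$ from Proposition~\ref{prop:decompI}(i) together with Remark~\ref{rmk:characterize}, and noting that $g$ carries no linear $y_n$-contribution on $P$, it suffices to decompose $g = r^{1+2\beta-\epsilon} g_1$ with $g_1 \in C^{0,\epsilon}_\ast$ vanishing on $P$. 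This is achievable for every $\beta < 1-(n+1)/p+\epsilon/2$ by combining the decay in \eqref{eq:auxv} with the Schauder-type regularity of $\tilde u$ away from $\Gamma_w$ extracted from its defining equation \eqref{eq:split1}, and by pulling back through $T = T^u$ using the second-order asymptotics of $u$ established in Proposition~\ref{prop:improved_reg'}.

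Third, I would iterate the linearization argument of Proposition~\ref{prop:error_gain}: at each $y_0 \in P$ write $\tilde F(v,y) = L_{y_0} v + P_{y_0}(y) + E_{y_0}(y)$, so that $L_{y_0} v = g - P_{y_0} - E_{y_0}$, and apply the Baouendi-Grushin Schauder estimate (Proposition~\ref{prop:invert}) together with the polynomial-approximation scheme used in Proposition~\ref{prop:error_gain2}. Each round of this scheme upgrades the exponent from $\alpha$ to $\min\{2\alpha,\,1-(n+1)/p-\epsilon/2\}$, so after finitely many iterations one obtains $v \in X_{\delta,\epsilon}(\mathcal{B}_{r_0/2}^+)$ for every $\delta < 1-(n+1)/p$. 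By Proposition~\ref{prop:decompI}(ii), this provides $\partial_{in} v(\cdot,0,0) \in C^{0,\delta}$ on $P \cap \mathcal{B}_{r_0/2}$, and via (\ref{eq:boundaryLH}) the free boundary $\Gamma_w$ is locally $C^{1,\delta}$ for every $\delta < 1-(n+1)/p$; this already yields the claim in the case $p=\infty$. For $p<\infty$, I would close the remaining $\epsilon$-loss by a scaling argument analogous to Step~2 of the proof of Theorem~\ref{prop:hoelder_reg_a}: rescaling $v$ by the intrinsic Grushin dilations $\delta_\lambda$ and exploiting that $g$ scales exactly like $\lambda^{3-2(n+1)/p}$ while the relevant tangential Hölder seminorms on $P$ scale as $\lambda^{2\cdot(\text{exponent})}$, a power-counting argument recovers the endpoint exponent $1-(n+1)/p$.

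The main obstacle I anticipate is the precise control of $g$ in $Y_{\beta,\epsilon}$ throughout this procedure. Since $g$ is defined implicitly through $J(v)$, the inverse Hodograph map $T^{-1}$, and the auxiliary function $\tilde u$, all of which enjoy regularity limited by the assumption $a^{ij}\in W^{1,p}$, one must ensure that the tangential and Baouendi-Grushin Hölder estimates for $g$ improve in sync with the bootstrap on $v$ and, moreover, remain scaling-sharp in the final step. The key saving fact is that $g$ carries the extra decay $r^{3-2(n+1)/p}$ inherited from the splitting, and it is precisely this decay that allows the bootstrap to terminate at the expected endpoint exponent $1-(n+1)/p$.
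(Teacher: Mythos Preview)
Your overall route is the same as the paper's: pass to the Legendre function of the split component $u$, linearize $\tilde F$ at each $y_0\in P$, and bootstrap via the Baouendi--Grushin approximation scheme. Two points of comparison are worth noting.

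First, you work harder than necessary by trying to place $g$ in the Banach space $Y_{\beta,\epsilon}$; this is where your $\epsilon/2$-loss comes from and why you are left with only $\delta<1-\tfrac{n+1}{p}$ after the iteration. The paper avoids this: it linearizes as you do, but only records the \emph{pointwise} decay of the combined error
\[
|\tilde f(y)|\le C\,d_G(y,y_0)^{\eta_0},\qquad \eta_0=\min\Bigl\{1+4\alpha,\;3-\tfrac{2(n+1)}{p}\Bigr\},
\]
where the second bound arises from both $g$ \emph{and} the metric error $\hat E^{y_0,ij}G^{ij}(v)$ (here $\hat E^{y_0,ij}$ decays like $d_G^{2(1-(n+1)/p)}$ because $a^{ij}$ is only $C^{0,1-(n+1)/p}$). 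This pointwise bound is all that the compactness/approximation argument of Proposition~\ref{prop:Hoelder0} (and the remark following it) requires; full $Y_{\beta,\epsilon}$ membership is not needed at this stage.

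Second, and as a consequence, the paper reaches the endpoint $\delta=1-\tfrac{n+1}{p}$ for $p<\infty$ directly from the bootstrap, without your final scaling step: once $1+4\alpha\ge 3-\tfrac{2(n+1)}{p}$, the limiting decay is exactly $3-\tfrac{2(n+1)}{p}=1+2\bigl(1-\tfrac{n+1}{p}\bigr)$, and the approximation scheme then gives $v\in X_{1-(n+1)/p,\epsilon}$ on the nose. Your proposed scaling argument is not incorrect, but it is unnecessary here; the cleaner path is to feed pointwise decay straight into the approximation lemma rather than routing through $Y_{\beta,\epsilon}$.
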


\begin{proof}
We prove this result similarly as in the case of $C^{1,\gamma}$ metrics but instead of working with the original solution $w$, we work with the modified function $u$ from Section~\ref{sec:ext}.\\
We begin by splitting $w=u+\tilde{u}$ as in Section \ref{sec:ext}. Moreover, we recall that by Lemma~\ref{lem:lower1'} (and the discussion following it) $\Gamma_w = \Gamma_{u}$. The Legendre function $v$ with respect to $u$ (c.f. (\ref{eq:Leg_split})) satisfies the nonlinear equation $\tilde{F}(v,y)=g(y)$ (c.f. \eqref{eq:w1p}), which in the notation in Section~\ref{subsec:improvement}, can be written as 
$$\tilde{F}(v,y)=\sum_{i,j=1}^{n+1}\tilde{a}^{ij}(y)G^{ij}(v)=g(y).$$
Furthermore, $g$ satisfies the decay condition (\ref{eq:w1p_g}). 
Keeping this in the back of our minds, we begin by proving analogues of Propositions \ref{prop:error_gain}, \ref{prop:error_gain2}.
To this end, we use a Taylor expansion to obtain that 
$$\tilde{a}^{ij}(y)=\tilde{a}^{ij}(y_0)+ \hat E^{y_0,ij}(y), \quad y\in \mathcal{B}_{1/2}^+(y_0),$$
for each $y_0\in P\cap \mathcal{B}_{1/2}$.
Due to the $C^{0,1-\frac{n+1}{p}}$ Hölder regularity of $a^{ij}$, for $\epsilon \in (0,1-\frac{n+1}{p})$ the function $\hat E^{y_0,ij}(y)$ satisfies 
\begin{equation}\label{eq:w1p_metric}
\begin{split}
\left\|d_G(\cdot,y_0)^{-2(1-\frac{n+1}{p})}\hat E^{y_0,ij}\right\|_{L^\infty(\mathcal{B}_{1/2}^+(y_0))}\\
+\left[d_G(\cdot,y_0)^{-2(1-\frac{n+1}{p}-\epsilon) }\hat E^{y_0,ij}\right]_{\dot{C}^{0,\epsilon}_\ast(\mathcal{B}_{1/2}^+(y_0))}\leq C.
\end{split}
\end{equation}
Recalling \eqref{eq:expand_v}, we expand the nonlinear function $G^{ij}(v)$ as 
$$G^{ij}(v)=G^{ij}(v_{y_0})+\p_{m_{k\ell}}G^{ij}(v_{y_0})\p_{k\ell}(v-v_{y_0})+\tilde{E}^{y_0,ij}_1(y),$$
where $v_{y_0}$ is the asymptotic profile of $v$ at $y_0$ and $\tilde{E}^{y_0,ij}_1(y)$ denote the same functions as in \eqref{eq:expand_v}.
Due to Proposition~\ref{prop:improved_reg1} the error term $\tilde{E}_1^{y_0,ij}$ satisfies the estimate (\ref{eq:Holderweight}). Hence, as in Step 1c of Proposition~\ref{prop:error_gain}, we can rewrite our nonlinear equation $\tilde{F}(v,y)=g(y)$ as 
\begin{align*} 
L_{y_0}v= L_{y_0}v_{y_0}+\tilde{f}
\end{align*}
with $L_{y_0}=\tilde{a}^{ij}(y_0)\p_{m_{k\ell}}G^{ij}(v_{y_0})\p_{k\ell}$ being the same leading term as in Remark~\ref{rmk:error_gain3} and 
\begin{align*}
\tilde{f}(y)&=-\tilde{a}^{ij}(y_0)\tilde{E}^{y_0,ij}_1(y)-\hat E^{y_0,ij}(y)G^{ij}(v_{y_0})+g(y).
\end{align*}
Due to the error bounds for $\tilde{E}_1^{y_0,ij}$ and $\hat E^{y_0,ij}(y)$, the linear estimate for $G^{ij}(v)$ and the estimate \eqref{eq:w1p_g} for $g$, we infer that
\begin{align*}
|\tilde{f}(y)|\leq C d_G(y,y_0)^{\eta_0}, \quad \eta_0=\min\left\{1+4\alpha, 3-\frac{2(n+1)}{p}\right\}.
\end{align*}
Hence, as long as $1+4\alpha<3-\frac{2(n+1)}{p}$ we bootstrap regularity as in Proposition~\ref{prop:error_gain2}, in order to obtain an increasingly higher modulus of regularity for $v$ at $P$. In particular, by the compactness argument in the Appendix, c.f. Section \ref{sec:quarter_Hoelder}, this allows us to conclude that the Legendre function $v$ is in $X_{\delta,\epsilon}(\mathcal{B}_{1/2}^+)$ for all $\delta \in (0,1-\frac{n+1}{p}]$ if $p<\infty$ and in $X_{\delta,\epsilon}(\mathcal{B}_{1/2}^+)$ for all $\delta \in (0,1-\frac{n+1}{p})$ if $p= \infty$. This shows the desired regularity of $v$ and hence of $\Gamma_u$.
\end{proof}

\subsubsection{Regularity results in the presence of inhomogeneities and obstacles}
\label{sec:nonzero}
In this section we consider the regularity of the free boundary in the presence of non-vanishing inhomogeneities $f$. In particular, this includes the presence of obstacles (c.f. Remark \ref{rmk:obstacles_1}).

In this set-up we show the following results:

\begin{prop}[Inhomogeneities]
\label{prop:inhomo_2}
Let $w$ be a solution of the thin obstacle problem with metric $a^{ij}$ satisfying the assumptions (A1)-(A7) from Section \ref{sec:conventions}. 
\begin{itemize}
\item[(i)] Assume further that $a^{ij}\in W^{1,p}(B_1^+, \R^{(n+1)\times(n+1)}_{sym})$ and $f\in L^p$ for some $p\in (2(n+1),\infty]$. Then $\Gamma_w$ is locally a $C^{1,\frac{1}{2}-\frac{n+1}{p}}$ graph.
\item[(ii)] Assume further that $a^{ij}\in C^{k,\gamma}(B_1^+, \R^{(n+1)\times(n+1)}_{sym})$ and that $f\in C^{k-1,\gamma}$ with $k\geq 1$, $\gamma \in (0,1)$. Then we have that $\Gamma_w$ is locally a $C^{k+[\frac{1}{2}+\gamma], (\frac{1}{2}+\gamma - [\frac{1}{2}+\gamma])}$ graph.
\end{itemize}
\end{prop}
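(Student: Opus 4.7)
\smallskip

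\noindent\textbf{Proof proposal.} The overall plan is to follow the two-step blueprint from Sections \ref{sec:W1p} and \ref{sec:fb_reg}, starting from the splitting $w=u+\tilde u$ of Section \ref{sec:ext}. Since $\Gamma_w=\Gamma_u$ by Lemma \ref{lem:lower1'} and its subsequent discussion, we work throughout with the Legendre function $v$ of $u$, which by Proposition \ref{eq:bulk_new} lies in some space $X_{\alpha,\epsilon}(\mathcal{B}_{r_0}^+)$ for a small $\alpha\in(0,\tfrac12-\tfrac{n+1}{p}]$ and satisfies the fully nonlinear equation $\tilde F(v,y)=g(y)$, with $g$ now obeying the weaker decay bound $|g(y)|\lesssim (y_n^2+y_{n+1}^2)^{1-\frac{n+1}{p}}$ from \eqref{eq:w1p_g}. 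The weaker decay originates from the inhomogeneity $f$ entering $\tilde u$ through \eqref{eq:split1} and is the key quantitative difference compared to the $f=0$ analysis (c.f. Remark \ref{rmk:decay}).

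For part~(i), I would rerun the bootstrap of Propositions \ref{prop:error_gain} and \ref{prop:error_gain2} with $\tilde F(v,y)=g(y)$ in place of $F(v,y)=0$. The same point-by-point expansion at $y_0\in P$ produces
\begin{align*}
L_{y_0}v=L_{y_0}v_{y_0}+\tilde f, \qquad
|\tilde f(y)|\lesssim d_G(y,y_0)^{\eta_0},\ \eta_0=\min\!\Bigl\{1+4\alpha,\,2-\tfrac{2(n+1)}{p}\Bigr\},
\end{align*}
where the metric and nonlinear error terms are controlled exactly as before, but now the $g$-contribution is the dominant one. The compactness/approximation scheme of Section \ref{sec:quarter_Hoelder} applied to the hypoelliptic constant-coefficient operator $L_{y_0}$ (using the absence of obstructing homogeneous solutions of the relevant orders, c.f. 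Remark \ref{rmk:error_gain3}) then upgrades $\alpha$ at each step, until the saturation $\alpha=\tfrac12-\tfrac{n+1}{p}$ is reached. The parametrization $x_n=-\partial_{n}v(y'',0,0)$ and the decomposition from Proposition \ref{prop:decompI}(ii) convert this into $\Gamma_w\in C^{1,\frac12-\frac{n+1}{p}}$.

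For part~(ii), once we know $v\in X_{\delta,\epsilon}(\mathcal{B}_{1/2}^+)$ with $\delta$ close to $\min\{\tfrac12+\gamma,1\}$ via the step above (adapted to the improved decay $|g(y)|\lesssim d_G(y,P)^{3+2\gamma-\epsilon}$ coming from $f\in C^{k-1,\gamma}$, $a^{ij}\in C^{k,\gamma}$, and the higher-order asymptotics of $\tilde u$), I would proceed exactly as in Section~\ref{sec:IFTAppl}. The one-parameter family of tangential diffeomorphisms $\Phi_a$ from Section \ref{subsec:IFT0} acts on $v_a:=v\circ\Phi_a$, producing an equation $G_a(w_a,y)=\eta\tilde F_a(w_a+v,y)-\eta g\circ\Phi_a+(1-\eta)\Delta_G w_a=0$. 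The mapping properties of Propositions \ref{prop:nonlin_map}, \ref{prop:linear}, \ref{prop:reg_a}, \ref{prop:invertible} are preserved: the only new term is $\eta g\circ\Phi_a$, which, thanks to $f\in C^{k-1,\gamma}$, defines a $C^{k-1,\gamma-\epsilon}$ (respectively real analytic) map $a\mapsto \eta g\circ\Phi_a\in Y_{\delta,\epsilon}$ by the decomposition of $g$ into a tangentially $C^{k-1,\gamma}$ factor times a $(y_n^2+y_{n+1}^2)$-weight of the correct order. The analytic/Banach implicit function theorem applied exactly as in Theorems \ref{prop:hoelder_reg_a}, \ref{prop:analytic} and the final scaling argument of Step~2 in the proof of Theorem \ref{prop:hoelder_reg_a} (which must now absorb the extra half-derivative coming from the $g$ contribution) yield the tangential $C^{k-1+[\frac12+\gamma]+2,\gamma+\frac12-[\frac12+\gamma]}$-regularity of $\partial_n v(y'',0,0)$, and hence the claim for $\Gamma_w$.

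The main obstacle is keeping track of the interplay between the weaker decay of $g$ and the homogeneity scheme underlying the spaces $X_{\alpha,\epsilon}$, $Y_{\alpha,\epsilon}$: one must verify that $\eta g\circ\Phi_a$ indeed lies in $Y_{\delta,\epsilon}$ with the correct quantitative bound (so that $G_a$ still maps into $Y_{\delta,\epsilon}$) and that the $a$-regularity of this contribution matches the $C^{k-1,\gamma}$ regularity of $f$ after one transfers, via the decomposition of Proposition \ref{prop:decompI}, the tangential derivatives $\partial_{y''}$ through $T^{-1}$ and the splitting $w=u+\tilde u$. The precise bookkeeping of the gain of $3/2$ derivatives (versus the $1$-derivative gain in the classical obstacle problem) is encoded in the decay exponent $2-\frac{2(n+1)}{p}$ of $g$ and in the factor $y_n$ that the polynomial part $P_{\bar y}$ in the definition of $Y_{\alpha,\epsilon}$ can absorb.
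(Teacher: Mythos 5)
Your treatment of part~(i) follows the paper's route faithfully: split $w=u+\tilde u$, take the Legendre transform associated with $u$, and run the bootstrap of Propositions~\ref{prop:error_gain}, \ref{prop:error_gain2} with the inhomogeneity decay $|g(y)|\lesssim d_G(y,P)^{2-\frac{2(n+1)}{p}}$ until the exponent saturates at $\delta=\tfrac12-\tfrac{n+1}{p}$; this is exactly the paper's argument for~(i).

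For part~(ii), however, there are two substantive gaps. First, the paper does \emph{not} use the splitting for $C^{k,\gamma}$ metrics with $C^{k-1,\gamma}$ inhomogeneities: since $k\geq 1$ already guarantees $w\in C^{2,\gamma}_{loc}(B_1^+\setminus\Gamma_w)$, the Legendre transform is applied directly to $w$, yielding $F(v,y)=g(y)$ with $g(y)=-J(v)\,f(T^{-1}(y))$. Second — and this is the crucial error — the claimed decay $|g(y)|\lesssim d_G(y,P)^{3+2\gamma-\epsilon}$ is wrong. The inhomogeneity $f\in C^{k-1,\gamma}$ is generically nonvanishing at $\Gamma_w$; under the square-root Hodograph map the factor $f\circ T^{-1}$ stays bounded away from zero, while $J(v)\sim r^2$, so the true bound is $|g(y)|\lesssim d_G(y,P)^2$, exactly what the paper uses. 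This caps the bootstrap at $\delta=\tfrac12$ (not $\min\{\tfrac12+\gamma,1\}$). Even if you insist on the splitting, $\tilde u$ decays at most like $\dist(\cdot,\Gamma_w)^2$ because the right-hand side of \eqref{eq:split1} does not vanish at $\Gamma_w$, and one again lands at $|g(y)|\lesssim r^2$. The additional $\gamma$ in the conclusion does not come from improved $X_{\delta,\epsilon}$ membership of $v$ but from the implicit function theorem in the tangential parameter $a$: running the IFT in $X_{1/2,\epsilon}$ gives $C^{k-1,\gamma-\epsilon}$ dependence in $a$, which combined with the $C^{0,1/2}$ control from $X_{1/2,\epsilon}$ and the scaling step of Theorem~\ref{prop:hoelder_reg_a} produces $\p_n v\in C^{k+[\frac12+\gamma],\,\frac12+\gamma-[\frac12+\gamma]}(P)$. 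Finally, note that your closing tally $C^{k-1+[\frac12+\gamma]+2,\cdot}=C^{k+1+[\frac12+\gamma],\cdot}$ for $\p_nv$ overshoots the assertion of the proposition by one order of differentiability.
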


We point out that compared with the result without inhomogeneities we lose half a derivative. This is due to the worse decay of the inhomogeneity in (\ref{eq:w1p_g}).\\

Similarly as for the zero obstacle case the proofs for Proposition \ref{prop:inhomo_2} rely on the Hodograph-Legendre transformation. In case (i) of Proposition~\ref{prop:inhomo_2}, we consider the Legendre transformation with respect to the modified solution $u$ after applying the splitting method. This is similar as in Section~\ref{subsec:w1p_zero}, where we dealt with $W^{1,p}$ metrics with zero right hand side. In case (ii) of Proposition~\ref{prop:inhomo_2}, we consider the Legendre transformation with respect to the original solution $w$. We remark that the presence of the inhomogeneity changes neither the leading order asymptotic expansion of $\nabla w$ around the free boundary, nor of the second derivatives $D^2w$ in the corresponding non-tangential cones (assuming $\|f\|_{L^\infty}$ is sufficiently small, which can always be achieved by scaling). In particular, in this case the Hodograph-Legendre transformation is well defined, and the asymptotic expansion of the Legendre function (c.f. Section~\ref{sec:Leg}) remains true.

\begin{proof}
We prove the result of Proposition \ref{prop:inhomo_2} in three steps. First we consider the set-up of (i). Then we divide the setting of (ii) into the cases $k=1$ and $k\geq 2$.
\begin{itemize}
\item In the case of $W^{1,p}$ metrics and $W^{2,p}$ obstacles, we proceed similarly as in Section~\ref{subsec:w1p_zero} by using the splitting method from above. The only changes occur when we estimate the inhomogeneity $g(y)$, where $g(y)$ is as in (\ref{eq:w1p}). Indeed, in the case of $f\neq 0$ we can in general only use the decay estimate \eqref{eq:w1p_g} for $g(y)$. This yields 
\begin{align*}
|g(y)|\leq Cd_G(y,y_0)^{\eta_0},\quad \eta_0=\min\left\{1+4\alpha, 2-\frac{2(n+1)}{p}\right\}.
\end{align*}
Thus, we obtain that $v\in X_{\delta,\epsilon}(\mathcal{B}_{1/2}^+)$ for all $\delta \in (0,\frac{1}{2}-\frac{n+1}{p}]$. In particular, this entails that  $\p_{in}v\in C^{0,\frac{1}{2}-\frac{n+1}{p}}(P\cap \mathcal{B}_{\frac{1}{2}})$. Hence, the regular free boundary $\Gamma_{\frac{3}{2}}(w)$ is locally a $C^{1,\frac{1}{2}-\frac{n+1}{p}}$ submanifold. 
\item In the case of a $C^{1,\gamma}$ metric $a^{ij}$ and a $C^{0,\gamma}$ inhomogeneity $f$, we carry out an analogous expansion as in Proposition \ref{prop:error_gain} and estimate the right hand side of the equation by $d_G(y,y_0)^{2}$. Hence, an application of the bootstrap argument from Proposition \ref{prop:error_gain2} implies that $v\in X_{\delta,\epsilon}(\mathcal{B}_{1/2}^+)$ for all $\delta \in (0,\frac{1}{2}]$. Combining this with the application of the implicit function theorem as in Section \ref{sec:IFT1} hence yields that $\p_{in}v \in C^{[1/2+\gamma], (1/2+\gamma - [1/2+\gamma])}$. This implies the desired regularity.
\item
In the case of $C^{k,\gamma}$, $k\geq 2$ metrics we first apply the implicit function theorem (note that in our set-up the functional $\tilde{F}_a(w_a,y)=F_a(w_a+v,y)-g_a(y)$ is still $C^{k-1,\gamma-\epsilon}$ regular in the parameter $a$). In contrast to the argument in Section \ref{sec:IFT1} we can however now only apply the implicit function theorem in the spaces $X_{\delta,\epsilon}$ with $\delta \in (0,1/2]$. Thus, by the implicit function theorem argument (Step 1 in Theorem~\ref{prop:hoelder_reg_a} in Section \ref{sec:IFT1}) we infer that $\p_{in}v\in C^{k+[1/2+\gamma], (1/2+\gamma - [1/2+\gamma])}$.
\end{itemize}
This concludes the proof of Proposition \ref{prop:inhomo_2}.
\end{proof}

Finally, we comment on the relation of our regularity results with inhomogeneities and the presence of non-zero obstacles.

\begin{rmk}
\label{rmk:obstacles_1}
We note that the set-up of the present Section \ref{sec:W1p} (c.f. \eqref{eq:inhom}) in particular includes the set-up on non-zero obstacles: Indeed, let $a^{ij}:B_1^+ \rightarrow \R^{(n+1)\times (n+1)}_{sym}$ be a uniformly elliptic $W^{1,p}$, $p\in (2(n+1),\infty]$, metric satisfying (A1)-(A3) and let $\phi:B'_1 \rightarrow \R$ be a $W^{2,p}$ function. Suppose that $\tilde{w}$ is a solution to the thin obstacle problem with metric $a^{ij}$ and obstacle $\phi$. Then $w:=\tilde{w}-\phi$ is a solution of the thin obstacle problem 
\begin{align*}
\p_{i} a^{ij} \p_j w & = f \mbox{ in } B_1^+,\\
\p_{n+1}w \leq 0, \ w \geq 0, \ w\p_{n+1}w&=0 \mbox{ in } B_1'.
\end{align*}
Hence, the inhomogeneity now reads $f=-\p_ia^{ij}\p_j\phi$ and is in $L^p$. In particular, Proposition \ref{prop:inhomo_2} is applicable and yields the $C^{1,\frac{1}{2}-\frac{n+1}{p}}$ regularity of the free boundary. Analogous reductions hold for more regular metrics and non-vanishing obstacles.
\end{rmk}

\section{Appendix}
\label{sec:append}

Last but not least, we provide proofs of the estimates which we used in the application of the implicit function theorem. This in particular concerns the spaces $X_{\delta,\epsilon}, Y_{\delta,\epsilon}$ and the mapping properties of $\D_G$ in these: After giving the proof of the characterization of the spaces $X_{\delta,\epsilon}$, $Y_{\delta,\epsilon}$ in terms of decompositions into Hölder functions (c.f. Proposition \ref{prop:decompI}) in Section \ref{sec:decomp}, we present the proof of the (local) $X_{\delta,\epsilon}$ estimates for solutions of the Baouendi-Grushin operator with mixed homogeneous Dirichlet-Neumann data (c.f. Proposition \ref{prop:invert}) in Section \ref{sec:quarter_Hoelder}. Here we argue by an iterative approximation argument, which exploits the scaling properties of the Baouendi-Grushin operator similarly as in \cite{Wa03}. Finally in Sections \ref{sec:XY} and \ref{sec:kernel}, we use this to show the necessary mapping properties of $\D_G$ in the spaces $X_{\delta,\epsilon},Y_{\delta,\epsilon}$.

\subsection{Proof of Proposition \ref{prop:decompI}}
\label{sec:decomp}

In this section we present the proof of the characterization of the spaces $X_{\alpha,\epsilon}$, $Y_{\alpha,\epsilon}$ in terms of decompositions into Hölder functions (c.f. Proposition \ref{prop:decompI} in Section \ref{sec:functions}). 

\begin{proof}
We argue in two steps and first discuss the decomposition of functions in $Y_{\alpha,\epsilon}$ and then the corresponding property of functions in $X_{\alpha,\epsilon}$:\\
(i) Given $f\in Y_{\alpha,\epsilon}$, we denote 
$$f_0(y''):=\p_nf(y''), \quad f_1(y):=r(y)^{-(1+2\alpha-\epsilon)}(f(y)-f_0(y'')y_n).$$ 
In particular, this yields
$f(y)=f_0(y'')y_n+r^{1+2\alpha-\epsilon}f_1(y)$. Moreover, we note that $f_1$ is well-defined on $P$, where it vanishes as a consequence of the boundedness of the $Y_{\alpha,\epsilon}$ norm and of Remark \ref{rmk:homo}. Hence, it suffices to prove the Hölder regularity of $f_0$ and $f_1$. \\
To show that $f_0 \in C^{0,\alpha}(P)$ (in the classical sense), we consider points $y_0,y_1\in P$, $y_0\neq y_1$ and a point $y=(y'',y_n,y_{n+1})\notin P$ with the property that $r(y)=|y_0-y_1|^{1/2}$ and $y_0,y_1\in \mathcal{B}_{2r(y)}(y)$. Then, by the boundedness of the norm and by recalling the estimates in Remark \ref{rmk:homo} (i), we have
\begin{align*}
|f(y)-f_0(y_0)y_n|&\leq C r^{1+2\alpha},\quad |f(y)-f_0(y_1)y_n|\leq C r^{1+2\alpha}.
\end{align*}
Thus, by the triangle inequality,
$$|f_0(y_0)y_n-f_0(y_1)y_n|\leq Cr^{1+2\alpha}.$$
Choosing $y$ with $y_{n+1}=0$, $|y_n|=r(y)>0$ and dividing by $|y_{n}|$ yields
\begin{align*}
|f_0(y_0)-f_0(y_1)|\leq Cr^{2\alpha}=|y_0-y_1|^\alpha.
\end{align*}
This shows the $C^{0,\alpha}$ regularity of $f_0$ (if $\alpha \in (0,1]$).\\
We proceed with the $C^{0,\epsilon}_\ast(Q_+)$ regularity of $f_1$. First we observe that since $|f(y)-f_0(y'')y_n|\leq C r(y)^{1+2\alpha}$ (which follows from Remark \ref{rmk:homo} (i)), we immediately infer that $|f_1(y)|\leq Cr(y)^{\epsilon}$. Thus, if $y_1, y_2 \in Q_+$ are such that 
$d_G(y_1,y_2)\geq \frac{1}{10}\max\{r(y_1), r(y_2) \}$, we have
\begin{align*}
|f_1(y_1)-f_1(y_2)| \leq C r(y_1)^{\epsilon}+Cr(y_2)^\epsilon\leq C d_G(y_1,y_2)^{\epsilon}.
\end{align*}
If $y_1,y_2\in Q_+$ are such that $d_G(y_1,y_2)<\frac{1}{10}\max\{r(y_1),r(y_2)\}$, then there is a point $\bar y\in P$ such that $y_1,y_2\in \mathcal{B}_1^+(\bar y)$ (for example assuming $r(y_1)\geq r(y_2)$ we can let $\bar y=(y''_1,0,0)$). Then  the H\"older regularity follows from the $C^{0,\epsilon}_\ast(\mathcal{B}_1^+(\bar y))$ regularity of $d(\cdot, \bar y)^{-(1+2\alpha-\epsilon)}(f-P_{\bar y})$ and the $C^{0,\alpha}(P)$ regularity of $f_0$. 
More precisely,
\begin{equation}
\label{eq:f1}
\begin{split}
&\quad \left|f_1(y_1)-f_1(y_2)\right|\\
&=\left|r(y_1)^{-1-2\alpha+\epsilon}\left(f(y_1)-f_0(y''_1)(y_1)_n\right)-r(y_2)^{-1-2\alpha+\epsilon}\left(f(y_2)-f_0(y_2'')(y_2)_n\right)\right|\\
&\leq r(y_1)^{-1-2\alpha+\epsilon}\left|\left(f(y_1)-f_0(y''_1)(y_1)_n\right)-\left(f(y_2)-f_0(y''_1)(y_2)_n\right)\right|\\
& \quad + r(y_1)^{-1-2\alpha+\epsilon}\left|f_0(y''_1)(y_2)_n-f_0(y_2'')(y_2)_n\right|\\
&\quad + |r(y_1)^{-1-2\alpha+\epsilon}-r(y_2)^{-1-2\alpha+\epsilon}||f(y_2)-f_0(y_2'')(y_2)_n|.
\end{split}
\end{equation}
By the definition of the norm of $Y_{\alpha,\epsilon}$, we have
\begin{align*}
&\left|\left(f(y_1)-f_0(y''_1)(y_1)_n\right)-\left(f(y_2)-f_0(y''_1)(y_2)_n\right)\right|\\
&=\left|(f(y_1)-P_{y''_1}(y_1))-(f(y_2)-P_{y''_1}(y_2))\right|\lesssim r(y_1)^{1+2\alpha-\epsilon}d_G(y_1,y_2)^{\epsilon}.
\end{align*}
Moreover, the $C^{0,\alpha}$ regularity of $f_0$ as well as $|(y_2)_n|\sim r$ yields
\begin{align*}
r(y_1)^{-1-2\alpha+\epsilon}\left|f_0(y''_1)(y_2)_n-f_0(y_2'')(y_2)_n\right|&\lesssim r(y_1)^{-1-2\alpha+\epsilon}r(y_1)d_G(y_1,y_2)^{2\alpha}\\
&  \lesssim d_G(y_1,y_2)^{\epsilon}.
\end{align*}
Here we have used that $2\alpha \geq \epsilon$ and that w.l.o.g. $0\leq r(y_2)\leq r(y_1)$.
Finally, the last term in (\ref{eq:f1}) is estimated by the $C^{0,\epsilon}_\ast$ regularity of $r(y_1)^{\epsilon}$ and by recalling the definition of the norm on $Y_{\alpha,\epsilon}$ in combination with Remark \ref{rmk:homo} once more.  Combining all the previous observations, we have
\begin{align*}
|f_1(y_1)-f_1(y_2)|\lesssim  d_G(y_1,y_2)^{\epsilon}.
\end{align*}

This completes the proof of (i).\\

(ii) The proof for the decomposition of $v$ is similar. Given any $y\in Q_+\setminus P$, we denote by $y_0:=(y'',0,0)\in P$ the projection of $y$ onto $P$. Since $v$ is $C^{3,\alpha}_\ast$ at $y_0$, there exists a Taylor polynomial 
\begin{align*}
P_{y_0}(z)=\p_n v(y_0)z_n+\p_{in}v(y_0)(z_i-y_i)z_n + \frac{1}{6}\p_{nnn}v(y_0)z_n^3+\frac{1}{2}\p_{n,n+1,n+1}(y_0)z_nz_{n+1}^2,
\end{align*}
such that $|v(z)-P_{y_0}(z)|\leq Cd_G(z,y_0)^{3+2\alpha}$ for each $z\in \mathcal{B}_{1}^+(y_0)$. Due to the regularity of $v$ at $P$, the coefficients have the desired regularity properties: $\p_{n}v(y'')\in C^{1,\alpha}(P\cap \mathcal{B}_{1/2})$, $\p_{in}v(y''), \p_{nnn}v(y''), \p_{n,n+1,n+1}v(y'')\in C^{0,\alpha}(P\cap \mathcal{B}_{1/2})$. Moreover, their Hölder semi-norms are bounded from above by $C\|v\|_{X_{\alpha,\epsilon}}$. \\

In order to show the $C^{0,\epsilon}_\ast$ estimates of $C_1$, $V_i$ and $C_{ij}$, we argue similarly as in (i) for $f\in Y_{\alpha,\epsilon}$. For simplicity we only present the argument for
$$V_n(y):=r^{-(2+2\alpha-\epsilon)}\p_n(v-P_{y''})(y), \quad y\in \mathcal{B}_1^+\setminus P.$$ 
The others are analogous.
First, the boundedness of the first two terms in the norm $\|v\|_{X_{\alpha,\epsilon}}$ (c.f. Definition~\ref{defi:spaces}) and an interpolation estimate imply that for each $y\in \mathcal{B}_1^+\setminus P$ fixed, $d_G(z,y'')^{-(2+2\alpha-\epsilon)}\p_n(v-P_{y''})(z)$ as a function of $z$ is in $C^{0,\epsilon}_\ast(\mathcal{B}_{r(y)/2}(y))$, with norm bounded by $C\|v\|_{X_{\alpha,\epsilon}}$.  
Next, for any points $z_1$ and $z_2$ in the non-tangential ball $\mathcal{B}_{r(y)/2}(y)$ we have
\begin{align*}
&\quad |V_n (z_1)-V_n(z_2)|\\
&=\left|d(z_1,z''_1)^{-(2+2\alpha-\epsilon)}\p_{n}(v-P_{z''_1})(z_1)-d(z_2,z''_2)^{-(2+2\alpha-\epsilon)}\p_{n}(v-P_{z''_2})(z_2)\right|\\
&\leq \left|d(z_1,z''_1)^{-(2+2\alpha-\epsilon)}\p_{n}(v-P_{z''_1})(z_1)-d(z_2,z''_1)^{-(2+2\alpha-\epsilon)}\p_{n}(v-P_{z''_1})(z_2)\right|\\
& \quad + \left|\left(d(z_2,z''_1)^{-(2+2\alpha-\epsilon)}-d(z_2,z''_2)^{-(2+2\alpha-\epsilon)}\right)\p_{n}(v-P_{z''_1})(z_2)\right|\\
& \quad +\left|d(z_2,z''_2)^{-(2+2\alpha-\epsilon)}\p_{n}(P_{z''_2}-P_{z''_1})(z_2)\right|:=I+II+III.
\end{align*}
By the definition of the $X_{\alpha,\epsilon}$-norm and by interpolation, $I\leq C\|v\|_{X_{\alpha,\epsilon}}d_G(z_1, z_2)^{\epsilon}$. Using the fact that $|\p_{n}(v-P_{z''_1})(z_2)|\leq C\|v\|_{X_{\alpha,\epsilon}} d_G(z_2,z''_1)^{2+2\alpha}$ and that $d_G(z''_2,z''_1)\leq Cd_G(z_2,z_1)\leq C\min \{d_G(z_2,z''_2),d_G(z_1,z''_1)\}$ for $z_1,z_2$ in the non-tangential ball $\mathcal{B}_{r(y)/2}(y)$, we also have that $II\leq C\|v\|_{X_{\alpha,\epsilon}}d_G(z_2,z_1)^{\epsilon}$. To estimate $III$ we notice that
\begin{align*}
\p_n P_{z''_1}(z)&=\p_nv(z''_1)+\sum_{i=1}^{n-1}\p_{in}v(z''_1)(z_i-(z_1)_i)\\ 
& \quad +\frac{1}{2}\p_{nnn}v(z''_1)z_n^2+\frac{1}{2}\p_{n,n+1,n+1}v(z''_1)z_{n+1}^2.
\end{align*}
Recalling that $\p_{n}v(y'')\in C^{1,\alpha}(P)$ and using a Taylor expansion of $\p_nv(y'')$ at $z''_1$, we infer that
\begin{align*}
\left|\p_nv(z''_2)-\left(\p_nv(z''_1)+\sum_{i=1}^{n-1}\p_{in}v(z''_1)((z_2)_i-(z_1)_i)\right)\right|\leq C\|v\|_{X_{\alpha,\epsilon}}|z''_2-z''_1|^{1+\alpha}.
\end{align*}
Thus, recalling the $C^{0,\alpha}$ regularity of $\p_{nnn}v(z'')$ and $\p_{n,n+1,n+1}v(z'')$, we obtain
\begin{align*}
\p_n( P_{z''_1}-P_{z''_2}) (z_2)&= \left(\p_n v(z''_1)+\sum_{i=1}^{n-1}\p_{in}v(z''_1)( (z_2)_i-(z_1)_i) - \p_n v(z''_2)\right)\\
&+\frac{1}{2}\left(\p_{nnn}v(z''_1)-\p_{nnn}v(z''_2)\right)(z_2)_n^2 \\
&+\frac{1}{2}\left(\p_{n,n+1,n+1}v(z''_1)-\p_{n,n+1,n+1}v(z''_2)\right)(z_2)_{n+1}^2\\
&\leq C\|v\|_{X_{\alpha,\epsilon}}d_G(z''_2,z''_1)^{2(1+\alpha)}+ C\|v\|_{X_{\alpha,\epsilon}}d_G(z''_1,z''_2)^{2\alpha}d_G(z_2,z''_2)^2.
\end{align*}
Due to the same reason as for $f$, this implies the estimate for $III$.
\end{proof}

\subsection{Proof of Proposition~\ref{prop:invert}}
\label{sec:quarter_Hoelder}

In this section, we present the proof of the (local) $X_{\delta,\epsilon}$ estimates for the Baouendi-Grushin operator (c.f. Proposition \ref{prop:invert} in Section \ref{sec:functions}).
We begin by recalling the natural energy spaces associated with the Baouendi-Grushin operator:

\begin{defi}
\label{defi:GrushinLp}
Let $\Omega\subset \R^{n+1}$ be an open subset.
The Baouendi-Grushin operator is naturally associated with the following Sobolev spaces (recall Definition~\ref{defi:Hoelder1} for the vector fields $\tilde{Y}_j$): 
\begin{align*}
M^{1}(\Omega)&:= \{u\in L^2(\Omega)| \tilde{Y}_ju \in L^2(\Omega) \mbox{ for } j\in \{1,\dots,2n\}\},\\
M^{2}(\Omega)&:= \{u\in L^2(\Omega)|\tilde{Y}_ju, \tilde{Y}_{k}\tilde{Y}_\ell u\in L^2(\Omega)\text{ for } j,k,\ell\in \{1,\ldots, 2n\}\}.
\end{align*}
\end{defi}

We prove Proposition~\ref{prop:invert} in two steps. Firstly, we obtain a polynomial approximation (in the spirit of Campanato spaces) near the points at which the ellipticity of the operator degenerates, $P:=\{y_n=y_{n+1}=0\}$ (c.f. Proposition \ref{prop:Hoelder0}). Then we interpolate these estimates with the uniformly elliptic estimates which hold away from the degenerate points. Here we follow a compactness argument which was first outlined in this form by Wang, \cite{Wa03}. It proceeds via approximation and iteration steps.\\

In the sequel, we deduce a first regularity estimate in the energy space. This serves as a compactness result for the following approximation lemmata:

\begin{prop}
\label{prop:Sobolevreg}
Let $0<r\leq R<\infty$. Let $f:\mathcal{B}_{R}^+(0) \rightarrow \R$ be an $L^{2}$ function and let $u:\mathcal{B}_{R}^+\rightarrow \R$ be a solution of 
\begin{equation}
\label{eq:Grushin}
\begin{split}
\Delta_G u& =f \text{ in } \mathcal{B}_R^+,\\
u&=0 \text{ on } \mathcal{B}_R^+\cap \{y_n=0\},\\
\p_{n+1}u&=0 \text{ on } \mathcal{B}_R^+\cap\{y_{n+1}=0\}.
\end{split}
\end{equation}
Then 
\begin{align}
\label{eq:Lpapriori}
\left\| u \right\|_{M^{2}(\mathcal{B}_r^+)} \leq C(n,r,R)\left( \| f\|_{L^{2}(\mathcal{B}_{R}^+)} + \| u\|_{L^{\infty}(\mathcal{B}_{R}^+)} \right).
\end{align}
\end{prop}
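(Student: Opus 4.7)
The plan is to reduce the mixed boundary value problem on the quarter-ball to an interior problem on the full Grushin ball via reflection, and then to invoke standard subelliptic $L^2$ estimates for the Baouendi-Grushin operator (which is a sum of squares of smooth vector fields satisfying H\"ormander's bracket condition). First I would extend $u$ to all of $\mathcal{B}_R$ by performing an odd reflection across $\{y_n=0\}$ (compatible with the Dirichlet condition $u=0$) and an even reflection across $\{y_{n+1}=0\}$ (compatible with the Neumann condition $\partial_{n+1}u=0$). Concretely, set
\[ \tilde u(y) := \operatorname{sgn}(y_n)\, u(y'',|y_n|,|y_{n+1}|), \qquad \tilde f(y) := \operatorname{sgn}(y_n)\, f(y'',|y_n|,|y_{n+1}|). \]
Because $\Delta_G = (y_n^2+y_{n+1}^2)\Delta'' + \partial_n^2 + \partial_{n+1}^2$ is invariant under $y_n\mapsto -y_n$ and $y_{n+1}\mapsto -y_{n+1}$, these reflections yield a function $\tilde u\in M^1(\mathcal{B}_R)\cap L^\infty(\mathcal{B}_R)$ that weakly satisfies $\Delta_G\tilde u=\tilde f$ on the full Grushin ball $\mathcal{B}_R$. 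Here the Dirichlet condition matches the jump of normal derivatives across $\{y_n=0\}$, while the Neumann condition makes the even extension across $\{y_{n+1}=0\}$ lie in $M^1$.

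Next I would fix a radius $r<\rho<R$ and choose a smooth cutoff $\eta$ adapted to the Grushin geometry with $\eta\equiv 1$ on $\mathcal{B}_r$ and $\supp\eta\Subset\mathcal{B}_\rho$, with $|\tilde Y_j\eta|+|\tilde Y_j\tilde Y_k\eta|\leq C(r,\rho)$. The function $\eta\tilde u$ is compactly supported in $\mathcal{B}_R$ and solves
\[ \Delta_G(\eta\tilde u) \;=\; \eta\tilde f + 2\sum_{j=1}^{2n}(\tilde Y_j\eta)(\tilde Y_j\tilde u) + \tilde u\,\Delta_G\eta. \]
The key observation is now that $\Delta_G = \sum_{i=1}^{n-1}\bigl((y_n\partial_i)^2+(y_{n+1}\partial_i)^2\bigr)+\partial_n^2+\partial_{n+1}^2$ is a sum of squares of the smooth vector fields $\{y_n\partial_i,y_{n+1}\partial_i,\partial_n,\partial_{n+1}\}$, and the commutators $[\partial_n,y_n\partial_i]=\partial_i$ span all tangential directions. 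Thus H\"ormander's bracket condition is satisfied at step two everywhere in $\mathcal{B}_R$, and the classical $L^2$ subelliptic estimates (see, e.g., Rothschild-Stein) yield
\[ \sum_{j,k=1}^{2n}\|\tilde Y_j\tilde Y_k w\|_{L^2(\R^{n+1})} + \sum_{j=1}^{2n}\|\tilde Y_j w\|_{L^2(\R^{n+1})} \;\leq\; C\bigl(\|\Delta_G w\|_{L^2(\R^{n+1})}+\|w\|_{L^2(\R^{n+1})}\bigr) \]
for any $w$ compactly supported in a bounded set. Applying this to $w=\eta\tilde u$, one obtains
\[ \|\tilde u\|_{M^2(\mathcal{B}_r)} \;\leq\; C(n,r,\rho)\bigl(\|\tilde f\|_{L^2(\mathcal{B}_\rho)} + \|\tilde Y_j\tilde u\|_{L^2(\mathcal{B}_\rho)} + \|\tilde u\|_{L^2(\mathcal{B}_\rho)}\bigr). \]

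To close the estimate, I would absorb the first-order term on the right-hand side by a Caccioppoli-type energy estimate obtained from testing $\Delta_G\tilde u=\tilde f$ with $\eta^2\tilde u$, which gives
\[ \sum_{j=1}^{2n}\|\tilde Y_j\tilde u\|_{L^2(\mathcal{B}_\rho)}^2 \;\leq\; C(r,\rho,R)\bigl(\|\tilde f\|_{L^2(\mathcal{B}_R)}^2 + \|\tilde u\|_{L^2(\mathcal{B}_R)}^2\bigr), \]
and finally bounding $\|\tilde u\|_{L^2(\mathcal{B}_R)}\leq |\mathcal{B}_R|^{1/2}\|\tilde u\|_{L^\infty(\mathcal{B}_R)} = C_n R^{(n+3)/2}\|u\|_{L^\infty(\mathcal{B}_R^+)}$. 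Restricting back to $\mathcal{B}_r^+=\mathcal{B}_r\cap Q_+$ yields the desired estimate \eqref{eq:Lpapriori}.

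The main technical point I expect to require care is the justification of the reflection step at the level of weak solutions: one needs to verify that the odd/even extensions do not introduce distributional contributions supported on $\{y_n=0\}\cup\{y_{n+1}=0\}$. The Dirichlet condition $u=0$ on $\{y_n=0\}$ and the Neumann condition $\partial_{n+1}u=0$ on $\{y_{n+1}=0\}$ are precisely what is needed to ensure that the reflected function lies in $M^1_{\mathrm{loc}}(\mathcal{B}_R)$ and that integration by parts against any test function compactly supported in $\mathcal{B}_R$ produces no boundary contribution from $\{y_n=0\}\cup\{y_{n+1}=0\}$. Everything else is then standard once H\"ormander's theorem (or the explicit $L^2$ estimates for sum-of-squares operators) is invoked.
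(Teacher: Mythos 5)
Your proposal is correct and matches the paper's one-line proof (even/odd reflection to the full Grushin ball, then invoke the whole-space $L^2$ subelliptic theory for $\Delta_G$); the paper cites its own kernel estimates in Lemma~\ref{lem:ker} for that last step, whereas you appeal to Rothschild--Stein/H\"ormander, but these are equivalent routes to the same whole-space estimate. The only minor slip is a sign convention in the reflection: since $Q_+=\{y_n\geq 0,\ y_{n+1}\leq 0\}$, the extension should read $\tilde u(y)=\operatorname{sgn}(y_n)\,u(y'',|y_n|,-|y_{n+1}|)$, but this does not affect the argument.
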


\begin{proof}
The result is obtained by an even and odd reflection from the whole space result (in particular by the kernel estimate, see e.g. Lemma \ref{lem:ker} in Section \ref{sec:kernel}).
\end{proof}

With Proposition \ref{prop:Sobolevreg} at hand, we prove our first approximation result: We approximate solutions of the \emph{inhomogeneous} Baouendi-Grushin equation by solutions of the \emph{homogeneous} equation, provided the inhomogeneity is sufficiently small.

\begin{lem}
\label{lem:compactness}
Assume that $u:\mathcal{B}_{1}^+ \rightarrow \R$ is a solution of (\ref{eq:Grushin}) which satisfies
\begin{align*}
\frac{1}{|\mathcal{B}_{1}^+(0)|}\int\limits_{\mathcal{B}_1^+(0)} u^2 dx \leq 1.
\end{align*}
For any $\epsilon>0$ there exists a constant $\delta=\delta(\epsilon)>0$ such that if 
\begin{align*}
\frac{1}{|\mathcal{B}_{1}^+(0)|} \int\limits_{\mathcal{B}_1^{+}(0)}f^2 dx \leq \delta^2,
\end{align*} 
then there is a solution $h$ of the homogeneous Baouendi-Grushin equation with mixed Dirichlet-Neumann data, i.e. 
\begin{equation}
\label{eq:hGrushin}
\begin{split}
\D_G h & = 0 \mbox{ on } \mathcal{B}_R^+(0),\\
h&=0 \mbox{ on } \{y_{n}=0\}\cap \mathcal{B}_R^+(0),\\
\p_{n+1} h&=0 \mbox{ on } \{y_{n+1}=0\}\cap \mathcal{B}_R^+(0),
\end{split}
\end{equation}
such that
\begin{align*}
\frac{1}{|\mathcal{B}_{1/2}^+(0)|} \int\limits_{\mathcal{B}_{1/2}^+(0)}|u-h|^2 dx \leq \epsilon^2.
\end{align*}
\end{lem}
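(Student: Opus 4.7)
The plan is to argue by contradiction, using the a priori estimate of Proposition \ref{prop:Sobolevreg} together with a Rellich-type compactness argument adapted to the Baouendi-Grushin Sobolev spaces $M^{1}$, $M^{2}$ from Definition \ref{defi:GrushinLp}. Assume for contradiction that the statement fails. Then for some $\epsilon_0 > 0$ there exist sequences $(u_k)_{k\in\N}$, $(f_k)_{k\in\N}$ such that $\D_G u_k = f_k$ in $\mathcal{B}_1^+$ with $u_k = 0$ on $\{y_n = 0\}\cap \mathcal{B}_1^+$ and $\p_{n+1}u_k = 0$ on $\{y_{n+1}=0\}\cap \mathcal{B}_1^+$, with $\frac{1}{|\mathcal{B}_1^+|}\int_{\mathcal{B}_1^+} u_k^2\, dx \leq 1$ and $\frac{1}{|\mathcal{B}_1^+|}\int_{\mathcal{B}_1^+} f_k^2\, dx \leq \frac{1}{k^2}$, yet such that for every solution $h$ of \eqref{eq:hGrushin} one has $\frac{1}{|\mathcal{B}_{1/2}^+|}\int_{\mathcal{B}_{1/2}^+} |u_k - h|^2\, dx > \epsilon_0^2$.

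The first step is to upgrade the $L^2$ bound on $u_k$ to a local $M^2$ bound on an intermediate ball $\mathcal{B}_r^+$ with $1/2 < r < 1$. Here I would like to apply Proposition \ref{prop:Sobolevreg} directly, but it is stated in terms of $L^\infty$ data; the fix is to run the same even/odd reflection across the two flat faces $\{y_n = 0\}$ and $\{y_{n+1}=0\}$ and then invoke the interior $L^2$-based a priori estimate for $\D_G$ on the reflected cylinder (which follows from the kernel estimates of Section \ref{sec:kernel}, in particular a variant of Lemma \ref{lem:ker}). This yields a uniform bound $\|u_k\|_{M^2(\mathcal{B}_r^+)} \leq C(r)$.

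Next I would extract a subsequence, still denoted $u_k$, converging weakly in $M^2(\mathcal{B}_r^+)$ and strongly in $L^2(\mathcal{B}_{3/4}^+)$ to a limit $u_\infty$. The strong $L^2$ convergence rests on a Rellich-type compact embedding $M^1 \hookrightarrow L^2$ for the Baouendi-Grushin vector fields on bounded domains, which follows from the Hörmander bracket-generating property of the $Y_i$ and standard sub-Riemannian compactness (or again by a reflection reducing to the classical Rellich theorem after the change of variables induced by the Hodograph map). The weak $M^2$ limit satisfies $\D_G u_\infty = 0$ in $\mathcal{B}_{3/4}^+$ because $f_k \to 0$ in $L^2$, and the mixed Dirichlet-Neumann boundary conditions are inherited in the trace sense: Dirichlet passes by continuity of the trace $M^1 \to L^2(\{y_n=0\})$, and the Neumann condition on $\{y_{n+1}=0\}$ passes by testing the weak formulation against test functions that do not vanish on $\{y_{n+1}=0\}$.

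Choosing $h = u_\infty$ in the failed-statement inequality then gives $\frac{1}{|\mathcal{B}_{1/2}^+|}\int_{\mathcal{B}_{1/2}^+} |u_k - u_\infty|^2\, dx > \epsilon_0^2$, which contradicts the strong $L^2$ convergence $u_k \to u_\infty$ in $L^2(\mathcal{B}_{3/4}^+)$. The main technical obstacle is justifying the compact embedding and, in particular, the passage to the limit of the Neumann boundary condition on the corner $\{y_{n+1}=0\}$ meeting the degenerate edge $P$; the cleanest route, which I would follow, is to do the reflection across $\{y_{n+1}=0\}$ first (even reflection of $u_k$ and $f_k$), which eliminates the Neumann face and leaves only the Dirichlet face $\{y_n=0\}$ on which standard trace theory applies.
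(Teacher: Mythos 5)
Your proof follows the same contradiction-compactness argument as the paper: you obtain a uniform $M^{2}$ bound from Proposition \ref{prop:Sobolevreg}, extract a subsequence converging strongly in $L^{2}$ via Rellich-type compactness for the Grushin-Sobolev spaces, pass to the limit in the equation and in the mixed Dirichlet-Neumann conditions (as you note, most cleanly after even reflection across $\{y_{n+1}=0\}$), and derive a contradiction by taking $h=u_\infty$. Your observation that Proposition \ref{prop:Sobolevreg} as stated controls $\|u\|_{M^{2}}$ by $\|u\|_{L^\infty}$ rather than $\|u\|_{L^{2}}$, while only $L^{2}$ control of $u$ is assumed here, is a fair point of care, and your patch (running the even/odd reflection and the kernel estimates of Section \ref{sec:kernel} to get an $L^{2}$-to-$M^{2}$ bound directly) is precisely what the paper's proof implicitly relies on when it invokes estimate (\ref{eq:Lpapriori}).
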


\begin{proof}
We argue by contradiction and compactness. Assume that the statement were wrong. Then there existed $\epsilon>0$ and sequences, $\{u_m\}_{m}$, $\{f_m\}_m$, such that on the one hand
\begin{equation}
\label{eq:contra}
\begin{split}
\D_G u_m &= f_m \mbox{ in } \mathcal{B}_1^+(0),\\
u_m & = 0 \mbox{ on } \{y_n = 0\}\cap \mathcal{B}_{1}^+(0),\\
\p_{n+1} u_m &= 0 \mbox{ on } \{y_{n+1}=0\} \cap \mathcal{B}_{1}^+(0).
\end{split}
\end{equation}
and
\begin{align*}
&\frac{1}{|\mathcal{B}_{1}^+(0)|}\int\limits_{\mathcal{B}_1^+(0)} u_m^2 dx \leq 1, \ \frac{1}{|\mathcal{B}_{1}^+(0)|} \int\limits_{\mathcal{B}_1^{+}(0)}f_m^2 dx \leq \frac{1}{m}.
\end{align*}
On the other hand
\begin{align*}
\frac{1}{|\mathcal{B}_{1}^+(0)|}\int\limits_{\mathcal{B}_1^+(0)} |u_m - h|^2 dx \geq \epsilon^2, 
\end{align*}
for all $h$ which satisfy the homogeneous equation (\ref{eq:hGrushin}). By (\ref{eq:Lpapriori}), we however have compactness for $u_m$ in $M^{1}$:
\begin{align*}
u_m \rightarrow u_0 \mbox{ in } M^{1}(\mathcal{B}_{3/4}^+(0)).
\end{align*}
Testing the weak form of (\ref{eq:contra}) with a $C_{0}^{\infty}(\mathcal{B}_{1/2}^+(0))$ function, we can pass to the limit and infer that $u_0$ is a weak solution of the homogeneous bulk equation from (\ref{eq:hGrushin}).
Finally, by the boundedness of $u_m\in M^{2}$ and the corresponding trace inequalities or a reflection argument, we obtain that $u_0$ satisfies the mixed Dirichlet-Neumann conditions from (\ref{eq:hGrushin}). This yields the desired contradiction.
\end{proof}

We now prove a further approximation result for solutions of the homogeneous Baouendi-Grushin equation in the quarter space. More precisely, we now seek to approximate solutions of the homogeneous equation (\ref{eq:contra}) by associated (eigen-) polynomials.
To this end, we recall the notion of \emph{homogeneous polynomials} in Section~\ref{sec:holder}.

\begin{rmk}
We note that all homogeneous polynomial solutions (e.g. the ones up to degree five) of $\Delta_Gv=0$ which satisfy the Dirichlet-Neumann boundary conditions can be computed explicitly. For instance, the degree less than five polynomial solutions are given by the linear combination of
\begin{align*}
y_n, \ y_jy_n, \ j\in\{1,\dots,n-1\},\ y_{n}^3 - 3y_n y_{n+1}^2.
\end{align*}
\end{rmk}

Using the notion of homogeneous polynomials, we proceed to our second approximation lemma:

\begin{lem}
\label{lem:approx}
Let $u:\mathcal{B}_{1}^+(0) \rightarrow \R$ be a solution of (\ref{eq:hGrushin}) with $\| u \|_{L^2(\mathcal{B}_{1}^+(0))}\leq \bar{c}$. Then there exists a polynomial $p$ of homogeneous degree less than or equal to three which solves (\ref{eq:hGrushin}), i.e.
\begin{align*}
p(y)=y_n \left(a_0+\sum_{i=1}^{n-1}a_iy_i+b(y_n^2-3y_{n+1}^2)\right),
\end{align*}
such that for all $0<r\leq \frac{1}{2}$
\begin{align}
\label{eq:approx}
\frac{1}{|\mathcal{B}_{r}^+(0)|}\int\limits_{\mathcal{B}_r^+(0)}|u-p|^2 dy \leq C(\bar{c}) r^{10},
\end{align}
and
\begin{align*}
\sum_{i=0}^{n-1}|a_i|+|b|\leq C\bar c,
\end{align*}
where $C$ is a universal constant.
\end{lem}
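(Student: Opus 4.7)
The plan is to reduce to a full-space problem by reflection, exploit hypoellipticity of the Baouendi--Grushin operator, and then truncate a Grushin-homogeneous polynomial expansion of $u$ at degree three. The gain to order $r^{10}$ in the squared $L^2$ average will come from the fact that no nontrivial degree four homogeneous polynomial solution with the Dirichlet--Neumann symmetries of the problem exists.

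First, I would extend $u$ across $\{y_n = 0\}$ by odd reflection and across $\{y_{n+1}=0\}$ by even reflection. Since $\D_G = (y_n^2+y_{n+1}^2)\D'' + \p_n^2 + \p_{n+1}^2$ is invariant under $y_n \mapsto -y_n$ and $y_{n+1}\mapsto -y_{n+1}$, and since the Dirichlet and Neumann conditions are preserved by the respective reflections, the resulting extension $\tilde u$ solves $\D_G \tilde u = 0$ on the full Grushin ball $\mathcal{B}_1$, is odd in $y_n$ and even in $y_{n+1}$, and satisfies $\|\tilde u\|_{L^2(\mathcal{B}_1)} \leq C \bar c$. The hypoellipticity of $\D_G$ (Hörmander's condition holds for the vector fields $\p_n,\p_{n+1},y_n\p_j,y_{n+1}\p_j$ via the brackets $[\p_n, y_n\p_j]=\p_j$; see \cite{JSC87}) then gives a quantitative $C^k$ estimate $\|\tilde u\|_{C^k(\mathcal{B}_{3/4})} \leq C_k \bar c$ for every $k$.

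Next, I would use the standard polynomial expansion for solutions of $\D_G \tilde u = 0$ into Grushin-homogeneous polynomial solutions (the analog of the expansion of a harmonic function into homogeneous harmonic polynomials, but with respect to the dilations $\delta_r(y)=(r^2 y'', r y_n, r y_{n+1})$). This yields a convergent expansion
\begin{equation*}
\tilde u(y) = \sum_{k\geq 1} P_k(y), \qquad P_k \in \mathcal{P}_k^{hom},\ \D_G P_k = 0,
\end{equation*}
on compact subsets of $\mathcal{B}_1$, where each $P_k$ inherits the parity of $\tilde u$, and the coefficients of $P_k$ are bounded by $C \bar c$ thanks to the $C^k$ estimate in the previous step combined with a Grushin rescaling $\tilde u_r(y):=\tilde u(\delta_r y)$ and homogeneity.

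Then I would verify the key algebraic fact that no nontrivial homogeneous polynomial of Grushin degree four, odd in $y_n$ and even in $y_{n+1}$, lies in $\ker \D_G$. The candidate monomials must satisfy $2|\beta''|+\beta_n+\beta_{n+1}=4$ with $\beta_n$ odd and $\beta_{n+1}$ even; the options are $y_n^4$, $y_i y_n^2$ and $y_n^2 y_{n+1}^2$, and a direct computation shows $\D_G(Ay_n^4 + B_i y_i y_n^2 + Cy_n^2 y_{n+1}^2) = 12A y_n^2 + 2B_i y_i + 2C(y_n^2+y_{n+1}^2)$, which forces $A=B_i=C=0$. Hence $P_4 \equiv 0$.

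Finally, I would set $p := P_1 + P_2 + P_3$; this is precisely a polynomial of the form $y_n(a_0 + \sum_i a_i y_i + b(y_n^2 - 3y_{n+1}^2))$ with $\sum |a_i|+|b|\leq C\bar c$. Since $P_4 = 0$ and $|P_k(y)| \leq C\bar c\, d_G(y,0)^k$ on $\mathcal{B}_{1/2}^+$, for any $0<r\leq 1/2$,
\begin{equation*}
|\tilde u(y) - p(y)| \leq \sum_{k\geq 5} |P_k(y)| \leq C\bar c\, r^5 \qquad \text{for all } y\in \mathcal{B}_r^+,
\end{equation*}
which gives the claimed estimate \eqref{eq:approx} after squaring and averaging. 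The main obstacle is Step three: establishing the Grushin-homogeneous expansion with controlled coefficients. This can either be done directly by decomposing $\tilde u$ onto an orthonormal basis of Grushin-homogeneous harmonics (adapting the classical spherical harmonics argument to the Grushin geometry), or, alternatively and more in the spirit of the compactness approach of Wang \cite{Wa03} used in Lemma \ref{lem:compactness}, it can be replaced by an iterative compactness scheme that successively improves the approximation at dyadic scales $2^{-j}$ by a factor $2^{-5}$, the nonexistence of degree four solutions being exactly what rules out any obstruction to the improvement.
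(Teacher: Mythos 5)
Your overall strategy matches the paper's in spirit: reduce to a full-space problem by reflection, decompose into Grushin-homogeneous polynomial solutions, observe that there are no admissible solutions of homogeneous degree four, and truncate at degree three to gain a factor $r^{10}$. But there are two issues, one cosmetic and one substantive, and you have already put your finger on the second.

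First, the parity analysis in your degree-four step is internally inconsistent. You require $\beta_n$ odd and $\beta_{n+1}$ even, but then list $y_n^4$, $y_i y_n^2$, $y_n^2 y_{n+1}^2$, all of which are \emph{even} in $y_n$. In fact the situation is even cleaner than your computation suggests: if $\beta_n$ is odd and $\beta_{n+1}$ is even then $\beta_n+\beta_{n+1}$ is odd, whereas $4-2|\beta''|$ is even, so there are \emph{no} monomials of Grushin degree four with the required parity, and $P_4\equiv 0$ follows from parity alone — no application of $\D_G$ is needed. (The $\D_G$ computation you wrote is what one does when working directly in the quarter space with the mixed boundary conditions and without reflection, which is closer to the paper's phrasing; either route is fine, but they shouldn't be mixed.)

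The substantive gap, which you yourself flag as "the main obstacle," is passing from smoothness of $\tilde u$ to a \emph{controlled} expansion $\tilde u = \sum_k P_k$ with a pointwise tail bound $\sum_{k\geq 5}|P_k(y)|\leq C\bar c\, d_G(y,0)^5$. The $C^k$ estimates from hypoellipticity control only finitely many Taylor coefficients, with constants $C_k$ that may grow rapidly in $k$, and $C^\infty$ alone does not give convergence of a Taylor series — one would need analytic hypoellipticity to run your pointwise argument, and that is a genuinely delicate question for Baouendi--Grushin operators. The paper sidesteps this entirely by working in $L^2$: after the conformal change $\D_G=\p_t^2+\D_\Sigma$ on the Grushin sphere, the homogeneous polynomial solutions $p_k$ come from spherical eigenfunctions and are \emph{orthogonal} in $L^2(\mathcal{B}_1^+)$. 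One then writes $u=\sum_k\alpha_k p_k$ with $\sum_k|\alpha_k|^2\leq\bar c^2$, and orthogonality plus the homogeneity scaling $\|p_k\|^2_{L^2(\mathcal{B}_r^+)}=r^{2k}|\mathcal{B}_r^+|\,\|p_k\|^2_{L^2(\mathcal{B}_1^+)}$ gives, since $P_4=0$,
\begin{align*}
\frac{1}{|\mathcal{B}_r^+|}\|u-p\|^2_{L^2(\mathcal{B}_r^+)}=\sum_{k\geq 5}|\alpha_k|^2 r^{2k}\leq r^{10}\sum_{k}|\alpha_k|^2\leq \bar c^2 r^{10},
\end{align*}
with no need to sum a pointwise series. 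Your first proposed remedy — decomposing onto an orthonormal basis of Grushin-homogeneous harmonics — is exactly this, and is the route the paper takes; I would commit to it rather than the $C^k$/pointwise argument. Your second proposed remedy, an iterative compactness scheme at dyadic scales, is problematic as stated: that iteration scheme (which the paper does carry out, in the subsequent Lemma 7.5 and Corollary 7.6) \emph{uses} the present lemma as its engine, and the nonexistence of degree-four solutions is precisely what feeds the improvement rate; trying to derive the present lemma from such an iteration risks circularity, and making it non-circular would require essentially rediscovering the orthogonal decomposition.
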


\begin{proof}
After a conformal change of variables, the Baouendi-Grushin operator can be rewritten as
\begin{align*}
\D_G = (\dt^2 + \D_{\Sigma}),
\end{align*}
where $\Sigma:= \{(y'',y_n,y_{n+1})| \ |y''|^4 + y_n^2 + y_{n+1}^2 = 1\}$ denotes the Baouendi-Grushin sphere. In our setting this is augmented with the Dirichlet and Neumann conditions from (\ref{eq:hGrushin}). The eigenfunctions of $\D_{\Sigma}$ can be extended in the radial direction to yield homogeneous solutions of the homogeneous Baouendi-Grushin equation. As the Baouendi-Grushin operator is hypoelliptic, these solutions are polynomials (this remains true in the cone $\Sigma \cap \mathcal{B}_1^+$, as the eigenfunctions on the Baouendi-Grushin quarter sphere can be identified as a subset of the eigenfunctions on the whole sphere by appropriate (even and odd) reflections). Moreover, the eigenfunctions on $\Sigma$ are orthogonal and as a consequence, the same is true for the correspondingly associated polynomials. The Baouendi-Grushin polynomials hence form an orthogonal basis into which a solution of the homogeneous Baouendi-Grushin problem can be decomposed. 
We denote these polynomials by $p_k(y)$ and normalize them with respect to $\mathcal{B}^{+}_{1}(0)$. Since $u$ is bounded in $L^2(\mathcal{B}_{1}^+(0))$, we have
\begin{align}
\label{eq:poly_decomp}
u(y) = \sum\limits_{k=0}^{\infty} \alpha_k p_k(y) \mbox{ with } \sum\limits_{k=0}^{\infty}|\alpha_k|^2 \leq \bar{c}^2.
\end{align}
The previous decomposition can also be seen ``by hand'': Making the ansatz that a homogeneous solution of the Baouendi-Grushin problem is of the form
\begin{align*}
u(t,\theta) = \sum\limits_{k\in \Z} \alpha_k(t) u_{k}(\theta),
\end{align*}
where $u_k(\theta)$ denotes the spherical eigenfunctions, we obtain that
\begin{align*}
0& =(u_k, \D_G u)_{L^{2}(\Sigma)} = \alpha_k''(t) + (u_k, \D_{\Sigma} u)_{L^2(\Sigma)} \\
& =  \alpha_k''(t) -\lambda_k^2 \alpha_k(t)
+ \int\limits_{\partial \Sigma} u_k (\nu\cdot \nabla_{\Sigma} u) d\mathcal{H}^{n-1} -  \int\limits_{\partial \Sigma} u (\nu\cdot \nabla_{\Sigma} u_k) d\mathcal{H}^{n-1}.
\end{align*}
Here $\lambda_k^2$ is the eigenvalue associated with $u_k(\theta)$ and $\nu:\partial \Sigma \rightarrow \R^{n-1}$ is the outer unit normal field.
As both $u$ and $u_k$ satisfy the mixed Dirichlet-Neumann boundary conditions, this yields that
\begin{align*}
\alpha''_k(t) - \lambda_k^2 \alpha_k(t)=0.
\end{align*}
As the Dirichlet data imply that $\alpha_k(-\infty)=0$, this results in $\alpha_k(t) = \alpha_k(0) e^{|\lambda_k| t} $. By hypoellipticity, $\lambda_k \in \Z$, so that we obtain a decomposition into polynomials, after undoing the conformal change of coordinates.
After an appropriate normalization, we again infer (\ref{eq:poly_decomp}).\\

We define $p(y):= \sum\limits_{k=0}^{3} \alpha_k p_k(y)$.
Thus, recalling that there are no eigenpolynomials of (homogeneous) degree four which satisfy our mixed Dirichlet-Neumann conditions and computing the difference $u-p$, we arrive at
\begin{align*}
\left\| u- p \right\|_{L^2(\mathcal{B}_r^+)}^2 =  \sum\limits_{k=5}^{\infty}|a_k|^2 \| p_k\|_{L^2(\mathcal{B}_r^+)}^2
&\leq  \sum\limits_{k=5}^{\infty}|a_k|^2 r^{10}|\mathcal{B}_r^+|\| p_k\|_{L^2(\mathcal{B}_1^+)}^2\\
&\leq r^{10+ 2n}C(\bar{c}),
\end{align*}
where we used the scaling of the Baouendi-Grushin cylinders from Definition \ref{defi:Grushincylinder} and the boundedness of $u$ (c.f. (\ref{eq:poly_decomp})).
This yields the desired result.
\end{proof}

\begin{rmk}
\label{rmk:approx}
We stress that the approximation from Lemma \ref{lem:approx} is not restricted to third order polynomials. It can be extended to polynomials of arbitrary (homogeneous) degree.
\end{rmk}

Combining the previous results, we obtain the key building block for the iteration which yields regularity at the hyperplane $\{y_n=y_{n+1}=0\}$ at which the ellipticity of the Baouendi-Grushin operator degenerates.

\begin{lem}[Iteration]
\label{prop:iteration}
Let $\alpha\in(0,1)$.
Assume that $u:\mathcal{B}_{1}^+ \rightarrow \R$ is a solution of (\ref{eq:Grushin}) which satisfies
\begin{align*}
\frac{1}{|\mathcal{B}_{1}^+(0)|}\int\limits_{\mathcal{B}_1^+(0)} u^2 dx \leq 1.
\end{align*}
There exist a radius $r_0\in (0,1)$, a universal constant $C>0$ and a constant $\epsilon>0$ such that if
\begin{align*}
\frac{1}{|\mathcal{B}_{1}^+(0)|} \int\limits_{\mathcal{B}_1^{+}(0)}f^2 dx \leq \epsilon^2,
\end{align*} 
then there exists a polynomial $p$ of order less than or equal to three
satisfying (\ref{eq:hGrushin}), i.e.
\begin{align*}
p(y)=y_n \left(a_0+\sum_{i=1}^{n-1}a_iy_i+b(y_n^2-3y_{n+1}^2)\right),
\end{align*}
such that
\begin{align*}
\frac{1}{|\mathcal{B}_{r_0}^+(0)|}\int\limits_{\mathcal{B}_{r_0}^+(0)} |u-p|^2 dy \leq r^{2(3+2\alpha)}_0,
\end{align*}
and
\begin{align*}
\sum_{i=0}^{n-1}|a_i|+|b|\leq C.
\end{align*}
\end{lem}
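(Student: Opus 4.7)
The plan is to combine the two previous approximation statements, Lemma~\ref{lem:compactness} and Lemma~\ref{lem:approx}, with a two-scale choice of parameters calibrated to the target exponent $\alpha \in (0,1)$. The key quantitative point is that Lemma~\ref{lem:approx} supplies approximation by a degree $\leq 3$ Grushin polynomial at the rate $r^{10}$ in squared $L^{2}$-average (because no homogeneous polynomial solution of degree exactly $4$ obeys the mixed Dirichlet--Neumann conditions, so the next spectral block begins at degree $5$), and we will exploit that $2(5) = 10 > 2(3+2\alpha)$ when $\alpha<1$.

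First, given $\alpha\in(0,1)$, choose $r_0\in(0,1/4]$ small enough that $C_1\, r_0^{5} \leq \tfrac12\, r_0^{3+2\alpha}$, where $C_1$ is the universal constant produced by Lemma~\ref{lem:approx} with $\bar c = 2$; this is possible precisely because $5>3+2\alpha$. Then, using that the Grushin volume scales as $|\mathcal{B}_r^+|\sim r^{2n}$, set $\epsilon_1 := c\, r_0^{\,n+3+2\alpha}$ for a small enough universal constant $c$, and apply Lemma~\ref{lem:compactness} with tolerance $\epsilon_1$ to obtain a threshold $\delta(\epsilon_1)>0$. Finally set $\epsilon := \delta(\epsilon_1)$. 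Under the hypothesis $\tfrac{1}{|\mathcal{B}_1^+|}\int |f|^2 \le \epsilon^2$, Lemma~\ref{lem:compactness} yields a homogeneous Grushin solution $h$ of (\ref{eq:hGrushin}) satisfying
\begin{equation*}
\frac{1}{|\mathcal{B}_{1/2}^+|}\int_{\mathcal{B}_{1/2}^+}|u-h|^{2}\,dy \leq \epsilon_1^{2},\qquad \|h\|_{L^2_{\mathrm{avg}}(\mathcal{B}_{1/2}^+)}\leq 1+\epsilon_1\leq 2 .
\end{equation*}

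Next, rescale $h$ via the Grushin dilation $\tilde h(y):=h(\delta_{1/2}(y))$; the operator $\Delta_G$ and the mixed boundary conditions are invariant under $\delta_\lambda$, so $\tilde h$ is a homogeneous Grushin solution on $\mathcal{B}_1^+$ with $\|\tilde h\|_{L^2(\mathcal{B}_1^+)}\leq \bar c := 2$. Lemma~\ref{lem:approx} then produces a degree $\le 3$ polynomial $\tilde p$ of the prescribed form with $\sum_i|a_i|+|b|\leq C\bar c$ and $L^2$-average error bounded by $C_1^2 r^{10}$ on $\mathcal{B}_r^+$ for every $r\le 1/2$. Undoing the dilation gives a polynomial $p$ of the same form (the collection of such polynomials is dilation-invariant) and the approximation $\tfrac{1}{|\mathcal{B}_r^+|}\int_{\mathcal{B}_r^+}|h-p|^2\le C_1^2 r^{10}$ for $r\leq 1/4$. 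Combining the triangle inequality
\begin{equation*}
\frac{1}{|\mathcal{B}_{r_0}^+|}\int_{\mathcal{B}_{r_0}^+}|u-p|^{2}\,dy \leq 2\,\frac{|\mathcal{B}_{1/2}^+|}{|\mathcal{B}_{r_0}^+|}\cdot\epsilon_1^{2} \;+\; 2C_1^{2}\,r_0^{10}
\end{equation*}
with the two choices above bounds each term by $\tfrac12\, r_0^{\,2(3+2\alpha)}$, yielding the desired estimate; the coefficient bound on $p$ follows from the $\bar c =2$ case of Lemma~\ref{lem:approx}.

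There is no genuine conceptual obstacle once both lemmas are in hand; the only step requiring care is the bookkeeping of the Grushin volume scaling $|\mathcal{B}_r^+|\sim r^{2n}$ when transferring the $L^2$-closeness from scale $1/2$ down to scale $r_0$, together with the observation that the approximation rate $r^{10}$ of Lemma~\ref{lem:approx} beats the target $r^{2(3+2\alpha)}$ by an $\alpha$-independent margin, which is what allows a single choice of $r_0$ to work for every $\alpha\in(0,1)$.
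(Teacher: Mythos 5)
Your proof is correct and follows essentially the same route as the paper's: apply Lemma~\ref{lem:compactness} to replace $u$ by a homogeneous solution $h$ up to a small $L^2$ error, apply Lemma~\ref{lem:approx} to approximate $h$ by a degree-$\leq 3$ Grushin polynomial at rate $r^{10}$, and then tune $r_0$ and the smallness parameter so that the two error contributions are each dominated by $\tfrac12 r_0^{2(3+2\alpha)}$. The only cosmetic difference is that you insert an explicit Grushin dilation of $h$ to place it on $\mathcal{B}_1^+$ before invoking Lemma~\ref{lem:approx}, and you carry the volume factor $|\mathcal{B}_{1/2}^+|/|\mathcal{B}_{r_0}^+|\sim r_0^{-2n}$ explicitly through the choice $\epsilon_1 = c\, r_0^{n+3+2\alpha}$, whereas the paper absorbs that factor by choosing $\delta$ after $r_0$ so that $2 r_0^{-2n}\delta^2 \le \tfrac12 r_0^{2(3+2\alpha)}$; these are the same bookkeeping written in a different order.
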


\begin{proof}
By our first approximation result, Lemma \ref{lem:compactness}, there exists a function $h$ which solves (\ref{eq:hGrushin}) and satisfies
\begin{align}
\label{eq:approx1}
\int\limits_{\mathcal{B}_{1/2}^+(0)}|u-h|^2 dy \leq \delta^2.
\end{align}
In particular, $\| h \|_{L^2(\mathcal{B}_{1/2}^+(0))}\leq C$. Hence, by our second approximation result, Lemma \ref{lem:approx}, there exists a (homogeneous) third order Baouendi-Grushin polynomial satisfying the Dirichlet-Neumann condition such that
\begin{align*}
\frac{1}{|\mathcal{B}_{r}^+(0)|} \int\limits_{\mathcal{B}_{r}^+(0)} |h-p|^2 dy \leq C r^{10}, \quad \mbox{ for all } 0<r<1/2.
\end{align*}
Consequently, by rescaling, we obtain for each $0<r<1/2$,
\begin{align*}
\frac{1}{|\mathcal{B}_r^{+}(0)|} \int\limits_{\mathcal{B}_r^+(0)}|u-p|^2 dy & \leq 2 \frac{1}{|\mathcal{B}_r^{+}(0)|} \int\limits_{\mathcal{B}_r^+(0)}|u-h|^2 dy
+ 2 \frac{1}{|\mathcal{B}_r^{+}(0)|} \int\limits_{\mathcal{B}_r^+(0)}|h-p|^2 dy\\
& \leq 2 r^{-2n} \int\limits_{\mathcal{B}_{1/2}^+(0)}|u-h|^2 dy + 2C r^{10}\\
& \leq 2 r^{-2n}\delta^2 + 2C r^{10}, \
\end{align*}
where we used (\ref{eq:approx1}) to estimate the first term. First choosing $0<r_0<1$ universal, but so small such that $2C r_0^{10}\leq \frac{1}{2}r_0^{2(3+2\alpha)}$, and then choosing $\delta>0$ universal such that $2 r_0^{-2n}\delta^2 \leq\frac{1}{2}r_0^{2(3+2\alpha)}$, yields the desired result.
\end{proof}

As a corollary of Lemma \ref{prop:iteration}, we can iterate in increasingly finer radii.

\begin{cor}
\label{cor:iteration}
Let $\alpha\in(0,1)$.
Assume that $u:\mathcal{B}_{1}^+ \rightarrow \R$ is a solution of (\ref{eq:Grushin}) which satisfies
\begin{align*}
\frac{1}{|\mathcal{B}_{1}^+(0)|}\int\limits_{\mathcal{B}_1^+(0)} u^2 dx \leq 1,
\end{align*}
and that for each $k\in \mathbb{N}_+$
\begin{align*}
\frac{1}{|\mathcal{B}_{r_0^{k-1}}^+(0)|} \int\limits_{\mathcal{B}_{r_0^{k-1}}^{+}(0)}f^2 dx \leq \epsilon^2 r_0^{2(k-1)(1+2\alpha)}.
\end{align*} 
Then there exists a polynomial $p_k$ of (homogeneous) degree (less than or equal to) three solving (\ref{eq:hGrushin}) such that
\begin{align*}
\frac{1}{|\mathcal{B}_{r^{k}_0}^{+}(0)|} \int\limits_{\mathcal{B}_{r^{k}_0}^+(0)}|u-p_k|^2 dx \leq r^{2k (3+ 2\alpha)}_0.
\end{align*}
Moreover, it is of the form
\begin{align*}
p_k(y) = y_n\left(a^0_k + \sum\limits_{j=1}^{n-1}a_k^j y_j\right) + b_k (y_n^3 - 3y_n y_{n+1}^2), 
\end{align*}
and we have
\begin{equation}
\label{eq:coefficients}
\begin{split}
|a^0_k - a^0_{k-1}| &\leq C r_0^{k(2+{2\alpha})},\\
|a_k^j - a_{k-1}^j| &\leq C r_0^{{2k\alpha}},\quad j=1,\dots, n-1,\\
|b_k - b_{k-1}| & \leq C r_0^{{2k\alpha}}.
\end{split}
\end{equation}
\end{cor}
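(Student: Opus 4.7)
The statement is an iteration of the single-step approximation in Lemma~\ref{prop:iteration}, so the natural strategy is induction on $k$. The case $k=1$ is exactly Lemma~\ref{prop:iteration} (with $p_0:=0$). For the inductive step, assume the conclusion holds at level $k$ with an approximating polynomial $p_k$. The plan is to rescale the error $u-p_k$ to the unit scale via the Baouendi-Grushin dilation $\delta_\lambda(y)=(\lambda^2y'',\lambda y_n,\lambda y_{n+1})$ from Remark~\ref{rmk:equi_dist}, apply Lemma~\ref{prop:iteration} on the unit scale, and then rescale back. Specifically, define
\begin{equation*}
\tilde u(y):=\frac{(u-p_k)(\delta_{r_0^k}(y))}{r_0^{k(3+2\alpha)}},\qquad \tilde f(y):=\frac{f(\delta_{r_0^k}(y))}{r_0^{k(1+2\alpha)}}.
\end{equation*}
Since $\Delta_G$ is homogeneous of degree $2$ under $\delta_\lambda$ and since $\Delta_G p_k=0$, one checks directly that $\Delta_G\tilde u=\tilde f$ in $\mathcal{B}_1^+$, and the Dirichlet--Neumann conditions for $\tilde u$ are inherited from those of $u-p_k$ (here we use that $p_k$ itself solves the homogeneous problem \eqref{eq:hGrushin}).

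The next step is to verify that $\tilde u$ and $\tilde f$ satisfy the hypotheses of Lemma~\ref{prop:iteration}. Changing variables $z=\delta_{r_0^k}(y)$ and using $|\mathcal{B}_{r_0^k}^+|=r_0^{2nk}|\mathcal{B}_1^+|$ gives
\begin{equation*}
\frac{1}{|\mathcal{B}_1^+|}\int_{\mathcal{B}_1^+}\tilde u^2\,dy=\frac{1}{r_0^{2k(3+2\alpha)}}\cdot\frac{1}{|\mathcal{B}_{r_0^k}^+|}\int_{\mathcal{B}_{r_0^k}^+}(u-p_k)^2\,dz\leq 1
\end{equation*}
by the inductive hypothesis, and analogously the decay assumption on $f$ at scale $r_0^k$ yields $\tfrac{1}{|\mathcal{B}_1^+|}\int_{\mathcal{B}_1^+}\tilde f^2\,dy\le\varepsilon^2$. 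Lemma~\ref{prop:iteration} then produces a homogeneous polynomial
\begin{equation*}
q(y)=y_n\Bigl(\tilde a_0+\sum_{j=1}^{n-1}\tilde a_j y_j\Bigr)+\tilde b\,(y_n^3-3y_ny_{n+1}^2),\qquad |\tilde a_0|+\sum_j|\tilde a_j|+|\tilde b|\le C,
\end{equation*}
with $\tfrac{1}{|\mathcal{B}_{r_0}^+|}\int_{\mathcal{B}_{r_0}^+}|\tilde u-q|^2\,dy\le r_0^{2(3+2\alpha)}$. Setting $p_{k+1}(z):=p_k(z)+r_0^{k(3+2\alpha)}\,q(\delta_{r_0^{-k}}(z))$ yields a new polynomial of the prescribed form (the structure of $q$ is preserved under the Grushin dilation), still solving \eqref{eq:hGrushin}, and rescaling the estimate back produces the required $L^2$ bound at scale $r_0^{k+1}$.

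Finally, the coefficient estimates \eqref{eq:coefficients} drop out of the explicit scaling of $q$ under $\delta_{r_0^{-k}}$: the linear piece $\tilde a_0 y_n$ contributes $r_0^{k(3+2\alpha)-k}=r_0^{k(2+2\alpha)}$, while the quadratic-in-$y''$ piece and the cubic piece both contribute $r_0^{k(3+2\alpha)-3k}=r_0^{2k\alpha}$, matching exactly the three bounds in \eqref{eq:coefficients} (up to an index shift in $k$). The main technical point to keep clean is the bookkeeping of these different homogeneity weights under $\delta_\lambda$ and the verification that the rescaled data fit the hypotheses of Lemma~\ref{prop:iteration}; once the correct rescaling is set up, the inductive step is essentially immediate. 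No further obstacle is expected, as the geometry of $\mathcal{B}_r^+$, the boundary conditions, and the class of approximating polynomials are all invariant under the Grushin dilation.
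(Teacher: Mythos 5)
Your proof is correct and follows essentially the same route as the paper's: induction on $k$, with the inductive step carried out by Grushin-rescaling $u-p_k$ via $\delta_{r_0^k}$, applying Lemma~\ref{prop:iteration} at unit scale to get a unit-size polynomial $q$, and then dilating $q$ back to produce $p_{k+1}$, with the three coefficient estimates in \eqref{eq:coefficients} read off from the degree-$1$, degree-$3$ (quadratic in $y''$), and degree-$3$ homogeneity weights of the pieces of $q$ under $\delta_{r_0^{-k}}$. The index shift you flag (your computation naturally produces $r_0^{(k-1)(2+2\alpha)}$ rather than $r_0^{k(2+2\alpha)}$, etc.) is absorbed into the universal constant $C$ since $r_0$ is fixed, which is exactly how the paper also treats it.
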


\begin{proof}
We argue by induction on $k$ and take $p_0 = 0$ and $p_1$ as the polynomial from Proposition \ref{prop:iteration}. We assume that the statement is true for $k$ and show it for $k+1$. For that purpose, we consider the rescaled and dilated functions
\begin{align*}
u_k(y):= \frac{(u-p_k)(r_0^{2k} y'', r_0^k y_n, r_0^k y_{n+1})}{r_0^{k(3+{2\alpha})}}.
\end{align*}
Hence,
\begin{align*}
\D_G u_k = \frac{r_0^{2 k}f_{r_0}}{r_0^{(3+{2\alpha})k}} = r_0^{-k (1+{2\alpha})} f_{r_0},
\end{align*}
where $f_{r_0}(y'',y_n,y_{n+1})= f(r_0^{2} y'', r_0 y_n, r_0 y_{n+1})$. 
Using the smallness assumption on $f$, we obtain that
\begin{align*}
\frac{1}{|\mathcal{B}_{1}^{+}(0)|} \int\limits_{\mathcal{B}_{1}^+(0)}|r_0^{-k(1+{2\alpha})} f_{r_0}|^2 dx \leq r_0^{-2k (1+{2\alpha})} \frac{1}{|\mathcal{B}_{r_0^{k}}^{+}(0)|} \int\limits_{\mathcal{B}_{r_0^k}^+(0)}f^2 dx \leq  \epsilon^2.
\end{align*}
Hence by Proposition \ref{prop:iteration}, we obtain a (homogeneous) polynomial, $q$, of degree less than or equal to three, which satisfies (\ref{eq:hGrushin}) and is of the form
\begin{align*}
q(y'',y_n, y_{n+1}) =  y_n\left(a_0+ \sum_{j=1}^{n-1}a_j y_j\right) + b (y_n^3 - 3y_n y_{n+1}^2),
\end{align*}
and 
$$\sum_{j=0}^{n-1}|a_j|+|b|\leq C,$$
such that
\begin{align*}
\frac{1}{|\mathcal{B}_{r_0}^{+}(0)|} \int\limits_{\mathcal{B}_{r_0}^+(0)}|u_k - q|^2 dx  \leq r_0^{2(3+{2\alpha})}.
\end{align*}
Rescaling therefore gives us that the polynomial
\begin{align*}
p_{k+1}(y'',y_n,y_{n+1}) = p_k(y'',y_{n},y_{n+1}) + r_0^{(3+{2\alpha})k}q\left(\frac{y''} {r_0^{2k}}, \frac{y_n}{r_0^k}, \frac{y_{n+1}}{r_0^k} \right),
\end{align*}
satisfies the claim of the corollary.
\end{proof}

We summarize the previous compactness and iteration arguments in the following intermediate result:

\begin{prop}
\label{prop:Hoelder0}
Let $\alpha\in(0,1)$.
Assume that $u:\mathcal{B}_{1}^+ \rightarrow \R$ is a solution of (\ref{eq:Grushin}). Suppose that the inhomogeneity $f:\mathcal{B}_{1}^+(0)\rightarrow \R$ is $C^{1,\alpha}_{\ast}$ at $y=0$ in the sense of Definition \ref{defi:diff}, i.e.
\begin{align*}
|f - f(0) - \p_n f(0) y_n | \leq F_0 r^{1+{2\alpha}}
\end{align*}
for any $0<r<1$. 
Then there is a polynomial $p$ with \\
$\|p\|_{C^{3}(\mathcal{B}_{1}^+)}\leq C \left(\| u\|_{L^{2}(\mathcal{B}_1^+)} + |f(0)|+|\p_nf(0)|\right)$ of (homogeneous) degree less than or equal to three such that
\begin{align*}
\frac{1}{|\mathcal{B}_{r}^{+}(0)|} \int\limits_{\mathcal{B}_{r}^+(0)}|u-p|^2 dx \leq C(\| u\|_{L^2(\mathcal{B}_1^+(0))}^2 + F_0^2)r^{2(3+{2\alpha})}.
\end{align*}
\end{prop}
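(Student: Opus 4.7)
My first step is to peel off the leading Taylor polynomial of the inhomogeneity at the origin by an explicit polynomial correction. Set
$$q(y):=\frac{f(0)}{2}y_n^2+\frac{\partial_n f(0)}{6}y_n^3.$$
A direct calculation shows $\Delta_G q = f(0)+\partial_n f(0)y_n$, and $q$ satisfies the mixed Dirichlet-Neumann boundary conditions of \eqref{eq:Grushin}, since each monomial vanishes on $\{y_n=0\}$ and is independent of $y_{n+1}$. Consequently $\tilde u := u-q$ solves the Dirichlet-Neumann problem with right hand side $\tilde f := f - f(0) - \partial_n f(0) y_n$, which by hypothesis satisfies $|\tilde f(y)|\leq F_0\, d_G(y,0)^{1+2\alpha}$ on $\mathcal{B}_1^+$.

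\textbf{Running the iteration.} Next I would rescale by $M:=\|\tilde u\|_{L^2(\mathcal{B}_1^+)}+\epsilon^{-1}F_0$, where $\epsilon>0$ is the constant furnished by Corollary \ref{cor:iteration}, and apply the corollary to $\bar u:=\tilde u/M$. By construction $\|\bar u\|_{L^2(\mathcal{B}_1^+)}\leq 1$, and $\Delta_G \bar u = \tilde f/M$ has mean $L^2$-norm on $\mathcal{B}_{r_0^{k-1}}^+$ bounded by $\epsilon^2 r_0^{2(k-1)(1+2\alpha)}$ for every $k\geq 1$. Corollary \ref{cor:iteration} then yields homogeneous solutions
$$p_k(y)=y_n\Bigl(a_k^0+\sum_{j=1}^{n-1}a_k^j y_j\Bigr)+b_k\bigl(y_n^3-3y_n y_{n+1}^2\bigr)$$
of \eqref{eq:hGrushin} satisfying
$$\frac{1}{|\mathcal{B}_{r_0^k}^+|}\int_{\mathcal{B}_{r_0^k}^+}|\bar u-p_k|^2\,dy\leq r_0^{2k(3+2\alpha)},$$
together with the Cauchy-type bounds \eqref{eq:coefficients}.

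\textbf{Passing to the limit and concluding.} Summing the geometric series in \eqref{eq:coefficients} gives convergence of the coefficient sequences to limits $a^0,a^j,b$ of uniformly bounded size, and the limit polynomial
$$p^*(y):=y_n\Bigl(a^0+\sum_{j=1}^{n-1}a^j y_j\Bigr)+b\bigl(y_n^3-3y_n y_{n+1}^2\bigr)$$
satisfies $\sup_{\mathcal{B}_{r_0^k}^+}|p^*-p_k|\leq C r_0^{k(3+2\alpha)}$ by a termwise estimate using the Grushin scaling $|y_n|\leq r_0^k$, $|y_j|\leq r_0^{2k}$ on $\mathcal{B}_{r_0^k}^+$. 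Combining this with the dyadic decay of $\bar u-p_k$ produces $\frac{1}{|\mathcal{B}_{r_0^k}^+|}\int_{\mathcal{B}_{r_0^k}^+}|\bar u-p^*|^2\,dy\lesssim r_0^{2k(3+2\alpha)}$, and for general $r\in(r_0^{k+1},r_0^k]$ the inclusion $\mathcal{B}_r^+\subset \mathcal{B}_{r_0^k}^+$ together with the comparability $|\mathcal{B}_{r_0^k}^+|/|\mathcal{B}_r^+|\leq r_0^{-2n}$ extends the estimate to all $r\in(0,1)$. Undoing the normalization, the target polynomial is $p:=Mp^*+q$, whose $C^3$-norm is bounded by $M+|f(0)|+|\partial_n f(0)|\lesssim \|u\|_{L^2(\mathcal{B}_1^+)}+|f(0)|+|\partial_n f(0)|+F_0$, and which obeys the claimed $L^2$ approximation bound.

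\textbf{Main obstacle.} The essential analytic work is already carried out in Corollary \ref{cor:iteration}; the one point requiring care is the construction of the particular-solution correction $q$ so as to preserve both the Dirichlet condition on $\{y_n=0\}$ and the Neumann condition on $\{y_{n+1}=0\}$. The choice of $q$ as a pure polynomial in $y_n$ is what makes this compatibility automatic, and no other step of the argument presents a genuine obstruction.
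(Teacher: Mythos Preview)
Your argument is correct and follows essentially the same route as the paper: subtract an explicit polynomial $q$ to kill the Taylor jet of $f$ at the origin, normalize, apply Corollary \ref{cor:iteration}, and pass to the limit using the geometric decay \eqref{eq:coefficients} of the coefficients. Your normalization by $M=\|\tilde u\|_{L^2}+\epsilon^{-1}F_0$ is in fact slightly more careful than the paper's (which only divides by $F_0/\epsilon$ and tacitly assumes the $L^2$-smallness of the resulting function), and your choice of $q$ as a pure polynomial in $y_n$ is a clean special case of the paper's more general ansatz $q=\tfrac{1}{2}f(0)y_n^2+cy_ny_{n+1}^2+dy_n^3$ with $2c+6d=\partial_nf(0)$; the only cosmetic discrepancy is that your $C^3$-bound on $p$ carries an extra $F_0$, which is harmless in all applications.
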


\begin{proof}
Without loss of generality we may assume that $f(0)=\p_n f(0)=0$. Indeed, this follows by considering the function $v(y'',y_n,y_{n+1}):= u(y) - q(y)$, where $q(y)$ is a homogeneous polynomial of homogeneous degree less than or equal to three such that $\Delta_G q=f(0)+\p_{n}f(0)y_n$ and $q=0$ on $\{y_n=0\}$, $\p_{n+1}q=0$ on $\{y_{n+1}=0\}$ (for example, one can consider $q(y)=\frac{1}{2}f(0) y_n^2+ cy_ny_{n+1}^2 +dy_n^3$ with $2c+6d=\p_nf(0)$).
Considering $\tilde{v}:=\frac{\epsilon}{F_0}v$, then also gives the smallness assumptions of Corollary \ref{cor:iteration}. Thus, for each $k\in \N_+$ there exists a Baouendi-Grushin polynomial $p_k$ such that
\begin{align*}
\frac{1}{|\mathcal{B}_{r_0^k}^+(0)|} \int\limits_{\mathcal{B}_{r_0^k}^+(0)}|\tilde{v}-p_k|^2 dy \leq r_0^{2k(3+{2\alpha})}.
\end{align*}
Due to the estimates (\ref{eq:coefficients}) on the coefficients of $p_k$, which were derived in Corollary \ref{cor:iteration}, $p_k \rightarrow p_{\infty}$, where $p_{\infty}$ is a polynomial of (homogeneous) degree at most three and which satisfies
\begin{align*}
\frac{1}{|\mathcal{B}_{r_0^k}^+(0)|} \int\limits_{\mathcal{B}_{r_0^k}^+(0)}|p_{\infty}-p_k|^2 dy \leq C r_0^{2k(3+{2\alpha})}.
\end{align*}
Consequently, by the triangle inequality
\begin{align*}
\frac{1}{|\mathcal{B}_{r_0^k}^+(0)|} \int\limits_{\mathcal{B}_{r_0^k}^+(0)}|\tilde{v} -p_{\infty}|^2 dy \leq C r_0^{2k(3+{2\alpha})}
\end{align*}
for $k\in \N$. Rescaling then yields the desired result.
\end{proof}

\begin{rmk}
\begin{itemize}
\item The previous result yields the ``H\"older regularity at the point'' $y=0$. For other points $y_0=(y_0'',0,0)$ an analogous result holds by translation invariance of the equation and the boundary conditions in the $y''$ directions (c.f. (\ref{eq:Grushin})). In this translated case, the conditions on the inhomogeneity $f:\mathcal{B}_{1}^+(y_0)\rightarrow \R$ read
\begin{align*}
|f - f(y_0) - \p_{n}f(y_0) y_n|  \leq F_0 r^{2(1+{2\alpha})}.
\end{align*}
\item Instead of imposing the $C^{1,\alpha}_{\ast}$ condition in the sense of Definition \ref{defi:diff}, it would have sufficed to assume the weaker condition 
\begin{align*}
 \frac{1}{|\mathcal{B}_{r}^{+}(y_0)|} \int\limits_{\mathcal{B}_{r}^+(y_0)}|f - f(y_0) - \p_{n}f(y_0) y_n|^2 dy  \leq F_0 r^{2(1+{2\alpha})}.
\end{align*}
\item In order to argue as we have outlined above, we have to require the compatibility condition $\p_{n+1}f(y'',0,0)=0$ (c.f. Definition \ref{defi:spaces}). However, apart from the described $C^{1,\alpha}_{\ast}$ regularity, we do not have to pose further restrictions on $f$.
\end{itemize}
\end{rmk}

Building on the precise description of the regularity of solutions close to the hyperplane $\{y_n=y_{n+1}=0\}$, we can now derive the full regularity result of Proposition \ref{prop:invert} by additionally invoking the uniform ellipticity which holds at a sufficiently far distance from $P$. This then concludes the argument for Proposition \ref{prop:invert}.

\begin{proof}[Proof of Proposition~\ref{prop:invert}]
It suffices to prove the corresponding regularity result in $X_{\alpha,\epsilon}(\mathcal{B}_3^+)$ (c.f. Definition \ref{defi:spaces_loc}).
Indeed, the Hölder estimate,
\begin{align*}
\|v\|_{C^{2,\epsilon}_{\ast}(Q_+)} \lesssim \|\D_G v\|_{C^{0,\epsilon}_{\ast}(Q_+)},
\end{align*}
follows similarly. As a consequence of the support assumption on $\Delta_G v$, this then yields the bound
\begin{align*}
\|v\|_{C^{2,\epsilon}_{\ast}(Q_+)} \lesssim \|\D_G v\|_{Y_{\alpha,\epsilon}},
\end{align*}
which together with the local estimate in $X_{\alpha,\epsilon}(\mathcal{B}_3^+)$ provides the full bound from Proposition \ref{prop:invert}.\\

\emph{Step 1. Polynomial approximation at $P=\{y_n=y_{n+1}=0\}$.} We note that for $f\in Y_{\alpha,\epsilon}$ and $y_0\in P$ , there exists a first order polynomial $p_{y_0}(y)$ which is of the form $p_{y_0}(y) =f_0(y_0)y_n$ such that
\begin{align*}
\frac{1}{|\mathcal{B}_s^+(y_0)|}\int_{\mathcal{B}_s^+(y_0)}|f(y)-f_0(y_0)y_n|^2\leq C s^{2(1+2\alpha)}, \quad \forall s\in(0,1).
\end{align*}
By considering $v(y)-f_0(y_0)y_n^3/6$ and by still denoting the resulting function by $v$, we may assume that $f_0(y_0)=0$. The same arguments as before lead to the existence of a third order (in the homogeneous sense) polynomial $P_{y_0}$, where $$P_{y_0}(y)=a_0\left(y_n^3-3y_ny_{n+1}^2\right)+\sum_{i=1}^{n-1}b_iy_ny_i+c_0y_n,$$ for some constants $a_0,b_i,c_0$ depending on $y_0$, such that 
\begin{align}
\label{eq:approx_a}
\frac{1}{|\mathcal{B}_s^+(y_0)|}\int_{\mathcal{B}_s^+(y_0)}|v-P_{y_0}|^2\leq C\left(\|v\|_{L^2(\mathcal{B}_1^+)}^2+\|f\|_{Y_{\alpha,\epsilon}}^2\right)s^{2(3+2\alpha)}
\end{align}
for any $0<s<1/2$. \\

\emph{Step 2. Interpolation.} For $y\notin P$ with $\sqrt{y_n^2+y_{n+1}^2}=\lambda>0$, let
\begin{align*}
\tilde{v}_\lambda(\xi):=\frac{(v-P_{y_0})(y_0+\lambda ^2 \xi'',\lambda \xi_n, \lambda \xi_{n+1})}{\lambda ^{3+2\alpha}},
\end{align*}
where $y_0$ is the projection of $y$ on $P$. Let $\xi_0$ be the image point of $y$ under this rescaling. By Step 1, $\tilde{v}_\lambda(\xi)\in L^2(\mathcal{B}_{1/2}(\xi_0))$ with 
\begin{align}
\label{eq:rhs_est}
\|\tilde{v}_\lambda\|_{L^2(\mathcal{B}_1(\xi_0))}\leq C\left(\|v\|_{L^2}+\|f\|_{Y_{\alpha,\epsilon}}\right). 
\end{align}
Moreover, 
\begin{align*}
\Delta_G \tilde{v}_\lambda(\xi)= f_{\lambda}(\xi),
\end{align*}
where $f_{\lambda}(\xi):=\frac{1}{\lambda^{\epsilon}}f(y_0+\lambda^2\xi'',\lambda\xi_n,\lambda \xi_{n+1})$. We note that by the definition of $Y_{\alpha,\epsilon}$ and by $f_0(y''_0)=0$,
\begin{align*}
\| f_{\lambda}\|_{C^{0,\epsilon}(\mathcal{B}_{1/2}(\xi_0))} \leq \| f\|_{Y_{\alpha,\epsilon}}.
\end{align*}
In $\mathcal{B}_{1/2}(\xi_0)$, $\Delta_G$ is uniformly elliptic. Thus, by the classical $C^{2,\epsilon}$ Schauder estimates 
\begin{align*}
\|\tilde{v}_\lambda\|_{C^{2,\epsilon}(\mathcal{B}_{1/4}(\xi_0))} \leq C\left( \|\tilde{v}_\lambda\|_{L^2(\mathcal{B}_{1/2}(\xi_0))}+\|f_\lambda\|_{C^{0,\epsilon}(\mathcal{B}_{1/2}(\xi_0))}\right).
\end{align*}
Rescaling back and letting $\tilde{v}:=v-P_{y_0}$, we in particular infer 
\begin{align*}
\lambda^{-1-2\alpha+\epsilon}\sum\limits_{i,j=1}^{n+1}[Y_i Y_j \tilde{v}]_{C^{0,\epsilon}(\mathcal{B}_{\lambda/4}(y))} 
\leq C\left(\|v\|_{L^2(\mathcal{B}_1^+)}+\|f\|_{Y_{\alpha,\epsilon}}\right).
\end{align*}
Here we used (\ref{eq:rhs_est}) to estimate the right hand side contribution. 
Recalling the $L^{\infty}$ estimate $\| v\|_{L^{\infty}(Q_+)} \leq C \| \D_G v \|_{L^{\infty}}$ (c.f. the kernel bounds in Lemma \ref{lem:ker} in Section \ref{sec:kernel}) and the support conditions for $\D_G v$ and for $f$ allows us to further bound 
\begin{align*}
\|v\|_{L^2(\mathcal{B}_1^+)}+\|f\|_{Y_{\alpha,\epsilon}} \leq C \| f\|_{Y_{\alpha,\epsilon}}.
\end{align*}
This implies
\begin{align}\label{eq:err_est}
\lambda^{-1-2\alpha+\epsilon}\sum\limits_{i,j=1}^{n+1}[Y_i Y_j \tilde{v}]_{C^{0,\epsilon}_\ast(\mathcal{B}_{\lambda/4}(y))} 
+ \lambda^{-1-2\alpha}\sum\limits_{i,j=1}^{n+1}\|Y_i Y_j \tilde{v}\|_{L^{\infty}(\mathcal{B}_{\lambda/4}(y))} 
\leq C\|f\|_{Y_{\alpha,\epsilon}}.
\end{align} 
Passing through a chain of non-tangential balls, we infer that \eqref{eq:err_est} holds in a non-tangential cone at $y_0$:
\begin{align*}
&\sum\limits_{i,j=1}^{n+1}[d_G(y,y_0)^{-1-2\alpha+\epsilon}Y_i Y_j \tilde{v}]_{C^{0,\epsilon}_\ast(\mathcal{N}_G(y_0))} \\
&+ \sum\limits_{i,j=1}^{n+1}\|d_G(y,y_0)^{-1-2\alpha}Y_i Y_j \tilde{v}\|_{L^{\infty}(\mathcal{N}_G(y_0))} 
\leq C\|f\|_{Y_{\alpha,\epsilon}}.
\end{align*} 
We note that it is possible to derive \eqref{eq:err_est} for $v-P_{\bar y}$ at each $\bar y\in P$ and that hence $v$ is $C^{3,\alpha}_{\ast}(P)$ in the sense of Definition \ref{defi:diff}. As by Proposition~\ref{prop:decompI} the map $\bar y\mapsto P_{\bar y}$ is  $C^{0,\alpha}(P)$ regular, a triangle inequality and a covering argument yield the estimate in the full neighborhood of $y_0$
\begin{align*}
&\sum\limits_{i,j=1}^{n+1}[d_G(y,y_0)^{-1-2\alpha+\epsilon}Y_i Y_j \tilde{v}]_{C^{0,\epsilon}_\ast(\mathcal{B}_1^+(y_0))} \\
&+ \sum\limits_{i,j=1}^{n+1}\|d_G(y,y_0)^{-1-2\alpha}Y_i Y_j \tilde{v}\|_{L^{\infty}(\mathcal{B}_1^+(y_0))} 
\leq C\|f\|_{Y_{\alpha,\epsilon}}.
\end{align*}
This concludes the local estimate and hence concludes the proof of Proposition \ref{prop:invert}.
\end{proof}

\subsection{Invertibility of the Baouendi-Grushin Laplacian in $X_{\alpha,\epsilon} $, $Y_{\alpha,\epsilon}$}
\label{sec:XY}

We provide the proofs of the completeness of the spaces $X_{\alpha,\epsilon},Y_{\alpha,\epsilon}$ (c.f. Definition~\ref{defi:spaces}) and the desired invertibility of the Baouendi-Grushin Laplacian as an operator from $X_{\alpha,\epsilon}$ to $Y_{\alpha,\epsilon}$ (c.f. Lemma \ref{lem:inverse}).

\begin{lem}
\label{lem:Banach}
Let $X_{\alpha,\epsilon},Y_{\alpha,\epsilon}$ be as in Definition~\ref{defi:spaces}. Then $(X_{\alpha,\epsilon},\| \cdot \|_{X_{\alpha,\epsilon}}), (Y_{\alpha,\epsilon}, \| \cdot \|_{Y_{\alpha,\epsilon}})$ are Banach spaces.
\end{lem}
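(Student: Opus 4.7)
The strategy is to use the decomposition characterization in Proposition \ref{prop:decompI} and Remark \ref{rmk:characterize}, which identifies $Y_{\alpha,\epsilon}$ and $X_{\alpha,\epsilon}$ (up to equivalence of norms) with closed subspaces of products of compactly-supported classical Hölder-type spaces on $P$ and on $Q_+$.

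First I would check that $\|\cdot\|_{Y_{\alpha,\epsilon}}$ and $\|\cdot\|_{X_{\alpha,\epsilon}}$ actually separate points, since both are built largely out of seminorms. For $Y_{\alpha,\epsilon}$, if $\|f\|_{Y_{\alpha,\epsilon}}=0$, then the quantitative bound $[\p_n f]_{\dot C^{0,\alpha}(P)} + [f_1]_{\dot{C}^{0,\epsilon}_\ast} \leq C\|f\|_{Y_{\alpha,\epsilon}}$ from Proposition \ref{prop:decompI}(i) forces $\p_n f$ to be constant on $P$ and $f_1$ to be constant on $Q_+$. The support condition $\supp f \subset \mathcal{B}_1^+$ pins $\p_n f(y'')=0$ for $|y''|$ large, and the vanishing $f_1\equiv 0$ on $P$ built into Proposition \ref{prop:decompI} forces $f_1\equiv 0$; hence $f\equiv 0$. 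For $X_{\alpha,\epsilon}$ the argument is analogous, and in fact the $L^\infty$ piece $\|d_G(\cdot,\bar y)^{-(3+2\alpha)}(v-P_{\bar y})\|_{L^\infty(\mathcal{B}_3^+(\bar y))}$ yields $v = P_{\bar y}$ pointwise on $\mathcal{B}_3^+(\bar y)$ for every $\bar y\in P$; these polynomials must agree on overlaps, so $v$ coincides locally with a single polynomial, which combined with $v\in C_0(Q_+)$ and the Dirichlet condition on $\{y_n=0\}$ implies $v\equiv 0$.

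Next, for completeness of $Y_{\alpha,\epsilon}$, let $\{f_k\}$ be Cauchy. Applying the decomposition uniformly via Remark \ref{rmk:characterize} writes $f_k = \p_n f_k(y'')\,y_n + r^{1+2\alpha-\epsilon} f_{1,k}$, and the two pieces form Cauchy sequences in $C^{0,\alpha}(P)$ (with $\p_n f_k$ supported in $\{|y''|\leq 1\}$) and $C^{0,\epsilon}_\ast(Q_+)$ (with $f_{1,k}$ supported in $\mathcal{B}_1^+$ and vanishing on $P$). On these compactly supported/vanishing function classes the Hölder seminorms are actual norms, so we obtain limits $g_0$ and $g_1$ inheriting the support and vanishing conditions. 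Setting $f(y):= g_0(y'')\,y_n + r^{1+2\alpha-\epsilon} g_1(y)$, the converse direction of the decomposition (Remark \ref{rmk:characterize}) places $f\in Y_{\alpha,\epsilon}$, and the equivalence of norms gives $f_k\to f$ in $Y_{\alpha,\epsilon}$. The same scheme for $X_{\alpha,\epsilon}$ uses the richer decomposition of Proposition \ref{prop:decompI}(ii): a Cauchy sequence $\{v_k\}$ produces Cauchy data for each coefficient function $\p_n v_k,\p_{in}v_k,\p_{nnn}v_k,\p_{n,n+1,n+1}v_k$ on $P$ and for each remainder $C_1^{(k)}, V_j^{(k)}, C_{ij}^{(k)}$ in $C^{0,\epsilon}_\ast$, all in classical compactly-supported Banach spaces; the reassembled limit lies in $X_{\alpha,\epsilon}$ by the converse of Proposition \ref{prop:decompI}, while the $[v]_{C^{2,\epsilon}_\ast(Q_+\setminus \mathcal{B}_3^+(\bar y))}$ part of the norm is handled by standard $C^{2,\epsilon}_\ast$ completeness away from $P$.

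The main obstacle is the tension between the seminorm structure of $\|\cdot\|_{Y_{\alpha,\epsilon}}$ and the Hölder pieces of $\|\cdot\|_{X_{\alpha,\epsilon}}$ on the one hand, and the absence of any explicit norm on the approximating polynomials $P_{\bar y}$ on the other. The support conditions are what rescue the Banach-space property: they ensure that the decomposition in Proposition \ref{prop:decompI} lands in compactly-supported classical Hölder spaces in which the relevant seminorms are actually norms. Once this identification is in place, both the separation property of the norm and completeness reduce to standard closure arguments in products of classical Banach spaces.
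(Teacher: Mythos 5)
Your overall strategy --- reducing to the decomposition of Proposition~\ref{prop:decompI} and Remark~\ref{rmk:characterize}, and using the support conditions to upgrade the homogeneous seminorms to full Banach norms --- is the right one and is what the paper does for $Y_{\alpha,\epsilon}$. Your treatment of $Y_{\alpha,\epsilon}$ is sound: since $f$ itself is supported in $\mathcal{B}_1^+$, the coefficient $\p_n f(\cdot)$ vanishes for $|y''|$ large and $f_1$ vanishes on $P$, so the semi-norms $[\p_n f]_{\dot{C}^{0,\alpha}(P)}$ and $[f_1]_{\dot{C}^{0,\epsilon}_\ast}$ control the corresponding $L^\infty$ norms.

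However, there is a genuine gap in your argument for $X_{\alpha,\epsilon}$. You assert that the coefficient functions $\p_n v_k,\p_{in}v_k,\p_{nnn}v_k,\p_{n,n+1,n+1}v_k$ on $P$ live ``in classical compactly-supported Banach spaces.'' They do not: the hypothesis for $X_{\alpha,\epsilon}$ is only that $\supp(\D_G v)\subset \mathcal{B}_1^+$, not that $v$ itself has compact support. Outside $\mathcal{B}_1^+$, $v$ solves the homogeneous Baouendi--Grushin problem and merely tends to zero at infinity (by $v\in C_0(Q_+)$); there is no reason for $\p_n v(y'',0,0)$ to vanish for $|y''|$ large. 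Consequently the seminorm $[\p_{in}v]_{\dot{C}^{0,\alpha}(P)}\lesssim \|v\|_{X_{\alpha,\epsilon}}$ from Proposition~\ref{prop:decompI}(ii) does \emph{not} control $\|\p_{in}v\|_{L^\infty(P)}$ by a compact-support argument, and the reassembly of the Cauchy limit you propose does not close. The same issue undermines the claim that a $\|\cdot\|_{X_{\alpha,\epsilon}}$-Cauchy sequence yields Cauchy $L^\infty$ data for the polynomial coefficients. (The remainders $C_1^{(k)}, V_j^{(k)}, C_{ij}^{(k)}$ are fine: they vanish on $P$, so their seminorms do control their $L^\infty$ norms.)

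The missing ingredient is a quantitative bound on $\|v\|_{L^\infty(Q_+)}$ in terms of $\|v\|_{X_{\alpha,\epsilon}}$, and this is what the paper supplies via the Baouendi--Grushin fundamental solution. One first notes that the compact support of $\D_G v$ gives $\|\D_G v\|_{C^{0,\epsilon}_\ast(Q_+)}\lesssim [\D_G v]_{\dot{C}^{0,\epsilon}_\ast(Q_+)}\lesssim \|v\|_{X_{\alpha,\epsilon}}$; then, after the odd/even reflections prescribed by the Dirichlet--Neumann conditions, the hypothesis $v\in C_0(Q_+)$ forces the kernel representation $v(x)=\int K(x,y)\,\D_G v(y)\,dy$ (the potential low-degree polynomial ambiguity is killed by decay at infinity), and the kernel estimates of Lemma~\ref{lem:ker} give $\|v\|_{L^\infty(Q_+)}\lesssim \|\D_G v\|_{L^\infty(Q_+)}\lesssim \|v\|_{X_{\alpha,\epsilon}}$. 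This is what controls the $L^\infty$ norms of the polynomial coefficients and turns the seminorm on $X_{\alpha,\epsilon}$ into a genuine norm compatible with the decomposition. Without some such input (kernel bound, maximum principle, or a quantitative decay estimate), the qualitative membership $v\in C_0(Q_+)$ alone does not suffice, and your reduction for $X_{\alpha,\epsilon}$ does not go through.
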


\begin{proof}
(i) We first note that by the definition of $Y_{\alpha,\epsilon}$, $\supp(f)\subset \mathcal{B}_3^+$ . Hence, it suffices to consider the behavior of functions on $\bar{\mathcal{B}_3^+}$.  By Proposition~\ref{prop:decompI} and Remark~\ref{rmk:characterize} a function $f\in Y_{\alpha,\epsilon}$ can be decomposed as 
\begin{align*}
f(y)=f_0(y'')y_n+ r(y)^{1+2\alpha-\epsilon}f_1(y),\quad r(y)=\sqrt{y_n^2+y_{n+1}^2},
\end{align*}
with $f_0\in C^{0,\alpha}(P)$ and $f_1\in C^{0,\epsilon}_\ast(Q_+)$ and $f_0$, $f_1$ are obtained by Taylor approximation of $f$ (c.f. the proof of Proposition \ref{prop:decompI} in Section \ref{sec:decomp}).
Moreover, $[f_0]_{\dot{C}^{0,\alpha}(P\cap \mathcal{B}_3)} +[f_1]_{\dot{C}^{0,\epsilon}_\ast(\mathcal{B}_3^+)}$ is equivalent to $\|f\|_{Y_{\alpha,\epsilon}}$. Thus, in order to obtain the desired Banach property, it suffices to show the equivalence of the homogeneous Hölder norms and their inhomogeneous counterparts for  $y\in \mathcal{B}_3^+$. \\

We start by making the following observation: For any $f\in Y_{\alpha,\epsilon}$,  $\supp(f)\subset \mathcal{B}_3^+$ (in combination with the definition of $f_0, f_1$) implies that
\begin{align*}
f_0(y'')=0  \mbox{ and } f_1(y)=0 \mbox{ for } y=(y'',y_n, y_{n+1}) \mbox{ such that } (y'',0,0)  \in P\setminus \mathcal{B}_3^+.
\end{align*}
Thus,
\begin{align*}
\|f_0 \|_{L^\infty(P\cap \mathcal{B}_3)} \leq C [f_0]_{\dot{C}^{0,\alpha}(P \cap \mathcal{B}_3)}, \quad \|f_1\|_{L^\infty(\mathcal{B}_{3}^+)}\leq C[f_1]_{\dot{C}^{0,\epsilon}_\ast(\mathcal{B}_3^+)}.
\end{align*}
In particular, this immediately entails that $\|f_0\|_{C^{0,\alpha}(P\cap \mathcal{B}_3)}\leq C[f_0]_{\dot{C}^{0,\alpha}(P\cap \mathcal{B})}$ and $\|f_1\|_{C^{0,\epsilon}_\ast(\mathcal{B}_3^+)}\leq C[f_1]_{\dot{C}^{0,\epsilon}_\ast(\mathcal{B}_3^+)}$. Therefore, $Y_{\alpha,\epsilon}$ is a Banach space. \\

(ii) Let $v\in X_{\alpha,\epsilon}$. Since $\supp(\Delta_G v)\subset \mathcal{B}_3^+$, we infer that
\begin{align}\label{eq:compact_supp2}
\|\Delta_G v\|_{C^{0,\epsilon}_\ast(Q_+)}\leq C [\Delta_G v]_{\dot C^{0,\epsilon}_\ast(Q_+)}\leq C\|v\|_{X_{\alpha,\epsilon}}.
\end{align}
Moreover, the Dirichlet-Neumann boundary conditions allow us to extend $v$ and $\Delta_Gv$ evenly about $y_{n+1}$ and oddly about $y_n$. After the extension, the assumption that $v\in C_0(Q_+)$ yields the representation 
$$v(x)=\int\limits_{\R^{n+1}} K(x,y) \Delta_G v(y)dy,$$ 
where $K$ is the fundamental solution of $\Delta_G$ in $\R^{n+1}$ (c.f. Lemma \ref{lem:ker} in Section \ref{sec:kernel}). We remark that a priori $v$ deviates from $\int K(x,y) \Delta_G v(y)dy$ by (at most) a third order polynomial as we only control the semi-norm $[v]_{C^{2,\epsilon}_{\ast}(Q_+\setminus \mathcal{B}_1^+)}$ in the bulk and the deviation of $Y_{i}Y_{j}v$ at the boundary of $\mathcal{B}_3^+$.
However, the decay property at infinity forces $v$ to coincide with $\int K(x,y) \Delta_Gv(y) dy$. By the kernel estimates for the fundamental solution (c.f. Lemma \ref{lem:ker} in Section \ref{sec:kernel}) and by the support assumption (\ref{eq:compact_supp2})
\begin{align*}
\| v \|_{L^{\infty}(Q_+)} \leq C \| \D_G v\|_{L^\infty(Q_+)} \leq C \|v\|_{X_{\alpha,\epsilon}}.
\end{align*}
Thus, we are able to control the $L^\infty $ norm of the coefficients of the approximating  polynomial $P_{\bar y}$ at each point $\bar y\in P$.
\end{proof}

Last but not least, we show the invertibility of the Baouendi-Grushin Laplacian as an operator on these spaces:

\begin{lem}
\label{lem:inverse}
Let $X_{\alpha,\epsilon},Y_{\alpha,\epsilon}$ be as in Definition~\ref{defi:spaces}. Then, $\D_G:X_{\alpha,\epsilon} \rightarrow Y_{\alpha,\epsilon}$ is an invertible operator.
\end{lem}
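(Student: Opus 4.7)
The plan is to decompose invertibility into the three standard ingredients: boundedness of $\Delta_G$ as a map between the two spaces, injectivity, and surjectivity. The first two follow quickly from material already developed; the main work lies in the construction of a solution.

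\emph{Step 1 (Boundedness).} Given $v\in X_{\alpha,\epsilon}$, I would insert the decomposition from Proposition~\ref{prop:decompI}(ii) into
$$\Delta_G v = (y_n^2+y_{n+1}^2)\Delta'' v + \partial_{nn}v + \partial_{n+1,n+1}v$$
to obtain, with $r(y):=\sqrt{y_n^2+y_{n+1}^2}$,
$$\Delta_G v = \bigl(\partial_{nnn}v(y'',0,0)+\partial_{n,n+1,n+1}v(y'',0,0)\bigr)y_n + r^{1+2\alpha-\epsilon}\tilde C(y),$$
where $\tilde C\in C^{0,\epsilon}_\ast$ vanishes on $P$, using also that $r^2\Delta''v = r\cdot r^{2\alpha-\epsilon}\cdot O(1)$ and that the normal pointwise coefficient (which appears in the compatibility condition $\partial_{nn}v|_P=0$) actually cancels. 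Combined with the support condition $\supp(\Delta_Gv)\subset\mathcal B_1^+$ built into $X_{\alpha,\epsilon}$, Remark~\ref{rmk:characterize} yields $\Delta_G v\in Y_{\alpha,\epsilon}$ together with the bound $\|\Delta_G v\|_{Y_{\alpha,\epsilon}}\lesssim\|v\|_{X_{\alpha,\epsilon}}$.

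\emph{Step 2 (Injectivity).} If $v\in X_{\alpha,\epsilon}$ satisfies $\Delta_G v=0$, the a priori estimate of Proposition~\ref{prop:invert} immediately gives $\|v\|_{X_{\alpha,\epsilon}}\leq C\|0\|_{Y_{\alpha,\epsilon}}=0$, hence $v=0$.

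\emph{Step 3 (Surjectivity).} Given $f\in Y_{\alpha,\epsilon}$, I would extend $f$ to a function $\bar f$ on $\R^{n+1}$ by odd reflection in $y_n$ and even reflection in $y_{n+1}$. The compatibility conditions $f=\partial_{n+1}f=0$ on $P$ together with the decomposition $f=f_0(y'')y_n+r^{1+2\alpha-\epsilon}f_1$ (which is globally bounded with compact support in $\overline{\mathcal B_1^+}$) ensure that $\bar f$ is a bounded function with compact support in $\R^{n+1}$, continuous away from the (lower-dimensional) set where the odd reflection may jump. Using the fundamental solution $K$ of the Baouendi--Grushin Laplacian on $\R^{n+1}$ recalled in Lemma~\ref{lem:ker} in Section~\ref{sec:kernel}, define
$$v(y):=\int_{\R^{n+1}} K(y,z)\,\bar f(z)\,dz.$$
The symmetries of $K$ under the Baouendi--Grushin dilations and under reflections in $y_n$, $y_{n+1}$, together with those of $\bar f$, force $v$ to be odd in $y_n$ and even in $y_{n+1}$, so $v=0$ on $\{y_n=0\}$ and $\partial_{n+1}v=0$ on $\{y_{n+1}=0\}$; the kernel decay from Lemma~\ref{lem:ker} gives $v\in C_0(Q_+)$. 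Distributionally $\Delta_G v=\bar f$ on $\R^{n+1}$, hence $\Delta_G v=f$ on $Q_+$, so in particular $\supp(\Delta_G v)\subset\mathcal B_1^+$. Finally, to upgrade $v$ into $X_{\alpha,\epsilon}$, I apply the a priori estimate of Proposition~\ref{prop:invert} (which only requires $v$ to be a solution satisfying the boundary conditions and to enjoy enough baseline regularity to even write down the norm): the local polynomial approximation argument of Section~\ref{sec:quarter_Hoelder} then directly delivers $\|v\|_{X_{\alpha,\epsilon}}\leq C\|f\|_{Y_{\alpha,\epsilon}}$.

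\emph{Expected main obstacle.} The delicate point is Step 3: the odd reflection $\bar f$ is in general discontinuous on $\{y_n=0\}\setminus P$, so one cannot quote classical Schauder theory on $\R^{n+1}$ directly. I would handle this by an approximation, replacing $f$ by a sequence $f_k\in C^\infty_c(\mathcal B_1^+)$ with $f_k=\partial_{n+1}f_k=0$ on $\{y_n=0\}\cup\{y_{n+1}=0\}$ and $f_k\to f$ in $Y_{\alpha,\epsilon}$; for each $f_k$ the reflected extension is smooth and the construction above produces a bona fide classical solution $v_k\in X_{\alpha,\epsilon}$, and the a priori estimate of Proposition~\ref{prop:invert} shows that $\{v_k\}$ is Cauchy in $X_{\alpha,\epsilon}$, with limit the desired $v$.
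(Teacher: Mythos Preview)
Your proposal is correct and follows essentially the same route as the paper: extend $f$ by odd/even reflection, define $v$ via the Baouendi--Grushin fundamental solution, read off the boundary conditions from the symmetries, and upgrade to $X_{\alpha,\epsilon}$ using the approximation machinery of Section~\ref{sec:quarter_Hoelder}. Two small remarks. First, your ``expected main obstacle'' is not actually an obstacle: the paper does not attempt to pass through classical Schauder theory on $\R^{n+1}$ for the reflected data; it only uses that the (possibly discontinuous) reflection $\bar f$ is in $L^\infty$ with compact support, so that the Calderon--Zygmund estimates of Section~\ref{sec:kernel} give $u\in M^{2,p}$ for all $p$, and then the finer $X_{\alpha,\epsilon}$ regularity is obtained by working directly in $Q_+$ (where $f\in Y_{\alpha,\epsilon}$ genuinely) via Proposition~\ref{prop:Hoelder0} and scaling. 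So the density/approximation step you propose is unnecessary. Second, to land in $X_{\alpha,\epsilon}$ you must also verify the compatibility condition $\partial_{nn}v=0$ on $P$; the paper checks this in one line from the equation itself, $\partial_{nn}v=f-(y_n^2+y_{n+1}^2)\Delta''v-\partial_{n+1,n+1}v$, since each term on the right vanishes on $P$.
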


\begin{proof}
We show that for each $f\in Y_{\alpha,\epsilon}$, there exists a unique $u\in X_{\alpha,\epsilon}$ such that $\Delta_Gu=f$. Moreover, by Section \ref{sec:quarter_Hoelder}
\begin{align}\label{eq:apriori}
\|u\|_{X_{\alpha,\epsilon}}\leq C\|f\|_{Y_{\alpha,\epsilon}}.
\end{align}

Indeed, given $f\in Y_{\alpha,\epsilon}$, we extend $f$ oddly about $y_{n}$ and evenly about $y_{n+1}$ and (with slight 
abuse of notation) still denote the extended function by $f$. Let $u(x)= \int K(x,y) f(y)dy$, where $K$ is the kernel from Section \ref{sec:kernel}. In particular, the decay estimates for $K$ (c.f. Lemma \ref{lem:ker} in Section \ref{sec:kernel}) imply that $u\in C_0(\R^{n+1})$. Since $f\in L^\infty(\R^{n+1})$ and $\supp(f)\subset \mathcal{B}_3$, we obtain that $u\in M^{2,p}(\R^{n+1})$ for any $1<p<\infty$ (c.f. the Calderon-Zygmund estimates in Section \ref{sec:kernel}). Moreover, by the symmetry of the extension, $u$ is odd in 
$y_n$ and even in $y_{n+1}$, which implies that $u=0$ on $\{y_n=0\}$ and $\p_{n+1}u=0$ on
$\{y_{n+1}=0\}$. We restrict $u$ to $Q_+$ and still denote it by $u$. By the interior estimates from 
the previous Section \ref{sec:quarter_Hoelder} and a scaling argument, we further obtain that 
$u\in X_{\alpha,\epsilon}$ and that it satisfies \eqref{eq:apriori}.

It is immediate that $\supp(\Delta_Gu)=\supp(f)\subset \mathcal{B}_3^+$. Moreover, by using the equation, $\p_{nn}u=f-(y_n^2+y_{n+1}^2)\sum_{i=1}^{n-1}\p_{ii}u-\p_{n+1,n+1}u=0 $ on $\{y_{n}=y_{n+1}=0\}$. This shows 
the existence of $u\in X_{\alpha,\epsilon}$ which satisfies $\Delta_Gu=f$. Due to \eqref{eq:apriori} such a function $u$ is unique in $X_{\alpha,\epsilon}$.
\end{proof}

\subsection{Kernel estimates for the Baouendi-Grushin Laplacian}
\label{sec:kernel}

Last but not least, we provide the arguments for the mapping properties of the Baouendi-Grushin Laplacian in the whole space setting. This in particular yields the kernel bounds, which are used in the previous subsection.\\

Our main result in this section are the following Calderon-Zygmund estimates:

\begin{prop}[Calderon-Zygmund estimates] 
\label{prop:CZ}
Let $Y:=(Y_1,\dots,Y_{n+1})$ with $Y_i$ denoting the vector fields from Definition \ref{defi:Grushinvf} and let $F=(F^1,\dots,F^{n+1})\in L^p(\R^{n+1},\R^{n+1})$, $f\in L^p(\R^{n+1})$.
Suppose that 
\[   \Delta_G u =  Y_i F^{i}. \]
Then, there exists a constant $c_n= c(n)>0$ such that 
\[ \sum\limits_{i=1}^{n+1} \Vert Y_i u \Vert_{L^p(\R^{n+1})} \le c_{n} \frac{p^2}{p-1} \Vert F \Vert_{L^p(\R^{n+1})}. \]
If 
\[ \Delta_ G u = f ,\] 
then there exists a constant $c_n= c(n)>0$ such that 
\[ \sum\limits_{i,j=1}^{n+1}  \Vert Y_{i}Y_j u \Vert_{L^p(\R^{n+1})} \le c_{n} \frac{p^2}{p-1} \Vert f \Vert_{L^p(\R^{n+1})}. \]
If $0<s<1$ and $F\in \dot C^s(\R^{n+1}, \R^{n+1})$, then there exists a constant $c_n= c(n)>0$ such that 
\[ \sum\limits_{i=1}^{n+1}  \Vert Y_i u \Vert_{\dot C^s(\R^{n+1})} \le c_{n} \frac1{s(1-s)}  \Vert F \Vert_{\dot C^s(\R^{n+1})}. \] 
Moreover, if $F$ is supported on ball of radius one,  
\[ \sum\limits_{i=1}^{n+1}  \Vert Y_i u (x)\Vert_{L^\infty(\R^{n+1})} 
\le c \Vert F \Vert_{C^{s}(B_1)}. \] 
\end{prop}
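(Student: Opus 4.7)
The plan is to realize the operators $T_{ij}:=Y_i Y_j (-\Delta_G)^{-1}$ and $S_{ij}:=Y_i (-\Delta_G)^{-1}Y_j^\ast$ as Calder\'on--Zygmund operators on the space of homogeneous type $(\R^{n+1},d_G,dx)$, and then to invoke the standard Coifman--Weiss machinery together with the classical Marcinkiewicz interpolation in order to track the sharp $p^2/(p-1)$ constant. The first step is thus to obtain a fundamental solution $K(x,y)$ for $\Delta_G$ on all of $\R^{n+1}$ and to establish the pointwise decay
\[
|K(x,y)|\lesssim d_G(x,y)^{2-Q},\quad |Y_i^{(x)}K(x,y)|\lesssim d_G(x,y)^{1-Q},\quad |Y_i^{(x)}Y_j^{(x)}K(x,y)|\lesssim d_G(x,y)^{-Q},
\]
where $Q=2n$ is the homogeneous dimension associated with the Grushin dilations $\delta_\lambda$, together with the analogous off-diagonal H\"older regularity in both the $x$ and $y$ variables with exponent $s\in(0,1)$. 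Here I would invoke the Nagel--Stein--Wainger/Jerison--S\'anchez-Calle theory for sub-Laplacians built from H\"ormander vector fields: the vector fields $Y_1,\dots,Y_{n+1}$ satisfy H\"ormander's condition (one bracket suffices since $[Y_n,Y_i]=\partial_i y_n\partial_i$ up to sign recovers the missing directions), so existence, smoothness outside the diagonal and the scaling $K(\delta_\lambda x,\delta_\lambda y)=\lambda^{2-Q}K(x,y)$ are standard. Together with the doubling property of the Grushin balls ($|\mathcal{B}_r(x_0)|\sim r^{Q}$) this furnishes all the kernel estimates.

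Once the kernel bounds are in place, the $L^2$ boundedness of $T_{ij}$ and $S_{ij}$ follows from the identity $-\Delta_G=\sum_k Y_k^\ast Y_k$ (note $Y_i^\ast=-Y_i$ for $i\le n-1$ since the weight $\sqrt{y_n^2+y_{n+1}^2}$ does not depend on $y_i$) and polarization: writing $u=(-\Delta_G)^{-1}f$ gives
\[
\int |\nabla_G u|^2\,dx=-\int u\,\Delta_G u\,dx=\int u\,f\,dx,
\]
and the second derivative estimate follows from expanding $\|\Delta_G u\|_{L^2}^2$ by integrations by parts and noting that all commutators $[Y_k,Y_l]$ are again controlled by lower-order Grushin vector fields, yielding $\sum\|Y_iY_ju\|_{L^2}^2\lesssim \|f\|_{L^2}^2$. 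With the $L^2$ bound and the Calder\'on--Zygmund kernel estimates verified relative to $(d_G,dx)$, the standard proof of weak-type $(1,1)$ via a Calder\'on--Zygmund decomposition on the space of homogeneous type applies, and Marcinkiewicz interpolation between weak-$(1,1)$ and the $L^2$ bound, together with duality to cover $p>2$, gives the $L^p$ estimate with the constant $c_n p^2/(p-1)$ (the $p/(p-1)$ factor arises from the weak-$(1,1)$ interpolation and the additional $p$ from duality/Marcinkiewicz).

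For the H\"older estimate I would use the representation
\[
Y_i u(x)=\mathrm{p.v.}\int \bigl[Y_i^{(x)}Y_j^{\ast (y)}K(x,y)\bigr]\bigl(F^j(y)-F^j(x)\bigr)\,dy
\]
valid for $F\in\dot C^s$ (the p.v.\ integral of the kernel against a constant vanishes by the $Y_j^\ast$ on the $y$ side combined with an approximation by cut-offs). Splitting the integration region into $\{d_G(x,y)\le 2 d_G(x,x')\}$ and its complement, estimating on the inner region by the size bound together with $|F^j(y)-F^j(x)|\le [F]_{\dot C^s}d_G(x,y)^s$ producing $\int_0^{d_G(x,x')}r^{s-1}dr\sim d_G(x,x')^s/s$, and on the outer region using the H\"older regularity of the kernel in the first argument producing a gain $\int_{d_G(x,x')}^\infty r^{s-2}dr\sim d_G(x,x')^{s-1}/(1-s)$, the combined bound is $c_n[s(1-s)]^{-1}\|F\|_{\dot C^s}d_G(x,x')^s$, as desired. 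Finally, the $L^\infty$ bound under the support hypothesis follows by combining this H\"older seminorm estimate with a single pointwise value: evaluating $Y_i u$ at a point far from $\mathrm{supp}\,F$ and using the kernel decay gives a value of size $\lesssim\|F\|_{L^\infty}$, and then the $\dot C^s$ bound propagates the control to all of $\R^{n+1}$.

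The main obstacle will be verifying the Calder\'on--Zygmund kernel estimates in a quantitatively clean form---the mismatch between the Euclidean and Grushin geometries means one must carefully work with the non-isotropic balls $\mathcal{B}_r$ and check the doubling/H\"older hypotheses with the correct homogeneity, particularly in the transition region $\{y_n^2+y_{n+1}^2\sim |y''-x''|\}$ where the metric changes character. Once these estimates are secured, the derivation of the $L^p$, H\"older and $L^\infty$ conclusions is a direct application of the Coifman--Weiss framework, so the entire argument reduces to a verification of hypotheses rather than any genuinely new estimate.
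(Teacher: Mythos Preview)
Your approach coincides with the paper's: both reduce the estimates to kernel bounds for the fundamental solution (the paper's Lemma~\ref{lem:ker}) and then invoke Calder\'on--Zygmund theory on the doubling space $(\R^{n+1},d_G,dx)$. The paper's argument is in fact sketchier than yours---it simply asserts that $K_{ij}=Y_{i,z}Y_{j,w}k$ is a CZ operator and treats the H\"older case by a cutoff decomposition near $z$ and $w$; your outline additionally tracks the constants $p^2/(p-1)$ and $[s(1-s)]^{-1}$, which the paper does not.

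Two technical points need correcting, however. First, Grushin balls do \emph{not} satisfy $|\mathcal{B}_r(x_0)|\sim r^{Q}$ uniformly in $x_0$: for a center at distance $r_0>0$ from $P$ and radii $r\ll r_0$ the metric is comparable to the Euclidean one (Remark~\ref{rmk:equi_dist}) and $|\mathcal{B}_r(x_0)|\sim r^{n+1}r_0^{n-1}$, whereas $|\mathcal{B}_r(x_0)|\sim r^{2n}$ only for $x_0\in P$. The correct kernel bounds are therefore those of Lemma~\ref{lem:ker}, with $\vol(\mathcal{B}_{d_G(z,w)}(z))^{-1}$ replacing $d_G(z,w)^{-Q}$; doubling still holds, so the Coifman--Weiss machinery goes through once you make this substitution throughout your H\"older splitting. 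Second, the fields $Y_i=\sqrt{y_n^2+y_{n+1}^2}\,\partial_i$ are not smooth and hence not directly covered by the Nagel--Stein--Wainger/H\"ormander framework; the relevant smooth sum-of-squares representation is $\Delta_G=\sum_{j=1}^{2n}\tilde{Y}_j^2$ with the fields $\tilde{Y}_j$ of Definition~\ref{defi:Hoelder1} (so that, e.g., $[\partial_n,y_n\partial_i]=\partial_i$ recovers the missing directions), and the estimates for the $Y_i$ then follow since each $Y_i$ is a bounded pointwise combination of the $\tilde{Y}_j$. Neither point changes the strategy, but both are needed for the details to go through.
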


The key auxiliary result to infer the regularity estimates of Proposition \ref{prop:CZ} is the following existence and regularity result for a kernel to our problem:

\begin{lem} 
\label{lem:ker}
Let $u:\R^{n+1}\rightarrow \R$ be a solution of $\D_G u = f$. Then there exists a kernel $k(z,w): \R^{(n+1)\times (n+1)}\rightarrow \R$ such that 
\begin{align*}
u(x)= \int\limits_{\R^{n+1}} k(x,y)f(y)dy.
\end{align*}
Let $\tilde{Y}^{\alpha}$ denote the composition of the vector fields $\tilde{Y}_{\alpha_{1}}\dots \tilde{Y}_{\alpha_{|\alpha|}}$ where $\tilde{Y}_i$, $i\in\{1,\dots,2n\}$, denote the modified vector fields from Definition \ref{defi:Hoelder1}. Then for all multi-indeces $\alpha, \beta$ the following estimates hold
\[ \left|\tilde{Y}_{z}^\alpha \tilde{Y}_{w}^\beta k(z,w)\right|
\le c_{\alpha, \beta} d_G(z,w)^{2-|\alpha|-|\beta|} (\vol(B_{d_G(z,w)}(z)))^{-1}  .
\] 
Here the subscript $z,w$ in the vector fields $\tilde{Y}_z^{\alpha}, \tilde{Y}_w^{\beta}$ indicates the variable the vector fields are acting on.
\end{lem}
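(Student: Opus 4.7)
The plan is to construct the fundamental solution of $-\D_G$ via the heat semigroup and then extract the claimed derivative bounds from subelliptic heat kernel estimates. First I would note that $\D_G$ fits the classical sub-Laplacian framework: with the vector fields $\tilde Y_i$ of Definition \ref{defi:Hoelder1}, one verifies directly that $\D_G = \sum_{i=1}^{2n}\tilde Y_i^2$, and the commutators $[\tilde Y_{2n-1}, \tilde Y_{2i-1}] = \p_i$, $[\tilde Y_{2n}, \tilde Y_{2i}] = \p_i$ for $i\in\{1,\dots,n-1\}$ show that H\"ormander's bracket-generating condition is satisfied, of step two, everywhere on $\R^{n+1}$. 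Hence $-\D_G$ is hypoelliptic and essentially self-adjoint on $C_c^\infty(\R^{n+1})$. The associated Carnot-Carath\'eodory distance is equivalent to $d_G$, and $(\R^{n+1}, d_G, dy)$ is a space of homogeneous type with homogeneous dimension $Q = 2n$ under the Grushin dilations $\delta_\lambda$; in particular $\D_G$ is scale-homogeneous of degree $-2$.

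Next, by the classical theory of subelliptic heat semigroups (Folland, Rothschild-Stein, Jerison-S\'anchez-Calle, Nagel-Stein-Wainger), the semigroup $e^{t\D_G}$ admits a smooth positive kernel $p_t(z,w)$ obeying the Gaussian-type bound
\begin{align*}
|\tilde Y_z^\alpha \tilde Y_w^\beta p_t(z,w)| \leq C_{\alpha,\beta}\, t^{-(|\alpha|+|\beta|)/2}\, \vol(B_{\sqrt{t}}(z))^{-1}\, \exp\bigl(-c\, d_G(z,w)^2/t\bigr).
\end{align*}
Assuming $Q > 2$ (i.e.\ $n\geq 2$; the case $n=1$ requires a logarithmic modification at infinity), I then set
\begin{align*}
k(z,w) := \int_0^\infty p_t(z,w)\,dt,
\end{align*}
which is the Schwartz kernel of $(-\D_G)^{-1}$; the representation $u(z) = \int k(z,w)f(w)\,dw$ for solutions of $\D_G u = f$ with suitable decay at infinity follows from $-\D_G\int_0^\infty p_t\,dt = \delta_z$ together with Fubini's theorem.

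The pointwise derivative bound is then obtained by integrating the heat-kernel estimate in $t$. Setting $\rho:=d_G(z,w)$ and splitting $\int_0^\infty = \int_0^{\rho^2} + \int_{\rho^2}^\infty$, on the first interval the factor $e^{-c\rho^2/t}$ dominates, and combined with the volume doubling bound $\vol(B_{\sqrt t}(z))^{-1}\lesssim (\rho/\sqrt t)^{Q}\vol(B_\rho(z))^{-1}$ and the change of variables $s = \rho^2/t$, one obtains a contribution of order $\rho^{2-|\alpha|-|\beta|}\vol(B_\rho(z))^{-1}$. On the second interval the exponential is bounded by one, and reverse doubling yields an integrand of size $t^{-(|\alpha|+|\beta|+Q)/2}\rho^{Q}\vol(B_\rho(z))^{-1}$, which integrates against $dt$ to the same order. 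Summing the two contributions gives the claimed estimate.

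The main technical obstacle is establishing the Gaussian heat-kernel bound globally on $\R^{n+1}$, in particular uniformly across the degenerate set $P=\{y_n=y_{n+1}=0\}$ where classical elliptic theory fails. One must invoke genuinely subelliptic tools here: the Rothschild-Stein lifting theorem together with the two-sided Gaussian bounds of Jerison-S\'anchez-Calle, or the Nagel-Stein-Wainger description of Carnot-Carath\'eodory balls. A secondary point, resolved by the scale invariance of the Grushin structure and the doubling property of $(\R^{n+1},d_G,dy)$, is the correct interpolation between the two volume regimes $\vol(B_r(z))\sim r^{2n}$ (when $r\gtrsim \sqrt{z_n^2+z_{n+1}^2}$) and $\vol(B_r(z))\sim r^{n+1}(z_n^2+z_{n+1}^2)^{(n-1)/2}$ (otherwise); the doubling property guarantees that the final estimate is uniform in $z$ with respect to $\vol(B_\rho(z))$.
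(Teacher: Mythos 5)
Your proposal is correct in substance but proceeds along a genuinely different route than the paper's. The paper constructs the kernel abstractly: after the energy estimate $\sum_i\Vert Y_i u\Vert_{L^2}\leq\Vert F\Vert_{L^2}$ and a sub-Riemannian Sobolev embedding, it invokes hypoellipticity of $\D_G$ (H\"ormander's condition is verified just as you do) only through the \emph{qualitative} local estimate $\Vert \tilde Y^\alpha u\Vert_{L^\infty(B_{1/2})}\leq c_\alpha\Vert u\Vert_{L^1(B_1)}$ for homogeneous solutions, applies the Schwartz kernel theorem to the continuous map $\dot M^{-1}\to\dot M^{-1}$ to obtain $k(z,w)$, proves $|\tilde Y_z^\alpha\tilde Y_w^\beta k(z,w)|\leq c$ at the unit scale $d_G(z,w)=1$ by a localization-and-dualization argument, and then rescales via the Grushin dilations $\delta_\lambda$. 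You instead build $k$ as the Laplace transform of the heat kernel, $k=\int_0^\infty p_t\,dt$, import the two-sided Gaussian bounds with all $\tilde Y$-derivatives from Jerison--S\'anchez-Calle/Nagel--Stein--Wainger, and integrate in $t$ with the splitting at $t=\rho^2$, where $\rho=d_G(z,w)$. Both arguments are sound. Your route packages the uniformity across the degenerate set $P$ into a single black-box heat-kernel estimate and makes the dependence on the volume $\vol(B_\rho(z))$ (and hence on the two scaling regimes near and far from $P$) explicit from the outset, while the paper's argument is softer: it never needs the Gaussian decay, only finite propagation of the local smoothing estimate combined with exact Grushin scale invariance. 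A side benefit of your presentation is that you correctly identify the homogeneous dimension as $Q=2n$ (the paper's exponent ``$2n-2$'' in its Sobolev step appears to be a misprint), and you flag the genuine degeneracy at $n=1$ ($Q=2$), where $\int_0^\infty p_t\,dt$ diverges and the Green's function is logarithmic; the paper's abstract construction silently assumes this does not occur. One minor imprecision on your side: the exponent in the reverse-doubling bound you use for $t>\rho^2$ is not $Q=2n$ but the \emph{lower} volume growth exponent $n+1$ (balls away from $P$ are Euclidean); since the factor $\rho^\nu/\vol(B_\rho(z))$ and the ensuing $\int t^{-(|\alpha|+|\beta|+\nu)/2}\,dt$ conspire so that the $\nu$-dependence cancels, your final estimate is unaffected, but it is worth recording that the tail converges for all $|\alpha|+|\beta|\geq 0$ precisely because $n+1>2$, not $2n>2$.
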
 
 
Relying on this representation, we can proceed to the proof of Proposition \ref{prop:CZ}:

\begin{proof}[Proof of Proposition \ref{prop:CZ}]
Let $K_{ij}(z,w)= Y_{i,z} Y_{j,w} k(z,w)$ for any pair $i,j\in\{1,\dots,n+1\}$, where the indeces $z,w$ refer to the variables which the vector field are acting on and $k(z,w)$ denotes the kernel from Lemma \ref{lem:ker}. The function $K_{ij}(z,w)$ is related to the 
obvious Calderon-Zygmund operator $T$ which maps $L^p$ to $L^p$. This proves the desired $L^p$ bounds. Hence, it remains to prove the Hölder estimates. Formally, it maps constants to zero. Thus, 
\[ Tf(x) = T(f-f(x))(x)= \int\limits_{\R^{n+1}} K_{ij}(x,y)(f(y)-f(x)) dy .\] 
Now let $d_G(z,w) = 3$. We choose a smooth cutoff function $\phi$ which is equal to $1$ in $\mathcal{B}_1(0)$ and equal to $0$ outside $\mathcal{B}_{3/2}(0)$ and set
$f(x) = \phi(x-z) f(x) + \phi(x-w)f(x) + (1-\phi(x-z)-\phi(x-w)) f(x). $ 
We claim that $|T(f)(w)-T(f)(z)| \le c $. This follows from the kernel estimates of Lemma \ref{lem:ker}.
\end{proof} 

Finally, to conclude our discussion of the mapping properties of the Baouendi-Grushin operator, we present the proof of Lemma \ref{lem:ker}:

\begin{proof}[Proof of Lemma \ref{lem:ker}]
We begin by considering the equation
\[ \Delta_G  u = \sum\limits_{i=1}^{n+1}Y_i F \mbox{ in } \R^{n+1}. \]
For this we have the energy estimate
\[ \sum\limits_{i=1}^{n+1} \Vert Y_i u \Vert_{L^2(\R^{n+1})}^2 \le \sum\limits_{i=1}^{n+1} \Vert F^i \Vert_{L^2(\R^{n+1})}^2 . \] 
Also, the Sobolev embedding 
\[ \Vert u \Vert_{L^p(\R^{n+1})} \le c \sum\limits_{i=1}^{n+1} \Vert Y_i u \Vert_{L^q(\R^{n+1})} , \]
holds with 
\[ \frac1q - \frac1{2n-2} = \frac1p. \]
By duality, if 
\[ \frac12 - \frac1{2n-2} = \frac1p, \]
 we have the embedding 
\[  \Vert f \Vert_{\dot M^{-1}(\R^{n+1})} \leq c \Vert f \Vert_{L^{p'}(\R^{n+1})}  .\]
Here $\dot{M}^{-1}(\R^{n+1})$ denotes the dual space of $\dot{M}^1(\R^{n+1})$ (and $\dot{M}^{1}(\R^{n+1})$ is the homogeneous version of the space introduced in Definition \ref{defi:GrushinLp} in Section \ref{sec:quarter_Hoelder}).
As discussed in Section \ref{sec:holder} the symbol of $\D_G$ defines the sub-Riemannian metric
\[   g_{y}(v,w) = (y_n^2 + y_{n+1}^2)^{-1} \sum\limits_{i=1}^{n-1} v_{i}w_{i} + v_{n}w_{n} + v_{{n+1}}w_{{n+1}}, \] 
which itself correspondingly defines a metric $d_G$ on $\R^{n+1}$. 
The operator $\D_G$ satisfies the Hörmander condition with the vector fields
\[ \tilde{Y}_i, \ i\in\{1,\dots,2n\}. \]
Hence, it is hypoelliptic and any local distributional solution is smooth. 
More precisely, if $u \in L^1(B_1(y))$ satisfies $\Delta_G u =0$, then for any multi-index $\alpha$
\[ \Vert \tilde{Y}^{\alpha} u \Vert_{L^\infty(B_{1/2}(y))} 
\le c_{\alpha} \Vert u \Vert_{L^1(B_1(y))}. \] 
Let $\frac1p+\frac1{2n-2}= \frac12$ and let $p'$ the Hölder conjugate
exponent. Then, by the embeddings, if $f \in L^{p'}$ and 
\[ \Delta_G u = f \mbox{ in } \R^{n+1}, \] 
then $\Vert u \Vert_{L^p(\R^{n+1})} \le c \Vert f \Vert_{L^{p'}(\R^{n+1})}$.

 By the Schwartz kernel theorem there is a kernel $k(z,w)$ so that 
\[  u(z) = \int\limits_{\R^{n+1}} k(z,w) f(w) dw. \]
More precisely, if $f \in \dot M^{-1}$, then $u \in \dot M^{-1}$. 
In particular, if $f$ is supported in ball  $B_1(w)$ then $u$ is a solution to 
the homogeneous problem outside. In particular if $ d_G(z,w) \ge 3$ then $u$ is bounded together with all derivatives in $B_1(z)$. We fix $z$. Then, 
\[    M^{-1}(B_1(w)) \ni f \to u(z), \] 
is a linear continuous map, which is represented by $\tilde w \to k(z,\tilde w) \in M^1(B_1(w))$. Since $\Delta_G$ is self-adjoint,$k(z,w) = k(w,z)$. Repeating previous arguments we see that 
\[ \tilde{Y}^\alpha_w k(z,\tilde w)   \] 
is bounded in $B_{1/2}(w)$. Repeating the arguments and dualizing once more we obtain that 
\[ |\tilde{Y}_z^\alpha  \tilde{Y}^\beta_w k(z,w)| \le c, \] 
provided $d_G(z,w)=1$. Hence rescaling leads to the desired kernel estimates.
\end{proof}

\bibliography{citations}
\bibliographystyle{alpha}

\end{document}